\begin{document}

\newcommand{\tv}{\tilde{V}}
\newcommand{\tu}{\tilde{U}}
\newcommand{\Mne}{M_{\perp, \e}}

\definecolor{darkgreen}{rgb}{0,0.55,0}
\newcommand{\blue}[1]{{\textcolor{blue}{#1}}} 
\newcommand{\red}[1]{{\textcolor{red}{#1}}} 
\newcommand{\green}[1]{{\bf\textcolor{darkgreen}{#1}}} 

\newcommand{\redi}{\color{red}}
\newcommand{\barint}{\overline{\hspace{.65em}}\!\!\!\!\!\!\int}
\newcommand{\e}{\varepsilon}
\newcommand{\eps}{\varepsilon}
\newcommand{\logeps}{|\log \eps|}
\newcommand{\ue}{u_\eps}
\newcommand{\ve}{v_\eps}
\newcommand{\jep}{j_\eps}
\newcommand{\loc}{ {\mbox{\scriptsize{loc}}} }
\newcommand{\R}{{\mathbb R}}
\newcommand{\RR}{{\mathbb R}}
\newcommand{\C}{{\mathbb C}}
\newcommand{\SSS}{{\mathbb S}}
\newcommand{\J}{{\mathbb J}}
\newcommand{\T}{{\mathbb T}}
\newcommand{\Z}{{\mathbb Z}}
\newcommand{\N}{{\mathbb N}}
\newcommand{\Q}{{\mathbb Q}}
\newcommand{\Hdf}{{\mathcal{H}}}

\newcommand{\h}{\mathcal{H}}
\newcommand{\calD}{{\mathcal{D}}}
\newcommand{\calE}{{\mathcal{E}}}
\newcommand{\calS}{{\mathcal{S}}}
\newcommand{\calL}{{\mathcal{L}}}
\newcommand{\calX}{{\mathcal{X}}}
\newcommand{\calF}{{\mathcal{F}}}
\newcommand{\calG}{{\mathcal{G}}}
\newcommand{\calH}{{\mathcal{H}}}
\newcommand{\calJ}{{\mathcal{J}}}
\newcommand{\calT}{{\mathcal{T}}}
\newcommand{\calO}{{\mathcal{O}}}
\newcommand{\calU}{{\mathcal{U}}}
\newcommand{\calM}{{\mathcal{M}}}
\newcommand{\dist}{\operatorname{dist}}
\newcommand{\sign}{\operatorname{sign}}
\newcommand{\spt}{\operatorname{spt}}
\def\rest{\hskip 1pt{\hbox to 10.8pt{\hfill
\vrule height 7pt width 0.4pt depth 0pt\hbox{\vrule height 0.4pt
width 7.6pt depth 0pt}\hfill}}}
\newcommand{\bd}{\partial}
\newcommand{\vp}{\varphi}

\newcommand{\dbar}{\bar{D}}
\newcommand{\dd}{D}

\newcommand{\vol}{\,\mbox{vol}}

\newcommand{\bn}[1]{ ({\texttt{#1}})}

\newcommand{\beq}{\begin{equation}}
\newcommand{\eeq}{\end{equation}}

\newcommand{\bea}{\begin{align*}}
\newcommand{\nd}{\noindent}

\newcommand{\Mn}{M_{\perp}}

\def\logeps{{|\!\log\eps|}}

\theoremstyle{plain}

\newtheorem{theorem}{Theorem}[section]
\newtheorem{lemma}[theorem]{Lemma}
\newtheorem{propo}[theorem]{Proposition}
\newtheorem{coro}[theorem]{Corollary}

\theoremstyle{definition}
\newtheorem{definition}{Definition}

\theoremstyle{remark}
\newtheorem{remark}[theorem]{Remark}
\newtheorem{example}[theorem]{Example}
\newtheorem{warning}{Warning}


\title[Renormalized energy between vortices on Riemannian surfaces]{Renormalized energy between vortices in some Ginzburg-Landau models on 2-dimensional Riemannian manifolds }

\maketitle

\begin{center}
{R. Ignat and R.L. Jerrard}
\end{center}

\begin{center}
\today
\end{center}

\bigskip

\begin{abstract}
We study a variational Ginzburg-Landau type model depending on a small parameter $\eps>0$ for (tangent) vector fields on a $2$-dimensional Riemannian manifold $S$. As $\eps\to 0$, these vector fields tend to have unit length so they generate singular points, called vortices, of a (non-zero) index  if the genus $\mathfrak{g}$ of $S$ is different than $1$. Our first main result concerns the characterization of canonical harmonic unit vector fields with prescribed singular points and indices. The novelty of this classification involves flux integrals constrained to a particular {\it vorticity-dependent} lattice in the $2\mathfrak{g}$-dimensional space of harmonic $1$-forms on $S$ if $\mathfrak{g}\geq 1$. Our second main result determines the interaction energy (called renormalized energy) between vortex points as a $\Gamma$-limit (at the second order) as $\eps\to 0$. The renormalized energy governing the optimal location of vortices depends on the Gauss curvature of $S$ as well as on the quantized flux. The coupling between flux quantization constraints and vorticity, and its impact on the renormalized energy, are new phenomena in the theory of Ginzburg-Landau type models. We also extend this study to two other (extrinsic) models for embedded hypersurfaces $S\subset \R^3$, in particular, to a physical model for non-tangent maps to $S$ coming from micromagnetics.
\end{abstract}

\tableofcontents

\section{Introduction}
\label{sec:intro}

 We consider three related  asymptotic variational problems similar to the Ginzburg-Landau model that are described by singularly perturbed functionals depending on a small parameter $\eps>0$. These functionals are defined for smooth vector fields on a $2$-dimensional compact Riemannian manifold $S$ (or otherwise, for embedded surfaces, 
	we consider smooth maps whose non-tangential component is strongly penalized). As $\eps\to 0$, we expect that these maps generate point singularities, called vortices, carrying a topological degree (or index).
In every case, our goal is to characterize the limit of minimizers of these functionals as $\e \to 0$,
or more generally, to prove a $\Gamma$-convergence result at second order that
 captures a ``renormalized energy" between the vortex singularities and identifies a ``canonical harmonic unit vector field" associated to these vortices. 
 
We classify all harmonic unit vector fields with singularities at prescribed vortex points with prescribed indices (satisfying a certain constraint coming from the topology of $S$). The subtlety  for surfaces of genus $\mathfrak{g}\geq 1$ is that a harmonic unit vector field depends not only on the prescribed vortex points with their topological degrees, but on some {\it flux integrals} constrained to belong to a particular vorticity-dependent lattice in the $2\mathfrak{g}$-dimensional space of harmonic $1$-forms on $S$. The renormalized energy associated to a configuration of vortices depends on vortex interaction (mediated by the Green's and Robin's functions for the Laplacian on $S$), a term arising from the Gaussian curvature of $S$, and the flux integrals. The dependence on vortex position and degree of the flux constraints, and through them the renormalized energy, constitutes a new phenomenon in the theory of Ginzburg-Landau type models.

\bigskip

\subsection{Three models}
We will always assume that the potential $F:\R_+\to \R_+$ is a continuous function such that there exists some $C>0$ with
\begin{equation}
F(1) = 0, \qquad F(s^2) \ge C(1- s)^2, \textrm{ for all } s>0.
\label{F.growth}\end{equation}

\bigskip

{\bf Problem 1}: Let $(S,g)$ be  a closed (i.e., compact, connected without boundary) oriented $2$-dimensional Riemannian manifold of genus $\mathfrak g$.
Consider (tangent) vector fields \footnote{{ In the sequel, a vector field on $S$ is always tangent at $S$ (the standard definition in differential geometry).}}
$$u:S\to TS, \quad \textrm{ i.e., } \,  u(x)\in T_x S \, \textrm{ for every } \, x\in S$$ where $TS=\cup_{x\in S} T_xS$ is the tangent bundle of $S$,
and minimize
the {\em intrinsic} energy 
\beq
\label{eepin}
E^{in}_\e(u) 
\  = \ 
\int_S e_\e^{in}(u) \vol_g, \quad e_\e^{in}(u):=\frac 12|\dd u|_g^2  +\frac 1{4\e^2} F(|u|^2_g).
\eeq
Here, $\vol_g$ is the volume $2$-form on $S$, $|v|_g$ is the length of a vector field $v$ { with respect to (w.r.t.)} the metric $g$ and
\[
|\dd u|_g^2(x) := |\dd_{\tau_1}u|_g^2(x) + |\dd_{\tau_2}u|_g^2(x)
\]
where $\dd_v u$ denotes covariant differentiation (with respect to the Levi-Civita connection) of
$u$  (in direction $v$) and $\{ \tau_1,\tau_2\}$ is any orthonormal basis for $T_xS$.

\bigskip

{\bf Problem 2}: Let $(S,g)$ be a closed oriented $2$-dimensional Riemannian manifold {\it isometrically embedded} in $(\R^3, \bar g)$.
To simplify the notation, we will still denote by $g$ the metric  $\bar g$ on $\R^3$,  which in
applications is typically the Euclidean metric. Consider sections $m$ of the tangent bundle $TS$ (i.e.,  $m(x) \in T_xS$ for a.e. $x\in S$), and minimize
the {\em extrinsic} energy
\[
E^{ex}_\e(m) \  = \ \int_S e_\e^{ex}(m) \vol_g, \quad e_\e^{ex}(m):=
\frac 12 |\dbar m|_g^2  +\frac 1{4\e^2} F(|m|_g^2). 
\]
That is, $|\cdot|_g$ denotes the length in the metric $g$ on $\R^3$
and $$|\dbar  m|_g^2 := |\dbar_{\tau_1}  \bar m|_g^2 + |\dbar_{\tau_2}  \bar m|_g^2,$$ where $\bar m$ is
an extension of $m$ to a neighborhood of $S$, $\{\tau_1(x), \tau_2(x)\}$ form a basis for $T_xS$,
and $\dbar_v$ denotes covariant derivative (with respect to the Levi-Civita connection) in $(\R^3, g)$ in the $v$ direction.
As is well known, $|\dbar m|_g^2$ is independent of the choice of extension $\bar m$. The difference between $|\dbar  m|_g^2$ in 
$e_\e^{ex}(m)$ and $|\dd m|_g^2$ in $e_\e^{in}(m)$ consists in the normal component $|\dbar  m\cdot N|_g^2$ of the full differential $\dbar m$ (the so called shape operator, see \eqref{shape} and Lemma \ref{lem:ext_Dir} below) where $N$ is the Gauss map at $S$. Problem 2 is relevant
to liquid crystals, as a relaxation of the model proposed in \cite{NapVerg1, NapVerg2}
and studied (for the torus) in \cite{SSV}.

\bigskip

{\bf Problem 3}: Let $(S,g)$ be a closed oriented $2$-dimensional Riemannian manifold {\it isometrically
embedded} in $\R^3$ (that is endowed with the Euclidean metric). Consider maps $M:S\to \R^3$ with $|M| =  1$ {\em a.e.} (standing for the magnetization), and minimize the micromagnetic energy on $S$:
\[
 E^{mm}_\e(M) \  = \ \ \int_S e_\e^{mm}(M) \vol_g, \quad e_\e^{mm}(M):=
\frac 12|\dbar M|^2  +\frac 1{4\e^2}F\big(1-(M\cdot N)^2\big).
\]
Here  
$|\dbar M|^2 := |\tau_1\cdot  \dbar \bar M |^2 + |\tau_2\cdot \dbar \bar M|^2$, where $|\cdot|$ denotes the euclidean length of a vector in $\R^3$, $\dbar$ is the differential operator in $\R^3$, $\bar M$ is
an extension of $M$ to a neighborhood of $S$ and $\{\tau_1(x), \tau_2(x)\}$ form an orthonormal  basis for $T_xS$ and $N(x)$ is the Gauss map at $S$.
As usual, $|\dbar M|^2$ is independent of the choice of extension $\bar M$. Note that if $M$ is decomposed as 
$$M=m+(M\cdot N) N,$$
where $m$ is the projection of $M$ on the tangent plane $TS$, then the energy $E^{mm}_\e(M)$ can be seen as a nonlinear perturbation of $E^{ex}_\e(m)$  in terms of the tangent component $m$ with  the potential $F(|m|^2)$ since $|m|^2=1-(M\cdot N)^2$ (see Section \ref{sec:11}). 
The above variational problem is a reduced model for thin ferromagnetic films  for the potential $F(s^2)=1-s^2$ for $s\in [0,1]$ (satisfying \eqref{F.growth} with $C=1$) (see Section \ref{sec:micromag}).

\medskip

\subsection{Vortices} Let $(S,g)$ be  a closed oriented $2$-dimensional Riemannian manifold of genus $\mathfrak g$ (not necessarily embedded in $\R^3$). We will identify vortices of a vector field $u$ with small geodesic balls centered at some points around which $u$ has a (non-zero) index. To be more precise, we introduce the Sobolev space (for $p \ge 1$)
\[
\calX^{1,p}(S) := \{ \mbox{vector fields $u:S\to TS$} : \ |u|_g, |Du|_g \in L^p(S)\}.
\]
We will also write $\calX(S)$ to denote the space of smooth vector fields on $S$.
Given $u\in \calX^{1,p}(S)\cap L^q(S)$ such that $\frac 1p + \frac 1q =1$, $p,q\in [1, \infty]$, we define the current $j(u)$ as the following $1$-form:
\beq
\label{j.def}
j(u)= (\dd u , iu)_g,
\eeq
where $(\cdot, \cdot)_g$ is the scalar product on $TS$ (more generally, the inner product associated to $k$-forms, $k=0,1,2$) and $i:TS\to TS$ is an isometry of $T_xS$ to itself for every $x\in S$ satisfying
\beq
\label{isom}
i^2w = -w, \qquad
(iw, v)_g = -(w,iv)_g \  = \ \vol_g(w,v). 
\eeq
In particular, $j(u)$ is a well-defined $1$-form in $L^1(S)$ if $u\in \calX^{1,1}(S)$ with $|u|_g=1$ almost everywhere in $S$.  
To introduce the notion of index, we assume that ${\mathcal O}$ is an open subset of $S$ of Lipschitz boundary and
$u\in \calX^{1,2}({\mathcal N})$ is a vector field in a neighborhood ${\mathcal N}$ of $\partial {\mathcal O}$ such that $|u|_g\ge \frac 14$ a.e. in ${\mathcal N}$;
then the {\em index} (or topological degree) 
of $u$ along $\partial {\mathcal O}$ is defined by
\beq
\label{deg.def}
\deg (u; \partial {\mathcal O}) := \frac 1{2\pi}\left( \int_{\partial {\mathcal O}}\frac{ j(u)}{|u|_g^2} + \int_{\mathcal O} \kappa \,\vol_g
\right),
\eeq
where $\kappa$ is the Gauss curvature on $S$ and the curve $\partial {\mathcal O}$ has the orientation inherited
in the usual way from ${\mathcal O}$ as oriented by the volume form,
so that Stokes' Theorem holds with the standard sign conventions
(see \cite{Car94} Chapter 6.1). 
In particular, if $u$ is smooth enough in ${\mathcal O}$ and has unit length on $\partial {\mathcal O}$, then one has 
$$\deg (u; \partial {\mathcal O})=\frac{1}{2\pi} \int_{{\mathcal O}} \omega(u)$$ where $\omega(u)$ is the {\it vorticity} (as a $2$-form) associated to the vector field $u$:
\begin{equation}
\omega(u) := d  j(u) + \kappa \vol_g,
\label{omega.def}\end{equation}
where $d j(u)$ is the exterior derivative of $j(u)$ (for more details, see Lemma \ref{L.degree} below).
Sometimes we will identify the index of $u$ at a point $P\in S$ with the index of $u$ along a sufficiently small curve around $P$.
Note that every smooth vector field $u\in \calX({\mathcal O})$ (or more generally, $u\in \calX^{1,2}({\mathcal O})$) of unit length in ${\mathcal O}$ has $\deg (u; \partial {\mathcal O})=0$; moreover, a vortex with non-zero index will carry infinite energy in Problems 1, 2 and 3 as $\eps\to 0$.\\

\subsection{Aim} We will prove a $\Gamma$-convergence result (at the second order) for the three energy functionals introduced above, as $\eps\to 0$. The genus $\mathfrak g$ and the Euler characteristic $$\chi(S)=2-2\mathfrak g$$ of $S$ will play an important role. In particular, at the level of minimizers $u_\eps$ of $E^{in}_\eps$, we show that  as $\e\to 0$, $u_\eps$ converges weakly in $\calX^{1,p}(S)$  for $p<2$,  see Theorem \ref{thm:min_in} (for a subsequence) to a canonical harmonic vector field $u^*$ of unit length that is smooth \footnote{In the case of a surface $(S,g)$ with genus $1$ (i.e., homeomorphic with the flat torus), then $n=0$ and $u^*$ is smooth in $S$. } away from $n=|\chi(S)|$ distinct singular points $a_1, \dots, a_n$, each 
singular point $a_k$ carrying the same index $d_k = \sign \chi(S)$ for $k=1, \dots, n$ so that \footnote{In fact, $\deg (u^*; \gamma)=d_k$ for every closed simple curve $\gamma$  around $a_k$ and lying near $a_k$.}  
\beq \label{necessary}
\sum_{k=1}^n d_k=\chi(S).
\eeq
Moreover, the vorticity $\omega(u^*)$ detects the singular points $\{a_k\}_{k=1}^n$ of $u^*$:  
\beq
\omega(u^*) =  
2\pi \sum_{k=1}^n d_k \delta_{a_k}  \qquad \textrm{ in }  S,
\label{ustar2}
\eeq
where $\delta_{a_k}$ is the Dirac measure (as a $2$-form) at $a_k$. 
The expansion of the minimal intrinsic energy $E^{in}_\eps$ at the second order is given by
$$E^{in}_\eps(u_\eps)=n\pi \log \frac 1 \eps+\lim_{r\to 0} \bigg( \int_{S\setminus \cup_{k=1}^n B_r(a_k)}\frac 12 |Du^*|_g^2\, \vol_g + n\pi \log r\bigg) +n\iota_F+o(1), \textrm{ as } \, \eps\to 0,$$
where $\iota_F>0$ is a constant depending only on the potential $F$ and $B_r(a_k)$ is the geodesic ball centered at $a_k$ of radius $r$, see again Theorem \ref{thm:min_in}. The second term in the above right-hand side (RHS) is called the {\it renormalized energy} between the vortices $a_1, \dots, a_n$ and governs the optimal location of these singular points; in the Euclidean case, this notion was introduced by Bethuel-Brezis-H\'elein in their seminal book \cite{BBH}. In particular, if $S$ is the unit sphere in $\R^3$ endowed with the standard metric $g$, then $n=2$ and $a_1$ and $a_2$ are two diametrically opposed points on $S$. 
Our results will give an explicit description of this renormalized energy, together with its
counterparts for the extrinsic Problems 2 and 3, see Section \ref{sec:renorm}.\\

\bigskip

\section{Main results}

\subsection{Canonical harmonic vector fields of unit length}
\label{sec:canon}

Let $(S,g)$ be  a closed oriented $2$-dimensional Riemannian manifold of genus $\mathfrak g$ (not necessarily embedded in $\R^3$).
We will say that a canonical harmonic vector field of unit length having distinct singular points $a_1,\ldots, a_n \in S$  of index
$d_1,\ldots, d_n\in \Z$ for some $n\geq 1$, is a vector field 
$u^* \in \calX^{1,1}(S)$ such that $|u^*|_g=1$ in $S$, \eqref{ustar2} holds, i.e., $$dj(u^*)=-\kappa \vol_g+2\pi \sum_{k=1}^n d_k \delta_{a_k}$$  and
\beq
d^* j(u^*) =0  \qquad \textrm{ in }  S.
\label{ustar1}
\eeq
Here, $d^*$ is the adjoint of the exterior derivative $d$, i.e., $d^* j(u^*)$ is the unique $0$-form on $S$ such that
$$ \int_S \big(d^* j(u^*), \zeta)_g \vol_g=\int_S \big(j(u^*), d\zeta)_g \vol_g \quad  \textrm{for every smooth $0$-form $\zeta$},$$
where $(\cdot, \cdot)_g$ is the inner product associated to $k$-forms, $k=0,1,2$. 
If $u^*$ satisfies \eqref{ustar2}, then \eqref{omega.def} combined with  
Gauss-Bonnet theorem
imply that necessarily \eqref{necessary} holds.

We will see that condition \eqref{necessary} is also sufficient. 
Indeed, if \eqref{necessary} holds, we will construct solutions of \eqref{ustar2} and \eqref{ustar1},
as follows: if $\psi=\psi(a;d)$ is the unique $2$-form on $S$ 
solving
\begin{equation}
-\Delta \psi = -\kappa\, \vol_g + 2\pi \sum_{k=1}^n d_k\delta_{a_k} \qquad \textrm{ in }  S, \qquad\qquad\int_S \psi = 0,
\label{psi.def}\end{equation}
with the sign convention that $-\Delta = d d^* + d^* d$, then the idea is to find $u^*$ such that $j(u^*)-d^*\psi$ 
belongs to the space of harmonic $1$-forms, i.e., 
\beq
\label{harmon}
Harm^1(S)
=\{ \mbox{integrable $1$-forms $\eta$ on $S$} \ : \ d\eta = d^*\eta = 0\mbox{ as distributions}\}.\eeq
The dimension of the space $Harm^1(S)$ is twice the genus (i.e., $2\mathfrak g$) of $(S, g)$ and {\bf we fix an orthonormal basis $\eta_1,\ldots, \eta_{2\mathfrak g}$ of $Harm^1(S)$} such that 
$$
\int_S (\eta_k , \eta_l)_g \ \vol_g= \delta_{kl} \quad \textrm{ for $k,l=1,\ldots, 2\mathfrak g$}.$$
Therefore, our ansatz for $j(u^*)$ may be written 
\begin{equation}
j(u^*) =   d^*\psi  + \sum_{k=1}^{2\mathfrak g} \Phi_k \eta_k \qquad \textrm{ in }  S
\label{form_jstar}\end{equation}
for some constant vector $\Phi = (\Phi_1,\ldots, \Phi_{2\mathfrak g}) \in \R^{2\mathfrak g}$.
We call these constants {\it flux integrals} as they can be recovered by 
\[
\Phi_k = \int_S (j(u^*), \eta_k)_g \vol_g, \qquad\textrm{ for } k=1,\ldots, 2\mathfrak g.
\]
These flux integrals play an essential role in our analysis. 
They depend nontrivially  on $(a,d)$; this phenomenon is new, 
as far as we know, in the study of  of Ginzburg-Landau models, see Section \ref{sec:challen} for more details.
Note that \eqref{form_jstar} combined with \eqref{psi.def} automatically
yield \eqref{ustar2} and \eqref{ustar1}.
One important point is to characterize
for which values of  $\Phi$  the RHS of \eqref{form_jstar} arises as $j(u^*)$ for
some vector field $u^*$ of unit length in $S$. For that condition, we need to recall the following theorem of Federer-Fleming \cite{FedFle}: there exist $2\mathfrak g$ simple closed geodesics $\gamma_\ell$ on $S$, $\ell=1,\ldots, 2\mathfrak g$, such that for any closed Lipschitz curve $\gamma$ on $S$, one can find integers $c_1\ldots, c_{2\mathfrak g}$
such that 
$$
\textrm{$\gamma$ is homologous to $\displaystyle \sum_{\ell=1}^{2\mathfrak g} c_\ell \gamma_\ell$}$$
i.e., there exists an integrable function $f:S\to \Z$ such that
$$\int_{\gamma}\zeta - \sum_{\ell=1}^{2\mathfrak g} c_\ell\int_{\gamma_\ell}\zeta \ = \ \int_S f \, d\zeta \quad \textrm{ for all
smooth $1$-forms $\zeta$}$$ (see more details in Section \ref{S:homology}).  
{\bf We fix a choice of such geodesic curves $\{\gamma_\ell\}_{\ell=1}^{2\mathfrak g}$}. With these chosen  geodesics $\{\gamma_\ell\}_{\ell=1}^{2\mathfrak g}$ and the harmonic $1$-forms $\{\eta_k\}_{k=1}^{2\mathfrak g}$,
we denote by
\begin{equation}
\alpha_{\ell k} := \int_{\gamma_\ell}\eta_k, \quad k, \ell=1,\ldots, 2\mathfrak g.
\label{akl.def}\end{equation}
The matrix $\alpha=(\alpha_{\ell k})_{1\leq k, \ell \leq 2\mathfrak g}$ is 
invertible\footnote{In fact, by changing the choice of  geodesics and the basis in $Harm^1(S)$, the matrix $\alpha$ is multiplied by an invertible matrix (similar to the standard change of coordinates in vector spaces) due to the above definition of homologous curves where $\int_{\gamma}\eta = \sum_{\ell=1}^{2\mathfrak g} c_\ell\int_{\gamma_\ell}\eta$ for every harmonic $1$-form $\eta$, see also Lemma~\ref{L.FedFlem}.}
(see Lemma \ref{L.FedFlem}).

\begin{theorem}\label{P1}
Let $n\geq 1$ and $d = (d_1,\ldots, d_n)\in \Z^n$ satisfy \eqref{necessary}. 
Then for every $a = (a_1,\ldots, a_n)\in S^n$, there exists
\[
\zeta_\ell = \zeta_\ell(a;d) \in  \R/2\pi\Z, \qquad \ell=1,\ldots, 2\mathfrak g
\]
such that if a vector field $u^*\in \calX^{1,1}(S)$ of unit length solves \eqref{ustar2} and \eqref{ustar1},
then 
$j(u^*)$ has the form 
\eqref{form_jstar}  for constants $\Phi_1,\ldots, \Phi_{2\mathfrak g}$ such that
\begin{equation}
\sum_{k=1}^{2\mathfrak g} \alpha_{\ell k} \Phi_k +\zeta_\ell(a;d) \in 2\pi \Z, \qquad \ell=1,\ldots, 2\mathfrak g, 
\label{lattice}\end{equation}
where $(\alpha_{\ell k})$ are defined in \eqref{akl.def}.
Conversely, given any $\Phi_1,\ldots, \Phi_{2 \mathfrak g}$ satisfying 
\eqref{lattice}, there exists a vector field $u^*\in \calX^{1,1}(S)$ of unit length solving \eqref{ustar2} and \eqref{ustar1} and
such that $j(u^*)$ satisfies \eqref{form_jstar}. In addition, the following hold:
\begin{itemize}
\item[1)]  $\zeta_\ell(\cdot ;  d)$ depends continuously on $a\in S^n$ for every $\ell=1,\ldots, 2\mathfrak g$.
More generally, if
\beq \label{mul.to.mu}
\mu_t := 2\pi\sum_{l=1}^{n_t} d_{l,t} \delta_{a_{l,t}}
 \to \mu_0 :=  2\pi\sum_{l=1}^{n_0} d_{l,0} \delta_{a_{l,0}}
\qquad\mbox{in $W^{-1,1}$} \qquad\textrm{ as } t\downarrow 0,
\eeq
$\{d_{l,t}\}_l$ are integers with \eqref{necessary} and $\sum_{l=1}^{n_t}|d_{l,t}|$ is uniformly bounded in $t$, 
 then $\zeta_\ell(a_t; d_t)\rightarrow \zeta_\ell(a_0; d_0)$ as $t\downarrow 0$.
(See Section \ref{sec:soboesp} for the definition of $W^{-1,1}$.)

\item[2)] any $u^*$ solving \eqref{ustar2} and \eqref{ustar1} belongs to $\calX^{1,p}(S)$ for all $1\leq p<2$,
and is smooth away from $\{a_k\}_{k=1}^n$.

\item[3)] If $u^*, \tilde u^*$ both satisfy \eqref{form_jstar} for the same $(a;d)$ and the same $\{\Phi_k\}_{k=1}^{2\mathfrak g}$, then $\tilde u^* = e^{i\beta}u^*$ for some
$\beta\in \R$ where $e^{i\beta}=\cos \beta+i\sin \beta$ for the isometry $i$ defined in \eqref{isom}.
\end{itemize}
\end{theorem}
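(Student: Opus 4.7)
The plan is to reduce the statement to a Hodge-theoretic decomposition of $j(u^*)$ together with a period quantization argument along the generating curves $\gamma_\ell$. For the direct implication, given a unit vector field $u^*$ satisfying \eqref{ustar1}--\eqref{ustar2}, I would form $\xi := j(u^*) - d^*\psi$ and show $\xi \in Harm^1(S)$. Since $\psi$ is a $2$-form on a $2$-manifold, $d\psi = 0$, hence $-\Delta\psi = dd^*\psi$; by \eqref{psi.def} this equals $-\kappa\vol_g + 2\pi\sum_k d_k\delta_{a_k}$, which by \eqref{omega.def}--\eqref{ustar2} also equals $dj(u^*)$. Therefore $d\xi = 0$, while $d^*\xi = 0$ follows from \eqref{ustar1} and $d^*d^*\psi = 0$. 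Decomposing $\xi$ in the orthonormal basis $\{\eta_k\}$ then yields \eqref{form_jstar} with uniquely determined $\Phi_k = \int_S (j(u^*),\eta_k)_g\vol_g$.

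To obtain the lattice constraint, I would invoke Poincar\'e--Hopf (legitimate under \eqref{necessary}) to fix a smooth unit reference field $w_0$ on $S\setminus\{a_1,\ldots,a_n\}$ with the prescribed indices. Any competing unit vector field $u^*$ then has $u^* = e^{i\phi}w_0$ for an $S^1$-valued map $\phi$, and a direct computation gives $j(u^*) = j(w_0) + d\phi$ as $1$-forms. Integrating along $\gamma_\ell$ and combining with \eqref{form_jstar} and \eqref{akl.def},
\beq
\sum_{k=1}^{2\mathfrak g}\alpha_{\ell k}\Phi_k - \int_{\gamma_\ell}\bigl(j(w_0) - d^*\psi\bigr) \ = \ \int_{\gamma_\ell}d\phi \ \in \ 2\pi\Z,
\eeq
the last inclusion being $2\pi$ times the winding number of $\phi|_{\gamma_\ell}\colon\gamma_\ell\to S^1$. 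This forces $\zeta_\ell(a;d)\in\R/2\pi\Z$ to be the class of $\int_{\gamma_\ell}(d^*\psi - j(w_0))$ and establishes \eqref{lattice}; the class is independent of the choice of $w_0$, since replacing $w_0$ by $e^{i\alpha}w_0$ modifies the integrand by $d\alpha$, whose period along $\gamma_\ell$ lies in $2\pi\Z$. For the converse, given $\Phi$ satisfying \eqref{lattice}, I would set $\beta := d^*\psi + \sum_k \Phi_k\eta_k - j(w_0)$ on $S\setminus\{a_k\}$. The same cancellation between $dd^*\psi$ and $dj(w_0)$ makes $\beta$ closed there, its periods around small loops encircling each $a_k$ vanish (the $2\pi d_k$ contributions from $d^*\psi$ and $j(w_0)$ cancel, while smooth $\eta_k$ contribute nothing), and its periods along the $\gamma_\ell$ lie in $2\pi\Z$ precisely by \eqref{lattice}. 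Since these two families generate $H_1(S\setminus\{a_k\})$, $\beta = d\phi$ for an $S^1$-valued primitive $\phi$, and $u^* := e^{i\phi}w_0$ is the required canonical field with $j(u^*) = d^*\psi + \sum_k \Phi_k\eta_k$.

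For the auxiliary items: item (2) follows from elliptic regularity of $\psi$ away from the $a_k$ together with the local asymptotics $u^* \sim e^{id_k\theta}$ in geodesic polar coordinates at $a_k$, which give $|Du^*|\sim |d_k|/r$ and hence $u^*\in\calX^{1,p}(S)$ for all $p<2$; item (3) follows because if $j(u^*) = j(\tilde u^*)$, then locally $\tilde u^* = e^{i\beta}u^*$ with $d\beta = 0$, and connectedness of $S\setminus\{a_k\}$ forces $\beta$ to be a global constant (which extends across the isolated singularities since both fields have unit length). The main obstacle, in my view, lies in item (1): individual terms in the integral defining $\zeta_\ell$ jump by $2\pi d_j$ whenever a singularity $a_j$ crosses $\gamma_\ell$, and this discontinuity is absorbed only after passing to $\R/2\pi\Z$. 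Upgrading this to genuine continuity under $W^{-1,1}$ convergence of the vorticity measure requires continuous dependence of $\psi$ on the atomic data (standard elliptic theory with bounded-mass measure right-hand sides) together with a choice of reference field $w_0$ that depends continuously on $a$ up to a gauge of integer winding — and this coupling between topology (the integer windings) and analytic data is precisely the new phenomenon emphasized in the introduction.
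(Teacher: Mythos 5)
Your core argument is sound and runs parallel to, but not identical with, the paper's. Where the paper works with local moving frames and the connection $1$-form $A$, proving the quantization $\int_\gamma (j(u^*)+A)\in 2\pi\Z$ and the converse existence by a parallel-transport ODE (Lemma \ref{L.ns}), together with the Federer--Fleming homology lemma (Lemma \ref{L.FedFlem}), you instead fix one global unit reference field $w_0$ on $S\setminus\{a_k\}$ with the prescribed indices (this is Hopf's realizability theorem, the converse of Poincar\'e--Hopf, and should be cited as such) and reduce everything to the circle-valued ratio $\phi$ with $u^*=\phi\, w_0$, identifying $\zeta_\ell$ with the class of $\int (d^*\psi - j(w_0))$. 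Since $A=-j(w_0)$ for the frame $\{w_0, i w_0\}$, this is the same quantity as \eqref{zetak.def}, and your ``exponential of a line integral'' primitive in the converse is precisely the paper's holonomy formula; so the two routes buy essentially the same thing, yours trading the ODE/frame formalism for a classical topological input. Two routine points you gloss: $u^*\in\calX^{1,1}$ only, so before speaking of winding numbers of $\phi$ along curves you must first use the harmonic decomposition to see that $j(u^*)$ is smooth away from the $a_k$ and upgrade the regularity of $\phi$ there; and when $\gamma_\ell$ passes through some $a_k$ you must integrate over a homologous perturbation $\lambda_\ell$, whose independence mod $2\pi$ is exactly Lemma \ref{L.buxtehude} and requires the mollification of $\psi$ performed there, because the defining identity for homologous curves is stated for smooth $1$-forms while $d^*\psi - j(w_0)$ is singular.

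The genuine gap is item 1). Because your $\zeta_\ell$ is expressed through $j(w_0)$ and $w_0$ carries its singularities at the moving points with the moving degrees, continuity forces you to produce reference fields $w_0(a_t;d_t)$ varying ``continuously up to a gauge of integer winding'' under mere $W^{-1,1}$ convergence of the vorticity measures \eqref{mul.to.mu} --- where $n_t$ may differ from $n_0$, points may merge or split and degrees recombine --- and you do not prove this; it is not a soft statement, and for merging configurations there is no natural pointwise limit of $w_0(a_t)$ near the collision. The paper sidesteps the issue entirely: in \eqref{zetak.def} the frame (hence $A$) is fixed near $\lambda_\ell$ independently of $(a;d)$, so only the smooth quantity $d^*\psi$ moves, and continuity reduces to the dipole decomposition of $\mu_t-\mu_0$ together with the uniform Green's-function estimate that $d^*\bigl[(G(\cdot,p)-G(\cdot,q))\vol_g\bigr]\to 0$ away from $\{p,q\}$, integrated over curves $\lambda_{\ell,t}$ homologous to $\gamma_\ell$ and kept at distance $r$ from the dipoles. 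You can repair your argument cheaply using the gauge invariance you already established: mod $2\pi$, $w_0$ may be replaced, in the integral over $\lambda_\ell$, by any unit field defined only in a neighbourhood of $\lambda_\ell$ and independent of $(a;d)$; with that replacement your $\zeta_\ell$ becomes literally \eqref{zetak.def}, the dependence on $w_0$ disappears, and the continuity proof goes through as in the paper.
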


\medskip

\begin{remark}
Throughout this paper, objects that we  write as functions of $(a;d)$,
such as $\psi(a;d), \zeta_\ell(a;d)$, and so on,
in fact depend only on
the measure $2\pi\sum_{l=1}^{n} d_{l} \delta_{a_l}$. 
As a result, one can always do the reduction of a set $(a;d)$ of points $a = (a_1,\ldots, a_n)\in S^n$ (not necessarily distinct) and integers $d = (d_1,\ldots, d_n)$ (that can be zero) satisfying \eqref{necessary} to a set $(\tilde a; \tilde d)$ where the points $a_k$ are distinct and $d_k\neq 0$; indeed, one can just put together all the identical $a_k$, sum their degrees $d_k$, relabel them and then cancel the $a_k$ with zero degree $d_k$ (of course, \eqref{necessary} is conserved). This is why we can always assume that the
points $(a_k)$ are distinct and that every $d_k$ is nonzero.
\end{remark}

The constants $\{\zeta_\ell(a;d)\}_{\ell=1}^{2\mathfrak g}$ are determined as follows. For every $\ell=1,\ldots, 2\mathfrak g$, we let 
$\lambda_\ell$ be some smooth simple closed
curve such that $\lambda_\ell$ is homologous to $\gamma_\ell$ (the geodesics
fixed in \eqref{akl.def}) and $\{a_k\}_{k=1}^n$ is disjoint from
$\lambda_\ell$; for example, $\lambda_\ell$ is either $\gamma_\ell$
or, if $\gamma_\ell$ intersects some $a_k$, a small perturbation thereof.
We now define  $\zeta_\ell(a;d)$ to be the element of  $\R/2\pi\Z$  such that 
\begin{equation}\label{zetak.def}
\zeta_\ell(a;d):=\int_{\lambda_\ell} (d^*\psi + A)  \,   \mod 2\pi, \quad \ell=1,\ldots, 2\mathfrak g,
\end{equation}
where $\psi = \psi(a;d)$ is the $2$-form given by \eqref{psi.def} and $A$ is the connection $1$-form associated to any moving frame defined in
a neighborhood of $\lambda_\ell$ (see Section \ref{subsec:connect}). The proof of
Theorem \ref{P1} will show that $\zeta_\ell(a;d)$ is well-defined as an element of $\R/2\pi \Z$.
In general, $\zeta_\ell(a;d)\ne 0\mod 2\pi$ for $\ell = 1,\ldots, 2\mathfrak g$ as
we will see in Example \ref{exam} in which it can be explicitly computed. \\

\nd {\bf The lattice $\calL(a;d)$}. Due to Theorem \ref{P1}, we introduce the following set corresponding to $n$ distinct points $a = (a_1,\ldots, a_n)\in S^n$ and nonzero integers $d = (d_1,\ldots, d_n)\in \Z^n$ satisfying \eqref{necessary}:  
\[
\mathcal L(a;d) := \{ \Phi = (\Phi_1,\ldots, \Phi_{2\mathfrak g})\in \R^{2\mathfrak g}\, : \, \sum_{k=1}^{2\mathfrak g} \alpha_{\ell k}\Phi_k +\zeta_\ell(a;d)\in 2\pi \Z, \, \ell=1, \dots, 2\mathfrak g \}.
\]
It is a lattice (up to a translation).  
Indeed, if $\alpha=(\alpha_{\ell k})_{1\leq \ell, k\leq 2\mathfrak g}$ is the matrix defined in \eqref{akl.def} with the inverse $\alpha^{-1}$, then
\beq
\label{setL}
\Phi\in \calL(a;d) \Longleftrightarrow \Phi \in 2\pi \alpha^{-1}\Z^{2\mathfrak g}-\alpha^{-1}\zeta,
\eeq
i.e., the lattice is determined by the columns of the matrix $\alpha^{-1}$ and it is shifted by the vector $\alpha^{-1}\zeta$ with $\zeta(a;d)=(\zeta_1, \dots, \zeta_{2\mathfrak g})$ defined by 
\eqref{zetak.def}. Due to the relation on $\Phi$, the above discussed change of geodesics $\{\gamma_k\}$ and basis of harmonics $\{\eta_k\}$ would be equivalent to a change of coordinates in the lattice $\calL(a;d)$.

The continuity of $\zeta$ stated at Theorem \ref{P1} point 1) can be quantified as follows:

\begin{lemma} 
\label{lem:latti} For every $K\in \Z_+$, there exists $C_K>0$ such that for every two measures $\mu=2\pi \sum_{k=1}^{n} d_{k} \delta_{a_{k}}$ and $\tilde \mu=2\pi \sum_{k=1}^{\tilde n} 
\tilde d_{k} \delta_{\tilde a_{k}}$ with the distinct points $a=(a_{k})_{k=1}^{n}$, $\tilde a=(\tilde a_{k})_{k=1}^{\tilde n} \subset S$ and the nonzero integers $d=\{d_{k}\}_{k=1}^{n}$ and $\tilde d=\{\tilde d_{k}\}_{k=1}^{\tilde n}$ satisfying \eqref{necessary} and $\sum_{k=1}^{n} |d_{k}|$, $\sum_{k=1}^{\tilde n} |\tilde d_{k}|\leq K$, then
\beq
\label{dist_lat}
\dist_{\R^{2\mathfrak g}} \big(\calL(a; d),\calL(\tilde a; \tilde d)\big)\leq C_K \|\mu-\tilde \mu\|_{W^{-1,1}(S)}.\eeq
\end{lemma}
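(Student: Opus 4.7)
The plan is to use the explicit description \eqref{setL} of the lattices to reduce the claim to a Lipschitz estimate on each coordinate $\zeta_\ell$ of the shift vector. Both $\calL(a;d)$ and $\calL(\tilde a;\tilde d)$ are translates of the fixed lattice $2\pi\alpha^{-1}\Z^{2\mathfrak g}$, by $-\alpha^{-1}\zeta(a;d)$ and $-\alpha^{-1}\zeta(\tilde a;\tilde d)$ respectively, so their Hausdorff distance equals $\dist\big(\alpha^{-1}(\zeta(a;d)-\zeta(\tilde a;\tilde d)),\,2\pi\alpha^{-1}\Z^{2\mathfrak g}\big)$, which is dominated (up to $|\alpha^{-1}|_{op}$) by $\max_\ell |\zeta_\ell(a;d) - \zeta_\ell(\tilde a;\tilde d)|_{\R/2\pi\Z}$. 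Hence it suffices to prove, for each $\ell$,
\[
|\zeta_\ell(a;d) - \zeta_\ell(\tilde a;\tilde d)|_{\R/2\pi\Z} \leq C_K \|\mu - \tilde\mu\|_{W^{-1,1}(S)}.
\]

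The key geometric step will be to realize this difference using a single curve admissible in the sense of \eqref{zetak.def} for both measures. Since $\sum|d_k|,\sum|\tilde d_k|\leq K$ with nonzero integer weights, $\#(\{a_k\}\cup\{\tilde a_k\})\leq 2K$. I fix a tubular neighborhood $V_\ell$ of $\gamma_\ell$ foliated by a smooth family $\{\lambda_\ell^s\}_{s\in I}$ of closed curves, each homologous to $\gamma_\ell$; a pigeonhole argument in the parameter $s$ then supplies $s_\ast\in I$ such that $\lambda_\ell := \lambda_\ell^{s_\ast}$ is disjoint from $\{a_k\}\cup\{\tilde a_k\}$ and $\dist_g(\lambda_\ell, \{a_k\}\cup\{\tilde a_k\})\geq c_\ell/K$ for a constant $c_\ell>0$ depending only on the foliation. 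By the well-definedness of $\zeta_\ell$ modulo $2\pi$ (Theorem \ref{P1}), both $\zeta_\ell(a;d)$ and $\zeta_\ell(\tilde a;\tilde d)$ may be computed from \eqref{zetak.def} with this common $\lambda_\ell$ and a single moving frame nearby; the connection-form contribution then cancels, leaving
\[
\zeta_\ell(a;d) - \zeta_\ell(\tilde a;\tilde d) \equiv \int_{\lambda_\ell} d^* h \pmod{2\pi}, \qquad h := \psi(a;d) - \psi(\tilde a;\tilde d).
\]

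By \eqref{psi.def}, $h$ is a mean-zero $2$-form solving $-\Delta h = \mu - \tilde\mu$, hence harmonic on the $c_\ell/(2K)$-tube $N$ around $\lambda_\ell$. I first control $\|h\|_{L^1(S)}$ by duality: since $\int_S h = 0$, for any $\phi\in L^\infty(S)$ one has $\int_S \phi h = \int_S (\phi - \bar\phi) h$ with $\bar\phi := |S|^{-1}\int_S \phi$; letting $u$ be the mean-zero solution of $-\Delta u = \phi - \bar\phi$, standard $2$D elliptic regularity (Calder\'on--Zygmund together with Morrey's embedding $W^{2,p}\hookrightarrow C^{1,1-2/p}$ for $p>2$) gives $\|u\|_{W^{1,\infty}(S)}\leq C\|\phi\|_{L^\infty(S)}$, so that $\int_S \phi h = \int_S u\, d(\mu - \tilde\mu) \leq C\|\phi\|_{L^\infty}\|\mu - \tilde\mu\|_{W^{-1,1}(S)}$, and taking supremum over $\phi$ yields $\|h\|_{L^1(S)}\leq C\|\mu - \tilde\mu\|_{W^{-1,1}(S)}$. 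Interior gradient estimates for harmonic functions on $N$ then deliver $\|d^* h\|_{L^\infty(\lambda_\ell)} \leq C(c_\ell/K)^{-3}\|h\|_{L^1(S)}$, so integrating along $\lambda_\ell$ (of uniformly bounded length) produces $\big|\int_{\lambda_\ell} d^* h\big|\leq C_K \|\mu - \tilde\mu\|_{W^{-1,1}(S)}$ with $C_K$ polynomial in $K$. Since the $\R/2\pi\Z$-distance is bounded both by the absolute value and by $\pi$, this completes the Lipschitz bound on $\zeta_\ell$ and, through the reduction of the first paragraph, the estimate \eqref{dist_lat}.

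The main obstacle is that $\lambda_\ell$ has to lie in a prescribed homology class while avoiding up to $2K$ adversarially placed singular points, forcing the admissible tube width to shrink like $1/K$; this is the sole source of the $K$-dependence of $C_K$. The remaining ingredients — the lattice identity \eqref{setL}, the line-integral representation \eqref{zetak.def}, the duality bound on $\|h\|_{L^1}$, and interior regularity for harmonic functions — rest on standard elliptic theory on the closed Riemannian surface $S$.
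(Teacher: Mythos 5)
Your argument is correct, and it reaches the estimate by a genuinely different route than the paper in the key analytic step. Both proofs reduce, exactly as you do, to bounding $\int_{\lambda_\ell} d^*(\psi(a;d)-\psi(\tilde a;\tilde d))$ modulo $2\pi$ along a comparison curve homologous to $\gamma_\ell$ that stays away from all singular points (the well-definedness of $\zeta_\ell$ modulo $2\pi$ justifying the common choice of curve and frame, so the connection form cancels). From there the paper writes $\mu-\tilde\mu$ as a sum of at most $2K$ dipoles, invokes the Brezis--Coron--Lieb minimal-connection characterization of the $W^{-1,1}$ norm, represents $\hat\psi=\psi-\tilde\psi$ via the Green's function as in \eqref{newpsi}, and uses the pointwise Lipschitz bound $\bigl|d^*\bigl[(G(x,p)-G(x,q))\vol_g\bigr]\bigr|\le C_r\,\dist_S(p,q)$ for $x$ at distance $\ge r$ from the poles, integrating along geodesics $\gamma_\ell$ perturbed around small balls. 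You instead work directly with $h=\psi-\tilde\psi$, obtain $\|h\|_{L^1(S)}\le C\|\mu-\tilde\mu\|_{W^{-1,1}}$ by duality against the mean-zero solution of $-\Delta u=\phi-\bar\phi$ (using $W^{2,p}\hookrightarrow W^{1,\infty}$, which is compatible with the paper's definition of $W^{-1,1}$ as the dual of $W^{1,\infty}$), and then apply interior estimates for harmonic functions in a tube of width $\sim 1/K$ around a leaf of a foliation selected by pigeonhole. This buys you independence from the Green's function representation and from the BCL minimal-connection fact, at the price of a constant $C_K\sim K^3$ (the paper's $K$-dependence enters only implicitly through the avoidance radius $r$) and of having to justify the scaled interior gradient and $L^\infty$--$L^1$ estimates for Laplace--Beltrami harmonic functions with constants uniform at small scales — standard via rescaled Schauder/Moser in normal coordinates, but worth a sentence. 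Your foliation/pigeonhole selection of $\lambda_\ell$ (using that the signed distance to $\gamma_\ell$ is $1$-Lipschitz, so each of the $\le 2K$ points excludes an $s$-interval of length $O(\rho)$) is a clean replacement for the paper's ball-avoiding perturbation of the geodesics, and the $\R/2\pi\Z$ reduction via \eqref{setL} matches the paper's; your proof also dispenses with the paper's preliminary reduction to $\|\mu-\tilde\mu\|_{W^{-1,1}}\le 1$, since your bound is unconditional.
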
 

Here $\dist_{\R^{2\mathfrak g}}(\calL, \tilde \calL) = \inf_{\Phi\in \calL, \tilde \Phi\in \tilde \calL}|\Phi - \tilde \Phi|$, which  coincides with the Hausdorff distance, since  $\calL$ and $\tilde \calL$ are both translations of a fixed lattice $2\pi\alpha^{-1}\Z^{2\mathfrak g}$.

\bigskip

\subsection{Renormalized energy}
\label{sec:renorm}
$\quad$\\

\nd {\bf The intrinsic Dirichlet energy}. Let $(S,g)$ be  a closed oriented $2$-dimensional Riemannian manifold of genus $\mathfrak g$ (not necessarily embedded in $\R^3$).
For any $n\geq 1$, we consider $n$ {\bf distinct} points $a = (a_1,\ldots, a_n)\in S^n$. Let $d = (d_1,\ldots, d_n)\in \Z^n$ satisfying \eqref{necessary},  
$\{\zeta_\ell(a;d)\}_{\ell=1}^{2\mathfrak g}$ be given in Theorem \ref{P1} and $\Phi\in \R^{2\mathfrak g}$ be a constant vector inside the lattice $\calL(a;d)$ defined in \eqref{setL}.
We define the {\it renormalized energy} between the vortices $a$ of indices $d$ by
\beq
\label{defi_W}
W(a,d,\Phi):=\lim_{r\to 0} \bigg(\int_{S\setminus \cup_{k=1}^n B_r(a_k)}\frac12 |Du^*|^2_g\, \vol_g+\pi \log r \sum_{k=1}^n d_k^2\bigg),
\eeq
where $u^*=u^*(a,d,\Phi)$ is the unique (up to a multiplicative complex number) canonical harmonic vector field given in Theorem \ref{P1} and $B_r(a_k)$ is the geodesic ball centered at $a_k$ of radius $r$. (Our arguments will show that the above limit indeed exists, see \eqref{W.def}). As in the Euclidean case (see  the pioneering work of
Bethuel, Brezis and H\'elein \cite{BBH}), we can compute the renormalized energy by using the Green's function. For that, let
$G(x,y)$ be the unique function on $S\times S$ such that
$$
-\Delta_x ( G(\cdot, y)  \, \vol_g) = \delta_y \,  - \frac {\vol_g} {\mbox{Vol}_g(S)}\, \, \textrm{ distributionally in } S, \, \, \,  \int_S G(x,y) \vol_g(x) = 0,  \, \,  \forall y\in S,
$$
with $\mbox{Vol}_g(S):= \int_S \vol_g$. 
Then $G$ may be represented in the form 
(see  Chapter 4.2\footnote{More precisely, according to \cite{Aubin}, page 109, eqn (17),
one may define $G_0$ as above such that $H := G - G_0$ 
can be represented in the form
\[
H(x,y) =  \int_S \Delta_z G_0(x,z) G_0(z,y) \vol(z)  +\mbox{ smoother terms},
\]
(where here $\Delta_z$ denotes the pointwise Laplacian rather than the distributional Laplacian)
and in addition $\|\Delta_z G_0  \|_{L^\infty(S)}\le C$.})
 \cite{Aubin}$$
G(x,y) = G_0(x,y) + H(x,y),\qquad\mbox{ with }H\in C^1(S\times S),
$$
where $G_0$ is smooth away from the diagonal, 
 with
\begin{align*}
G_0(x,y) &= -\frac 1{2\pi}\log(\dist_S(x,y))\\ 
&\quad  \mbox{ if the geodesic distance } \dist_S(x,y) <\frac 12 (\mbox{injectivity radius of $S$}).
\end{align*}
The $2$-form $\psi=\psi(a;d)$ defined in \eqref{psi.def} can be written as: 
\beq
\label{newpsi}
\psi= 2\pi \sum_{k=1}^n d_k G(\cdot, a_k) \vol_g \ +  \psi_0  \vol_g \qquad\mbox{ in } S,
\eeq
where $\psi_0\in C^\infty(S)$ has zero average on $S$ and solves
\begin{equation}
-\Delta \psi_0  = -\kappa + \bar \kappa , \qquad
\mbox{ for }
\bar \kappa = \frac 1{\mbox{Vol}(S)} \int_S \kappa \vol_g  = \frac {2\pi \chi(S)}{\mbox{Vol}(S)} .
\label{psi0.def}\end{equation}
In other words, the $2$-form $x\mapsto \psi(x) + d_k \log \dist_S(x, a_k) \vol_g$ is $C^1$ in a neighborhood of $a_k$ for every $1\leq k\leq n$.
We have the following expression of the renormalized energy:

\medskip

\begin{propo}
Given $n\geq 1$ distinct points $a_1,\ldots, a_n \in S$, integers $d_1,\ldots, d_n$ with \eqref{necessary}
and $\Phi \in \mathcal L(a;d)$, then
\begin{align}
W(a,d,\Phi) &=  4\pi^2 \sum_{1\leq l< k\leq n} d_l d_k G(a_l,a_k) 
+2\pi \sum_{k=1}^n  \left[ \pi d_k^2 H(a_k,a_k) +d_k \psi_0(a_k) \right]  
\nonumber \\
&\quad \quad 
+ \frac 12|\Phi|^2 + \int_S \frac{|d\psi_0|_g^2}2 \vol_g \ ,
\label{W.formula}
\end{align}
where $\psi_0$ is defined in \eqref{psi0.def}. 
\label{prop.W}\end{propo}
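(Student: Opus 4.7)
The plan is to compute
\[
\int_{S_r}\tfrac12|Du^*|_g^2\vol_g+\pi\log r\sum_{k=1}^n d_k^2
\]
on $S_r:=S\setminus\bigcup_k B_r(a_k)$ and pass to $r\to 0$. The starting observation is that $|u^*|_g=1$ forces $|Du^*|_g^2=|j(u^*)|_g^2$ pointwise on $S_r$: differentiating the unit-length constraint gives $(D_X u^*, u^*)_g=0$, so in the orthonormal frame $\{u^*, iu^*\}$ we have $D_X u^*=j(u^*)(X)\cdot iu^*$, hence $|D_X u^*|_g^2=j(u^*)(X)^2$. Using the representation $j(u^*)=d^*\psi+\sum_k\Phi_k\eta_k$ from Theorem \ref{P1}, expanding the square, and integrating by parts, the cross terms $\int_{S_r}(d^*\psi,\eta_k)_g\vol_g$ reduce to boundary integrals against $*\psi=f$ (where $\psi=f\vol_g$); since $f=O(\log r)$ near each vortex and $|\partial S_r|=O(r)$, these are $O(r\log r)=o(1)$, while the orthonormality of $\{\eta_k\}$ yields $\int_{S_r}|\sum_k\Phi_k\eta_k|_g^2\to|\Phi|^2$. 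This cleanly separates the flux:
\[
\int_{S_r}\tfrac12|j(u^*)|_g^2\vol_g=\int_{S_r}\tfrac12|d^*\psi|_g^2\vol_g+\tfrac12|\Phi|^2+o(1).
\]

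Next I reduce to a scalar problem. In dimension two, for $\psi=f\vol_g$ one has $d^*\psi=-{*}df$, so $|d^*\psi|_g^2=|df|_g^2$. With $f=2\pi\sum_k d_k G(\cdot, a_k)+\psi_0$, I expand $|df|^2$ into self, mixed, and smooth terms and apply the Green identity
\[
\int_{S_r}(du, dv)_g\vol_g=-\int_{S_r}u\,\Delta v\,\vol_g+\int_{\partial S_r}u\,\partial_\nu v\,d\sigma,
\]
($\nu$ outward from $S_r$) term by term. For off-diagonal Green products $(dG_l, dG_k)$, $l\ne k$, the bulk vanishes since $\int_S G_l=0$ and $\Delta G_k$ is a constant away from $a_k$; the boundary is nonvanishing only on $\partial B_r(a_k)$, where $\partial_\nu G_k\sim (2\pi r)^{-1}$ and $G_l$ is smooth, yielding $G(a_l, a_k)+o(1)$. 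The mixed term $(dG_k, d\psi_0)$ has vanishing boundary ($\psi_0$ is smooth) and bulk $\int_S G_k(-\Delta\psi_0)=\psi_0(a_k)$, using $-\Delta\psi_0=\bar\kappa-\kappa$ together with the defining property of $G$. The pure $|d\psi_0|^2$ term converges to $\int_S|d\psi_0|_g^2\vol_g$ trivially.

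The delicate step is the diagonal self-interaction. Using $G(\cdot, a_k)=-\tfrac{1}{2\pi}\log d_S(\cdot, a_k)+H(\cdot, a_k)$ with $H\in C^1$, together with the geodesic-normal-coordinate expansion $|\partial B_r(a_k)|=2\pi r+O(r^3)$ and $\partial_\nu G_k=\tfrac{1}{2\pi r}+O(1)$ on $\partial B_r(a_k)$, a direct computation yields
\[
\int_{\partial B_r(a_k)}G_k\,\partial_\nu G_k\,d\sigma=-\tfrac{1}{2\pi}\log r+H(a_k, a_k)+o(1),
\]
the leading terms coming from $(-\tfrac{1}{2\pi}\log r)\cdot\tfrac{1}{2\pi r}\cdot|\partial B_r(a_k)|$ and from averaging $H(\cdot, a_k)$ on the small geodesic circle. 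Hence $\int_{S_r}|dG_k|_g^2\vol_g=-\tfrac{1}{2\pi}\log r+H(a_k, a_k)+o(1)$, so multiplying by $\tfrac12(2\pi d_k)^2=2\pi^2 d_k^2$ produces the divergence $-\pi d_k^2\log r$ cancelled exactly by the renormalization, plus the finite $2\pi^2 d_k^2 H(a_k, a_k)$. Collecting $4\pi^2\sum_{l<k}d_l d_k G(a_l, a_k)$, $2\pi^2\sum_k d_k^2 H(a_k, a_k)$, $2\pi\sum_k d_k\psi_0(a_k)$, $\int_S\tfrac12|d\psi_0|_g^2\vol_g$, and $\tfrac12|\Phi|^2$ produces \eqref{W.formula} and simultaneously establishes existence of the limit defining $W(a, d, \Phi)$. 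The one genuinely subtle calculation is the geodesic-normal-coordinate asymptotic extracting the $H(a_k, a_k)$ constant from the self-interaction boundary integral; everything else is routine integration by parts.
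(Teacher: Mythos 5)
Your proposal is correct and follows essentially the same route as the paper: separate the harmonic part by orthogonality of $\{\eta_k\}$, reduce $|d^*\psi|_g^2$ to the scalar Dirichlet energy of $2\pi\sum_k d_k G(\cdot,a_k)+\psi_0$, and evaluate it on $S_r$ by integration by parts with boundary asymptotics near each vortex, extracting $-\tfrac{1}{2\pi}\log r + H(a_k,a_k)$ from the self-interaction just as in the paper's Step 2a. The only differences are bookkeeping: you expand into pairwise products $(dG_l,dG_k)$ and put the Laplacian on $\psi_0$ in the mixed term, while the paper keeps $\psi_1$ whole via its regular part $R_k$ and tracks quantitative $O(\sqrt r)$ error rates (needed later for \eqref{W.def}, but not for the proposition as stated).
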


\medskip

In the case of the unit sphere $S$ in $\R^3$ endowed with the standard metric (in particular, $\psi_0$ vanishes in $S$), 
if $n=2$ and $d_1=d_2=1$, then the second term in the RHS of \eqref{W.formula} is independent of $a_k$ (as $x\mapsto H(x,x)$ is constant, 
see \cite{steiner}); moreover, $\Phi=0$ and so, minimizing $W$ is equivalent by minimizing the Green's function $G(a_1, a_2)$ over the set of 
pairs $(a_1, a_2)$ in $S\times S$, namely, the minimizing pairs are diametrically opposed. 

More generally, if $S=\mathbb{S}^2$ is endowed with a non-standard metric $g$, then Steiner \cite{steiner} proves that $x\mapsto H(x,x)+\frac1{2\pi}\psi_0(x)$ is constant.\footnote{The function $x\mapsto H(x,x)$ is called the Robin's mass on $\mathbb{S}^2$, see e.g. \cite{steiner}. } Therefore, an optimal pair $(a_1, a_2)$ of vortices of degree $d_1=d_2=1$ minimizes the following energy
$$(a_1, a_2)\in S\times S\mapsto 4\pi G(a_1,a_2)+\psi_0(a_1)+\psi_0(a_2).$$
In general this is a complicated expression, but it should be possible to find minima in
special cases. For example, if $S$ is an ellipsoid, then we expect the vortices 
$a_1$ and $a_2$ will be placed at the two poles of the largest diameter as they have maximal Gauss curvature (the maximum principle suggests that this will minimize $\psi_0$), and they maximize the distance $\dist_S(a_1, a_2)$ (so minimize $G(a_1,a_2)$).
\\

\nd {\bf The extrinsic Dirichlet energy}. In the case of an embedded surface $S\subset \R^3$, when dealing with the extrinsic Dirichlet energy in Problems 2 and 3, a second interaction energy between vortices $a$ of degree $d$ is important next to $W(a,d,\Phi)$.  For that, we denote by $\calS:TS\to TS$ the shape operator on $S$, that is,
\beq
\label{shape}
\calS(v)=-\dbar_v N, \quad \textrm{for every } v\in TS,
\eeq
where $N$ is the Gauss map on $S$. Let $u^*=u^*(a,d,\Phi)$ be the unique (up to a multiplicative complex number) canonical harmonic vector field given in Theorem \ref{P1}. We consider
\beq
\label{renorm_ext}
\tilde{W}(a,d,\Phi)=\min_{\Theta:S\to \R} \frac12\int_{S} |d\Theta|^2_g+\big|\calS(e^{i\Theta}u^*)\big|^2_g\, \vol_g.
\eeq
(Existence of a minimizer is standard, as we discuss in more detail later.)
We will prove in Theorems \ref{ext.gamma} and \ref{mm.gamma} in Sections \ref{sec:ext_gamma} and \ref{sec:11} that the renormalized energy associated to the extrinsic energy $E^{ex}_\e$ (as well as the one associated to the energy $E^{mm}_\e$ in Problem~3) is given by
$${W}(a,d,\Phi)+\tilde{W}(a,d,\Phi).$$

Note that for the unit sphere $S$ in $\R^3$ endowed with the standard metric, the shape operator satisfies $|\calS(u)|_g = 1$  for any $x\in S$ and unit vector $u\in T_xS$, so that $\tilde{W}(a,d,\Phi) = 2\pi$ for all $(a,d, \Phi)$. Therefore, the total renormalized energy $W+\tilde W$ has the same minimizers as $W$.

\subsection{$\Gamma$-convergence}
\label{sec:gamma}

Given the potential $F$ in Section \ref{sec:intro}, we compute the intrinsic energy of the radial profile of a vortex of index $1$ inside a ball of radius $R>0$ with respect to the Euclidean structure on $\R^2$: 
\begin{align}
\label{ier.def}
I^{in}_F(R,\eps) &:= \min \left \{ \int_{B_R(0)} e_\e(v)\, dy \, : \, v:B_R(0)\to \C, v(y)=\frac{y}R \, \textrm{ for } |y|=R \right \}\\
\nonumber \textrm{with } \quad e_\e(v)&:=\frac12|\nabla v|^2 + \frac{1}{4\eps^2}F(|v|^2).
\end{align}
The above minimum is indeed achieved\footnote{In fact, the minimizer is unique and symmetric \cite{Pac-Riv,Miro}. For other uniqueness results, see \cite{INSZ1, INSZ2}. } and $I^{in}_F(R,\eps) = I^{in}_F(\lambda R, \lambda \eps) = I^{in}_F(1,\frac{\eps}{R})=:I^{in}_F(\frac{\eps}{R})$ for every
$\lambda>0$, and the following limit exists (see \cite[Lemma III.1]{BBH}): 
\beq
\label{gammaF.def}
\iota_F:=\lim_{t\downarrow 0}( I^{in}_F(t)  + \pi \log t).
\eeq
The extrinsic energy of the radial profile of a vortex of index $1$ in Problem 2 will also correspond to the one above. However, for Problem 3, due to the constraint of unit-length on the magnetization $M$, the following expression comes out: 
\begin{align}
\label{uuu}
I^{mm}_F(R,\eps) &:= \min \left \{ \int_{B_R(0)} \tilde{e}_\e(v)\, dy \, : \, v:B_R(0)\to \SSS^2, v(y)=\frac1R(y,0) \, \textrm{ for } |y|=R \right \}\\
\nonumber
 \textrm{with } \quad \tilde{e}_\e(v)&:=\frac12|\nabla v|^2 +  \frac{1}{4\eps^2} F(1-v_3^2) \quad \textrm{where } v=(v_1, v_2, v_3).
\end{align}
Again, the above minimum is indeed achieved for every fixed $R, \e>0$ and writing $I^{mm}_F(R,\eps)=:I^{mm}_F(\frac{\eps}{R})$, we obtain the following quantity  (see \eqref{aici})
\beq
\label{iota_til}
\tilde \iota_F:=\lim_{t\downarrow 0}( I^{mm}_F(t)  + \pi \log t).
\eeq

We state our main result for Problem 1 in a closed oriented $2$-dimensional Riemannian manifold of genus $\mathfrak g$ : \\

\begin{theorem}\label{intrinsic.gammalim}
The following $\Gamma$-convergence result holds.
\begin{itemize}
\item[1)] (Compactness) Let $(u_\eps)_{\eps\downarrow 0}$ be a family of vector fields in $\calX^{1,2}(S)$ satisfying
$
E^{in}_\eps(u_\eps)  \le T \pi |\log \eps| + C
$ 
for some integer
$ T\ge  0$
and a constant $C>0$. 
We denote by
$$\Phi(u_\eps) := \left(\int_S  (j(u_\eps) ,  \eta_1)_g \vol_g ,\ldots, \int_S  (j(u_\eps), \eta_{2\mathfrak g})_g \vol_g\right)\in \R^{2\mathfrak g},$$
where $\{\eta_k\}_{k=1}^{2\mathfrak g}$ are fixed in \eqref{form_jstar}. Then
there exists a sequence $\eps \downarrow 0$ such that 
\begin{equation}
\omega(u_\eps)  \longrightarrow 2\pi \sum_{k=1}^{n} d_k\delta_{a_k} \quad \textrm{in } \, W^{-1,1}, \, \,  \textrm{ as } \, \eps \to 0,
\label{convergence2}\end{equation}
where $\{a_k\}_{k=1}^n$ are distinct points in $S$ and $\{d_k\}_{k=1}^n$ are nonzero integers satisfying \eqref{necessary} and $\sum_{k=1}^n |d_k|\leq T$. Moreover, if $\sum_{k=1}^n |d_k|= T$, then
$n = T$ and 
$|d_k|=1$ for every $k=1, \dots, n$; in this case, for a further subsequence, there exists $\Phi\in \calL(a;d)$ such that $\Phi(u_\eps)\rightarrow \Phi$.\\

\item[2)] ($\Gamma$-liminf inequality) Assume that the vector fields $u_\eps\in \calX^{1,2}(S)$ satisfy \eqref{convergence2} for $n$ distinct points $\{a_k\}_{k=1}^n\in S^n$ and $|d_k|=1$, $k=1, \dots n$ that satisfy \eqref{necessary} and $\Phi\in \calL(a;d)$. Then
$$\liminf_{\eps\to 0}
\left[
E^{in}_\eps(u_\eps)- n \pi |\log \eps|
\right]  \ \ge \  
W(a,d,\Phi)+n \iota_F.$$

\item[3)] ($\Gamma$-limsup inequality) For every $n$ distinct points $a_1,\ldots, a_n\in S$ and $d_1,\ldots , d_n\in \{\pm 1\}$ satisfying
\eqref{necessary} and every $\Phi\in \calL(a;d)$
there exists a sequence of  
vector fields $u_\eps$ on $S$ such that $|u_\eps|_g\leq 1$ in $S$, \eqref{convergence2} holds and
$$
E^{in}_\eps(u_\eps)- n \pi |\log \eps| \longrightarrow  W(a,d, \Phi)+n \iota_F \quad \textrm{ as } \, \eps \to 0.
$$
\end{itemize}
\end{theorem}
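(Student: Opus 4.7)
The plan is to adapt the Jerrard--Sandier--Serfaty vortex-ball and product-estimate machinery to the Riemannian setting via local charts, and couple it with the canonical-field construction from Theorem~\ref{P1}. For compactness (part 1), I would cover $S$ by finitely many geodesic balls on which the metric is bi-Lipschitz to Euclidean, run Sandier's ball-growth in each chart, and so obtain for each small $\eps$ a finite disjoint family of balls containing $\{|u_\eps|_g\le 1/2\}$ with radii tending to $0$. On each such ball the local (Euclidean) lower bound reads $\pi d^2\log(r/\eps)-C$, so the energy hypothesis forces $\sum_k d_k^2\le T$; combined with a uniform $W^{-1,1}$ bound on $\omega(u_\eps)=d j(u_\eps)+\kappa\vol_g$ (controlled by $\|j(u_\eps)\|_{L^1}\lesssim |\log\eps|^{1/2}$) and Gauss--Bonnet, a subsequence gives $\omega(u_\eps)\to 2\pi\sum_k d_k\delta_{a_k}$ in $W^{-1,1}$ with $\sum|d_k|\le T$ (since $d_k^2\ge |d_k|$) and \eqref{necessary}. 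Equality $\sum|d_k|=T$ forces each $d_k^2=|d_k|$, i.e. $|d_k|=1$ and $n=T$. The flux integrals $\Phi(u_\eps)$ are bounded by the $L^1$ bound on $j(u_\eps)$, so they admit a limit $\Phi$ along a further subsequence; the lattice constraint \eqref{lattice} is recovered by evaluating $\int_{\lambda_\ell}j(u_\eps)/|u_\eps|_g^2$ on smooth curves $\lambda_\ell$ homologous to $\gamma_\ell$ and disjoint from $\{a_k\}$, where $u_\eps/|u_\eps|_g$ is a smooth $\SSS^1$-valued field whose degree yields exactly $\sum_k\alpha_{\ell k}\Phi_k+\zeta_\ell(a;d)\in 2\pi\Z$ in the limit, by the very definition \eqref{zetak.def}.

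For the $\Gamma$-liminf (part 2), fix a small $r>0$ and split $S=\bigcup_k B_r(a_k)\cup\bigl(S\setminus\bigcup_k B_r(a_k)\bigr)$. On each ball the local Jerrard--Sandier lower bound together with $|d_k|=1$ yields
\[
\int_{B_r(a_k)} e^{in}_\eps(u_\eps)\,\vol_g \ge \pi\log\tfrac{r}{\eps}+\iota_F-o_\eps(1),
\]
via the radial-profile characterisation of $\iota_F$ and a bi-Lipschitz chart. On the complement I would use the pointwise identity $\tfrac12|\dd u|_g^2\ge \tfrac12 |j(u)|_g^2/|u|_g^2$, the fact that $|u_\eps|_g\to 1$ in $L^p$, and the weak $L^2_\loc$-convergence $j(u_\eps)\rightharpoonup j(u^\ast)$ on $S\setminus\bigcup_k B_r(a_k)$. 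This weak convergence follows from the Hodge decomposition $j(u_\eps)=d^\ast\psi_\eps+h_\eps+d\beta_\eps$, where $\psi_\eps$ solves $-\Delta\psi_\eps=\omega(u_\eps)-\kappa\vol_g$ (so $\psi_\eps\to\psi(a;d)$ by compactness of $\omega(u_\eps)$), $h_\eps$ is the harmonic part (which tends to $\sum_k\Phi_k\eta_k$ by the flux convergence), and an elliptic estimate shows $d\beta_\eps\to 0$. Hence the limit is exactly the representation \eqref{form_jstar} of $j(u^\ast)$ prescribed by Theorem~\ref{P1}. Lower semicontinuity plus the definition \eqref{defi_W} give the sharp $\liminf$ after adding the boundary term $n\pi\log r$ and letting $r\downarrow 0$.

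For the $\Gamma$-limsup (part 3), given $(a,d,\Phi)$ with $|d_k|=1$ and $\Phi\in\calL(a;d)$, take $u^\ast=u^\ast(a,d,\Phi)$ from Theorem~\ref{P1} and set $u_\eps=u^\ast$ outside $\bigcup_k B_\rho(a_k)$ for a small $\rho>0$ (eventually sent to $0$). Using normal coordinates at $a_k$ and a local frame, write $u^\ast$ on $\partial B_\rho$ in the form $|u^\ast|_ge^{i\theta_k}\tau_k$; inside $B_{\rho/2}$ replace it by the optimal vortex profile realising $I^{in}_F(\rho/2,\eps)$ with Dirichlet datum $y/|y|$ (suitably rotated/conjugated for $d_k=-1$), and interpolate smoothly in $B_\rho\setminus B_{\rho/2}$ at uniformly bounded energy cost. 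On $S\setminus\bigcup_kB_\rho(a_k)$ the Dirichlet part converges to $\tfrac12\int_{S\setminus\bigcup B_\rho}|\dd u^\ast|_g^2\vol_g$, the cores contribute $n(\pi\log(\rho/(2\eps))+\iota_F+o_\eps(1))$, and the annular terms are $o(1)$ as $\rho\to 0$; summing and letting $\eps\to 0$, then $\rho\to 0$ recovers $n\pi|\log\eps|+W(a,d,\Phi)+n\iota_F$. The flux convergence $\Phi(u_\eps)\to\Phi$ holds since $u_\eps=u^\ast$ away from cores and the core contribution to $\int_S(j(u_\eps),\eta_k)_g\vol_g$ is $O(\rho\|\eta_k\|_\infty)$.

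The principal obstacle is the sharp lower bound on the exterior in part~(2), specifically the identification of the weak limit of $j(u_\eps)$ with $j(u^\ast)$ including the harmonic contribution $\sum_k\Phi_k\eta_k$: this is the mechanism by which the new term $\tfrac12|\Phi|^2$ in $W$ is forced, and it requires matching the vorticity compactness with the flux compactness from part~(1) and then invoking the uniqueness in Theorem~\ref{P1}(3) to pin down the canonical limit up to a global phase. A secondary difficulty is carrying out the Euclidean vortex-ball construction on a Riemannian surface without losing terms of order $1$, so that the constant $\iota_F$ is matched exactly; here the continuity statement in Lemma~\ref{lem:latti} plays a role in comparing $\Phi(u_\eps)$ to $\calL(a;d)$ in the concentration limit.
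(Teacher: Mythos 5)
Your overall architecture (ball construction for compactness, core/exterior splitting for the lower bound, canonical field plus optimal radial profile for the recovery sequence) is the same as the paper's, but three of your key steps have genuine gaps, and they occur precisely at the points where the Riemannian/flux structure makes this problem harder than the Euclidean one. First, the flux compactness: you claim $\Phi(u_\eps)$ is bounded ``by the $L^1$ bound on $j(u_\eps)$'', but that bound is only $\|j(u_\eps)\|_{L^1}\lesssim \|Du_\eps\|_{L^2}\lesssim \logeps^{1/2}$, which diverges; the same objection applies to your proposed uniform $W^{-1,1}$ bound on $\omega(u_\eps)$ via $\|j(u_\eps)\|_{L^1}$. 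A priori the harmonic part of the current can oscillate with size $O(\logeps^{1/2})$, and the paper explicitly flags this (remark after Corollary \ref{Cor.cgc1}). Boundedness of $\Phi(u_\eps)$ in the critical case $\sum_k|d_k|=T$ is obtained in the paper only by a second-order energy argument: the quantitative exterior bound (Lemma \ref{L.qgamma}) produces $W(a^\e,d^\e,\Phi^\e)$, whose coercive term $\tfrac12|\Phi^\e|^2$, added to the core bound of Lemma \ref{L.near}, exhausts the budget $T\pi\logeps+C$ and forces $|\Phi^\e|\le C$; the comparison of $\Phi(u_\eps)$ with the $\eps$-dependent lattice $\calL(a^\e;d^\e)$ then needs the quantitative estimates \eqref{fluxes.converge} and Lemma \ref{lem:latti}, not just a soft evaluation of $\int_{\lambda_\ell}j(u_\eps)/|u_\eps|_g^2$ in the limit.

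Second, your $\Gamma$-liminf scheme with a \emph{fixed} radius $r$ (cores on $B_r(a_k)$, then $\eps\to0$, then $r\downarrow0$) cannot resolve $\iota_F$, nor even the leading constant: in normal coordinates the energy density is only comparable to the Euclidean one up to a factor $1+O(r^2)$, so the core lower bound on a fixed geodesic ball reads $(1-Cr^2)(\pi\log\tfrac r\eps+\iota_F)$, and the deficit $Cr^2\logeps$ diverges as $\eps\to0$ for fixed $r$; after subtracting $n\pi\logeps$ your liminf over $\eps$ is $-\infty$ for every fixed $r$. This is why the paper performs both the core estimate and the matching exterior estimate on balls of radii $r_\eps\le\eps^\beta$ (Lemmas \ref{L.sep}, \ref{L.qgamma}, and Steps 3--4 of the proof of Proposition \ref{intrinsic.gammalim2}); the same remark forces $\rho=\rho(\eps)\to0$ (the paper takes $\sqrt\eps$) in your limsup construction, which is repairable by a diagonal argument but not as you stated it. Third, your identification of the weak limit of $j(u_\eps)$ on $S\setminus\cup_kB_r(a_k)$ with $j(u^*)$ (``$d\beta_\eps\to0$ by an elliptic estimate'') is false in general: multiplying a recovery sequence by a fixed smooth phase $e^{i\xi}$ preserves both the vorticity and the flux hypotheses but shifts the limit current by $d\xi\ne0$, and nothing in the hypotheses controls $d^*j(u_\eps)$. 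For the lower bound one must instead estimate the cross term $\int_{S_{r_\eps}}(j^*_\eps,\,j(u_\eps)/|u_\eps|_g-j^*_\eps)_g\vol_g$ directly, which the paper does through the truncated stream function $\tilde\psi_\eps$ and the $W^{-1,1}$ closeness of $\omega(u_\eps)$ to the point masses (proof of Lemma \ref{L.qgamma}); plain weak lower semicontinuity does not yield the sharp constant $W(a,d,\Phi)$, and in particular does not force the term $\tfrac12|\Phi|^2$.
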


In fact, in the case $|d_k|=1$, we will prove a sharper lower bound than the one stated  in point (2) above, see Proposition 
 \ref{intrinsic.gammalim2} below. In the general case of arbitrary degrees $d_k\in \Z\setminus \{0\}$ satisfying \eqref{necessary}, we only prove a lower bound at the first order, implicit in the fact that $\sum_{k=1}^n|d_k| \le T$; see also Corollary \ref{Cor.cgc1}.

If $T=0$, the theorem implies that $n=0$. In this case, then,
there are no limiting vortices,  so necessarily $\mathfrak g=1$ (i.e., $S$ is diffeomorphic to the $1$-torus). Also, $\calL(a,d)$ is a fixed lattice $\calL$. See also Remark \ref{rem:min_in} point 2) below. By \eqref{W.formula}, the renormalized energy in this case is exactly 
$\frac 12 |\Phi|^2+ \frac 12 \int_S |d\psi_0|^2 \, \vol_g$. It is not clear whether $\Phi = 0$ belongs to the lattice $\calL$ if the torus is
not flat.

The situation in points 2) and 3) above (i.e., all vortices have degree $\pm 1$) is typical when the vector fields $u_\eps$ are minimizers of $E^{in}_\eps$ (or energetically close to minimizing configurations). For more details, see Theorem \ref{thm:min_in}.

For Problem 2 where the surface $S$ is isometrically embedded in $\R^3$,
 one has the similar result by replacing the interaction energy between vortices with:
$$W(a,d, \Phi)+\tilde W(a,d, \Phi)$$
see Theorem \ref{ext.gamma}.
While for Problem 3, the difference with respect to the result of Problem~2 consists in replacing $\iota_F$ by $\tilde \iota_F$ (see Theorem \ref{mm.gamma});
so, up to this constant, there is no change of the vortex location when minimizing the interaction energy in Problem 3 w.r.t. Problem 2.

This theorem is the generalization of the $\Gamma$-convergence result for $E^{in}_\eps$ in the Euclidean case (see {\cite{CoJe,JeSo,SandSerfbook,AliPon}) and it is based on topological methods for energy concentration (vortex ball construction, vorticity estimates etc.) as introduced in \cite{Je,Sa}. A part of our results were announced in \cite{IgnJer}.

\bigskip

\nd {\it Outline of the article}.  In Section \ref{sec:motiv}, we give a motivation for our models coming from micromagnetics and geometry, while in Section \ref{sec:challen}, we present some challenges and novelties of our results with respect to other Ginzburg-Landau type models. Before giving the proofs of our results, we present in Section \ref{S:nb} some notation and background on differential forms, Sobolev spaces on manifolds and some useful computations involving the current. In Section \ref{S:chm}, we prove the characterization of canonical harmonic vector fields in Theorem~\ref{P1} as well as the stability estimate for the lattice $\calL(a;d)$ in Lemma \ref{lem:latti}; we also give Example~\ref{exam} for the non-triviality of the lattice $\calL(a;d)$ in the case of the flat torus
$\R^2/\Z^2$. In Section \ref{Sec:RE} we prove the formula of the renormalized energy in Proposition \ref{prop.W}. In Section \ref{sec:Comp}, we prove the compactness result for the vorticity measure in Theorem \ref{intrinsic.gammalim} point 1); as a consequence, we deduce the $\Gamma$-limit at the first order of the intrinsic energy $E^{in}_\eps$. The lower / upper bound in Theorem \ref{intrinsic.gammalim} are proved in Section \ref{sec:intrin}; in particular, we show an improved lower bound of the intrinsic energy $E^{in}_\eps$ in Proposition \ref{intrinsic.gammalim2}. In Sections~\ref{sec:ext_gamma} and \ref{sec:11}, we prove the $\Gamma$-convergence result at the second order for the extrinsic energy $E^{ex}_\eps$ and micromagnetic energy $E^{mm}_\eps$ (see Theorems \ref{ext.gamma} and \ref{mm.gamma}). Finally, in Section \ref{sec:min}, we characterize the asymptotic behavior of minimizers of our three energy functionals. In Appendix, we give the so-called ``ball construction" adapted to a surface $S$ which is a key tool in proving the lower bound of our functionals.  

\bigskip

\section{Motivation}
\label{sec:motiv}

\subsection{Micromagnetics} \label{sec:micromag}
One of the motivation of our study comes from micromagnetics. Micromagnetics is a variational principle
describing the behavior of small ferromagnetic bodies considered here of cylindrical shape
$\Omega=\Omega'\times (0,t)$ where $\Omega'$ is
the cross section of the sample of diameter $\ell$ and $t$ is the
thickness of the cylinder (see Figure \ref{cyli}).
\begin{figure}[htbp]
\center
\includegraphics[scale=0.4,
width=0.4\textwidth]{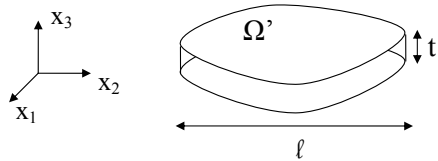} \caption{A ferromagnetic
sample.} \label{cyli}
\end{figure}
A ferromagnetic material is described by a $\SSS^2$-valued map
$$m:\Omega\to \SSS^2,$$ called magnetization, corresponding to the stable states of the energy functional (written here in the absence
of anisotropy and external magnetic field): 
\beq \label{en_global} E^{3D}(m)=\eta^2 \int_{\Omega}|\nabla
m|^2\, dx +\int_{\R^3}|\nabla
U|^2\, dx.\eeq

The first term, called exchange energy, penalizes the variations of $m$ according to the material constant $\eta>0$ (the exchange length) that is of the order
of nanometers. The second term of $E^{3D}$ is the stray field energy that favors flux closure; more precisely, the stray field potential
$U:\R^3\to \R$ is determined by the static Maxwell equation
\begin{align}
\label{stray}\Delta U&=\nabla\cdot \big(m {\bf 1}_\Omega\big)\quad \textrm{in} \quad \RR^3,\\
\nonumber \textrm{i.e.,}\quad \int_{\RR^3} \nabla U\cdot \nabla
\zeta \, dx&=\int_{\Omega} m\cdot \nabla \zeta\, dx,\quad \forall
\zeta\in C^\infty_c(\R^3).
\end{align}
In other words, the stray field $\nabla U$ is the Helmholtz projection of $m{\bf 1}_{\Omega}$ onto the $L^2$-gradient fields and
$$\int_{\R^3}|\nabla
U|^2\, dx=\|\nabla\cdot \big(m{\bf 1}_{\Omega}\big)\|^2_{\dot{W}^{-1,2}(\R^3)}.$$

\bigskip

\nd {\bf Thin film regime of very small ferromagnets}. Assume the following asymptotic regime\footnote{A thin film regime is characterized by a small aspect ratio $h$; the ferromagnetic samples considered here are very small because $\ell$ has the order of nanometers as $\eta$.}:
$$h:=\frac t \ell\to 0 \quad \textrm{and} \quad \eps:=\frac \eta \ell={\rm constant}$$
for some fixed parameter $\eps>0$. Set $x=(x', x_3)$, $x'=(x_1, x_2)$ where $\,' \,$  stands only in this section for an in-plane quantity. In order to study the asymptotic behavior as $h\to 0$, we rescale the variables:
${y}':={x'}/{\ell}$ (so, $\omega':=
{\Omega'}/{\ell}$ is of diameter $1$), ${y}_3:={x_3}/{t}$, $m_h(y):=m(x)$ and 
$$E_h(m_h):=\frac{1}{\eta^2t} E^{3D}(m), \quad m_h:\omega=\omega'\times (0,1)\to \SSS^2,$$
where the diameter of $\omega'$ equals $1$.
In this context, Gioia-James \cite{GioJam} proved the following $\Gamma$-convergence result in strong $L^2$-topology: 
$$E_h\stackrel{\Gamma}{\rightarrow} E_0$$
where the $\Gamma$-limit functional $E_0$ is given by
\begin{align*}
E_0(M)&=\int_{\omega} \bigg\{|\nabla M|^2+\frac 1{\eps^2} M_{3}^2 \bigg\}\, dy=\int_{\omega'} \bigg\{|\nabla' M|^2+\frac 1{\eps^2} M_{3}^2 \bigg\}\, dy'
\end{align*}
for a limit magnetization $M=(M', M_3):\omega\to \SSS^2$ that is invariant in $y_3$-direction, i.e., $\partial_{y_3}M=0$ in 
$\omega$, $\nabla'=(\partial_1, \partial_2)$, so that one can write $$M=M(y')\in W^{1,2}(\omega', \SSS^2), \quad y'=(y_1, y_2)\in \omega'.$$
The hint is the following: since the exchange energy term in $E_h(m_h)$ of $m_h$ is given by
$$m_h\mapsto \int_{\omega} \bigg(|\nabla' m_h|^2+\frac1{h^2}|\partial_{y_3} m_h|^2\bigg)\, dy,$$
it is clear that configurations $m_h$ of uniformly bounded energy (i.e., $E_h(m_h)\leq C$) tend to converge strongly in $L^2$ to
a limit $M$ depending only on $y'$-variables. The more delicate issue consists in understanding the scaling of the stray field energy term.
For that, we assume for simplicity that $m_h$ is invariant in $y_3$-direction (i.e., $m(x')=m_h(x'/\ell)$ for $x'\in\Omega'$). Then the Maxwell equation
\eqref{stray} turns into: $$ 
\Delta
U=\nabla'\cdot m' \, \, {\Hdf^3\llcorner \Omega}+m \cdot \nu \, {\Hdf^2\llcorner \partial
\Omega}\quad \textrm{in}\quad \RR^3,$$ where $\nu$ is the unit
outer normal vector on $\partial \Omega$ and $\Hdf^k$ is the Hausdorff measure of dimension $k$. This equation
is a transmission problem that can be solved
explicitly using the Fourier transform $\mathcal{F}(\cdot)$ in the
in-plane variables $x'$ and the computation yields (see e.g. \cite{Ignat}):
\begin{align*}
\int_{\RR^3}|\nabla U|^2\, dx=t\int_{\RR^2}
\tilde{f}(\frac{t}{2}|\xi'|)\big|{\mathcal{F}}{(m_3 {\bf 1}_{\Omega'})} \big|^2\,d\xi'+t\int_{\RR^2}
f(\frac{t}{2}|\xi'|)\big|\frac{\xi'}{|\xi'|}\cdot{\mathcal{F}}{(m' {\bf 1}_{\Omega'})}
\big|^2\,d\xi',\end{align*} where
$$\tilde{f}(s)=\frac{1-e^{-2s}}{2s}\quad \textrm{ and }\quad f(s)=1-\tilde{f}(s) \textrm{ if } s\geq0.$$
To conclude, one formally approximates $\tilde{f}(s)\approx 1$ and $f(s)\approx s$ if $s=o(1)$ so that\footnote{A different regime is studied in \cite{Ignat_Otto}.}
\begin{align*} \frac1{\eta^2 t}\int_{\RR^3} |\nabla U|^2\, dx\approx \frac1{\eta^2}\int_{\Omega'} m_3^2\, dx'+O(\frac{h}{\eps^2})\approx \frac1{\eps^2}\int_{\omega'} m_{h,3}^2\, dy',\end{align*}  
as $h\to 0$.
\bigskip

\nd {\bf Very small magnetic shells}. The situation of curved ferromagnetic samples was considered by Carbou \cite{Carbou}. The context is the following: let $S\subset \R^3$ be a surface isometrically embedded in $\R^3$ of diameter $\ell=1$ and $N$ be the Gauss map at $S$. A curved magnetic shell is considered occupying the domain
$$\Omega:=\bigg\{ x'+sN(x')\, :\, s\in (0, t), \, \, x'\in S \bigg\}.$$ 
Then Carbou \cite{Carbou} proved the corresponding $\Gamma$-convergence result as in Gioia-James \cite{GioJam} where the $\Gamma$-limit is given by
$$M\in H^1(S; \SSS^2) \mapsto \int_S |\dbar M|^2+\frac{1}{\eps^2}(M\cdot N)^2\, d\Hdf^2$$
where $\dbar M$ is the extrinsic differential of $M$ and $M\cdot N$ is the normal component of $M$ on the surface $S$. In the context of energy $E^{mm}_\e$, denoting $|m|^2=1-(M\cdot N)^2$ we have $F(|m|^2)=1-|m|^2=(M\cdot N)^2\geq (1-|m|)^2$, so \eqref{F.growth} is satisfied for $C=1$.

\subsection{Geometry and topology}
One of the first theorems one encounters in topology states that there does not exist any continuous nonvanishing vector field
on any closed oriented surface $S$ of genus $\mathfrak g \ne 1$. 
A unit vector field on such a surface must
therefore have singularities. If the surface has a Riemannian metric,
one might hope to use the metric structure to seek an energetically optimal unit vector field,
which presumably should have an energetically optimal placement of singularities.
This line of thought leads to the problem of minimizing the covariant Dirichlet 
energy
\begin{equation}
\int_S \frac 12 |Du|_g^2 \, \vol_g
\label{cde}\end{equation}
among all unit vector fields on $S$.
However, it follows from results in \cite{SSV} (an extension to the Sobolev space $W^{1,2}$ of the ``Hairy Ball Theorem", see also related results in \cite{CanSegVen})
that when $\mathfrak g\ne 1$, there does not exist any unit vector field on $S$
of finite energy. It is then reasonable (by analogy with standard
considerations in the analysis of the Ginzburg-Landau functional)
to seek energetically optimal 
vector fields by relaxing the constraint $|u|_g =1$ and replacing it with a
term that penalizes deviations of $u$ from unit length, then considering a 
suitable limit. This leads 
to Problem 1, or to Problem 2 if one is interested in the extrinsic Dirichlet 
energy on an embedded surface. One may thus interpret our results about these
problems as describing an optimal placement of singularities,  as sought above.

In the case of genus $1$, a number of results about minimization
of the extrinsic Dirichlet energy, in the space of unit tangent vector fields, are
proved in \cite{SSV}, motivated by models of liquid crystals \cite{NapVerg1, NapVerg2}.

\bigskip

\section{Challenges}
\label{sec:challen}

One first main result, Theorem \ref{P1}, contains a classification of all
harmonic unit vector fields in $\calX^{1,1}(S)$ with  singularities at
prescribed points. This classification is surprisingly subtle on manifolds
of genus $\ge 1$. Indeed, Theorem \ref{P1} shows that a 
harmonic vector field $u^*$ with singularities of degree $d_k$
at points $a_k\in S$ for $k=1,\ldots, n$ exists if and only if 
 the harmonic
part of the associated current $j(u^*)$ --- that is, the projection
of $j(u^*)$ onto the space of harmonic $1$-forms ---  belongs to a particular
lattice $\calL(a;d)$ in the $2\mathfrak g$-dimensional 
space of harmonic $1$-forms. (The degrees must also satisfy 
the natural topological constraint $\sum_{k=1}^n d_k=\chi(S)$; this
is clear and unsurprising.) 
We show that  $\calL(a;d)$ depends nontrivially on $(a;d)$
in a concrete example, and we believe this to be the case in general.
Although flux quantization constraints
appear in more or less all Ginzburg-Landau models on
non-simply connected domains, the dependence 
(encoded in $\calL(a;d)$)
of the constraints
on the vortex locations and the geometry of $S$ seems to be a new phenomenon.

The lattice $\calL(a;d)$ reappears and gives rise to novel issues 
in the proof of our main results. There we must
control energy coming from the harmonic part of
the current $j(u_\e)$
for a sequence $u_\e$ of vector fields; this requires a detailed understanding
of  the way in which the distribution of vorticity  in (approximately) unit vector fields
imposes vorticity-dependent (approximate) constraints on the harmonic part of the 
associated currents. 

These points do not appear in earlier
work on related problems. 
This includes papers of Orlandi \cite{Giando} and Qing \cite{JieQing}
that describe the asymptotic behaviour of minimizers
of a Ginzburg-Landau energy
for a section of a complex line bundle over a Riemannian manifold. 
This minimization problem involves finding not only an optimal unit-length section $u$  
of the bundle (corresponding 
in our setting to a tangent unit vector field), but also an optimal connection
on the bundle. By contrast, we insist on working with the Levi-Civita connection,
natural in our setting. A consequence of the freedom to choose an optimal connection
is that the vorticity-dependent constraints described by the lattice $\calL(a.d)$
do not arise in \cite{Giando, JieQing}, either in the description of optimal maps or the characterization of energy asymptotics.

A distinct and  important technical issue arises from the need to isolate
the energetic contribution of the vortex cores, reflected in the constants
$\iota_F$ and $\tilde \iota_F$ arising in Theorems \ref{intrinsic.gammalim}, 
\ref{ext.gamma}, and \ref{mm.gamma}. As usual, these terms are captured
by sharp energy estimates carried out near the vortex cores. The new feature
is that, in order to approximate the metric $g$ well by the Euclidean metric -- this is
necessary to correctly resolve $\iota_F$ and $\tilde \iota_F$ -- we must
carry out these estimates on geodesic balls that contain the vortices and {\it whose radii 
vanish as $\e$ tends to $0$.} This requirement forces us to rely on
refined quantitative control of the vorticity throughout our analysis.

Very closely related is the recent work of Canevari and Segatti \cite{CanSeg},
characterizing the asymptotics of a spatially-discretized 
covariant Dirichlet energy \eqref{cde} on a surface, 
in the limit as the discretization scale tends to zero. 
These authors prove results quite parallel to ours, but their
main focus is on the discrete-to-continuum limit, and the
renormalized energy that they find (see \cite{CanSeg}, equations (18), (20))
is described in a way that leaves the its dependence on $(a;d)$ very implicit and
does not resolve the issues appearing in our Theorem \ref{P1} 
and elsewhere in this paper.

\bigskip

\section{Notation and background}\label{S:nb}

Let $(S,g)$ be  a closed oriented $2$-dimensional Riemannian manifold of genus $\mathfrak g$, not necessarily embedded in $\R^3$.
We will write $\chi(S)$ and $\mathfrak g$ to denote the Euler characteristic and the genus of $S$ that are related by 
$\chi(S)=2-2\mathfrak g$. We write $\dd$ to denote the Levi-Civita connection on $(S,g)$. 
We will write $\dist_S(p,q)$ to denote the geodesic distance between $p\in S$ and $q\in S$:
\[
\dist_S(p,q) := \inf \left\{\int_0^1 |\gamma'(s)|_g ds \ : \ \gamma:[0,1]\to S \, \textrm{ Lipschitz, }\, \ \gamma(0)=p,\gamma(1)=q
\right\}.
\]
We will write $B_r(x)$ (or $B(x,r)$) to denote the open geodesic ball 
\[
B_r(p) := \{ q\in S : \dist_S(p,q)<r\}.
\]
and $\bar B_r(x)$ is the closure of this ball. Given points $a_1,\ldots, a_n\in S$ and $\sigma>0$, we also write $a=(a_1, \dots, a_n)$ and 
\[
S_\sigma(a) := S \setminus \cup_{k=1}^n \bar B_\sigma(a_k)
\]
and  
\[
\rho_a := \min_{k\ne \ell}\dist_S(a_k,a_\ell).
\]
We will also write simply $S_\sigma$, when it is clear which points $(a_1,\ldots , a_n)$ we
have in mind. We write ${\bf 1}_{S_\sigma}$ for the characteristic function of $S_\sigma$.

\subsection{Differential forms} If $\eta, \zeta$ are $k$-forms, $k=0,1,2$, we will write $(\eta, \zeta)_g$ to denote the 
inner product induced by the metric $g$, and the length $|\eta|_g := (\eta, \eta)_g^{1/2}$.
We will always fix a 
global volume $2$-form, denoted  $\vol_g$,
associated to the metric for which we define the isometry $$i:TS\to TS$$ by \eqref{isom}. 
The Hodge-star  operator, mapping $k$-forms to $2-k$ forms, is defined
by requiring that
\[
\eta \wedge \star \zeta =  (\eta, \zeta)_g \vol_g \qquad \mbox{ for all $k$-forms }\eta, \zeta.
\]
It is well-known, and straightforward to check, that $\star\star = (-1)^{k(2-k)}$
for a two-dimensional surface $S$. Also, for dimension $2$, we define the adjoint of the exterior derivative $d$ by $d^* :=- \star d \star$ on $S$. Then it follows that
\[
\int_S  (d\eta, \zeta)_g \vol_g = 
\int_S  (\eta,  d^*\zeta)_g \vol_g \qquad\mbox{ for a $k$-form $\zeta$ and a $k-1$-form $\eta$, $k=1,2$.}
\]
If we instead integrate over a subset of $S$ of the form
$S\setminus {\mathcal O}$, then this identity becomes
\begin{equation}\label{stokes.bdy}
\int_{S\setminus {\mathcal O}}  (d\eta, \zeta)_g \vol_g -
\int_{S\setminus {\mathcal O}}  (\eta, d^*\zeta)_g \vol_g   =  - \int_{\partial {\mathcal O}} \eta\wedge \star \zeta
\end{equation}
where we consider $\partial {\mathcal O}$ to have the orientation inherited from ${\mathcal O}$ (rather than $S\setminus {\mathcal O}$, hence the minus sign on the right-hand side). 
If $\eta$ is a $0$-form then  we will omit the wedge on the right-hand side of \eqref{stokes.bdy}.

For $p\in S$, we will write $\delta_p$ to denote the (measure-valued) $2$-form
 such that
\[
\int_S f \delta_p = f(p)\qquad\mbox{ for every continuous $f:S\to \R$}
\]
If $A$ is a $1$-form on $S$ and $v\in T_xS$, then
$A(v)$  denotes the number obtained via the action of the $1$-covector
$A_x\in T_x^*S$ on $v\in T_xS$. If $v$ is a vector field, then $A(v)$ denotes the function
whose value at $x$ is $A(v(x))$.

\subsection{The connection $1$-form} 
\label{subsec:connect} 
A {\em moving  frame}
on an open subset ${\mathcal O}\subset S$ will mean a pair of smooth, properly oriented, orthonormal vector fields $\tau_k\in \calX({\mathcal O})$ for
$k=1,2$, i.e., 
\[
(\tau_k,\tau_\ell)_g  = \delta_{k\ell} 
\qquad\qquad \vol_g(\tau_1,\tau_2) =1
\]
everywhere in ${\mathcal O}$.
Note that if $\tau$ is any smooth unit vector field on ${\mathcal O}$,
then  $\{\tau_1,\tau_2\} = \{ \tau, i\tau\}$ provides  a moving frame, 
and if $\{\tau_1,\tau_2\}$ is any moving frame, then $\tau_2=i\tau_1$.
In general a moving frame exists only locally on $S$.

On an open subset ${\mathcal O}\subset S$,
we will define the {\em connection 1-form} $A$ associated to a moving frame $\{ \tau_1,\tau_2\}$ by
$$
A(v) = (\dd_v\tau_2,\tau_1)_g = - (\dd_v \tau_1,\tau_2)_g, \qquad v\in \calX({\mathcal O}).
$$
Since 
$
0 = v(\tau_k, \tau_k)_g = 2 (\dd_v \tau_k, \tau_k)_g$ for $k=1,2$,
it follows that 
$\dd_v \tau_1 = -A(v)\tau_2$ and $\dd_v\tau_2 = A(v)\tau_1$.
Note that if $\{\tau_1,\tau_2\}$ is a moving frame on ${\mathcal O}\subset S$, then $A = - j(\tau_1)$
on ${\mathcal O}$ where $j(\tau_1)$ is the $1$-form defined in \eqref{j.def}. 
In complex notation, this fact and the Leibniz rule imply that
for any smooth complex-valued function $\phi$ on ${\mathcal O}$, 
\begin{equation}
\dd_v( \phi \tau_1 ) =   (d\phi(v) - iA(v) \phi) \tau_1.
\label{DandA}\end{equation}
The definition of $A$
is clearly independent of any coordinate system on ${\mathcal O}$ (since our definition does not
refer to any coordinates) but depends on the choice of a frame. 
However, it is a standard
fact that $dA$ is   independent of the frame. In particular, we have the identity
\beq
\label{rel_connection}
dA = \kappa \ \vol_g
\eeq
where $\kappa$ is the Gaussian curvature of $S$.
(See do Carmo \cite{Car94}, Proposition
2 on page 92; our $1$-form $A$ is written as $-\omega_{12}$ in
do Carmo's notation, see \cite{Car94} p. 94.)  In fact, this  may be
taken as the definition of Gaussian curvature. 
We recall several attributes of the Gaussian curvature. First,
the Gauss-Bonnet Theorem states that
$$
\int_S \kappa \vol_g = 2\pi \chi(S)
$$
where $\chi(S)$ is the Euler characteristic. (For a proof, with the definition
of the Euler characteristic, consult for example \cite{Car94}, section 6.1.)
Another classical fact that we will use is the  {\em Bertrand-Diguet-Puiseux Theorem},
which says that
$$
\kappa(P) = 
\lim_{r\searrow 0}  3\frac { 2\pi r - \calH^1(\partial B_r(P))}{ \pi r^3}
= 
\lim_{r\searrow 0} 12\frac { \pi r^2 - \mbox{Vol}_g(B_r(P))}{\pi r^4}.
$$

\subsection{Sobolev spaces} 
\label{sec:soboesp}
For $q\in [1, \infty]$, we define $L^{q}(S;\R)$ the space of $q$-integrable functions w.r.t. the volume form $\vol_g$ and the Sobolev spaces 
$$W^{1,q}(S;\R)=\{ f\in L^{q}(S;\R)\, : \,  
 \| f\|_{W^{1, q}}:= \max \{ \|f\|_{L^q}, \|df\|_{L^q}\}
<\infty\}.$$
If $\mu$ is a $2$-form (possibly measure-valued) then we write for $p, q\in [1,\infty]$ with 
$\frac 1p + \frac 1q =1$ that $W^{-1,p}$ is the dual of the Sobolev space $W^{1,q}$, i.e., 
\[
\| \mu\|_{W^{-1,p}} := \sup \left\{  \int_{S} f \mu  \ : \ f\in W^{1,q}(S;\R), \,  \| f\|_{W^{1, q}}\le 1 \right\}.
\]
We also recall the Hodge decomposition.
The following version will suffice for us:
if $\zeta$ is any square-integrable $1$-form on $S$, then there exist
a 
$0$-form $\xi$,  $2$-form $\beta$, and $\eta\in Harm^1(S)$ (see \eqref{harmon}) such that
\begin{equation}\label{Hodge}
\zeta = d\xi+ d^*\beta + \eta.
\end{equation}
Moreover, this decomposition is unique.
By integrating by parts one easily sees that for any
$0$-form $\xi$,  $2$-form $\beta$, and $\eta\in Harm^1(S)$,
one has
$$
\int_S (d\xi, d^*\beta)_g \vol_g \ = \ 
\int_S (d\xi, \eta )_g \vol_g \ = \ 
\int_S ( d^*\beta, \eta)_g \vol_g \ = \  0.
$$
and it follows that if \eqref{Hodge} holds, then
\[
\| d \xi \|_{L^2}^2 + \|d^*\beta\|_{L^2}^2 + \|\eta\|_{L^2}^2  = \|\zeta\|_{L^2}^2.
\]

We have the following density result in $\calX^{1,p}(S)$ (which is standard, see e.g. \cite{Kar}): 

 \begin{lemma} For any open ${\mathcal O}\subset S$, if $u\in \calX^{1,p}(\mathcal O)$, then for any open $\mathcal O'\subset\subset \mathcal O$ compactly supported in $\mathcal O$, there exists a  family of smooth vector fields  $(u_\eps)_{\eps\in (0,\eps_0)}\subset \calX({\mathcal O'})$ that converges	to  $u$ in $\calX^{1,p}(\mathcal O')$. If $|u|_g\le 1$ in $\mathcal O$, then one can arrange that $|u_\eps|_g\le 1$ in $\mathcal O'$ for $\eps<\eps_0$.  Moreover, if $p\ge 2$ and $|u|_g = 1$ in $\mathcal O$, then one can arrange that $|u_\eps|_g = 1$ everywhere in $\mathcal O'$.
\label{L.density}\end{lemma}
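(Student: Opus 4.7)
The proof is local and, once $TS$ is trivialized in charts, reduces to classical Euclidean facts. I address the three claims in turn.

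\emph{(i) Smooth approximation.} Cover $\overline{\mathcal O'}$ by finitely many coordinate patches $U_\alpha\subset\subset\mathcal O$ on each of which a smooth orthonormal frame $\{\tau_1^\alpha,\tau_2^\alpha\}$ is defined, and fix a subordinate partition of unity $\{\chi_\alpha\}$ summing to $1$ on $\mathcal O'$. Writing $u = u_1^\alpha\tau_1^\alpha + u_2^\alpha\tau_2^\alpha$ on $U_\alpha$ gives scalar components $u_i^\alpha\in W^{1,p}(U_\alpha;\R)$ in the chart; mollifying these via standard Euclidean convolution with a bump $\phi_\eps$ yields smooth $u^\alpha_{i,\eps}\to u^\alpha_i$ in $W^{1,p}$ on any $U_\alpha'\subset\subset U_\alpha$. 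Setting
\[
u_\eps := \sum_\alpha \chi_\alpha\bigl(u^\alpha_{1,\eps}\tau_1^\alpha + u^\alpha_{2,\eps}\tau_2^\alpha\bigr) \in \calX(\mathcal O'),
\]
standard mollifier estimates (using smoothness of the frames, charts, and $\chi_\alpha$) give $u_\eps\to u$ in $\calX^{1,p}(\mathcal O')$.

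\emph{(ii) Case $|u|_g\le 1$.} Define the fibrewise nearest-point projection $r:TS\to TS$ onto the closed unit disk bundle by $r_x(v)=v$ if $|v|_{g,x}\le 1$ and $r_x(v)=v/|v|_{g,x}$ otherwise. Then $r$ is $1$-Lipschitz in $v$ on each fibre, smooth in $x$, and fixes $u$ (since $|u|_g\le 1$), so the Lipschitz chain rule yields $r\circ u_\eps \to r\circ u = u$ in $\calX^{1,p}(\mathcal O')$. To get smooth approximants, replace $r$ by the smooth fibrewise family $r^\delta_x(v):=\psi_\delta(|v|_{g,x})\,v$, where $\psi_\delta\in C^\infty(\R_+)$ satisfies $\psi_\delta(s)=1$ for $s\le 1-\delta$, $\psi_\delta(s)\le 1/s$ for $s\ge 1-\delta$, and $\psi_\delta\to \min(1,1/s)$ in $W^{1,\infty}_{\mathrm{loc}}(\R_+)$ as $\delta\to 0$. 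Choosing $\delta(\eps)\to 0$ slowly enough, $\tilde u_\eps := r^{\delta(\eps)}\circ u_\eps$ is smooth, satisfies $|\tilde u_\eps|_g\le 1$, and converges to $u$ in $\calX^{1,p}(\mathcal O')$.

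\emph{(iii) Case $p\ge 2$, $|u|_g=1$.} This is the main obstacle, since the mollifications of (i) will in general vanish on small sets and cannot be normalized fibrewise as $v\mapsto v/|v|_g$. Locally, trivializing the unit circle bundle with the frame $\{\tau_1^\alpha,\tau_2^\alpha\}$ identifies $u$ with an $S^1$-valued $W^{1,p}$-map on a planar domain, and the statement reduces to the classical density of smooth $S^1$-valued maps in $W^{1,p}(\Omega;S^1)$ for $p\ge 2$ in dimension two. The plan is the standard Bethuel--Zheng tessellation argument: cover $\Omega$ by squares $Q_j$ of side $\delta\gg\eps$, call $Q_j$ \emph{good} when the scaling-invariant energy $\delta^{p-2}\int_{Q_j}|Du|^p$ is below a small universal $\eta>0$ and \emph{bad} otherwise. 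On good squares, the rescaled embedding $W^{1,p}(Q_j)\hookrightarrow L^\infty(Q_j)$ -- valid in dimension two precisely for $p\ge 2$ -- forces $|u_\eps|_g>1/2$ once $\eps$ is small, allowing the direct normalization $u_\eps/|u_\eps|_g$. The bad squares are at most $C\|Du\|_{L^p}^p/(\eta\delta^{p-2})$ in number and cover vanishing area as $\delta\to 0$; on each, one replaces $u_\eps$ by a smooth unit-length extension of (slightly perturbed) boundary data, feasible since the boundary degree vanishes on a generic grid. Patching with a partition of unity and sending $\eps/\delta\to 0$ and then $\delta\to 0$ produces the desired smooth unit-length approximation; quantitative control on bad squares is the delicate step and the reason $p\ge 2$ is required.
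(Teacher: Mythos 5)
Your parts (i) and (ii) are fine, and differ from the paper only in technique: the paper mollifies intrinsically, setting $u_\eps(x)=\int \tilde\rho_{\eps,x}(y)\,\tau_{y,x}u(y)\,\vol_g(y)$ with $\tau_{y,x}$ the parallel transport along the shortest geodesic, which has the advantage that $|u_\eps|_g\le 1$ is automatic (fibrewise averaging of vectors of length $\le 1$), so no retraction onto the disk bundle is needed; your chart/partition-of-unity mollification plus a smoothed radial truncation also works.

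Part (iii) is where there is a genuine problem. Your starting premise — that for $p\ge 2$ the mollifications ``will in general vanish on small sets and cannot be normalized fibrewise'' — is false, and it is precisely the point of the hypothesis $p\ge 2$ that this does not happen. Since the domain is $2$-dimensional, Poincar\'e--Wirtinger on $B_\eps(x)$ plus Cauchy--Schwarz give
$$\sup_{x\in \mathcal O'}\bigl|1-|u_\eps(x)|_g\bigr|\le \frac C\eps\sup_{x\in\mathcal O'}\int_{B_\eps(x)}|Du|_g\,\vol_g\le C\sup_{x\in\mathcal O'}\|Du\|_{L^2(B_\eps(x))}\longrightarrow 0$$
by equiintegrability of $|Du|_g^2$; hence for small $\eps$ the mollified field is uniformly close to unit length on $\mathcal O'$ and one simply sets $\tilde u_\eps=u_\eps/|u_\eps|_g$. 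This is the paper's entire argument for (iii): no tessellation, no degree considerations, no bad squares. Your Bethuel--Zheng-style plan, besides being unnecessary here, has two concrete defects as written: (a) the justification on good squares invokes $W^{1,p}(Q_j)\hookrightarrow L^\infty(Q_j)$ ``valid in dimension two precisely for $p\ge 2$'', but $W^{1,2}\not\hookrightarrow L^\infty$ in two dimensions, so the borderline case $p=2$ (the relevant one) is not covered by the argument you cite — what works there is exactly the averaging estimate above; and (b) the final step ``patching with a partition of unity'' of locally constructed unit-length fields destroys the unit-length constraint (a convex combination of unit vectors is not unit), and you have no uniform closeness between the local constructions that would let you renormalize after patching. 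So the delicate steps you defer (bad-square replacement and patching) are both the ones that would fail as stated, and they can be avoided entirely by the direct small-energy/normalization argument.
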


\begin{proof} Let $u\in \calX^{1,p}({\mathcal O})$. One considers a standard radial mollifier $\rho\in C^\infty(\R^2)$ such that $0\leq \rho\leq 1$, $\rho$ has support in the unit ball and $\int_{\R^2} \rho(z)\, dz=1$. For $x\in S$, we consider the exponential map $\exp_x:T_xS\to S$ and for 
$\eps\in (0, \eps_0)$ (with $\eps_0$ be the injectivity radius of $S$), let 
$$\rho_{\eps, x}(y)=\frac1{\eps^2} \rho\bigg(\frac{\exp_x^{-1}(y)}{\eps}\bigg)\quad \textrm{in a neighborhood of }\, x$$
 where we identified $T_xS$ with $\R^2$; we also consider the renormalized mollifiers
 $$\tilde \rho_{\eps, x}(y)=\frac{\rho_{\eps, x}(y)}{\int_S \rho_{\eps, x}\, \vol_g}.$$
 Now for $x\in \mathcal O$ such that $\dist_S(x, \partial \mathcal O)>\eps$, we define
 $$u_\eps(x)=\int_{\mathcal O} \tilde \rho_{\eps, x}(y) \tau_{y,x} u(y)\, \vol_g(y)\in T_xS, $$
 where $\tau_{y,x}:T_yS\to T_xS$ is the parallel transport along the shortest geodesic from $y$ to $x$.
 Then for any $\mathcal O'\subset\subset \mathcal O$, there exists $\eps_0$
 such that  
 $u_\eps\in
 \mathcal X({\mathcal O'})$ for  $0<\eps<\eps_0$,  and $u_\eps\to u$ in $\calX^{1,p}(\mathcal O')$ (see \cite{Kar} for more details). 
 Moreover, the following Poincar\'e-Wirtinger inequality holds:
 $$\int_{B_\eps(x)} |u_\eps(x)-\tau_{y,x} u(y)|_g\, \vol_g(y)\leq c \eps \int_{B_\eps(x)} |Du|_g\, \vol_g,$$
 for some universal constant $c>0$. 
 Also, note that $|u|_g\leq 1$ in $S$ implies that $|u_\eps|_g\leq 1$ in $\mathcal O'$.
  
 Assume now that $|u|_g=1$ in $\mathcal O$ and that  $p\geq 2$. As $|\tau_{y,x} u(y)|_g=|u(y)|_g=1$ a.e. in $\mathcal O$, we deduce:
 \begin{align*}
 \sup_{x\in \mathcal O'}\bigg|1-|u_\eps(x)|_g\bigg|&\leq C \sup_{x\in \mathcal O'} \frac{1}{\eps^2} \int_{B_\eps(x)} |u_\eps(x)-\tau_{y,x} u(y)|_g\, \vol_g(y)\\
 &\leq C  \sup_{x\in \mathcal O'} \frac1\eps \int_{B_\eps(x)} |Du|_g\, \vol_g \leq C  \sup_{x\in \mathcal O'} \|Du\|_{L^2(B_\eps(x))}\to 0 \, \, \textrm{as $\eps\to 0$},
 \end{align*}
 where we used the equiintegrability of $|Du|^2_g$ on $\mathcal O$. Therefore, $|u_\eps|_g\to 1$ uniformly in  ${\mathcal O'}$ as $\eps\to 0$ so that the smooth vector fields 
 $\tilde u_\eps=u_\eps/|u_\eps|_g$ are of unit length and converge to $u$ in $\calX^{1,p}(\mathcal O')$.
 \end{proof}

\subsection{A little homology}\label{S:homology}
Suppose that $\lambda_1,\ldots, \lambda_\ell$ are closed Lipschitz curves on $S$, by which 
we mean that $\lambda_k$ is a Lipschitz continuous function $[0,1]\to S$ such that 
$\lambda_k(0)=\lambda_k(1)$ for every $k$.
Given integers $c_1,\ldots, c_\ell$, we say that
\[
\sum_{k=1}^\ell c_k \lambda_k
\mbox{ is homologous to $0$}
\]
if there exists an integrable function $f:S\to \Z$ such that 
$$
\sum_k c_k\int_{\lambda_k}\phi \ = \ \int_S f \, d\phi\qquad\mbox{ for all
smooth $1$-forms }\phi.
$$
Here and below we use the notation
\[
\int_{\lambda}\phi := \int_0^1 (\lambda)^*\phi = \int_0^1 \phi_i(\lambda(s)) \, \lambda^i{}'(s)\, ds \ \mbox{ in local coordinates}
\]
We also say that $\lambda$ is homologous to $\sum_k c_k\lambda_k$ if
$\lambda - \sum c_k\lambda_k$ is homologous to $0$.

We will need a standard fact, which can be stated as follows:

\begin{lemma}
If $S$ is a compact Riemannian manifold of 
genus $\mathfrak g$,
then there exist simple\footnote{That is, non self-intersecting.} closed geodesics $\gamma_k$, for $k=1,\ldots, 2\mathfrak g$, such 
that if $\gamma$ is any closed Lipschitz curve, then there exist integers $c_1\ldots, c_{2\mathfrak g}$
such that 
$$
\gamma \mbox{ is homologous to }\sum_{k=1}^{2\mathfrak g} c_k \gamma_k.
$$
Moreover, these curves $\{\gamma_k\}_{k=1}^{2\mathfrak g}$ have the property that for $\eta\in Harm^1(S)$ defined in \eqref{harmon}, the following equivalences 
take place:
\begin{align*}
\eta = 0
&\qquad \ \ \Longleftrightarrow
\ \ 
\int_{\gamma} \eta = 0\ \ \ \ \mbox{ for every closed Lipschitz curve $\gamma$}\\
&\qquad \ \ \Longleftrightarrow
\ \ 
\int_{\gamma_k} \eta = 0\ \ \ \ \mbox{ for }k=1,\ldots, 2\mathfrak g.
\end{align*}
In particular, the matrix $\alpha=(\alpha_{\ell k})$ defined in \eqref{akl.def} is invertible.
 \label{L.FedFlem}\end{lemma}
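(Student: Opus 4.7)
The plan is to decouple the three assertions and handle them in sequence: (i) existence of the $2\mathfrak{g}$ simple closed geodesics, (ii) the three equivalences characterizing the zero harmonic form, (iii) invertibility of $\alpha$ as a corollary of (ii).

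For (i) the starting point is the classical topological fact that the first homology group $H_1(S;\Z)$ of a closed oriented surface of genus $\mathfrak{g}$ is isomorphic to $\Z^{2\mathfrak{g}}$, via the standard symplectic basis coming from the $4\mathfrak{g}$-gon representation. I would first observe that the homology notion defined in this paper via integration against smooth $1$-forms and integer-valued $f$ coincides with the usual topological notion: this is precisely the content of the Federer--Fleming theorem on integer rectifiable currents cited earlier, which identifies the de Rham homology of closed curves with $\Z^{2\mathfrak{g}}$ using integral $2$-currents as fillings (the function $f$ being the multiplicity of the filling). Given this, I would pick $2\mathfrak{g}$ generating simple closed Lipschitz curves and, within each free homotopy class, run length minimization. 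The resulting minimizers are closed geodesics; their simplicity follows from the standard surgery argument that any self-crossing can be resolved by swapping strands to produce a strictly shorter curve in the same homotopy class (in genus $1$, the flat or conformally flat torus admits the argument directly; in genus $\geq 2$, this is classical). This gives the desired $\{\gamma_k\}_{k=1}^{2\mathfrak{g}}$.

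For (ii), the implications are set up to chain together. The trivial implication is $\eta=0 \Longrightarrow \int_\gamma \eta=0$ for every closed Lipschitz $\gamma$. Next, assume $\int_{\gamma_k}\eta=0$ for $k=1,\ldots,2\mathfrak{g}$. Given an arbitrary closed Lipschitz $\gamma$, use (i) to write $\gamma$ as homologous to $\sum_k c_k\gamma_k$, so there exists $f\in L^1(S;\Z)$ with
\[
\int_\gamma \phi - \sum_k c_k \int_{\gamma_k}\phi = \int_S f\, d\phi
\]
for every smooth $1$-form $\phi$. Harmonic forms are smooth by elliptic regularity, so we may test with $\phi=\eta$; since $d\eta=0$, the right-hand side vanishes, giving $\int_\gamma \eta = \sum_k c_k \int_{\gamma_k}\eta = 0$. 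The remaining and most substantive implication is $\int_\gamma \eta = 0$ for all closed Lipschitz $\gamma \Longrightarrow \eta=0$. This is where I would use the structure of harmonic forms: zero periods together with $d\eta=0$ force $\eta = df$ for some smooth $f:S\to \R$ (de Rham theorem, with smoothness inherited from $\eta$); then $d^*\eta=0$ gives $d^*df=0$, i.e., $f$ is harmonic on the compact $S$, hence constant, so $\eta=df=0$.

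For (iii), suppose $(x_1,\ldots,x_{2\mathfrak{g}})\in \R^{2\mathfrak{g}}$ satisfies $\sum_k \alpha_{\ell k} x_k =0$ for every $\ell$, and set $\eta := \sum_k x_k \eta_k \in Harm^1(S)$. Then $\int_{\gamma_\ell}\eta = \sum_k \alpha_{\ell k} x_k = 0$ for every $\ell$, so (ii) gives $\eta = 0$. Since $\{\eta_k\}$ is an orthonormal (hence linearly independent) basis of $Harm^1(S)$, this forces $x=0$, i.e., $\alpha$ has trivial kernel and is invertible. The main conceptual obstacle is really bridging the measure-theoretic notion of homology (via integer-valued $f$) to the topological/homotopical one needed to produce simple closed geodesic representatives; once this identification is in hand, the analytic core of the lemma reduces to the standard fact that a harmonic $1$-form on a closed manifold is determined by its periods.
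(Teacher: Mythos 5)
Your parts (ii) and (iii) are correct and essentially coincide with the paper's argument: the substantive point is that a harmonic $1$-form with vanishing periods over a generating family must vanish, and your route (test the homology identity with the smooth form $\eta$, use $d\eta=0$ to kill the right-hand side, then deduce from vanishing of all periods that $\eta=df$ by de Rham, with $f$ harmonic hence constant by Hodge theory) simply spells out the nondegeneracy of the period pairing that the paper invokes more tersely through linear independence of the functionals $\eta\mapsto\int_{\gamma_k}\eta$ on the $2\mathfrak g$-dimensional space $Harm^1(S)$. The deduction of invertibility of $\alpha$ is identical in both.

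Where you genuinely diverge is the construction of the simple closed geodesics, and there your justification has a gap. The paper stays inside the Federer--Fleming framework: it takes mass-minimizing integral $1$-cycles in a generating set of homology classes, decomposes each into indecomposable components (which are simple closed geodesics precisely because of mass-minimality), and extracts $2\mathfrak g$ of them that still generate. You instead minimize length in the free homotopy classes of a standard symplectic basis of simple closed curves. That route can be made to work, but the simplicity argument you sketch is not correct as stated: resolving a transverse self-crossing by swapping strands does not in general produce a shorter curve \emph{in the same free homotopy class} (it either splits the loop into two loops or changes the class), so the claimed shorter competitor does not exist by that operation; likewise the appeal to a flat or conformally flat torus does not cover arbitrary metrics in genus $1$, which is the setting of the lemma. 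The fact you actually need --- that the length-minimizing closed geodesic in the free homotopy class of a simple closed curve is itself simple --- is true but is a genuine theorem (of Freedman--Hass--Scott type), not a one-line surgery; you should either cite it or switch to minimization of mass among integral cycles in a homology class, which is exactly the paper's route and where simplicity does come for free from indecomposability. Once that step is repaired, your construction is fine: the geodesic minimizers are freely homotopic, hence homologous, to the chosen basis curves, so they still generate, and your bridge between the paper's integration-based notion of ``homologous'' and the topological one via Federer--Fleming is the same one the paper uses.
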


\begin{proof} We sketch the proof for the reader's convenience.
First, it is a classical fact that the first singular homology group $H_1(S;\Z)$
of $S$ with integer coefficients is isomorphic to $\Z^{2\mathfrak g}$.
(In other words, the first Betti number of a surface of genus $\mathfrak g$ is
$2\mathfrak g$.)
 Second, the singular homology group $H_1(S;\Z)$
is isomorphic to the   homology group $H^{FF}_1(S;\Z)$ 
in the sense of Federer and Fleming (to whom this statement is
due, see \cite{FedFle} Theorem 5.11), consisting 
of integral $1$-cycles,
modulo boundaries of integral $2$-currents.
Thus $H^{FF}_1(S;\Z)$ is also isomorphic to $\Z^{2\mathfrak g}$.
Moreover, Federer and Fleming (see Corollary 9.6 in \cite{FedFle}) also show that every homology
class contains a mass-minimizing element.
Combining these facts, we may find mass-minimizing currents
$\Gamma_1,\ldots, \Gamma_{2\mathfrak g}$ 
in a collection of homology classes that generate $H^{FF}_1(S;\Z)$.
Each $\Gamma_j$ need not correspond to a simple closed
geodesic, but it follows from \cite{Federer} 4.2.25 that each $\Gamma_j$ can be 
written as a sum of ``indecomposable' currents, say $\gamma_{j,k}$ for $k=1, \ldots, \gamma_{j, \ell(j)}$,  which in the
present context (due to the minimality of $\Gamma_j$) correspond to  simple closed geodesics.
Now the collection of  homology classes corresponding to all $\{ \gamma_{j,k} \}$ generate
$H^{FF}_1(S;\Z) \cong \Z^{2\mathfrak g}$, so there must exist a subset containing exactly
$2\mathfrak g$ elements that also generates $H^{FF}_1(S;\Z)$. If we relabel the elements of
this subset as $\gamma_1,\ldots, \gamma_{2\mathfrak g}$, this 
says  that 
any integral $1$-cycle --- in particular, any 
closed Lipschitz curve --- is homologous to an integer linear
combination $\sum_{k=1}^{2\mathfrak g} c_k \gamma_k$, where ``homologous"
is understood in the sense of \cite{FedFle}, which is
our definition above.

Having found $\{ \gamma_k\}_{k=1}^{2\mathfrak g}$, it is immediate that
\[
\eta = 0 \quad \Rightarrow \quad \int_\gamma \eta = 0 \mbox{ for all closed Lipschitz $\gamma$}
\qquad \Rightarrow \quad \int_{\gamma_k}\eta = 0\mbox{ for }k=1,\ldots, 2\mathfrak g.
\]
So to establish the equivalences, we must only show that if $\eta\in Harm^1(S)$ and 
$\int_{\gamma_k}\eta = 0$ for all $k$, then $\eta =0$. To see this, note that
for each $k=1,\ldots, 2\mathfrak g$, we can identify $\gamma_k$ with the linear map $\eta \mapsto \int_{\gamma_k}\eta$ on $Harm^1(S)$. These maps are linearly independent, since they generate the $2\mathfrak g$-dimensional space $H^{FF}_1(S;\Z)$. Then the desired statement follows from the fact that $Harm^1(S)$ is $2\mathfrak g$-dimensional (noted in Section \ref{sec:canon}),  since any $\eta\in Harm^1(S)$ that is in the kernel of $2\mathfrak g$ independent linear maps
$S\to \R$ must therefore equal zero. 

Finally we prove that $\alpha$ is invertible. Assume by contradiction that there exists a vector $b\in \R^{2\mathfrak g}\setminus \{0\}$ such that $\alpha b=0$. By \eqref{akl.def}, it means that
$\int_{\gamma_\ell} \sum_{k=1}^{2\mathfrak g}  b_k \eta_k=0$ for every $\ell=1, \dots, 2\mathfrak g$. The above equivalences yields $ \sum_{k=1}^{2\mathfrak g}  b_k \eta_k=0$; as $\{\eta_k\}_k$ is a basis of $Harm^1(S)$, one has $b=0$ which is a contradiction.

\end{proof}

\subsection{Some useful calculations}
\label{sec:calc}
In this section we record some  straightforward facts that we will use repeatedly. Let $u$ be a smooth vector field
in an open set ${\mathcal O}\subset S$. 
First, note that wherever $u\ne 0$, for every smooth unit vector field $v$ we have
\[
\dd_v u = (\dd_v u, \frac{iu}{|u|_g)})_g \frac{iu}{|u|_g}  + (\dd_v u,\frac u{|u|_g})_g \frac{u}{|u|_g}  
= \frac{ j(u)(v) }{|u|_g} \frac{iu}{|u|_g} + v(|u|_g) \frac {u}{|u|_g},
\]
It follows that
\beq
\label{formula12}
|\dd u|_g^2 = \left| \frac { j(u)}{|u|_g} \right|_g^2 + | d|u|_g|^2.
\eeq
In particular, if $u$ is of unit length (i.e., $|u|_g=1$) and $\rho$ is a smooth scalar function, then 
$$|\dd u|_g^2=|j(u)|^2_g,\quad j(\rho u) = \rho^2 j(u),$$
and thus
$$
|\dd  (\rho u)|_g^2 =
\rho^2  \left|   j(u) \right|_g^2 + | d\rho|_g^2 = 
\rho^2  \left|  \dd u\right|_g^2 + | d\rho|_g^2 .
$$
Writing in complex variable $e^{i\Theta}=\cos \Theta+i\sin \Theta$ for a smooth scalar function $\Theta$ where $i$ is the isometry \eqref{isom},
then 
$$
j(e^{i\Theta}u)=j(u)+|u|^2_g \, d\Theta.
$$
The above properties generalize to suitable Sobolev spaces by a standard density argument (see Lemma \ref{L.density}).

\begin{lemma}
\label{L.last}
Let $\mathcal O$ be an open set in $S$. Then $j:\calX^{1,2}(\mathcal O)\to L^p({\mathcal O})$ is a continuous map for every $p\in [1,2)$ and $|dj(u)|_g \le |Du|_g^2$ a.e. in $\mathcal O$ for every $u\in \calX^{1,2}(\mathcal O)$. As a consequence, the map $u\in \calX^{1,2}(\mathcal O)\mapsto dj(u)$ is continuous as a map with values into 
the set of $2$-forms endowed with the $W^{-1,p}$-norm for every $p\in [1,2)$. Moreover, if $u\in \calX^{1,2}(\mathcal O)$, then $\frac{j(u)}{|u|_g}=(Du, \frac{iu}{|u|_g})_g$ is well defined and belongs to $L^2$.
\end{lemma}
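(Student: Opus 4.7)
\smallskip

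\noindent\textbf{Plan.} I would prove the four assertions in sequence, relying on (a) the pointwise Cauchy--Schwarz bound $|j(u)|_g\le |\dd u|_g\,|u|_g$, (b) the two-dimensional Sobolev embedding $W^{1,2}(\mathcal O)\hookrightarrow L^q(\mathcal O)$ for all $q<\infty$, (c) the density of smooth vector fields from Lemma~\ref{L.density}, and (d) a local computation in a moving frame.

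\smallskip

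\noindent\emph{Step 1 (continuity of $j$ into $L^p$).} From $j(u)=(\dd u,iu)_g$ and Cauchy--Schwarz one has $|j(u)|_g\le |\dd u|_g\,|u|_g$ pointwise. For $p\in[1,2)$ pick $q<\infty$ with $\tfrac1p=\tfrac12+\tfrac1q$; the Sobolev embedding on the $2$-dimensional manifold $\mathcal O$ gives $|u|_g\in L^q$, so H\"older yields $\|j(u)\|_{L^p}\lesssim \|\dd u\|_{L^2}\|u\|_{L^q}\lesssim \|u\|_{\calX^{1,2}}^{2}$. Continuity follows by polarization: for $u_n\to u$ in $\calX^{1,2}$,
\[
j(u_n)-j(u)=\big(\dd(u_n-u),\,iu_n\big)_g+\big(\dd u,\,i(u_n-u)\big)_g,
\]
and each summand tends to zero in $L^p$ by the same H\"older/Sobolev estimate.

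\smallskip

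\noindent\emph{Step 2 (pointwise estimate on $dj(u)$).} By Lemma~\ref{L.density} it suffices to prove the inequality for smooth $u$; the distributional identity then extends by approximation using Step~1. Choose locally a moving frame $\{\tau_1,\tau_2\}=\{\tau,i\tau\}$ with connection $1$-form $A$, write $u=u_1\tau_1+u_2\tau_2$, and set $\phi=u_1+iu_2$, $D\phi:=d\phi-iA\phi$, so that $|\dd u|_g^{2}=|D_{\tau_1}\phi|^2+|D_{\tau_2}\phi|^2$ by \eqref{DandA}. A direct computation gives
\[
j(u)=\Im(\bar\phi\,d\phi)\;-\;|\phi|^{2}A,
\]
and differentiating, using $dA=\kappa\vol_g$ from \eqref{rel_connection}, yields the clean local identity
\[
dj(u)\;=\;2\,\Im\!\big(\overline{D_{\tau_1}\phi}\;D_{\tau_2}\phi\big)\,\vol_g\;-\;\kappa\,|u|_g^{2}\,\vol_g.
\]
The elementary bound $2|ab|\le a^{2}+b^{2}$ applied to the first term gives $|dj(u)|_g\le |\dd u|_g^{2}$, up to the lower order curvature term $\kappa|u|_g^{2}$, which is harmless in all subsequent applications since $\kappa$ is bounded on the compact $S$ and $|u|_g^{2}$ is integrable.

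\smallskip

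\noindent\emph{Step 3 (continuity of $dj$ into $W^{-1,p}$).} This is duality: for any $\zeta\in W^{1,q}(S)$ with $\tfrac1p+\tfrac1q=1$, Stokes's theorem on the closed manifold $S$ gives $\int_{S}\zeta\,dj(u)=-\int_{S}d\zeta\wedge j(u)$, and hence $\|dj(u)\|_{W^{-1,p}}\le \|j(u)\|_{L^p}$. Continuity of $u\mapsto dj(u)$ in the $W^{-1,p}$-norm thus follows directly from Step~1.

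\smallskip

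\noindent\emph{Step 4 ($j(u)/|u|_g\in L^2$).} On $\{|u|_g>0\}$, the identity leading to \eqref{formula12} gives pointwise $j(u)/|u|_g=(\dd u,\,iu/|u|_g)_g$, and Cauchy--Schwarz yields $|j(u)/|u|_g|_g\le |\dd u|_g$. On $\{|u|_g=0\}$, Stampacchia's truncation lemma (or direct approximation, using Lemma~\ref{L.density}) gives $\dd u=0$ almost everywhere, and we extend $j(u)/|u|_g:=0$ there, consistently with $j(u)=0$ a.e. on this set. Integrating, $\|j(u)/|u|_g\|_{L^2(\mathcal O)}\le \|\dd u\|_{L^2(\mathcal O)}$.

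\smallskip

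\noindent\emph{Main obstacle.} The subtle point is Step~2: the local identity for $dj(u)$ must correctly blend the connection form $A$ and its curvature $dA=\kappa\vol_g$, and the cleanest formulation of the estimate is really $\big|dj(u)+\kappa|u|_g^{2}\vol_g\big|_g\le |\dd u|_g^{2}$. One has to ensure that passing from smooth $u$ to $u\in\calX^{1,2}$ via density preserves the identity in a distributional sense, which uses precisely Step~1 applied to $j(u_\eps)\to j(u)$ in $L^p$ for $p<2$.
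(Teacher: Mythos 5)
Your Steps 1, 3 and 4 follow essentially the same route as the paper: the $L^p$-continuity of $j$ via polarization, H\"older and the Sobolev embedding $\calX^{1,2}\subset L^q$; the $W^{-1,p}$-continuity of $dj$ from the continuity of $d:L^p\to W^{-1,p}$ (the paper states this directly rather than via your Stokes duality, but it is the same point); and the last assertion from \eqref{formula12}. The genuine divergence is Step 2, and there your computation is actually the more careful one. The paper works in coordinates orthonormal at a point and claims in \eqref{egal_dj}, ``by the Schwartz lemma'', that the second covariant derivative terms cancel; on a curved surface they do not, since the commutator contributes $(D_1D_2u-D_2D_1u,iu)_g=(R(\partial_1,\partial_2)u,iu)_g=-\kappa|u|_g^2$ at that point. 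Your moving-frame identity $dj(u)=2\,\Im(\overline{D_{\tau_1}\phi}\,D_{\tau_2}\phi)\,\vol_g-\kappa|u|_g^2\vol_g$ keeps this term and is consistent with Lemma \ref{L.omega0} (for $|u|_g=1$ it gives $dj(u)=-\kappa\vol_g$, whereas \eqref{egal_dj} would give $0$). Consequently the literal inequality $|dj(u)|_g\le|Du|_g^2$ of the lemma fails in general: take a unit vector field, rotated by $e^{i\theta}$ so that $j(u)$ vanishes at a point where $\kappa\ne0$; then $dj(u)=-\kappa\vol_g$ while $|Du|_g^2=|j(u)|_g^2$ is small near that point. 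So your ``clean'' estimate $\big|dj(u)+\kappa|u|_g^2\vol_g\big|_g\le|Du|_g^2$ is the correct statement, and, as you observe, it is all that is needed downstream (the paper only ever uses bounds of the type $|\omega(u)|_g\le|Du|_g^2+C|\kappa|$ and $L^1$/$W^{-1,1}$ estimates up to bounded additive terms). In short: your proof is correct, takes the same path except in the key pointwise step, and there it quietly repairs an inaccuracy in the paper's own argument; it would be worth stating explicitly that you are proving the curvature-corrected inequality rather than the one literally asserted, instead of calling the discrepancy ``harmless'' only in passing.
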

\begin{proof}
If $u, v\in \calX^{1,2}(\mathcal O)$ and $p\in [1,2)$, then the H\"older inequality implies
\begin{align*}
\int_{\mathcal O} |j(u)-j(v)|_g^p\, \vol_g&\leq C\bigg( \int_{\mathcal O} |(D(u-v), iu)_g|^p\, \vol_g+\int_{\mathcal O} |\big(Dv, i(u-v)\big)_g|^p\, \vol_g\bigg)\\
&\leq C \bigg(\|D(u-v)\|^p_{L^2} \|u\|^p_{L^q}+\|Dv\|^p_{L^2} \|u-v\|^p_{L^q}\bigg)\\
&\leq 
C \|D(u-v)\|^p_{L^2}(\|Du\|^p_{L^2}+\|Dv\|^p_{L^2}), 
\end{align*}
where $q=p/(2-p)$ and we used the Sobolev embedding $\calX^{1,2}\subset L^q$.
Therefore, $j:\calX^{1,2}(\mathcal O)\to L^{p}({\mathcal O})$ is a continuous map.
As $d:L^p\to W^{-1,p}$ is continuous, we deduce that
$u\mapsto dj(u)$ is continuous as map with values into 
the set of $2$-forms endowed with the $W^{-1,p}$-norm for every $p\in [1,2)$.  

We now prove that $|dj(u)|_g \le |Du|_g^2$ a.e. in $\mathcal O$. Assume for the moment that $u$ is smooth in $\mathcal O$. Fix some $x\in \mathcal O$, and choose  (properly oriented) coordinates near $x$
such that the coordinate vector fields $\partial_{x_1}, \partial_{x_2}$ are orthonormal {\em at $x$.}
In these coordinates,
$j(u) = \sum_{k=1,2}(D_k u, iu)_g dx^k$
and thus, by the Schwartz lemma,
\beq
\label{egal_dj}
dj(u) =\sum_{k, \ell=1,2} (D_k u, iD_\ell u)_g dx^\ell \wedge dx^k =2 (i D_1 u, D_2 u)_g dx^1\wedge dx^2.
\eeq
Thus at $x$,
$$
|dj(u)|_g = 2 | (i D_1 u, D_2 u)_g | \le |D_1u|_g^2 + |D_2 u|_g^2 = |Du|_g^2,
$$
where we have used several times the choice of coordinates, which implies that
 $dx^1, dx^2$, are orthonormal at $x$, in particular that $dx^1\wedge dx^2 = \vol_g$. 
In the general case, by a standard density argument (via Lemma \ref{L.density}), one deduces that the above inequality holds a.e. in $\mathcal O$ for every $u\in \calX^{1,2}(\mathcal O)$.
The last part of the statement follows from \eqref{formula12}.
\end{proof}

As a consequence, we have the following:
\begin{lemma}\label{L.omega0}
Assume that ${\mathcal O}$ is an open subset of $S$ and that $u\in \calX^{1,2}({\mathcal O})$
satisfies $|u|_g=1$.
Then
\[
d j(u) = -\kappa \vol_g \qquad\mbox{in }{\mathcal O}.
\]
In particular, we have  $\omega(u)=0$ in ${\mathcal O}$.
\end{lemma}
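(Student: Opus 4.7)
The plan is to reduce the identity $dj(u) = -\kappa\,\vol_g$ to a local computation in a moving frame, then pass to the Sobolev class by density. The statement $\omega(u)=0$ is immediate from the definition \eqref{omega.def} once the identity for $dj(u)$ is in hand.

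For $u$ smooth with $|u|_g=1$, the claim is purely local, so I would fix $x\in\mathcal O$ and work on a simply connected coordinate neighborhood $U\ni x$ equipped with an orthonormal moving frame $\{\tau_1,\tau_2=i\tau_1\}$. Since $U$ is simply connected, a standard lifting theorem for smooth $S^1$-valued maps applies: the unit tangent field $u$ admits a smooth phase lift
$$
u = \cos\Theta\,\tau_1 + \sin\Theta\,\tau_2 \qquad\text{on } U,
$$
for some smooth $\Theta:U\to\R$. A short direct calculation combining $D_v\tau_1=-A(v)\tau_2$, $D_v\tau_2=A(v)\tau_1$ and $iu=-\sin\Theta\,\tau_1+\cos\Theta\,\tau_2$ yields
$$
D_v u = (d\Theta(v)-A(v))\,iu,
$$
and hence $j(u)(v)=(D_v u,iu)_g = d\Theta(v)-A(v)$, i.e.\ $j(u)=d\Theta-A$ on $U$. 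Taking exterior derivatives and invoking the connection identity \eqref{rel_connection} then gives $dj(u) = -dA = -\kappa\,\vol_g$ on $U$, and since $x$ was arbitrary, on all of $\mathcal O$.

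For general $u\in\calX^{1,2}(\mathcal O)$ with $|u|_g=1$, I would fix any $\mathcal O'\subset\subset\mathcal O$ and apply Lemma \ref{L.density} in the case $p=2$, $|u|_g=1$, to produce smooth unit vector fields $u_\eps\in\calX(\mathcal O')$ with $u_\eps\to u$ in $\calX^{1,2}(\mathcal O')$. The smooth step just established gives $dj(u_\eps)=-\kappa\,\vol_g$ on $\mathcal O'$ for every $\eps$, and Lemma \ref{L.last} provides continuity of $u\mapsto dj(u)$ from $\calX^{1,2}(\mathcal O')$ into the space of $2$-forms with the $W^{-1,p}$ norm for any $p\in[1,2)$. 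Passing to the limit yields $dj(u)=-\kappa\,\vol_g$ on $\mathcal O'$, hence on $\mathcal O$ by arbitrariness of $\mathcal O'$.

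The only non-routine step is the local lift $u = e^{i\Theta}\tau_1$: it relies on the standard fact that any smooth $S^1$-valued map on a simply connected set lifts smoothly to $\R$, applied after trivializing $TS|_U$ via $\tau_1$. Everything else --- the identity $dA=\kappa\,\vol_g$, the existence of a smooth unit approximating sequence in $\calX^{1,2}$, and the $W^{-1,p}$ continuity of $dj$ --- is supplied directly by results already established in the excerpt, so there is no substantive obstacle beyond organizing these pieces.
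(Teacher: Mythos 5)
Your proof is correct and follows essentially the same route as the paper: reduce the smooth case to the identity $dA=\kappa\,\vol_g$ for a connection $1$-form, then handle general $u\in\calX^{1,2}$ by the density Lemma \ref{L.density} together with the $W^{-1,p}$-continuity of $u\mapsto dj(u)$ from Lemma \ref{L.last}. The paper's smooth step is just slightly more direct—it takes $\tau_1=u$ itself as the moving frame on $\mathcal O$, so that $A=-j(u)$ and no simply connected chart or local phase lift is needed—but this is only a cosmetic difference.
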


\begin{proof}
If $u$ is smooth in ${\mathcal O}$, then we define $\tau_1 = u$ and $\tau_2 = i \tau_1$ in ${\mathcal O}$,
and the definitions imply that the connection $1$-form associated to this
choice of orthonormal frame is exactly $A = - j(u)$. So the conclusion
follows immediately as $dA=\kappa \vol_g$. For general $u\in \calX^{1,2}({\mathcal O})$ of unit length, one argues by density (see Lemma \ref{L.density}) and the continuity properties of $j(\cdot)$ in Lemma \ref{L.last}.
\end{proof}

\bigskip

\section{The canonical harmonic vector field. Proof of Theorem \ref{P1}}\label{S:chm}

In this section we consider $(S,g)$ to be a $2$-dimensional closed oriented Riemannian manifold
(not assumed to be embedded in any Euclidean space).
We will need the following

\begin{lemma}\label{L.frame}
If $B$ is any nonempty open subset of $S$, then there exists a moving frame $\{ \tau_1,\tau_2\}$
on $S\setminus B$.
\end{lemma}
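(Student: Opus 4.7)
The plan is to reduce the construction of a moving frame to the existence of a single smooth nowhere-vanishing unit vector field $\tau_1$ on $S\setminus B$, and then set $\tau_2:=i\tau_1$ with $i$ the bundle isometry defined by \eqref{isom}. Indeed, a direct computation from \eqref{isom} shows that $(i\tau_1,\tau_1)_g=0$, that $|i\tau_1|_g^2 = -(\tau_1, i^2\tau_1)_g = |\tau_1|_g^2 = 1$, and that $\vol_g(\tau_1, i\tau_1) = (i\tau_1, i\tau_1)_g = 1$, so $\{\tau_1,i\tau_1\}$ is automatically a properly oriented orthonormal frame wherever $\tau_1$ is smooth and of unit length. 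Smoothness of $\tau_2$ follows from smoothness of $\tau_1$ and of $i$ as a bundle automorphism.

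Since $B$ is nonempty and open, I would pick a point $p\in B$; then $S\setminus B\subset S\setminus\{p\}$, so it suffices to exhibit a smooth vector field $v\in \calX(S)$ whose only zero on $S$ lies at $p$, as $\tau_1:=v/|v|_g$ will then be a smooth unit vector field on $S\setminus\{p\}$ and hence on $S\setminus B$. The key step is thus the construction of such a $v$. This is a classical consequence of the Poincar\'e--Hopf theorem: starting from the gradient $\nabla f$ of a smooth Morse function $f:S\to\R$, one obtains a smooth vector field with a finite set $Z$ of isolated zeros whose indices sum to $\chi(S)$. One then joins the points of $Z$ to $p$ by pairwise disjoint smooth embedded arcs (possible since $S$ is connected and $2$-dimensional) and, inside disjoint tubular neighborhoods of these arcs, applies smooth ambient isotopies that slide each zero onto $p$; the push-forward of $\nabla f$ under the composition of these isotopies is a smooth vector field on $S$ vanishing only at $p$ (with total index $\chi(S)$ there).

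The only real subtlety is the coalescence step, which must be carried out smoothly; but this is standard and well documented (see e.g.\ Milnor's \emph{Topology from the Differentiable Viewpoint}). As an alternative, more topological route, one may observe that $S\setminus\{p\}$ is homotopy equivalent to a one-dimensional CW-complex (a bouquet of $2\mathfrak g$ circles if $\mathfrak g\ge 1$, a point if $\mathfrak g=0$), so $H^2(S\setminus\{p\};\Z)=0$; the Euler class of the oriented rank-$2$ bundle $TS|_{S\setminus\{p\}}$ therefore vanishes, which for rank-$2$ oriented bundles over a space of dimension $\le 2$ is equivalent to the existence of a nowhere-vanishing section. Either way, one obtains the required $\tau_1$, and the argument in the first paragraph then completes the proof. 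I do not expect any serious obstacle beyond the well-known construction of a vector field with a single zero.
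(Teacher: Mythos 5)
Your overall strategy is the same as the paper's: produce a smooth vector field whose zeros avoid $S\setminus B$, normalize it to get $\tau_1$, and set $\tau_2=i\tau_1$, the orientation and orthonormality checks via \eqref{isom} being exactly as you write. The paper starts from the standard do Carmo construction of a field vanishing only at the vertices of a triangulation and then simply pushes that finite zero set into $B$ by a diffeomorphism of $S$; no coalescence of zeros is needed, since the frame is only required on $S\setminus B$.

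The one step of your primary route that does not work as described is the coalescence: you cannot ``slide each zero onto $p$'' by ambient isotopies and then push the field forward, because the composition of those isotopies is a diffeomorphism, hence injective, and so cannot send several distinct zeros to the single point $p$ (also, arcs from the various zeros to $p$ all meet at $p$, so they are not pairwise disjoint). The statement you are aiming at (a field with a single zero) is true, but its proof is the Hopf disk-replacement argument — enclose all zeros in one disk and redefine the field inside it by an extension vanishing only at the center — not a push-forward. More to the point, the coalescence is unnecessary: since you only need $\tau_1$ on $S\setminus B$, it suffices to choose a diffeomorphism of $S$ carrying the finitely many zeros to distinct points of $B$ (multi-transitivity of the diffeomorphism group of a connected surface), which is precisely the paper's argument. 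Your alternative obstruction-theoretic route ($H^2(S\setminus\{p\};\Z)=0$, vanishing Euler class, hence a nowhere-zero section) is sound, with the minor caveat that obstruction theory produces a continuous section, which must then be smoothed and normalized to yield the smooth unit field required here.
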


\begin{proof}
A standard construction (see for example \cite{Car94} pages 103-4) yields a
smooth vector field 
that does not vanish outside some finite set (the vertices of a triangulation of $S$). After pushing forward via a 
diffeomorphism of $S$ that maps every point of this finite set into $B$,
we get a vector field $v$ such that $|v|_g>0$ outside $B$.
Then we obtain a moving frame on $S\setminus B$ by
setting $\tau = v/|v|_g$ and  $\{ \tau_1,\tau_2\}  = \{ \tau, i\tau\}$.
\end{proof}

\begin{lemma}\label{L.jsb}
Let $\gamma$ be any closed Lipschitz curve on $S$.
If $\{ \tau_1,\tau_2\}$ and $\{\tilde \tau_1,\tilde \tau_2\}$ are moving frames defined 
in a neighborhood of $\gamma$, and $A$ and $\tilde A$ are the associated connection $1$-forms, then
\[
\int_\gamma A = \int_\gamma \tilde A \quad \mod 2\pi.
\]
\end{lemma}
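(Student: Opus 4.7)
\textbf{Plan.} The idea is to relate the two connection $1$-forms by an exact form $d\theta$, where $\theta$ is the angle between the two frames. Since $\theta$ is a priori only defined modulo $2\pi$, integrating $d\theta$ around the closed curve $\gamma$ produces an integer multiple of $2\pi$, which gives the conclusion.

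\textbf{Step 1: Define the angle $\theta$.} Let $U$ be an open neighborhood of $\gamma$ on which both moving frames $\{\tau_1,\tau_2\}$ and $\{\tilde\tau_1,\tilde\tau_2\}$ are defined. Because both frames are orthonormal and properly oriented in $U$, at each point $x\in U$ there is a unique angle $\theta(x)\in \R/2\pi\Z$ such that $\tilde\tau_1 = \cos\theta\,\tau_1 + \sin\theta\,\tau_2$ (equivalently, $\tilde\tau_1 = e^{i\theta}\tau_1$ in complex notation via the isometry $i$), and then necessarily $\tilde\tau_2 = -\sin\theta\,\tau_1+\cos\theta\,\tau_2$. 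This defines a smooth map $\theta: U\to \R/2\pi\Z$.

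\textbf{Step 2: Relate the connection $1$-forms.} I would compute $\tilde A(v) = (D_v\tilde\tau_2,\tilde\tau_1)_g$ by expanding $D_v\tilde\tau_2$ using the Leibniz rule, then contracting with $\tilde\tau_1$. Using $(D_v\tau_1,\tau_1)_g = (D_v\tau_2,\tau_2)_g=0$, $(D_v\tau_2,\tau_1)_g = A(v)$, $(D_v\tau_1,\tau_2)_g = -A(v)$, and the identity $\sin^2\theta+\cos^2\theta=1$, the cross terms collapse and one finds
\[
\tilde A \ = \ A - d\theta \qquad \text{on } U,
\]
which is meaningful even though $\theta$ is only defined modulo $2\pi$, since $d\theta$ is a genuine $1$-form on $U$.

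\textbf{Step 3: Integrate around $\gamma$.} Parametrize $\gamma:[0,1]\to U$. By continuous path lifting, the composition $\theta\circ\gamma: [0,1]\to \R/2\pi\Z$ lifts to a smooth function $\hat\theta:[0,1]\to \R$, and since $\gamma(0)=\gamma(1)$ we have $\hat\theta(1)-\hat\theta(0)\in 2\pi\Z$. Therefore
\[
\int_\gamma \tilde A \ - \ \int_\gamma A \ = \ -\int_\gamma d\theta \ = \ -\bigl(\hat\theta(1)-\hat\theta(0)\bigr)\ \in\ 2\pi\Z,
\]
which is precisely the desired congruence.

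\textbf{Expected difficulty.} The only subtlety is the $\mathbb{R}/2\pi\Z$-valued nature of $\theta$, which is handled cleanly by the path lifting in Step~3 (equivalently, by noting that $d\theta$ is globally defined on $U$ even though $\theta$ itself may not be). The algebraic computation in Step~2 is routine, as is everything else; the lemma is really a coordinate-change formula packaged to be invariant modulo $2\pi$.
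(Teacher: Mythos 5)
Your argument is correct and essentially the same as the paper's: the paper writes the frame change as a unit complex-valued function $\phi$ (your $e^{i\theta}$), shows $A-\tilde A=(d\phi,i\phi)$ (your $d\theta$), and lifts $\phi\circ\gamma$ along the closed curve to conclude the difference of the integrals lies in $2\pi\Z$. The only minor slip is that since $\gamma$ is merely Lipschitz the lift $\hat\theta$ is Lipschitz rather than smooth, which changes nothing in the conclusion.
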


\begin{proof}
In the domain where they are both defined, there exists a smooth $\C$-valued function
$\phi$ such that $\tilde \tau_1 = \phi\tau_1$, since $\{\tau_1(x),\tau_2(x)\} = \{\tau_1(x),i\tau_1(x)\}$ form a basis
for $T_xS$. It then follows that $|\phi|=1$ everywhere and that $\tilde \tau_2 = \phi\tau_2$ as well.
If we write $\phi = \phi_1+i\phi_2$,  the definition of
the connection $1$-form together with Section \ref{sec:calc} 
imply that $A - \tilde A =  \phi_1 d\phi_2-\phi_2 d\phi_1 =: (d\phi, i\phi)$.

Next, it is convenient to abuse notation and write $\gamma$ to denote both the curve in
$S$ and a Lipschitz 
function $\gamma:[0,1]\to S$, with $\gamma(0)=\gamma(1)$, that parametrizes the
given curve, with the correct orientation. We will also write $\varphi = \phi\circ \gamma: [0,1]\to \SSS^1\subset \C$. Clearly $\varphi$ is Lipschitz, so we can find a Lipschitz function
$f: [0,1]\to \R$ such that $\varphi(s) = e^{if(s)}$ for $s\in [0,1]$. Then one readily checks that
\[
\int_\gamma(A-\tilde A) = \int_\gamma (d\phi, i\phi) = \int_0^1  (\varphi'(s), i\varphi(s)) ds 
= \int_0^1 f'(s)ds = f(1)- f(0)\in 2\pi \Z
\]
since $\varphi(0) = \varphi(1)$.
\end{proof}

As a consequence, we deduce that the index (or topological degree) defined in \eqref{deg.def} is an integer number:
\begin{lemma}
\label{L.degree}
Let ${\mathcal O}$ be a simply connected open subset of $S$ of nonempty Lipschitz boundary  and
$u\in \calX^{1,2}({\mathcal N})$ is a vector field in a neighborhood ${\mathcal N}$ of $\partial {\mathcal O}$ such that $|u|_g\ge \frac 12$ a.e. in ${\mathcal N}$;
then the {\em index} of $u$ along $\partial {\mathcal O}$ defined in \eqref{deg.def} is well defined and it is an integer.
\end{lemma}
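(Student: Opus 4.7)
The plan is to reduce the index to a Sobolev winding number by using that the tangent bundle is trivial over $\bar{\mathcal O}$. Since $\mathcal O$ is a proper simply connected open subset of the oriented $2$-manifold $S$, its closure $\bar{\mathcal O}$ (with Lipschitz boundary) is contractible, so $TS|_{\bar{\mathcal O}}$ is trivial. I therefore pick a smooth properly oriented moving frame $\{\tau_1,\tau_2\}$ on an open neighborhood $U\supset \bar{\mathcal O}$ and shrink $\mathcal N$ so that $\mathcal N\subset U$; let $A$ denote the associated connection $1$-form on $U$, smooth and satisfying $dA=\kappa\,\vol_g$ by \eqref{rel_connection}.

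In $\mathcal N$, decompose $u=\phi\tau_1$ with $\phi := (u,\tau_1)_g + i(u,\tau_2)_g \in W^{1,2}(\mathcal N;\C)$. Since $\{\tau_1,\tau_2\}$ is orthonormal we have $|\phi|=|u|_g\ge 1/2$ a.e.\ on $\mathcal N$, so $w:=\phi/|\phi|$ lies in $W^{1,2}(\mathcal N;\SSS^1)$. Combining the scaling identity $j(u)=|u|_g^2\,j(u/|u|_g)$ from Section \ref{sec:calc} with the phase formula $j(e^{i\Theta}\tau_1)=d\Theta-A$ (a direct consequence of \eqref{DandA} and of $j(\tau_1)=-A$) yields the a.e.\ equality
\[
\frac{j(u)}{|u|_g^2} \;=\; (dw,iw)_{\C} - A \qquad\text{in }\mathcal N,
\]
where $(dw,iw)_\C(v):=\mathrm{Im}(\bar w\,dw(v))$ is the closed real $L^2$ $1$-form representing ``$d\arg w$''. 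Integrating over $\partial\mathcal O$ and invoking Stokes' theorem $\int_{\mathcal O}dA=\int_{\partial\mathcal O}A$ (valid because $A$ is smooth on all of $\mathcal O$), I obtain
\[
2\pi\deg(u;\partial\mathcal O) \;=\; \int_{\partial\mathcal O}\frac{j(u)}{|u|_g^2} + \int_{\mathcal O}\kappa\,\vol_g \;=\; \int_{\partial\mathcal O}(dw,iw)_\C,
\]
so the index is well defined and equals $2\pi$ times the ``winding number'' of $w$ around $\partial\mathcal O$.

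It remains to show $\int_{\partial\mathcal O}(dw,iw)_\C \in 2\pi\Z$. I apply Lemma \ref{L.density} (with $p=2$) to the unit vector field $u/|u|_g\in \calX^{1,2}(\mathcal N)$ on a slightly smaller tubular neighborhood $\mathcal N'$ of $\partial\mathcal O$, obtaining smooth unit vector fields $v_n=w_n\tau_1\to u/|u|_g$ in $\calX^{1,2}(\mathcal N')$; then the $w_n$ are smooth $\SSS^1$-valued functions with $w_n\to w$ in $W^{1,2}(\mathcal N';\SSS^1)$, and their traces on $\partial\mathcal O$ converge in $W^{1/2,2}(\partial\mathcal O;\SSS^1)$. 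For each smooth $w_n$, $\tfrac{1}{2\pi}\int_{\partial\mathcal O}(dw_n,iw_n)_\C$ is the classical integer winding number of $w_n|_{\partial\mathcal O}$; passing to the limit via continuity of the Sobolev degree on $W^{1/2,2}(\partial\mathcal O;\SSS^1)$ identifies $\int_{\partial\mathcal O}(dw,iw)_\C$ with $2\pi\deg(w|_{\partial\mathcal O})\in 2\pi\Z$. The main technical obstacle is exactly this last step: giving rigorous meaning to the boundary integral of the Sobolev $1$-form $(dw,iw)_\C$ and establishing that integer winding numbers pass to the limit, which is where standard but nontrivial Sobolev degree theory for $\SSS^1$-valued maps is needed.
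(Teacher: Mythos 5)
Your proposal is correct and follows essentially the same route as the paper: trivialize $TS$ over a neighborhood of $\bar{\mathcal O}$ by a moving frame, write $u/|u|_g=w\,\tau_1$ with $w\in H^1(\mathcal N;\SSS^1)$, use $j(u)/|u|_g^2=(dw,iw)-A$ together with Stokes' theorem for the smooth connection form $A$ (via $dA=\kappa\,\vol_g$) to reduce the index to $\frac1{2\pi}\int_{\partial\mathcal O}(dw,iw)$, and then invoke degree theory for $H^{1/2}(\partial\mathcal O;\SSS^1)$ maps. The only cosmetic difference is the last step: the paper cites \cite{BGP,BN} directly for the integrality of this boundary integral, while you rederive it by smooth approximation (Lemma \ref{L.density}) plus continuity of the Sobolev degree, which rests on the same cited theory.
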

\begin{proof}
We start by explaining why the definition \eqref{deg.def} makes sense for $u\in \calX^{1,2}({\mathcal N})$. In fact, if $\{\tau, i \tau\}$ is a moving frame in ${\mathcal N}\cup \mathcal O$ (which exists due to Lemma \ref{L.frame} as by our assumption $S\setminus \mathcal O$ has nonempty interior) and $\tilde u=u/|u|_g$, 
then $\tilde u=\phi \tau$ for some $\phi\in H^1(\mathcal N, \SSS^1)$. Denoting by $A$ the connection $1$-form associated to the frame, by \eqref{DandA}, we have that $D\tilde u=(d\phi-iA\phi)\tau$ so that $j(u)/|u|_g^2=j(\tilde u)=(d\phi, i\phi)-A$ where $(d\phi, i\phi)=\phi_1 d\phi_2-\phi_2 d\phi_1$ is the current associated to the unit-length  complex function $\phi$ belonging to $H^{1/2}(\partial \mathcal O, \SSS^1)$ by the trace theorem. Therefore, since $dA=\kappa \vol_g$, the 
Stokes theorem implies that \eqref{deg.def} writes
$$2\pi \deg(u; \partial {\mathcal O})=\int_{\partial {\mathcal O}}(d\phi, i\phi)-\int_{\partial {\mathcal O}} A+\int_{\mathcal O} \kappa \vol_g=\int_{\partial {\mathcal O}}(d\phi, i\phi)$$
where the meaning of the last term is given by the duality $(H^{-1/2}(\partial {\mathcal O}), H^{1/2}(\partial {\mathcal O}))$. Moreover, it is known (see \cite{BGP, BN}) that
this number is a multiple of $2\pi$ as long as $\phi \in H^{1/2}(\partial \mathcal O, \SSS^1)$.
\end{proof}

The following lemma is a main point in the proof of Theorem \ref{P1}.

\begin{lemma}\label{L.ns}
Let $u$ be a smooth unit vector field defined on an open set $\mathcal O\subset S$.
If $\gamma$ is any smooth closed curve in $\mathcal O$, and if $A$ is the connection $1$-form
associated to any moving frame defined in a neighborhood of $\gamma$, then
\begin{equation}\label{ns1}
\int_\gamma (j(u) + A) \in 2\pi \Z.
\end{equation}
Conversely, if $j$ is a smooth $1$-form in an open set $\mathcal O\subset S$ such
that 
\begin{equation}\label{ns2}
\int_\gamma (j+A) \in 2\pi \Z
\end{equation}
for any curve $\gamma$ and connection $1$-form $A$ as above, then there exists a smooth unit vector field $u$ in the open set $\mathcal O$,
such that $j(u)=j$.
\end{lemma}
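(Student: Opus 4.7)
For the forward direction, my plan is to reduce to a local phase representation along $\gamma$. On the neighborhood $\mathcal N$ of $\gamma$ carrying the moving frame $\{\tau_1,\tau_2\}$, both $u$ and $\tau_1$ are smooth and of unit length, so I would write $u=\phi\,\tau_1$ with $\phi:\mathcal N\to\SSS^1\subset\C$ smooth. Parametrizing $\gamma:[0,1]\to\mathcal O$ with $\gamma(0)=\gamma(1)$, I lift $\phi\circ\gamma$ to a continuous $\Theta:[0,1]\to\R$ with $\phi\circ\gamma=e^{i\Theta}$; then $\Theta(1)-\Theta(0)\in 2\pi\Z$ since $\phi(\gamma(0))=\phi(\gamma(1))$. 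A direct computation from \eqref{DandA} gives $j(e^{i\Theta}\tau_1)=d\Theta-A$ in $\mathcal N$, so $\int_\gamma(j(u)+A)=\Theta(1)-\Theta(0)\in 2\pi\Z$.

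For the converse, I plan to build $u$ locally and then patch. First, applying \eqref{ns2} to $\gamma=\partial D$ for small disks $D$ inside any chart carrying a moving frame, Stokes gives $\int_D d(j+A)\in 2\pi\Z$; letting $D$ shrink forces $d(j+A)=0$ pointwise in every local frame. Next, on any simply connected $U\subset\mathcal O$ admitting a frame $\{\tau_1^U,\tau_2^U\}$ with connection form $A_U$ (which exists by Lemma \ref{L.frame} applied to a slightly larger open set), the closed $1$-form $j+A_U$ admits a primitive $\Theta_U$, and the calculation from the forward direction shows that $u_U:=e^{i\Theta_U}\tau_1^U$ is a unit vector field on $U$ with $j(u_U)=j$.

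To assemble the global $u$, I cover $\mathcal O$ by a good cover $\{U_i\}$ of such charts with local solutions $u_i$. The identity $j(e^{i\beta}u)=j(u)+d\beta$ for unit $u$ and real $\beta$ (from Section \ref{sec:calc}) shows that two unit vector fields with the same current on a connected open set differ by multiplication by a constant $e^{i\beta}\in\SSS^1$; hence on each component of $U_i\cap U_j$, $u_j=e^{i\beta_{ij}}u_i$ with $\beta_{ij}\in\R/2\pi\Z$ constant, defining a \v{C}ech $1$-cocycle. By chaining the local potentials $\Theta_i$ along a closed loop $\gamma$ and using Lemma \ref{L.jsb} to reconcile frame changes on overlaps, the monodromy of this cocycle around $\gamma$ equals $\int_\gamma(j+A)\bmod 2\pi$ for any connection form $A$ defined in a neighborhood of $\gamma$. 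Hypothesis \eqref{ns2} then forces the cocycle to be a coboundary $\beta_{ij}=\beta_j-\beta_i$, and the adjusted sections $e^{i\beta_i}u_i$ glue into the desired global $u$. I expect the main obstacle to be precisely this last identification, which is where \eqref{ns2} is consumed; everything else is either routine local computation or direct invocation of previous lemmas.
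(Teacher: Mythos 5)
Your argument is correct, but for the converse it follows a genuinely different route from the paper. The paper proves both directions with a single ODE/parallel-transport device: it writes the equation $D_{\gamma'}U = j(\gamma')\,iU$ (see \eqref{transport}), solves it explicitly in a frame to get the holonomy formula \eqref{holonomy0}, reads off \eqref{ns1} from $U(0)=U(1)$, and then \emph{defines} $u(y)$ by transporting a fixed vector from a base point along an arbitrary path, with path-independence being exactly condition \eqref{ns2} via the same formula; smoothness comes from ODE dependence on data, and $j(u)=j$ is checked directly. Your forward direction (lifting the phase of $u=\phi\tau_1$ along $\gamma$ and using \eqref{DandA}) is essentially the paper's observation that \eqref{ns1} follows from Lemma \ref{L.jsb}. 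For the converse you instead take a sheaf-theoretic path: deduce $d(j+A)=0$ from \eqref{ns2} on small disk boundaries, solve locally by taking primitives $\Theta_U$ of $j+A_U$ on a good cover, note that local solutions with the same current differ by constant rotations, and kill the resulting flat $\SSS^1$-valued \v{C}ech cocycle by identifying its monodromy with $\int_\gamma(j+A)\bmod 2\pi$. This is a legitimate and more structural argument; what it costs is precisely the two steps you leave as claims, which should be carried out: the chaining computation identifying the cocycle monodromy with $\int_\gamma(j+A)$ (a telescoping argument using the transition functions between $\tau_1^{U_i}$ and a fixed frame near $\gamma$, i.e.\ the computation inside the proof of Lemma \ref{L.jsb} rather than its statement), and the fact that vanishing monodromy around all loops makes a constant-coefficient $\R/2\pi\Z$ cocycle on a good cover a coboundary (true, e.g.\ because $\R/2\pi\Z$ is divisible so $H^1(\mathcal O;\R/2\pi\Z)\cong\operatorname{Hom}(H_1(\mathcal O;\Z),\R/2\pi\Z)$, or by continuing the local potentials along paths, which brings you back to the paper's construction). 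What the paper's route buys in exchange for being less cohomological is an explicit formula \eqref{holonomy0} that is reused later: it gives the uniqueness-up-to-rotation statement in Step 5 of the proof of Theorem \ref{P1} and the convergence $u^*_\e\to u^*$ in the extrinsic lower bound, which your patching construction does not provide as directly.
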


\begin{proof} The first part is a direct consequence of Lemma \ref{L.jsb} as $\{u, iu\}$ is a moving frame around 
$\gamma$ to which the connection $1$-form $\tilde A$ is associated so that $j(u)=-\tilde A$. However, we give in the following a different proof that is needed for the last part of the statement.
Let $u$ be a smooth unit vector field on $\mathcal O\subset S$.
For simplicity we write $j_u := j(u)$.\\

\nd {\it Step 1. An ODE argument}. Fix some smooth curve $\gamma:[0,1]\to \mathcal O$ with $\gamma(0) = \gamma(1)$ and
for $s\in [0,1]$, let $U(s) := u(\gamma(s)) \in T_{\gamma(s)}S$. 
Then for $s\in [0,1]$,
we have
\begin{align}
D_{\gamma'(s)}U(s) 
&= 
(D_{\gamma'(s)}U(s)  , U(s))_g \, U(s) +  
(D_{\gamma'(s)}U(s)  , iU(s))_g \, iU(s) \nonumber \\
&= j_u(\gamma'(s)) \,iU(s)\, ,
\label{ode1}\end{align}
since $0 = \frac d{ds}|U(s)|_g^2 = 2(D_{\gamma'}U(s), U(s))_g$.
(We remind the reader of our convention that if $j_u$ is a $1$-form and  $v\in T_x S$, then $j_u(v)$ denotes
$j_u|_x(v)$.)
Now let $\{\tau_1,\tau_2\} = \{ \tau, i\tau\}$ be any moving frame defined in a neighborhood of $\gamma$, and let $A$
be the connection $1$-form associated to it. 
Writing $U(s)$ in terms of the frame, we have
$$
U(s) = \phi(s)\tau(s) = (\phi_1(s) + i \phi_2(s))\, \tau(s)
$$
where $\tau(s) := \tau(\gamma(s))$ and $\phi_j(s) = (U(s), \tau_j(s))_g$,
$j=1,2$. Using \eqref{DandA} to rewrite the ODE \eqref{ode1} in terms of $\phi$, 
we obtain
\[
\phi'(s) = (j_u + A)(\gamma'(s)) \ i \phi(s) .
\]
We solve to find that 
\begin{equation}
U(s) = \phi (s)\tau(s) = \phi(0) \exp\left[ i \int_0^s (j_u+A)(\gamma'(t)) \, dt \right]\tau(s),
\label{holonomy0}\end{equation}
for $0<s\le 1$.
Since $\gamma(0)=\gamma(1)$, however, it must be the case that $U(0)=U(1)$, 
and thus 
\[
 \int_0^1 (j_u+A)(\gamma'(t)) \, dt 
 =
 \int_\gamma (j_u+A) = 0\mod 2\pi.
\]
This proves \eqref{ns1}.\\

\nd {\it Step 2. Strategy}. To establish the converse, we now assume that $j$ satisfies \eqref{ns2} on an 
open set $\mathcal O$.
We may assume that $\mathcal O$ is connected, as otherwise we may follow the procedure described below on every connected component.
Now fix some $x\in \mathcal O$ and $v\in T_xS$ such that $|v|_g=1$.
Given any other $y\in \mathcal O$, we define $u(y)$ by the following procedure:
Fix a smooth curve $\gamma:[0,1]\to \mathcal O$ such that 
$\gamma(0)= x, \gamma(1) = y$. 
If $u$ exists, then $u(\gamma(s))$ must satisfy the ODE \eqref{ode1} found above. 
Motivated by this, 
we  let $U(s) \in T_{\gamma(s)}S$ be the solution of \eqref{ode1} with initial data as below:
\begin{equation}
D_{\gamma'(s)} U(s) =j( \gamma'(s)) \ iU(s),
\qquad U(0) = v.
\label{transport}\end{equation}
We  hope to define
\[
u(y) := U(1).
\]

\nd {\it Step 3. Independence of the connecting path}. We must verify that the above definition makes sense (in particular, is independent
of the choice of path connecting $x$ to $y$). 
For this, it suffices to show that 
for any piecewise smooth curve $\gamma:[0,1]\to \mathcal O$ 
such that $\gamma(0)=\gamma(1)$, 
$$
\mbox{ if $U$ solves \eqref{transport} along $\gamma$, \ \ \ then } 
U(0)=U(1).
$$
Indeed, if $\gamma_1$ and $\gamma_2$ are two such curves joining $x$ to $y$,
then
$$
\gamma(s) := \begin{cases}
\gamma_1(1-2s)&\mbox{ if }0\le s \le \frac 12\\
\gamma_2 (2s -1)&\mbox{ if }\frac 12 \le s \le 1
\end{cases}
$$
is a piecewise smooth curve beginning and ending at $y$ and passing through $x$ when $s=1/2$.
If we consider the solution of \eqref{transport} such that $U(\frac 12) = v\in T_xS$,
then $U(0)-U(1)$ characterizes the difference between the vectors obtained
by transporting $v$ from $x$ to $y$, using the ODE \eqref{transport},
along $\gamma_1$ and $\gamma_2$.

Now, exactly as above, by writing \eqref{transport} in terms of a moving
frame $\{ \tau_1,\tau_2\} = \{\tau, i\tau\}$
and solving the resulting equation, we find that \eqref{holonomy0} holds, and
thus that
$U(0)=U(1)$ if and only if  \eqref{ns2} is satisfied.
Thus the above procedure gives a well-defined vector
field $u$ on $\mathcal O$, which is clearly a unit vector field in view of \eqref{holonomy0}.
\\

\nd {\it Step 4. Smoothness of $u$ and $j_u=j$}. 

As $j$ is smooth and generating $u$ via \eqref{transport}, by regularity of ODEs w.r.t. change of parameters and initial data, we deduce that $u$ is smooth in $\mathcal O$. It remains to check that $j(u) = j$. Again we will write $j_u$ instead of $j(u)$.
Given any $y\in \mathcal O$ and $v\in T_yS$,  fix a smooth curve 
$\gamma:[0,1]\to  \mathcal O$ such that 
\[
\gamma(0) = x,
\qquad
\gamma(1)=y,\qquad
\gamma'(1)=v.
\]
Let $U(s)\in T_{\gamma(s)}S$ solve the ODE \eqref{transport}. By construction,
$U(s) = u(\gamma(s))$ for all $s$.
Then at the point $y$ (corresponding to $s=1$) we have
\[
j_u(v) \overset{\eqref{j.def} }= (\dd_v u, iu)_g =  (\dd_{\gamma'}U, iU)_g 
\overset{\eqref{transport}}
= j(\gamma')
= j(v).
\]
Since $v$ was arbitrary, it follows that $j(u) = j_u = j$, completing the proof.
\end{proof}

Before proving Theorem \ref{P1}, we need the following result:
\begin{lemma}\label{L.buxtehude}
Assume $a_1, \dots, a_n$ be $n$ distinct points in $S$, $d_1, \dots, d_n\in \Z$ such that \eqref{necessary} is satisfied and let $\psi$ be the zero average $2$-form solving \eqref{psi.def}.
Let $\lambda_\ell$, $\ell = 1,\ldots, J$ be
closed Lipschitz curves in $S$, all disjoint from the set $\cup_{k=1}^n \{a_k\}$ 
appearing in \eqref{psi.def}, and such that
$\sum_{\ell =1}^{J} \tilde d_\ell \lambda_\ell$ is homologous to $0$, 
for some integers $\tilde d_1,\ldots, \tilde d_{J}$.
Finally, let $\{\tau_1,\tau_2\}$ be a moving frame defined in a neighborhood of $\cup_{\ell=1}^J \lambda_\ell$,
and let $A$ be the connection $1$-form associated to it.
Then
\begin{equation}\label{jcb}
\sum_{\ell=1}^J \tilde d_\ell \int_{\lambda_\ell} (d^*\psi + A)  = 0 \mod 2\pi.
\end{equation}
\end{lemma}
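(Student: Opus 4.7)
The plan is to reduce to the converse part of Lemma \ref{L.ns} by introducing a smooth unit vector field $v$ on $S\setminus\{a_1,\ldots,a_n\}$ whose indices at the singular points are exactly $d_1,\ldots,d_n$, and then to show via Hodge decomposition that the currents $j(v)$ and $d^*\psi$ differ by the sum of an exact 1-form and a harmonic 1-form, neither of which contributes to the integral along the homologically trivial chain $\Gamma:=\sum_\ell \tilde d_\ell \lambda_\ell$.

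First I would invoke the Poincar\'e-Hopf theorem: since \eqref{necessary} is the requisite topological condition, there exists a smooth unit vector field $v$ on $S\setminus\{a_1,\ldots,a_n\}$ with index $d_k$ at each $a_k$. Because each $\lambda_\ell$ is disjoint from $\{a_k\}_{k=1}^n$, $v$ is smooth on a neighborhood of every $\lambda_\ell$, and Lemma \ref{L.ns} directly yields
$$\int_{\lambda_\ell}(j(v)+A)\in 2\pi\Z \qquad \text{for each }\ell=1,\ldots,J. \quad (\dagger)$$

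Next I would compute the distributional exterior derivative of $j(v)$. By Lemma \ref{L.omega0}, $dj(v)=-\kappa\vol_g$ on $S\setminus\{a_k\}$, and the index condition at $a_k$ adds a Dirac mass, giving the global identity
$$dj(v)=-\kappa\vol_g+2\pi\sum_{k=1}^n d_k\delta_{a_k}=-\Delta\psi,$$
where the last equality uses \eqref{psi.def} together with $d\psi=0$ (since $\psi$ is a top form). As $j(v)\in L^p(S)$ for every $p<2$, the Hodge decomposition writes $j(v)=d\beta+d^*\phi+\eta$ with $\beta$ a 0-form, $\phi$ a mean-zero 2-form and $\eta\in Harm^1(S)$. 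Applying $d$ gives $-\Delta\phi=dj(v)=-\Delta\psi$; uniqueness of mean-zero solutions then forces $\phi=\psi$, so
$$j(v)-d^*\psi=d\beta+\eta$$
is a 1-form smooth on a neighborhood of $\cup_\ell\lambda_\ell$.

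The conclusion is then immediate. Each $\lambda_\ell$ is closed, so $\int_{\lambda_\ell} d\beta=0$ by Stokes. And since $\Gamma$ is homologous to $0$, the definition in Section \ref{S:homology} applied to the closed form $\eta$ yields $\sum_\ell\tilde d_\ell \int_{\lambda_\ell}\eta = \int_S f\,d\eta=0$ for the integer-valued function $f$ witnessing the triviality. Combining with $(\dagger)$,
$$\sum_{\ell=1}^J \tilde d_\ell \int_{\lambda_\ell}(d^*\psi+A)=\sum_{\ell=1}^J\tilde d_\ell\int_{\lambda_\ell}(j(v)+A)\in 2\pi\Z,$$
as claimed. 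The main technical obstacle is the middle step: justifying the Hodge decomposition for $j(v)\in L^p$ and, crucially, identifying the mean-zero Hodge potential $\phi$ with $\psi$; this reduces to a distributional computation matching the Dirac contributions coming from the indices of $v$ with the right-hand side of \eqref{psi.def}. A minor point is that Lemma \ref{L.ns} is stated for smooth $\gamma$, whereas each $\lambda_\ell$ is only Lipschitz, but its ODE-based proof goes through verbatim in the Lipschitz setting.
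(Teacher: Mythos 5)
Your route is genuinely different from the paper's. The paper never constructs a comparison vector field: it regularizes $\psi$ by replacing the Dirac masses with smooth bumps $Q_\sigma$ of integral $d_k$ near each $a_k$, applies the defining identity of ``homologous to $0$'' to the smooth $1$-form $d^*\psi_\sigma+A$ (after using Lemmas \ref{L.frame} and \ref{L.jsb} to work with a frame defined near the support of the integer-valued function $f$, normalized to vanish on an open set), notes that $\int_S f\,d(d^*\psi_\sigma+A)=2\pi\int_S fQ_\sigma\vol_g\in 2\pi\Z$ because $f$ is locally constant near the $a_k$, and then lets $\sigma\to 0$ using locally uniform convergence $d^*\psi_\sigma\to d^*\psi$ away from the singular points. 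That argument needs no Hodge theory and no existence theorem for vector fields. Your argument—produce a unit field $v$ with the prescribed singularities, get $\int_{\lambda_\ell}(j(v)+A)\in2\pi\Z$ (here Lemma \ref{L.jsb} applied to the frame $\{v,iv\}$ already gives this for Lipschitz curves, which disposes of your smooth-vs-Lipschitz caveat), and then kill the discrepancy $j(v)-d^*\psi=d\beta+\eta$ on the null-homologous cycle—is conceptually appealing and, in outline, sound: the identification of the co-exact part with $d^*\psi$ via $dj(v)=-\Delta\psi$ and mean-zero uniqueness is correct, and the exact and harmonic pieces indeed contribute nothing ($\beta$ is smooth near the $\lambda_\ell$ by elliptic regularity, and $\sum_\ell\tilde d_\ell\int_{\lambda_\ell}\eta=\int_S f\,d\eta=0$).

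Two ingredients, however, are carrying more weight than your citations support. First, the existence of $v$: Poincar\'e--Hopf is the necessity direction, while you need Hopf-type realizability with zeros at the \emph{prescribed} points $a_k$, \emph{prescribed} indices $d_k$, and—crucially for the rest of your proof—controlled local behaviour near each $a_k$ (e.g. $v=e^{id_k\theta}\tau_1$ in normal coordinates and a local frame). For a generic smooth realization $w$ with a degenerate zero, $v=w/|w|_g$ need not obviously have $j(v)\in L^p$, nor the clean distributional identity $dj(v)=-\kappa\vol_g+2\pi\sum_k d_k\delta_{a_k}$; with a chosen local model both are immediate, and such a $v$ can be built by gluing local degree-$d_k$ models inside a disk containing all the $a_k$ to a nonvanishing field on the complement, the matching of boundary degrees being exactly \eqref{necessary}. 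This construction is classical but it is a real piece of work that your proposal treats as a one-line citation. Second, the Hodge decomposition recorded in the paper is only for square-integrable $1$-forms, whereas $j(v)$ is merely in $L^p$, $p<2$; you must either establish an $L^p$/measure-data version (solve for $\phi$ and $\beta$ with distributional right-hand sides and use Weyl's lemma to see the remainder is a genuine harmonic form) or argue directly with $\alpha:=j(v)-d^*\psi$. You flagged these last points yourself; once they and the construction of $v$ are supplied, the proof closes, but as written the argument is noticeably heavier than the paper's mollification argument, which it does not reproduce.
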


\begin{remark}
The proof shows that the conclusion of the Lemma still holds if 
\[
-\Delta\psi +\kappa \vol_g = \omega
\]
where 
$\omega$ is a $2$-form supported in a union $\cup_{k=1}^n B_k$ of disjoint balls
such that $\int_{B_k}\omega=2\pi d_k$ for every $1\leq k\leq n$, \eqref{necessary} holds and
the curves $\{\lambda_\ell\}_{1\leq \ell\leq J}$ are disjoint from  $\cup_{k=1}^n \bar B_k$.
\label{R.buxtehude}\end{remark}

\begin{proof}
The assumption that $\sum \tilde d_\ell \lambda_\ell$ is homologous to $0$ means that
there exists an integrable function $f:S\to \Z$ such that
\begin{equation}
 \sum_{k=1}^{J} \tilde d_\ell\int_{\lambda_\ell}\phi =  \int_S f \, d\phi
\qquad \mbox{ for every smooth $1$-form $\phi$.}
\label{dcmpse}\end{equation}
Since we can add a constant to $f$ without changing the integral in \eqref{dcmpse},
we may also assume that $f = 0$ on an open set $B$. After shrinking $B$
if necessary, we may assume that its closure does not
intersect $\cup_{\ell=1}^J \lambda_\ell$. Then, according to Lemma \ref{L.frame},
there exists a moving frame $\{\tau_1,\tau_2\}$ defined on a neighborhood of the support of
$f$. Let $A$ denote the associated connection $1$-form. In view of  Lemma \ref{L.jsb},
it suffices to prove \eqref{jcb} for this choice of $A$.
We wish to substitute $\phi = d^*\psi+A$ in \eqref{dcmpse} (but $\psi$ is not smooth on $S$) to find that
\[
\sum_{\ell=1}^J \tilde d_\ell \int_{\lambda_\ell} (d^*\psi + A) =
 \int_S f \, d(d^*\psi+A) \in 2\pi \Z \, ,
\]
since $f$ is integer-valued and $d(d^*\psi+A) = -\Delta \psi + \kappa \vol_g  = 2\pi \sum_{k=1}^n d_k\delta_{a_k}$, according 
to \eqref{psi.def}.
To justify this, we approximate $\psi$ by smooth functions proceeding as follows.
First, it is a standard fact that if $\int f d\phi = 0$ for all smooth $1$-forms with support 
in an open set $U$, then $f$ is constant\footnote{Modifying $f$ on a null set, 
if necessary, we assume that
$f(x) = \lim_{r\to 0} \barint_{B(r,x)}f(y) \vol_g$ wherever this limit exists. 
}
in $U$. 
It thus follows from \eqref{dcmpse} that $f$ is locally constant away from $\cup \lambda_\ell$, 
and in particular in a neighborhood of each $a_k$.
For $0<\sigma < \frac 12 \min_{j\ne k} \dist_S(a_j,a_k)$, 
let $Q_\sigma$ be a smooth function supported in 
$\cup_{k=1}^n B(a_k, \sigma)$,  with $d_k Q_\sigma \ge 0$ inside $B(a_k, \sigma)$, 
and  such that $\int_{B(a_k, \sigma)} Q_\sigma \vol_g = d_k$ for every
$k$ and $\sigma$, 
and let  $\psi_\sigma$ solve
\[
-\Delta \psi_\sigma = -\kappa \vol_g + 2\pi Q_\sigma  \vol_g\ .
\]
Then \eqref{dcmpse} implies that for every $\sigma>0$,
\[
\sum_{\ell=1}^{J} \tilde d_\ell \int_{\lambda_\ell}(d^*\psi_\sigma+A) = 2\pi \int_S f  Q_\sigma \vol_g .
\]
The last integral belongs to $2\pi \Z$ for every  $\sigma< \min_k \dist_S(a_k, \cup_\ell \lambda_\ell)$, and 
standard theory (for example,  properties of the Green's function
recalled in Section \ref{sec:renorm}) implies that 
$d^*\psi_\sigma\to d^*\psi$ as $\sigma\searrow 0$, locally uniformly away from $\{a_k\}$.
Thus we deduce \eqref{jcb}
by taking the limit $\sigma\searrow 0$  .
\end{proof}

We can now give the main result of this section: 

\begin{proof}[Proof of Theorem \ref{P1}]

Let $\psi = \psi(a;d)$ solve \eqref{psi.def}, for fixed
$d_1,\ldots, d_n\in \Z$ and distinct $a_1,\ldots, a_n\in S$
such that  \eqref{necessary} holds. Let $j^*=j^*(a,d,\Phi)$ be defined by \eqref{form_jstar},
that is, $j^* = d^*\psi + \sum_{k=1}^{2\mathfrak g} \Phi_k \eta_k$. \\

\nd {\it Step 1.Definition of $\zeta_k(a;d)$ and its consequences}. 
We recall the definition of $\zeta_k(a;d)$.
For every $k=1,\ldots, 2\mathfrak g$, we let $\lambda_k$ be a smooth
curve that  is homologous to $\gamma_k$ (the geodesics
found in Lemma \ref{L.FedFlem}) and disjoint from $\{a_l\}_{l=1}^n$.
We now define  $\zeta_k(a;d)\in \R/2\pi\Z$  by
\eqref{zetak.def}, i.e.,
$$
\zeta_k(a;d):=\int_{\lambda_k} (d^*\psi + A)  \,   \mod 2\pi, \quad k=1,\ldots, 2\mathfrak g,
$$
where $A$ is the connection $1$-form associated to any moving frame defined in
a neighborhood of $\lambda_k$.
It follows from Lemmas \ref{L.jsb} and \ref{L.buxtehude} that
the above integral is independent, modulo $2\pi\Z$,
of the choice of moving frame and of the curve
$\lambda_k$ homologous to $\gamma_k$,
and hence that $\zeta_k$ is well-defined as an element of $\R/2\pi\Z$.

With this choice of $\zeta_k$, we deduce from \eqref{form_jstar} that
$$
\int_{\lambda_k} (j^*+A) = \int_{\lambda_k} \big(d^*\psi +A +\sum_{\ell=1}^{2\mathfrak g} \Phi_\ell \eta_\ell\big)=
\zeta_k+\sum_\ell \alpha_{k\ell}\Phi_\ell.
$$
where $\{\alpha_{k\ell}\}$ were defined in \eqref{akl.def}. 
Also, it follows from Lemma \ref{L.FedFlem} that
any $\gamma$ is homologous to a linear combination of $\gamma_1,\ldots, \gamma_{2\mathfrak g}$, and
hence to a linear combination of $\lambda_1,\ldots, \lambda_{2\mathfrak g}$,
say $\sum_{k=1}^{2\mathfrak g} \tilde d_k \lambda_k$. Then
Lemma \ref{L.buxtehude} implies that
\[
\int_\gamma (j^*+A) = \sum_{k=1}^{2\mathfrak g}\tilde d_k \int_{\lambda_k}(j^*+A)
=
\sum_{k=1}^{2\mathfrak g}\tilde d_k \left(\zeta_k+\sum_\ell \alpha_{k\ell}\Phi_\ell
\right).
\] 
It follows that for $\zeta_k(a;d)$ as defined above, $j^*(a,d, \Phi)$ satisfies 
\begin{equation}\label{lattice3}
\begin{aligned}
\int_\gamma(j^*+A)=0 &\mod 2\pi \quad \mbox{for every Lipschitz path $\gamma$ in } S\setminus \cup_{l=1}^n \{a_l\} \\
& \iff \qquad\qquad  \sum_{\ell=1}^{2\mathfrak g} \alpha_{k\ell}\Phi_\ell +\zeta_k = 0\mod 2\pi
\mbox{ for all }1\leq k\leq 2\mathfrak g,
\end{aligned}
\end{equation}

\smallskip
\nd {\it Step 2. First implication}. Assume that $u^*$ is a unit vector field satisfying  \eqref{ustar2} and \eqref{ustar1}.
These conditions and the equation \eqref{psi.def} for $\psi$ imply that $j(u^*) - d^*\psi$
is a harmonic $1$-form, and it follows that $j(u^*) =: j^*(a,d,\Phi)$
for certain constants $\Phi_k$. 
Then by combining \eqref{ns1}  and \eqref{lattice3}, we conclude that
$\sum_\ell \alpha_{k\ell}\Phi_\ell +\zeta_k = 0\mod 2\pi$ for every $k$,
which is \eqref{lattice}.

\smallskip
\nd {\it Step 3. Converse implication}.
Fix constants $(\Phi_k)$ satisfying \eqref{lattice}.  By combining 
\eqref{lattice3} and the sufficiency assertion from Lemma \ref{L.ns},
we conclude that there exists a smooth
unit vector field  $u^*$ in $\mathcal O := S\setminus \cup_{l=1}^n \{a_l\}$
satisfying $j(u^*)=j^*$ so that \eqref{form_jstar} is fulfilled. 

\smallskip
\nd {\it Step 4. Continuity of $\zeta_k$, $k=1, \dots, 2\mathfrak g$}.
To prove the continuity of $\zeta_k$,  consider a sequence $\mu_t$ as in \eqref{mul.to.mu},
and let $\nu_t := \mu_t-\mu_0$ with $t>0$ small. Then \eqref{mul.to.mu} and basic properties of the 
$W^{-1,1}$ norm imply that $\nu_t$ can be written in the form
\[
\nu_t = \sum_{l=1}^{K_t} 2\pi (\delta_{p_{l,t}} - \delta_{q_{l,t}}), \qquad 
\mbox{ with }
\sum_l \dist_S(p_{l,t}, q_{l,t}) \to 0 \quad \textrm{as } \, t\to 0 
\]
and $\{K_t\}_{t\to 0}$ is uniformly bounded.
For sufficiently small $r>0$, whenever $t>0$ is small enough,  we can find 
Lipschitz paths
$\lambda_{k,t}$, for $k=1,\ldots, 2\mathfrak g$, such that
\[
\dist_S(\lambda_{k,t} , \{ p_{l,t}, q_{l,t}\}_l ) \ge r ,\qquad
\lambda_{k,t} \mbox{ is homologous to }\gamma_k,
\qquad\mbox{ and }\quad \calH^1(\lambda_{k,t}) \le C 
\]
for all $k$ (where $\gamma_k$ are the geodesics fixed at Lemma \ref{L.FedFlem}). In fact, the curves $\lambda_{k,t}$ can be considered to be the geodesics $\gamma_k$, and whenever they pass through
$B(p_{l,t} , 2r)$ or $B(q_{l,t} , 2r)$, replace that portion of the path with an arc of the circle $\partial B(p_{l,t} , 2r)$ or 
$\partial B(q_{l,t} , 2r)$.
By \eqref{newpsi}, we write for $t>0$ small:
$$\psi_t=2\pi \sum_{l=1}^{K_t} \bigg[\big(G(\cdot, p_{l,t}) - G(\cdot, q_{l,t})\big)\vol_g\bigg],$$ so that
for every $k=1,\ldots, 2\mathfrak g$, the definition \eqref{zetak.def} of $\zeta_k$ 
implies that
\[
2\pi \sum_{l=1}^{K_t} \int_{\lambda_{l,t}} d^* \bigg[\big(G(\cdot, p_{l,t}) - G(\cdot, q_{l,t})\big)\vol_g\bigg]  = \zeta_k( a_t, d_t) - \zeta_k(a_0, d_0) \mod 2\pi.
\]
But facts about the Green's function summarized in Section \ref{sec:renorm} imply that 
$d^*\bigg[\big(G(\cdot, p) - G(\cdot, q)\big)\vol_g\bigg]\to 0$ as $\dist_S(p,q)\to 0$, uniformly in the set $\{(x,p,q) \, : \, 
\dist_S(x, \{p,q\}) \ge r\}$. 
Hence the sum of integrals on the left-hand side above tends to $0$ as $t\to 0$,
which is what we needed to prove.

\medskip
\nd {\it Step 5. Uniqueness (modulo a global rotation) of $u^*$}. Assume that 
$u^*$ and 
$\tilde u^*$ are two solutions of \eqref{ustar2} and \eqref{ustar1} such that $j(u^*) = j(\tilde u^*)$.
Fixing $x$ and $v = u^*(x)$ as at the start of the construction of $u^*$ (in \eqref{transport}).
Since both $v$ and $\tilde v := \tilde u^*(x)$
are unit vectors, there exists some $\alpha$ such that 
$\tilde v = e^{i\alpha} v$. Then by inspection we see that 
if $\gamma$ is any Lipschitz curve avoiding the points $\cup_{k=1}^n\{a_k\}$, then
$\tilde U(s) = e^{i\alpha} U(s)$ solves \eqref{transport} with initial data
$\tilde U(0)  = \tilde v$. It follows that $\tilde u^*(y) = e^{i\alpha}u(y)$ for
every $y\not\in \cup_{k=1}^n\{a_k\}$. Thus $\tilde u^* = e^{i\alpha}u^*$ a.e. in $S$.

\medskip
\medskip
\nd {\it Step 6. Regularity}. Standard estimates, such as those recalled in Section \ref{sec:renorm}
for example, imply that Green functions belong to $W^{1,p}$ for all $p<2$ and smooth away from $\cup_k \{a_k\}$ which by \eqref{newpsi} it leads to $\psi$ being in the same Sobolev space and smooth away from $\cup_k \{a_k\}$. Moreover, \eqref{form_jstar} in combination with $|Du^*|_g = |j(u^*)|_g$ (by \eqref{ode1}) yields $u\in \calX^{1,p}(S)$ for all $p<2$. As $j(u^*)$ is smooth away from 
$\cup_k\{a_k\}$, then Lemma \ref{L.ns} through the construction \eqref{ode1} yield
$u^*$ is smooth away from $\cup_k\{a_k\}$. 
\end{proof}

We also prove the estimate \eqref{dist_lat}:

\begin{proof}[Proof of Lemma \ref{lem:latti}] 
First note from \eqref{setL} that there exists some $C = C(\alpha)$ such that
\[
\dist_{\R^{2\mathfrak g}} \big(\calL(a; d),\calL(\tilde a; \tilde d)\big)\leq C
\qquad\mbox{ for all }(a;d), (\tilde a, \tilde d).
\]
It therefore suffices to prove \eqref{dist_lat} under the assumption that 
$\| \mu -\tilde \mu\|_{W^{-1,1}}\le 1$.
As in Step 4 in the proof of Theorem \ref{P1}, we can rewrite $\mu-\tilde \mu=2\pi  \sum_{l=1}^{\hat n} (\delta_{p_{l}} - \delta_{q_{l}})$ for the dipoles $\{p_l, q_l\}_{l=1}^{\hat n}\subset a\cup \tilde a$ with $\hat n\leq 2K$. It follows from our specific choice of the $W^{-1,1}$ norm (see Section \ref{sec:soboesp} and the fact that $\| \mu -\tilde \mu\|_{W^{-1,1}}\le 1$
(see \cite{BCL}) that  the $W^{-1,1}$ norm of  $\mu-\tilde \mu$ represents the minimal connection 
\[
\|\mu-\tilde \mu\|_{W^{-1,1}} = 2\pi\min_{\sigma\in \calE_{\hat n}}  \sum_{l=1}^{\hat n} \dist_{S}(p_{l}, q_{\sigma(l)}),\]
where $\calE_{\hat n}$ is the set of permutations of $\hat n$ elements. After relabelling, we can assume that an optimal permutation $\sigma$ is the identity. For sufficiently small $r>0$, we can find 
Lipschitz paths
$\lambda_{k}$ homologous to $\gamma_k$ (where $\gamma_k$ are the geodesics fixed at Lemma \ref{L.FedFlem}) and of uniformly bounded length, for $k=1,\ldots, 2\mathfrak g$, such that
$\dist_S(\lambda_{k} , \{ p_{l}, q_{l}\}_l ) \ge r$ for all $k$. If we denote by $\psi(a;d)$ and $\tilde \psi(\tilde a, \tilde d)$ the solutions defined in \eqref{psi.def} associated to $\mu$ and $\tilde \mu$, we have by \eqref{newpsi}:
$$\hat \psi:=\psi-\tilde \psi=2\pi \sum_{l=1}^{\hat n} \bigg[\big(G(\cdot, p_{l}) - G(\cdot, q_{l})\big)\vol_g\bigg].$$ 
As $\big|d^*\big[\big(G(x, p) - G(x, q)\big)\vol_g\big]\big|\leq C_r \dist_S(p,q)$ for $\dist_S(x, \{p,q\}) \ge r$, we deduce by \eqref{zetak.def} that
$$|\zeta_k(a;d)-\zeta_k(\tilde a, \tilde d)|\leq \int_{\gamma_k} \big|d^*\hat \psi|
\leq 2\pi \sum_{l=1}^{\hat n} \int_{\gamma_k} \big|d^*\big[\big(G(x, p_l) - G(x, q_l)\big)\vol_g\big]\big|\leq C_r \|\mu-\tilde \mu\|_{W^{-1,1}}.$$
The conclusion is now straightforward.
\end{proof}

\begin{example}
\label{exam}
Let $S$ be the flat torus  $\R^2/\Z^2$ with the standard $(x,y)$ coordinates
and the standard metric 
$ds^2 = dx^2+dy^2$. We will often identify $S$ with the unit square
with periodic boundary conditions.
Here the genus $\mathfrak g = 1$, and
the $1$-forms $\eta_k$ (fixed as an orthonormal basis in \eqref{harmon})
may be taken to be
\[
\eta_1 = dx, \qquad \eta_2 = dy.
\]
In addition, we may take the geodesics from Section \ref{S:homology} 
to be
\[
\gamma_1(s) = (s,0) ,\qquad \qquad \gamma_2(s) = (0,s),
\qquad \mbox{ for $0\le s \le 1$. }
\]
We let $\{\tau_1,\tau_2\}$ denote the standard coordinate vector fields,
yielding a global moving frame for which the connection $1$-form $A$ is identically $0$ and $\kappa=0$.

Fix some $(a;d)\in S^n\times \Z^n$ such that \eqref{necessary} holds
and let $\psi$ solve \eqref{psi.def}, i.e., $-\Delta \psi=2\pi \sum_{k=1}^n d_k\delta_{a_k}$ in $S$.
We will identify each $a_k$ with the point  $a_k := (a_k^x,a_k^y) \in [0,1)^2$
and we write
\[
I(y) := \int_{\lambda^y_1} d^*\psi,\qquad\mbox{ for }\lambda_1^y(s) = (s,y), \ \ 0\le s \le 1.
\]
For every $y\in [0,1)$, note that $\lambda_1^y$ is homologous to the geodesic $\gamma_1$.
According to the definition \eqref{zetak.def},
if  $y \not\in \{ a_k^y\}_{k=1}^n$, then
$\zeta_1(a;d)$ is the equivalence class in $\R/2\pi\Z$
containing $I(y)$.
We may assume by a translation that $0\not\in \{ a_k^y\}_{k=1}^n$. Then
by Stokes Theorem and the equation \eqref{psi.def} for $\psi$:
\begin{equation}\label{zeta.example1}
I(y) - I(0) =
\int_{ [0,1]\times (0, y)} d d^*\psi = 
2\pi
\int_{ [0,1]\times (0, y)} \sum_{k=1}^n d_k\delta_{a_k}
 \end{equation}
for {\em a.e.} $y\in (0,1)$.
On the other hand,  the $2$-form $\psi$ may be written $\psi = \tilde \psi(x,y) dx \wedge dy$
for some function $\tilde \psi:S\to \R$, which we may identify with a $\Z^2$ periodic
function on $\R^2$. Then $d^*\psi =-\partial_x \tilde\psi dy +\partial_y\tilde\psi dx$, so
that 
\[
\int_0^1 I(y) \, dy = \int_0^1 \left( \int_0^1 \partial_y\tilde \psi(s,y) ds\right) dy = 0
\]
by the periodicity of $\tilde \psi$. 
We can thus integrate \eqref{zeta.example1} and simplify (using the fact that $\sum d_i=0$) to find that 
\[
I(0) = \int_0^1 I(0) \,dy  
 = \ -2\pi \sum_{k=1}^n d_k (1-a_k^y)
 = \ 2\pi \sum_{k=1}^n d_k a_k^y.
\]
This determines $\zeta_1(a;d)$.
An identical computation shows that $\zeta_2(a;d)$ is the equivalence
class in $\R/2\pi\Z$ containing $2\pi \sum_{k=1}^n d_ka_k^x$.
\end{example}

\bigskip

\section{The intrinsic renormalized energy. Proof of Proposition \ref{prop.W}}
\label{Sec:RE}
In this section, we prove the characterization of the 
intrinsic renormalized energy in Proposition \ref{prop.W}. 

\begin{proof}[Proof of Proposition \ref{prop.W}] Let $r>0$ be small satisfying 
\[
\sqrt r \le \rho_a:=\min_{k\ne l}\dist_S(a_k, a_l)
\]
and recall the notation $S_r := S\setminus \cup_{k=1}^n B_r(a_k)$.
The fact that $u^*$ is a unit vector field implies that 
$|Du^*|_g^2 = |j(u^*)|_g^2$. 
Then the form \eqref{form_jstar} of $j(u^*)$
implies that 
$$
\frac 12\int_{S_r} |j(u^*)|_g^2 \vol_g  
=
\frac 12
\int_{S_r} \bigg(|d^*\psi|_g^2 + 2 \sum_{k=1}^{2\mathfrak g} \Phi_k(d^*\psi, \eta_k)_g + \sum_{l,k=1}^{2\mathfrak g} \Phi_l\Phi_k 
(\eta_l,\eta_k)_g\bigg) \ \vol_g.
$$

\nd {\it Step 1. Computing the integrals depending on $\Phi$}. As $\{\eta_k\}_{k=1}^{2\mathfrak g}$ are smooth forming an orthonormal basis of \eqref{harmon}, we compute
$$
\int_{S_r}  \sum_{l,k}\Phi_l\Phi_k (\eta_l,\eta_k)_g \ \vol_g
=
\int_{S}\sum_{l,k} \Phi_l\Phi_k (\eta_l,\eta_k)_g \ \vol_g  
+O(|\Phi|^2 r^2)=
|\Phi|^2 +O(|\Phi|^2 r^2).
$$
Similarly, integrating by parts, 
\begin{align*}
\int_{S_r} \sum_l \Phi_l (d^*\psi,\eta_l)_g \ \vol_g
&=
\int_{S}  \sum_l\Phi_l (\psi, \underbrace{d\eta_l}_{=0})_g \ \vol_g + O( |\Phi| \, r) \\
&  = O(|\Phi|^2  r^{3/2} + r^{1/2})
\end{align*}
where we used 
\eqref{newpsi} and the properties on Green's function, which imply
that
$$\int_{B_r} |d^* \psi|\, dvol\sim \int_0^r \frac 1s\, sds=O(r).$$
 \\

\nd {\it Step 2. Computing $\int_{S_r} |d^*\psi|_g^2\, \vol_g$.}
We rewrite \eqref{newpsi} as follows:
\[
\psi = (\psi_0 + \psi_1)\vol_g, \qquad \psi_1 := \sum_{k=1}^n 2\pi d_k G(\cdot, a_k).
\]
(Observe that we have taken $\psi_0, \psi_1$ to be functions, whereas $\psi$ is a $2$-form.)
Then $|d^*\psi|_g^2 = |\star d \star \psi|_g^2 = |d\star\psi|_g^2 = |d(\psi_0+\psi_1)|_g^2$.
Since $\psi_0$ is smooth and $\psi_1\in W^{1,p}$ for $p<2$, it follows that
$$
\int_{S_r} |d^*\psi|_g^2 \vol_g 
= 
\int_{S_r} |d\psi_1|_g^2 \vol_g 
+ \int_{S_r} \bigg(2(d \psi_1, d \psi_0 )_g +|d\psi_0|_g^2\bigg)\, \vol_g. 
$$

\medskip

\nd {\it Step 2a. Computing $\int_{S_r} |d\psi_1|_g^2\, \vol_g$.}
We use  Stokes Theorem (see \eqref{stokes.bdy})
to write
\[
\int_{S_r} |d\psi_1|_g^2 \vol_g
=
\int_{S_r} (d^*d\psi_1, \psi_1)_g \vol_g - \sum_{k=1}^n \int_{\partial B(a_k,r)} 
\psi_1 \star d\psi_1 \ .
\]
Since $\psi_1$ has mean $0$ and  $d^*d\psi_1$ is constant, equal with $-\bar \kappa$ (see \eqref{psi0.def}) away from $\{a_k\}$, it follows \footnote{Recall that $\Delta (\psi_1 \vol_g)=
(\Delta \psi_1) \vol_g$.}
\[
 \int_{S_r}  (d^* d\psi_1, \psi_1)_g \vol_g =
-\bar\kappa  \int_{S_r}\psi_1\vol_g 
= \bar\kappa \int_{\cup_k B_r(a_k)} \psi_1\, \vol_g
= O(r^2 (|\log r|+1)) 
\] where we used the Green functions properties recalled in Section \ref{sec:renorm} and the fact that the distance between the points $a_k$ is larger than $\sqrt r$, i.e.,
\begin{align*}
&\int_{B_r(a_k)} G(\cdot, a_k)\, \vol_g\leq C \int_0^r |\log s|\, sds=O(r^2 (|\log r|+1)),\\ 
&\int_{B_r(a_k)} G(\cdot, a_l)\, \vol_g\leq C |\log \dist_S(a_k, a_l)| \textrm{Vol}(B_r(a_k)),
\end{align*}
We now fix $k\in \{1\ldots, n\}$, and we 
write
\[
R_k(x) := \psi_1(x)  + d_k \log \dist_S(x, a_k) = 2\pi d_k H(x,a_k) +\sum_{l\ne k} 2\pi d_l G(x, a_l)
\]
to denote the regular part of 
$\psi_1$ near $a_k$. Since $H \in C^1(S\times S)$ and $\dist_S(a_l, a_k) \ge \sqrt r$ for every $l\ne k$, it is clear that $R_k$ is Lipschitz in $B_r(a_k)$, with Lipschitz constant
bounded by $Cr^{-1/2}$. In addition,
$|d\psi_1|_g\le C/r$ on $\partial B(a_k,r)$, so
\begin{align*}
\int_{\partial B(a_k,r)}\, \psi_1 (\star d\psi_1)
&= 
\int_{\partial B(a_k,r)} (R_k -  d_k \log r )(\star d \psi_1) \\
&= 
\big(R_k (a_k) -  d_k \log r + O(\sqrt r)\big)\int_{\partial B(a_k,r)} \star d \psi_1
\end{align*}
and (recalling that $\eta = \star \eta \,\vol_g$  for any $2$-form $\eta$)
\begin{align*}
\int_{\partial B(a_k,r)}\star d \psi_1 &=
\int_{B(a_k,r)} d\star d \psi_1 =
\int_{B(a_k,r)} \underbrace{\star d\star}_{=-d^*} d \psi_1 \vol_g  =
\int_{B(a_k,r)} \Delta  \psi_1 \vol_g \\
&= -2\pi d_k + \bar\kappa \mbox{Vol}(B(a_k,r)) = -2\pi d_k - O(r^2).
\end{align*}
Combining the above, we find that 
$$
 \int_{S_r} |d\psi_1|^2 \vol_g 
= -  \sum 2\pi d_k^2  \log r
+ \sum_k 4\pi^2 d_k^2 H(a_k,a_k) + 8\pi^2\sum_{1\leq l< k\leq n}d_kd_lG(a_k,a_l) +O(\sqrt r).
$$

\nd {\it Step 2b. Computing $\int_{S_r}(d\psi_1 ,d\psi_0)_g\, \vol_g$.}
Since  $\psi_0$ is smooth in $S$ and $\psi_1\in W^{1,p}(S)$ for $p<2$, H\"older's inequality leads to
\[
\int_{S_r} (d\psi_1 ,d\psi_0)_g \, \vol_g = 
\int_{S} (d\psi_1 ,d\psi_0)_g\, \vol_g +   \| d\psi_1\|_{L^{4/3}} O( r^{1/2})  = \int_{S} (d\psi_1 ,d\psi_0)_g\vol_g + O(\sqrt r).
\]
As $\psi_0$ has mean $0$, Stokes theorem and the equation satisfied by $\psi_1$ imply that
\[
\int_S (d\psi_0 ,d\psi_1)_g \vol_g = 
\int_S (\psi_0 , d^*d \psi_1)_g \vol_g =
\int_S (\psi_0 , -\Delta \psi_1)_g \vol_g =
 \sum_k 2\pi d_k \psi_0(a_k) .
\]

\nd {\it Step 3. Conclusion}. 
As a consequence of the above computation, we obtain the following: there exists $r_0(S)>0$ such that if 
$r\in (0,r_0)$ satisfies 
\[
\sqrt r \le \min_{k\ne l}\dist_S(a_k, a_l)
\]
then any $1$-form $j^*=j^*(a,d,\Phi)$ satisfying \eqref{form_jstar} (with $\psi=\psi(a;d)$ given by \eqref{psi.def} and $\{\Phi\}_{k=1}^{2\mathfrak g}$ not necessarily in $\calL(a;d)$)
we have that
\begin{align}
\label{W.def}
&\frac 12 \int_{S_r} |j^*(a,d, \Phi)|^2 \vol_g 
=- \pi \log r\sum_{k=1}^n d_k^2+4\pi^2 \sum_{1\leq l< k\leq n} d_l d_k G(a_l,a_k)   \\
&+2\pi \sum_{k=1}^n  \left[ \pi d_k^2 H(a_k,a_k) +d_k \psi_0(a_k) \right] 
+ \frac 12|\Phi|^2 + \int_S \frac{|d\psi_0|_g^2}2 \vol_g
 + O( \sqrt r)  + O(|\Phi|^2 r^{3/2}).
\nonumber \end{align}
Moreover, the constants above depend only on $S$ and $\sum_{k=1}^n  |d_k|$.
We conclude that the limit in the definition \eqref{defi_W} of $W(a,d, \Phi)$ exists and the desired formula \eqref{W.formula} holds true.

\end{proof}

\bigskip

\section{Compactness}
\label{sec:Comp}

The result of this section will be crucial in proving point 1 of our main result in 
Theorem~\ref{intrinsic.gammalim}. It is stated as precise estimates for the vorticity and the flux integrals in terms of  the intrinsic energy,
but  immediately
implies parallel results for the other energies (in view of \eqref{e_vs_e1} and
\eqref{e_vs_e2}, see below).

\begin{propo}
For every $p\in [1,2)$ and $T, C>0$, every integer $n>T-1$ and every $0<q< 1 - \frac T{n+1}$, then
there exist $\e_0 \in (0, \frac12)$, $C_p>0$ such that the following holds true: 
if $0<\e<\e_0$ and $u\in \calX^{1,2}(S)$ with  
\begin{equation}
\frac 12 \int_S |Du|_g^2 + \frac 1{2\e^2}F(|u|^2_g) \, \vol_g\le T \pi \logeps + C,
\label{Escaling}\end{equation}
then there exist $K$ distinct points $a_1,\ldots, a_K\in S$
and nonzero integers $d_1,\ldots, d_K\in \Z$ such that \eqref{necessary} holds, $\sum_{k=1}^K |d_k|\leq n$ (so, $K\leq n$) and 
\begin{equation}
\| \omega(u) - 2\pi \sum_{k=1}^K d_k \delta_{a_k} \|_{W^{-1,p}} \le 
C_p (n+1) T \logeps \e^{q(\frac 2p-1)}.
\label{Pcomp.c1}\end{equation}
Moreover, if we define
\begin{equation}\label{Phiu.def}
\Phi(u) = (\Phi_1(u),\ldots, \Phi_{2\mathfrak g}(u)) := \left(\int_S  (j(u) ,  \eta_1)_g \vol_g ,\ldots, \int_S  (j(u), \eta_{2\mathfrak g})_g \vol_g\right), 
\end{equation}
for the orthonormal basis $\{\eta_k\}_{k=1}^{2\mathfrak g}$ fixed in \eqref{harmon}, then
\begin{equation}
\dist_{\R^{2\mathfrak g}} (\Phi(u), \mathcal L (a;d)) \le C_q \e^q \ ,
\label{fluxes.converge}\end{equation}
where $\mathcal L (a;d)$ is the set defined in Section \ref{sec:renorm} for $a=(a_1,\ldots, a_K)$ and $d=(d_1,\ldots, d_K)$.
\label{P.comp}\end{propo}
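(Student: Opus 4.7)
The plan is to proceed in two stages: first establish the vorticity concentration \eqref{Pcomp.c1} via the ball-construction machinery (Appendix), and then extract the flux quantization statement \eqref{fluxes.converge} by combining this with the holonomy characterization of unit vector fields (Lemma \ref{L.ns}) and Hodge theory.

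\textbf{Ball construction and vorticity estimate.} I would start by applying the ball construction adapted to $S$ (see Appendix) to $u$ at an intermediate scale $R = \eps^{q'}$ with $q < q' < 1 - \frac{T}{n+1}$. This yields a finite collection of disjoint closed geodesic balls $\{\bar B_i\} = \{\bar B_{R_i}(p_i)\}$ such that $|u|_g \ge \frac12$ on $S\setminus \cup_i B_i$, $\sum_i R_i \le \eps^{q'}$, with integer degrees $D_i = \deg(u;\partial B_i)$ and a lower bound
\[
\sum_i \pi |D_i|\log\frac{R_i}{\eps} \le E^{in}_\eps(u) + C \le T\pi\logeps + C.
\]
Since $\log(R_i/\eps) \ge (1-q')\logeps$, the energy bound forces $\sum_i |D_i| \le T/(1-q') + o(1) < n+1$, hence $\sum_i|D_i|\le n$ for small $\eps$. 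Define $(a_k, d_k)$ by keeping only balls with $D_i\ne 0$ (and relabelling $a_k := p_i$, $d_k := D_i$); the constraint \eqref{necessary} is automatic from Gauss-Bonnet and $\int_S \omega(u) = 0$ by Stokes. For the $W^{-1,p}$ estimate, I would test against $\phi \in W^{1,q}$: outside the balls, $|u|_g$ is close to $1$, so Lemma \ref{L.omega0} combined with the decomposition $\omega(u) = 2|u|_g\,d|u|_g \wedge j(u/|u|_g) + (1-|u|_g^2)\kappa\,\vol_g$ and Cauchy-Schwarz against the energy density gives a contribution of order $\eps^{q(2/p-1)}\logeps$; inside each ball, Stokes converts $\int_{B_i} \omega(u)$ into $2\pi D_i + O(R_i)$, and $|\phi - \phi(a_k)| \le R_i\|\nabla\phi\|_{L^q}$ controls the pointwise discrepancy.

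\textbf{Flux estimate via holonomy and Hodge decomposition.} For each $\ell = 1,\ldots, 2\mathfrak g$, I would pick a closed curve $\lambda_\ell$ homologous to $\gamma_\ell$, disjoint from $\cup_k \overline{B_{\eps^{q'}}(a_k)}$, along which $|u|_g \ge \frac12$; this is possible because the total radius of the balls tends to $0$, so a small perturbation of $\gamma_\ell$ works. Applying Lemma \ref{L.ns} to $\tilde u := u/|u|_g$ yields
\[
\int_{\lambda_\ell}\!\left(\tfrac{j(u)}{|u|_g^2} + A\right) \in 2\pi \Z,
\qquad\text{hence}\qquad
\int_{\lambda_\ell}(j(u)+A) = 2\pi k_\ell + E_\ell
\]
for some integer $k_\ell$, where $E_\ell = \int_{\lambda_\ell}(1-|u|_g^2)\tfrac{j(u)}{|u|_g^2}$ is small by the energy bound (one uses $\|1-|u|_g\|_{L^2(\lambda_\ell)} \lesssim \eps$ on a.e.\ curve in a Fubini family, after averaging over translates to avoid bad curves).

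\textbf{Matching to the lattice.} Now I would compute $\int_{\lambda_\ell}(j(u)+A)$ from the Hodge side. Let $\psi = \psi(a;d)$ solve \eqref{psi.def} and Hodge-decompose $j(u) = d\xi_u + d^*\beta_u + \sum_k \Phi_k(u)\eta_k$. Since $-\Delta \beta_u = dj(u) = \omega(u) - \kappa\,\vol_g$ while $-\Delta \psi = 2\pi\sum d_k \delta_{a_k} - \kappa\,\vol_g$, the estimate \eqref{Pcomp.c1} combined with standard elliptic regularity gives $\|\beta_u - \psi\|_{W^{1,p}} \lesssim \eps^{q(2/p-1)}\logeps$, and by Fubini along the family of curves,
\[
\int_{\lambda_\ell} d^*\beta_u = \int_{\lambda_\ell} d^*\psi + O(\eps^q) = \zeta_\ell(a;d) - \int_{\lambda_\ell}A + O(\eps^q) \pmod{2\pi}.
\]
Combined with $\int_{\lambda_\ell} d\xi_u = 0$ (closed curve) and $\int_{\lambda_\ell}\eta_k = \alpha_{\ell k}$, this yields
\[
\sum_k \alpha_{\ell k}\Phi_k(u) + \zeta_\ell(a;d) \in 2\pi\Z + O(\eps^q),
\qquad \ell = 1,\ldots, 2\mathfrak g.
\]
Since $\alpha$ is invertible (Lemma \ref{L.FedFlem}), inverting this relation delivers \eqref{fluxes.converge}.

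\textbf{Main obstacle.} The delicate point is the flux step: controlling the line integral $\int_{\lambda_\ell} d^*\beta_u$ requires more than an $L^p$ bound on a $1$-form, so I would pick $\lambda_\ell$ from a one-parameter family of perturbations of $\gamma_\ell$ and invoke a Fubini/mean-value argument to select a curve on which the integral is small. A simultaneous constraint is that $\lambda_\ell$ must avoid the vortex balls and carry $|u|_g \ge 1/2$ -- the competing scales ($\eps^{q'}$ for vortex exclusion versus the $L^p$-smallness scale) must be reconciled, which is exactly why the constraint $q < 1 - T/(n+1)$ on the exponents appears.
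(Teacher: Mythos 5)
Your overall architecture matches the paper's (ball construction for the vorticity, then Hodge decomposition plus the holonomy Lemma \ref{L.ns} and a comparison of $d^*\psi$'s to land in the lattice), but the first half has a genuine gap. The paper's key device is to replace $u$ by $\tilde u=\tilde h(|u|_g)u$, which has $|\tilde u|_g\equiv 1$ off the vortex balls, so that $\omega(\tilde u)$ vanishes \emph{identically} there (Lemma \ref{L.omega0}) and $\int_{B_i}\omega(\tilde u)=2\pi d_i$ exactly, while $\|j(u)-j(\tilde u)\|_{L^1}\le C\e E^{in}_\e(u)$. You work with $u$ itself, and then (i) your outside-ball estimate fails as written: in the decomposition $\omega(u)=2|u|_g\,d|u|_g\wedge j(u/|u|_g)+(1-|u|_g^2)\kappa\,\vol_g$ the first term carries no smallness factor, and Cauchy--Schwarz against the energy only gives $O(\logeps)$, not $O(\e^{q(2/p-1)}\logeps)$; to extract smallness you must either renormalize as the paper does, or integrate by parts so that $d(|u|_g^2-1)$ is traded for $(1-|u|_g^2)\,d\phi$, which creates boundary terms on $\partial(\cup_i B_i)$ that again need good circles or the renormalization. (ii) Relatedly, ``$\int_{B_i}\omega(u)=2\pi D_i+O(R_i)$'' is unjustified: the degree is $\frac1{2\pi}(\int_{\partial B_i}j(u)/|u|_g^2+\int_{B_i}\kappa\,\vol_g)$, and the discrepancy $\int_{\partial B_i}(|u|_g^2-1)j(u)/|u|_g^2$ is not controlled on the specific circles produced by Proposition \ref{P.vballs}. (iii) The pointwise bound $|\phi-\phi(a_k)|\le R_i\|\nabla\phi\|_{L^q}$ is false for finite $q$; Morrey gives $R_i^{1-2/q}=R_i^{2/p-1}$, which fortunately still produces the right exponent — the paper instead proves the case $p=1$ against $W^{1,\infty}$ test functions and interpolates between $W^{-1,1}$ and $L^1$, which is cleaner. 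You also omit the reduction to smooth fields with $|u|_g\le1$ (a full step in the paper), which is needed before invoking Proposition \ref{P.vballs} and Lemma \ref{L.ns}.

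The flux half is essentially the paper's argument and is sound in outline, with one structural difference you should be aware of: because you do not renormalize the modulus, your $\beta_u-\psi$ is \emph{not} harmonic away from the balls (since $\omega(u)\ne 0$ there), so the paper's route — interior elliptic estimates upgrading the $W^{1,p}$ bound to a $C^1$ bound on a fixed curve at distance $r$ from the balls — is unavailable; your Fubini/averaging selection of $\lambda_\ell$ is the correct substitute, but the selected curve must simultaneously avoid all balls, stay homologous to $\gamma_\ell$, satisfy $|u|_g\ge\frac12$ for the holonomy step, and carry the smallness of both $\int_{\lambda_\ell}|d^*(\beta_u-\psi)|$ and your error term $E_\ell$; this joint selection should be stated explicitly (a single averaging over a fixed-width family of perturbations of $\gamma_\ell$ does it, since the total radius of the balls is $o(1)$). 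With the renormalization $\tilde u$ inserted at the start, your items (i)--(ii) disappear and the rest of your plan goes through essentially as in the paper, including the final choice of $p$ close to $1$ and $q<q'$ to convert $\e^{q'(2/p-1)}\logeps$ into $C_q\e^{q}$.
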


In the above Proposition, $n$ can be $0$ (if $T\in (0,1)$), in which case, $K=0$.
Our proof will rely on the following result:

\begin{propo}\label{P.vballs}
For every $T, C>0$, every integer $n>T-1$ and every $0<q< 1 - \frac T{n+1}$, then
there exist  $\e_0, r_0, c>0$ such that the following holds true: 
if $\e\in (0,\e_0)$, $\sigma\in [\e^q, r_0]$ and $u\in \calX(S)$ be a smooth vector field with  \eqref{Escaling},
then there exists a collection of pairwise disjoint balls $\mathcal B^\sigma = \{ B_{l,\sigma}\}_{l=1}^{K_\sigma}$ of centers $a_{l, \sigma}\in S$ and radius $r_{l,\sigma}>0$
such that
\begin{align}\label{balls1}
\{ x\in S\, :\, |u(x)|_g &\le \frac 12 \} \subset \cup_{l=1}^{K_\sigma} B_{l,\sigma},
\\
\label{balls2}
\sum_{l=1}^{K_\sigma} | d_{l,\sigma} | \ &\le n \, ,
\qquad\qquad
\mbox{ where }d_{l,\sigma} := \deg(u; \partial B_{l, \sigma})\ .
\\ 
\label{balls3}
\sum_{l=1}^{K_\sigma} r_{l,\sigma}  \ &\le  (n+1)\sigma, \\
\label{balls4}
\int_{B_{l,\sigma}}e^{in}_\e(u) \vol_g &\ge |d_{l,\sigma}| (\pi  \log \frac \sigma \e - c),\quad l=1, \dots, {K_\sigma}.
\end{align}
\end{propo}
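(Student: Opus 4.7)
\textbf{Plan of proof of Proposition \ref{P.vballs}.} The strategy is the Jerrard--Sandier vortex ball construction, adapted to the Riemannian surface $(S,g)$. The adaptation rests on the fact that on a geodesic ball $B_\rho(p)\subset S$ of sufficiently small radius, the metric $g$ is a small $C^1$-perturbation of the Euclidean metric (via the exponential chart), so the classical annular lower bounds survive up to a multiplicative $(1+O(\rho^2))$ or an additive $O(\rho^2)$ curvature correction. Throughout, I work with $u\in \calX(S)$ smooth so that $|u|_g$, $j(u)$ and $\deg(u;\partial B)$ are classically defined, as in Lemma~\ref{L.degree}; the surface $S$ plays the role that $\R^2$ does in the Euclidean construction.

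\emph{Step 1: initial cover.} Using \eqref{F.growth}, on the set $\{|u|_g\le\frac12\}$ one has $F(|u|_g^2)\ge C/4$. Combined with the hypothesis \eqref{Escaling}, this bounds the $\vol_g$-measure of the bad set by $C\e^2\logeps$. A standard Vitali/Besicovitch argument on $S$ (carried out in geodesic coordinates) then produces a pairwise disjoint collection $\mathcal{B}^0 = \{B^0_l\}$ of geodesic balls covering $\{|u|_g\le \frac12\}$, with sum of radii bounded by $C\e$ (or any prescribed $\e^{1-q'}$ for $q'<1$). After discarding balls carrying degree zero, I may relabel the initial collection.

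\emph{Step 2: annular lower bound.} The key local estimate is: if $B_R(p)\setminus \bar B_r(p)\subset S$ is an annulus of small outer radius on which $|u|_g\ge \frac12$, then
\[
\int_{B_R(p)\setminus \bar B_r(p)} e^{in}_\e(u) \vol_g \ \ge \ \pi |d|\,\log\frac{R}{r} - c\,|d|\,R^2,
\]
where $d=\deg(u;\partial B_r(p))$. This follows from \eqref{formula12} by integrating on geodesic circles: using Cauchy--Schwarz on each circle, the contribution of $|j(u)/|u|_g|_g^2$ dominates $|d|^2/(2\pi\rho)$ up to the Gauss--Bonnet correction $\int_{B_\rho}\kappa\,\vol_g$ that appears in \eqref{deg.def}, which is $O(\rho^2)$. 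The lower bound on $\int_\rho \pi d^2/\rho\,d\rho$ gives the $\log$ term; the curvature terms give the additive $cR^2$.

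\emph{Step 3: growth-and-merge iteration.} Starting from $\mathcal{B}^0$, I run the standard Jerrard procedure on $S$: expand all balls simultaneously so that $\log r$ advances uniformly, and whenever two balls touch, merge them into a single geodesic ball containing their union, assigning the new ball the sum of the degrees (which is sub-additive since degree is given by an integral on the boundary). Since the merging is sub-additive and the ambient injectivity radius is strictly positive, one may continue until the total sum of radii reaches $(n+1)\sigma$; this produces the final collection $\mathcal{B}^\sigma$, and \eqref{balls1}--\eqref{balls3} are built in, modulo verifying \eqref{balls2}, which I do in Step 4. Summing Step 2 over the annuli swept during growth gives
\[
\int_{B_{l,\sigma}}e^{in}_\e(u)\,\vol_g \ \ge \ \pi |d_{l,\sigma}|\log\frac{\sigma}{\e^{1-q'}} - c|d_{l,\sigma}|,
\]
which, absorbing the factor $(1-q')$ into the constant $c$ by shrinking $q'$ sufficiently, yields \eqref{balls4}.

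\emph{Step 4: a priori bound on degrees, and the main obstacle.} Summing \eqref{balls4} over $l$ and comparing with \eqref{Escaling}, if $\sum_l |d_{l,\sigma}|\ge n+1$ then
\[
T\pi\logeps + C \ \ge \ (n+1)\bigl(\pi\log(\sigma/\e) - c\bigr)\ \ge\ (n+1)\pi(1-q)\logeps - C,
\]
which is impossible once $\e$ is small, by the assumption $q<1-T/(n+1)$. This gives \eqref{balls2}. The main technical obstacle is the tension between two smallness conditions on the expanding balls: they must be large enough (at least $\e^q$) that the annular lower bound in Step~2 saturates the $\pi|d|\log(\sigma/\e)$ term without loss, yet small enough that the curvature error $O(R^2)$ and the local-Euclidean approximation errors stay in the constant $c$. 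Proving \eqref{balls4} with a constant $c=c(S)$ independent of the iteration step --- and in particular invariant under merges --- is the most delicate point, and is handled by the standard Sandier device of parameterizing the expansion by a suitable scale function rather than by radius itself.
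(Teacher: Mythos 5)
Your overall architecture (grow-and-merge balls \`a la Jerrard--Sandier, then the stopping argument comparing the accumulated lower bound with \eqref{Escaling} to force $\sum_l|d_{l,\sigma}|\le n$) is the same as the paper's, but two of your steps would fail as written. First, in Step 1 you pass from the bound $\vol_g\{|u|_g\le\frac12\}\le C\e^2\logeps$ to a cover of the bad set by disjoint balls whose \emph{radii} sum to $C\e$. An area bound controls (roughly) the sum of the squares of the radii, not the sum of the radii: the set $\{|u|_g\le\frac12\}$ could a priori be a thin tube of area $O(\e^2\logeps)$ but of length $O(1)$, and then any cover has radii summing to order $1$. This is exactly the pitfall the classical construction avoids by a coarea (Modica--Mortola) argument: since $\frac12|d\rho|_g^2+\frac1{4\e^2}F(\rho^2)\ge\frac c\e|1-\rho|\,|d\rho|_g$, one selects a regular level $\alpha\in[\frac12,\frac34]$ with $\calH^1(\rho^{-1}(\alpha))\le C\e\int_S e^{in}_\e(|u|_g)\vol_g$ and covers that level set (the paper's Lemma \ref{Zballs}); the components with nonzero degree additionally carry an initial energy deposit, supplied either by the potential term on an $\e$-scale annulus or by the topological bound $\int_B|Du|_g^2\ge\frac\pi4|d|$ of Lemma \ref{L.degree.energy}, which is what seeds the growth process (estimate \eqref{best1}). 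Without a perimeter-type bound your initial collection has no control on $\sum_l r_{l,0}$, and the whole iteration cannot start. Relatedly, even granting an initial radii sum of size $\e^{1-q'}$, your Step 3 only yields $\pi|d|\log(\sigma/\e^{1-q'})=\pi|d|\big(\log(\sigma/\e)-q'\logeps\big)$; the deficit $\pi|d|\,q'\logeps$ diverges as $\e\to0$ and cannot be ``absorbed into the constant $c$'' of \eqref{balls4} by shrinking $q'$, since $q'$ must be fixed before $\e\to 0$.

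Second, your annular lower bound in Step 2 claims the sharp constant $\pi|d|\log(R/r)$ assuming only $|u|_g\ge\frac12$ on the annulus. The Cauchy--Schwarz argument on a geodesic circle gives $\int_{\partial B_\rho}\frac{|j(u)|_g^2}{|u|_g^2}\,d\calH^1\ \gtrsim\ (\min_{\partial B_\rho}|u|_g)^2\,\frac{2\pi d^2}{\rho}\,(1-C\rho^2)$, so with only $|u|_g\ge\frac12$ you obtain $\frac\pi4|d|\log(R/r)$, which is not enough for \eqref{balls4} (and the sharp $\pi$ is essential for the later renormalized-energy analysis). To recover the constant $\pi$ one must couple the circular degree estimate with the potential term: if $\min_{\partial B_\rho}|u|_g=s<1$, the modulus part of the energy on the circle contributes at least $\frac{c}{\e}(1-s)^2$, and minimizing over $s$ gives the function $\lambda_\e(\rho)\ge\frac{\pi(1-c\rho^2)}{\rho+c\e}$ of the paper's Lemma \ref{L.ingredients}, whose integral $\Lambda_\e(\sigma)\ge\pi(\log(\sigma/\e)-C)$ is what propagates through the growth-and-merge iteration. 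Your closing remark about the ``tension between two smallness conditions'' gestures at this, but the estimate you actually state is insufficient, and fixing it requires precisely the $\lambda_\e$, $\Lambda_\e$ machinery (or an equivalent), not just curvature corrections to the Euclidean bound.
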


If $n=0$ above, then $K_\sigma$ is not necessarily $0$ (as balls of degree zero may appear). Proposition \ref{P.vballs} is proved by a rather standard vortex balls argument,
as introduced in \cite{Je, Sa} for the Ginzburg-Landau energy
in flat $2$-dimensional domains. 
We present some details in Appendix \ref{App.A}.
With Proposition \ref{P.vballs} available,
the proof of the basic compactness assertion \eqref{Pcomp.c1} follows
classical arguments, which we recall for the convenience of the
reader. The main new point is the estimate 
\eqref{fluxes.converge} of the flux integrals.

\begin{proof}[Proof of Proposition \ref{P.comp}] In what follows, $c>0$ 
is a constant that can change from line to line and that can depend on all parameters appearing the hypotheses of the proposition.
\\

\nd {\it Step 1. Reduction to smooth bounded vector fields}. We consider $h:\R^+\to \R^+$ to be the Lipschitz cut-off function $h(s)=1$ if $s\leq 1$ and $h(s)=1/s$ if $s>1$ and 
$$\hat u:=h(|u|_g) u.$$ First, we want to show that we can replace $u$ by $\hat u$ in the statement of Proposition \ref{P.comp}. Indeed, $\hat u\in \calX^{1,2}(S)$ and since $|D\hat u|_g\leq |Du|_g$ (see Section \ref{sec:calc}) and $F(|\hat u|_g)\leq F(|u|_g)$ (because $F(1)=0$), we get that 
$$\int_{\calO} e^{in}_\e(\hat u) \vol_g \leq \int_{\calO} e^{in}_\e(u) \vol_g, \textrm{ for every $\calO\subset S$},$$ so the bound \eqref{Escaling} is conserved for $\hat u$. Moreover, by Section \ref{sec:calc},
\[
|j(\hat u)-j(u)|_g =  |h^2(|u|_g)-1| \, |j(u)|_g \ \le |u|_g \, |h^2(|u|_g)-1| \, |Du|_g .
\]
Moreover, the definition of $h$ and \eqref{F.growth} imply that 
\[
|u|_g\, |h^2(|u|_g)-1| \ \le 2 \left| 1 - |u|_g \right| \le c \sqrt { F(|u|_g^2)}.
\]
It follows that 
$$
\| j(\hat  u) - j(u)\|_{L^1(S)} \le c \e E^{in}_\e(u) \ .
$$
In particular, by \eqref{Phiu.def}, we have for any $k \in \{1,\ldots, 2\mathfrak g\}$ that 
$$\Phi_k(u) =\int_S ( j(\hat u), \eta_k)_g \vol_g \ + O (\e E^{in}_\e(u)) \ =  \
\Phi_k(\hat u) + O(\e \logeps).
$$
Moreover, for any $\varphi \in W^{1, \infty}(S)$, we have
$$
\bigg|\int_S \varphi  \left[ \omega(u) -  \omega(\hat u) \right ]\bigg|  = 
\bigg|\int_S \varphi  \, d \left[j(u) -  j(\hat u) \right ] \bigg|  \le c \| d \varphi \|_{L^\infty} \e E^{in}_\e(u);
$$
this yields $ \| \omega(\hat u) - \omega(u)\|_{W^{-1,1}(S)} \le c \e \logeps$. To estimate $ \| \omega(\hat u) - \omega(u)\|_{W^{-1,p}(S)}$
for $1<p<2$, we use Lemma \ref{L.last}:
\[
\| \omega(u) - \omega(\hat u)\|_{L^1} \le \int_S |d j(u)|_g+|d j(\hat u)|_g \vol_g \leq \int_S |Du|^2_g+|D\hat u|^2_g \vol_g \le c \logeps
\]
and
then the interpolation inequality: 
$$\| \omega(\hat u) - \omega(u)\|_{W^{-1,p}} \le  C\|\omega(\hat u) - \omega(u)\|_{W^{-1,1}}^{ \frac 2p-1} \|\omega(\hat u) - \omega(u)\|_{L^1}^{2-\frac 2p}
=O(\eps^{\frac 2p-1} \logeps).$$
 Therefore, it is enough to prove the statement for $\hat u$ instead of $u$. 
Furthermore, due to the density result in Lemma \ref{L.density} and the continuity results in Theorem \ref{P1} point 1) and Lemma \ref{L.last}, {\bf we can assume that $u$ is a smooth vector field in $S$ with $|u|_g\leq 1$.} (The cutting-off procedure
$|\hat u|_g\leq 1$ is needed in order that the potential term in the energy $E^{in}_\eps$ passes to the limit, as $F$ could increase very fast at infinity.) \\

\nd {\it Step 2. An approximation $\tilde u$ of $u$}. 
Let $\tilde h:\R^+\to \R^+$ be a smooth function such that
\[
\tilde h(s) = 1 \mbox{ for }0\le s \le \frac 14, 
\qquad
\tilde h(s) = 1/s \mbox{ for }s \ge \frac 12, 
\qquad
s\mapsto s \tilde h(s)\mbox{ is nondecreasing}
\]
and define the smooth vector field
\begin{equation}\label{tildeu.def}
\tilde u = \tilde h(|u|_g) u.
\end{equation}
The advantage of working with $\tilde u$ is that $|\tilde u|_g=1$ if $|u|_g\geq \frac12$. 
Then Section \ref{sec:calc} implies $j(\tilde u) = \tilde h^2(|u|_g) j(u)$ and $|D\tilde u|_g\leq c |Du|_g$ in $S$ since by \eqref{formula12}, we have
$$|D\tilde u|^2_g\leq \big[\tilde h^2(|u|_g)+\big(\frac{d}{ds}(s\tilde h(s))\big)^2\big|_{s=|u|_g}\big] |Du|^2_g.$$
By the computations in Step 1, we deduce
$$
\| j(\tilde u) - j(u)\|_{L^1(S)} \le c \e E^{in}_\e(u) \ .
$$

\nd {\it Step 3. Proof of \eqref{Pcomp.c1}}. 
For any $\varphi \in W^{1,\infty}(S)$, it follows from Step 2 that
$$
\bigg|\int_S \varphi  \left[ \omega(u) -  \omega(\tilde u) \right ]\bigg|  = 
\bigg|\int_S \varphi  \, d \left[j(u) -  j(\tilde u) \right ] \bigg|  \le c \|d \varphi \|_{L^\infty} \e E^{in}_\e(u).
$$
Wit the notations of Proposition \ref{P.vballs} applied for the smooth vector field $u$, we claim that for $\varphi$ as above and $\e^q \le \sigma \le r_0$,
$$
\bigg|\int_S \varphi  \big[  \omega(\tilde u) - 2\pi\sum_{l=1}^{K_\sigma} d_{l, \sigma} \delta_{a_{l, \sigma}} \big]\bigg|  \le c
\|d\varphi\|_{L^{\infty}}
\bigg(\sum_{l=1}^{K_\sigma} r_{l,\sigma} \bigg)E^{in}_\e(u).
$$
Indeed, it follows from Proposition \ref{P.vballs} (see \eqref{balls1}) and \eqref{tildeu.def} that $|\tilde u|_g=1$ outside the balls 
$\cup_{l=1}^{K_\sigma} B_{l,\sigma}$ so that Lemma \ref{L.omega0} implies $\omega(\tilde u) = 0$
outside $\cup_{l=1}^{K_\sigma} B_{l,\sigma}$. So
\[ 
\int_S \varphi \omega(\tilde u) 
= 
\sum_{l=1}^{K_\sigma} \int_{B_{l,\sigma}} \varphi\,\omega(\tilde u).
\] 
For each $1\leq l\leq K_\sigma$, we have that 
\begin{align*}
\int_{B_{l,\sigma}}\varphi\,\omega(\tilde u) & =  \varphi(a_{l, \sigma})\int_{B_{l,\sigma}}\omega(\tilde u)
 + \int_{B_{l,\sigma}}\big(\varphi(x) - \varphi(a_{l,\sigma})\big)\omega(\tilde u)\\
 &=\varphi(a_{l,\sigma})\underbrace{\bigg(\int_{\partial B_{l,\sigma}} j(\tilde u) +\int_{B_{l,\sigma}} \kappa \vol_g\bigg)}_{= 2\pi d_{l,\sigma}}+
 \int_{B_{l,\sigma}}(\varphi(x) - \varphi(a_{l,\sigma}))\omega(\tilde u),
\end{align*}
where we used \eqref{deg.def}, \eqref{omega.def} and the fact that $|\tilde u|_g=1$
on $\partial B_{l,\sigma}$ by \eqref{balls1}. In particular, for $\varphi=1$ in $S$, one has that 
$$2\pi \sum_{l=1}^{K_\sigma} d_{l,\sigma}
=\int_S \omega(\tilde u)=\int_S \kappa \vol_g=2\pi \chi(S),$$ i.e., \eqref{necessary} holds for the integers $\{d_{l,\sigma}\}_l$.
To estimate the last term in the above RHS, note that clearly
\[
|\varphi(x) - \varphi(a_{l, \sigma})| \le \|d \varphi\|_{L^\infty}r_{l,\sigma} \ \ \ \mbox{ for }x\in B_{l,\sigma}.
\]
Moreover, Lemma \ref{L.last} and the definition of $\omega$ imply that
$|\omega(\tilde u)|_g \le |D\tilde u|_g^2 + |\kappa|$ in $S$, and as a consequence, 
\[
\int_{B_{l,\sigma}} |\omega(\tilde u)|_g \vol_g \le  \int_S |D\tilde u|_g^2 \vol_g + c
\le c\int_S |Du|_g^2 \vol_g + c \le c(E^{in}_\eps(u) + 1).
\]
We may assume that $\e_0<\frac 12$, and
then we can absorb the additive constant in the multiplicative constant.
By combining these estimates with \eqref{balls3}, we see that for any smooth $\varphi$,
\[
\left|\int_S \varphi \bigg[ \omega(u) - 2\pi \sum_{l=1}^{K_\sigma} d_{l,\sigma}\delta_{a_{l,\sigma}}\bigg]  \right| \le
c (n+1) T \logeps \sigma \|d\varphi \|_{L^\infty}.
\]
Setting $\sigma=\eps^q$, this is the case $p=1$ of \eqref{Pcomp.c1}, noting that all the points $\{a_{l,\sigma}\}_{l=1}^{K_\sigma}$ are disjoint (as they belong to pairwise disjoint balls), \eqref{necessary} holds and $\sum_{l=1}^{K_\sigma} |d_{l, \sigma}|\leq n$ by \eqref{balls2}. 
For $1<p<2$, we complete the proof of \eqref{Pcomp.c1} using (again) the interpolation inequality 
$\| \mu\|_{W^{-1,p}} \le  C\| \mu\|_{W^{-1,1}}^{ \frac 2p-1} \|\mu\|_{L^1}^{2-\frac 2p}$, where  $L^1$ norm is understood to mean the total variation if $\mu$ is a measure,
together with the fact that
\[
\| \omega(u) - 2\pi \sum_{l=1}^{K_\sigma} d_{l,\sigma}\delta_{a_{l,\sigma}}\|_{L^1} \le \int_S (|Du|_g^2 +|\kappa|)\vol_g + 2\pi n \le c n T\pi \logeps,
\]
provided that $\e< 1/2$. This  follows easily from \eqref{omega.def}, \eqref{balls2} and Lemma \ref{L.last}. Also, \eqref{Pcomp.c1} holds for $\omega(\tilde u)$ (as the interpolation argument works for $\omega(\tilde u)$ exactly as for $\omega(u)$). Discarding the points $a_{l, \sigma}$ with zero degree $d_{l, \sigma}=0$, one may assume that
in \eqref{Pcomp.c1} all the integers $d_k$ are nonzero.\\

\nd {\it Step 4. Proof of \eqref{fluxes.converge}}.
For any $k \in \{1,\ldots, 2\mathfrak g\}$, it follows from Step 2 that 
\begin{equation}\label{Phi.dist1}
\Phi_k(u) = 
 \int_S ( j(\tilde u), \eta_k)_g \vol_g \ + O (\e E^{in}_\e(u)) \ =  \
\Phi_k(\tilde u) + O(\e E^{in}_\e(u)).
\end{equation}
Since $h^2(|u|_g)|u|_g\leq c$, we have $|j(\tilde u)|_g\le c|Du|_g\in L^2$.
As $\tilde u$ is smooth in $S$, the Hodge decomposition \eqref{Hodge} implies
\begin{equation}\label{juHodge}
j(\tilde u) = d\xi + d^*\tilde \psi +\sum_{k=1}^{2\mathfrak g} \tilde \Phi_k \eta_k, \qquad\qquad
\tilde \Phi_k := \Phi_k(\tilde u)
\end{equation}
for some smooth function $\xi$ and $2$-form $\tilde \psi$. Taking the exterior derivative of this, we find that
\[
-\Delta \tilde \psi + \kappa \vol_g = \omega(\tilde u) \quad \textrm{in}\quad S,
\]
and the RHS is supported in $\cup_{l=1}^{K_\sigma} B_{l, \sigma}$ (see Step 2).
As in Step 4 of the proof of Theorem \ref{P1}, for some   $r>0$ (small but fixed, independent of $\e$) and  every small enough $\e>0$ (and 
hence also $\sigma=\e^q$),  we fix
Lipschitz paths
$\lambda_k$, for $k=1,\ldots, 2\mathfrak g$  such that
\[
\lambda_k\cap \left( \cup_{l=1}^{K_\sigma} B_{l,\sigma}
\right)= \emptyset, \qquad
\dist_S(\lambda_k ,\cup_{l=1}^{K_\sigma} B_{l, \sigma} ) \ge r \mbox{ if }d_{l,\sigma}\ne 0,
\]
and
\[
\lambda_k \mbox{ is homologous to }\gamma_k,
\qquad\mbox{ and }\quad \calH^1(\lambda_k) \le c 
\]
for all $k\in \{1,\ldots, 2\mathfrak g\}$ (recall that $\{\gamma_k\}$ are the curves fixed in Lemma \ref{L.FedFlem}). The point is that, as in  Theorem \ref{P1}, we obtain $\lambda_k$
by  starting with
 $\gamma_k$ and modifying it as necessary, first to make it disjoint from
all $B_{l,\sigma}$, increasing the arclength by at most $2\pi(n+1)\sigma$ due to \eqref{balls3}; and next to arrange that it is always a distance at least $r$ from every ball with nonzero degree $d_{l,\sigma}$.  Since the number of such balls is at most $n$, due to \eqref{balls2} this can be done in
such a way that the arclength increases by a controlled amount, for example $2\pi nr$.
If $r$ and $\sigma$ are small enough, these modifications preserve the homology class.
We next define
\[
\tilde \zeta_k := \int_{\lambda_k} (d^*\tilde \psi + A) \ \in \R/2\pi\Z
\]
where $A$ is the connection $1$-form associated to any moving frame defined in
a neighborhood of $\lambda_k$.
It follows from \eqref{juHodge} 
and Lemma \ref{L.ns} (as $|\tilde u|_g=1$ outside $\cup_{l=1}^{K_\sigma} B_{l, \sigma}$) that for $k=1,\ldots, 2\mathfrak g$
\[
\tilde \zeta_k + \sum_{\ell=1}^{2\mathfrak g} \alpha_{k \ell}\tilde \Phi_\ell 
\ = \ \int_{\lambda_k} ( d^*\psi+A + \sum_{\ell=1}^{2\mathfrak g} \tilde \Phi_\ell   \eta_\ell ) \ = 
\int_{\lambda_k}j(\tilde u)+A -\underbrace{\int_{\lambda_k} d\xi}_{=0}= \ 0 \mod 2\pi,
\]
where $(\alpha_{k\ell})$ were defined in \eqref{akl.def}. Let us write $(\alpha^{k\ell})$ to denote the inverse of $(\alpha_{k\ell})$.
Denoting $d^\sigma=(d_{1, \sigma}, \dots, d_{K_\sigma, \sigma})$, as $d^\sigma=\{d_{l, \sigma}\}_{l=1}^{K_\sigma}$ satisfy \eqref{necessary}, we may consider the unique solution $\psi=\psi(a^\sigma; d^\sigma)$ of \eqref{psi.def} of zero mean on $S$, i.e., 
\[
-\Delta  \psi = - \kappa \vol_g + 2\pi \sum_{l=1}^{K_\sigma} d_{l, \sigma}\delta_{a_{l, \sigma}} \quad \textrm{in}\quad S.
\]
Considering $\zeta_\ell(a;d)$ given by \eqref{zetak.def} with $a=(a_{1, \sigma}, \dots, a_{K_\sigma, \sigma})$ and
$d=(d_{1, \sigma}, \dots, d_{K_\sigma, \sigma})$, we deduce that 
\[
\zeta_\ell(a;d)  +  \sum_{k=1}^{2\mathfrak g} \alpha_{\ell k} \bigg[
\underbrace{ \tilde \Phi_k + \sum_{m=1}^{2\mathfrak g} \alpha^{k m}(\tilde \zeta_m - \zeta_m(a;d))}_{=:\hat{\Phi}_k}\bigg]
= 0 \mod 2\pi \quad\mbox{ for }\ell=1,\ldots, 2\mathfrak g , 
\]
which implies that the vector in square brackets $\{\hat{\Phi}_k\}_{k=1}^{2\mathfrak g}$ belongs to $\calL(a;d)$.
Hence, in view of \eqref{Phi.dist1}, we have that
\begin{equation}\label{Phi.dist2}
\dist_{\R^{2\mathfrak g}}(\Phi(u), \calL(a;d)) \le c \e E^{in}_\e(u) + c\sup_\ell |\tilde \zeta_\ell - \zeta_\ell(a;d)|.
\end{equation}
To estimate $\tilde \zeta_\ell - \zeta_\ell(a;d)$,  we investigate the equation
\[
-\Delta(\tilde \psi - \psi) = \omega(\tilde u) - 2\pi\sum_{l=1}^{K_\sigma} d_{l,\sigma} \delta_{a_{l, \sigma}}.
\]
Thus, for any $p\in(1,2)$, we see from Step 2 and elliptic regularity that
\[
\| \tilde \psi - \psi\|_{W^{1,p}} \le C_p(n+1)T|\log\e| \e^{q(\frac 2p -1)}.
\]
Also, $\tilde \psi -\psi$ is harmonic away from $\cup_{k=1}^{K_\sigma} B_{k, \sigma}$, so we further deduce from standard elliptic theory that for $r>0$ fixed above,
\[
\| \tilde \psi - \psi\|_{C^1(\{ x\in S \, : \, \dist_S(x, \cup B_{k, \sigma}) > r \}) } \le C_{p,r}(n+1)T|\log\e| \e^{q(\frac 2p -1)}.
\]
In particular this estimate holds on $\lambda_\ell$ for every $\ell=1, \dots, 2\mathfrak g$. Thus, as a direct
consequence of the definitions of $\tilde \zeta_\ell$ and $\zeta_\ell(a;d)$, we obtain for a fixed small $r>0$:
\[
\left|\tilde \zeta_\ell - \zeta_\ell(a;d)\right| = \left|\int_{\lambda_\ell} d^*(\tilde \psi  - \psi) \right| \ \le C_p(n+1)T|\log\e| \e^{q(\frac 2p -1)} \ .
\]
For any $\tilde q\in (0, 1-\frac{T}{n+1})$, one chooses some $q\in (\tilde q, 1-\frac{T}{n+1})$ and $p\in (1,2)$ close to $1$ so that 
$(n+1)T|\log\e| \e^{q(\frac 2p -1)}\leq \eps^{\tilde q}$ for some $\eps\leq \eps_{\tilde q}$ and the above inequality and \eqref{Phi.dist2} together yield
\eqref{fluxes.converge} for $\tilde q$.
\end{proof}

As a direct consequence, we have partially the point 1 in Theorem \ref{intrinsic.gammalim}, together with a lower bound (at the first order) of the intrinsic energy:

\begin{coro}\label{Cor.cgc1}
Let $(u_\e)_{\e\downarrow 0}$ be a sequence of vector fields in $\calX^{1,2}(S)$ satisfying \eqref{Escaling} for some fixed $T,C>0$.
Then there exists a subsequence for which the vorticities $\omega(u_\e)$ converge in $W^{-1,p}(S)$,
for all $1\le p<2$, to a limit of the form $2\pi \sum_{k=1}^K d_k\delta_{a_k}$ for $K$ distinct points $a_1, \dots a_K\in S$ and nonzero $d_1, \dots, d_K\in \Z$ with \eqref{necessary} and $\sum_{k=1}^K |d_k|\leq T$ (so, $K\leq T$). Moreover, 
\[
\liminf_{\eps\to 0} \frac 1 {\pi \logeps} E^{in}_\e(u_\e) \ge \sum_{k=1}^{K} |d_k| .
\]
\end{coro}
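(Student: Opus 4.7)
The plan is to apply Proposition~\ref{P.comp} for each $\e$ to produce a discrete ``approximate vorticity", then compactify the configurations by a combination of pigeonhole (for the discrete data) and compactness of $S$ (for the points), and finally extract the energy lower bound from the vortex-ball estimate \eqref{balls4}.

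First, I would fix any integer $n>T-1$ and any $q\in(0,\,1-\frac{T}{n+1})$; these meet the hypotheses of both Propositions~\ref{P.comp} and \ref{P.vballs}. Applying Proposition~\ref{P.comp} to each $u_\e$ gives distinct points $a_1^\e,\ldots,a_{K_\e}^\e\in S$ and nonzero integers $d_1^\e,\ldots,d_{K_\e}^\e$ with $K_\e\le n$, $\sum_k d_k^\e=\chi(S)$, $\sum_k|d_k^\e|\le n$, and
\[
\bigl\| \omega(u_\e) - 2\pi \textstyle\sum_{k=1}^{K_\e} d_k^\e \delta_{a_k^\e}\bigr\|_{W^{-1,p}}
\le C_p(n+1)T\logeps\, \e^{q(\frac2p-1)} \longrightarrow 0,
\]
for every $p\in[1,2)$. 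Since there are only finitely many admissible tuples of degrees, after extracting a subsequence I can assume that $K_\e\equiv K_0$ and $(d_1^\e,\ldots,d_{K_0}^\e)\equiv(d_1^*,\ldots,d_{K_0}^*)$ are independent of $\e$, and then by compactness of $S$ I may further extract so that $a_k^\e\to a_k^*\in S$ for each $k$.

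Some of the limit points $a_k^*$ may coincide. I then partition $\{1,\ldots,K_0\}$ into equivalence classes where $k\sim k'$ iff $a_k^*=a_{k'}^*$, pick a representative $\tilde a_{[k]}:=a_k^*$ in each class and set $\tilde d_{[k]}:=\sum_{k'\sim k} d_{k'}^*$. Relabelling the classes with $\tilde d_{[k]}\ne 0$ as $(a_1,d_1),\ldots,(a_K,d_K)$, the sum $\sum_k d_k=\sum_k d_k^*=\chi(S)$ is preserved (the discarded classes contribute $0$), so \eqref{necessary} holds in the limit. Using that $W^{1,q}(S)\hookrightarrow C^{0,1-2/q}(S)$ for $q=p/(p-1)>2$ (when $p<2$), one sees that $\|\delta_{a_k^\e}-\delta_{a_k^*}\|_{W^{-1,p}}\le C\dist_S(a_k^\e,a_k^*)^{1-2/q}\to 0$, and combining this with the bound above yields $\omega(u_\e)\to 2\pi\sum_{k=1}^K d_k\delta_{a_k}$ in $W^{-1,p}$ for every $p\in[1,2)$.

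For the lower bound, I apply Proposition~\ref{P.vballs} with $\sigma=\e^q$ (after the smoothing/truncation reduction of Step~1 in the proof of Proposition~\ref{P.comp}, which does not affect the leading-order energy). Because the balls $\{B_{l,\sigma}\}$ are pairwise disjoint, summing \eqref{balls4} gives
\[
E^{in}_\e(u_\e)\ \ge\ \Bigl(\sum_{l=1}^{K_\sigma}|d_{l,\sigma}|\Bigr)\bigl(\pi(1-q)\logeps - c\bigr).
\]
In the construction of Proposition~\ref{P.comp} the points $a_k^\e$ are obtained precisely by discarding zero-degree balls, so $\sum_l|d_{l,\sigma}|=\sum_k|d_k^\e|=\sum_k|d_k^*|\ge\sum_{k=1}^K|d_k|$ by the triangle inequality applied within each equivalence class. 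Dividing by $\pi\logeps$ and taking the liminf gives
\[
\liminf_{\e\to 0}\frac{E^{in}_\e(u_\e)}{\pi\logeps}\ \ge\ (1-q)\sum_{k=1}^K|d_k|,
\]
and since $q\in(0,1-\tfrac{T}{n+1})$ can be chosen arbitrarily small (for $n$ fixed large), the desired bound $\liminf E^{in}_\e(u_\e)/(\pi\logeps)\ge\sum_k|d_k|$ follows; combined with the hypothesis, this also forces $\sum_k|d_k|\le T$ and hence $K\le T$.

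The main delicate point is the possible coalescence and partial cancellation of vortices in the limit, which could a priori cost mass in passing from $\sum_k|d_k^\e|$ to $\sum_k|d_k|$. The argument accommodates this because (a)~the vortex-ball lower bound \eqref{balls4} is controlled by the pre-limit quantity $\sum_l|d_{l,\sigma}|$, which dominates the post-coalescence sum, and (b)~classes with $\tilde d_{[k]}=0$ contribute no mass to the limiting measure, consistent with the $W^{-1,p}$ convergence established above.
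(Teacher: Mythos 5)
Your proposal is correct and follows essentially the same route as the paper's proof: the vortex-ball construction at scale $\sigma=\e^q$ (Propositions \ref{P.comp}/\ref{P.vballs}), summation of \eqref{balls4}, and letting $q\to 0$ at the end, with the compactness extracted by pigeonhole on the (finitely many) degree tuples plus convergence of the centers rather than by weak-$*$ convergence of the measures $\mu_\e$ as in the paper. The only point to phrase carefully is that the degrees $d_{l,\sigma}$ used in the lower bound and the points/degrees furnished by Proposition \ref{P.comp} should come from one and the same ball construction (as in the paper) --- or else one invokes lower semicontinuity of the total variation under the $W^{-1,1}$ convergence of the approximating measures --- but in either case the inequality $\sum_l |d_{l,\sigma}| \ge \sum_{k=1}^K |d_k|$ that you need does hold.
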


\begin{proof}
Fix the integer $n$ satisfying $n+1>T\geq n$ and $q\in (0, 1-\frac{T}{n+1})$. By Step 1 in the proof of Proposition \ref{P.comp}, we may assume that $u_\eps$ are smooth vector fields with $|u_\eps|_g\leq 1$ in $S$. Furthermore, for each $\eps>0$, as in the proof of Proposition \ref{P.comp}, we consider $\sigma=\eps^q$ and the set of pairwise disjoint balls $\cup_{l=1}^{K_\eps} B_{l, \eps}$ of center $\{a_{l, \eps}\}_l$ associated to $u_\eps$ such that 
$d_{l, \eps}$ is the degree of $u_\eps$ on $\partial B_{l, \eps}$ satisfying \eqref{necessary}. Moreover, $\sum_l |d_{l, \eps}|\leq n$ which entails that for a subsequence $\eps\downarrow 0$, there exist points $a_1, \dots, a_K\in S$ (not necessarily distinct) and $d_1, \dots, d_K\in \Z$ such that the measures $\mu_\eps:=2\pi\sum_{l=1}^{K_\eps} d_{l, \eps} \delta_{a_{l, \eps}}$ converge to 
$$\mu:=2\pi \sum_{l=1}^K d_{l} \delta_{a_l}$$ as measures, and thus, in $W^{-1,p}$ for any $p\in [1, 2)$ (as $W^{1,\tilde p}(S)$ embeds in the space of continuous functions, for the conjugate real $\tilde p=\frac{p}{p-1}>2$). 
Relabeling the indices, we may assume that $a=(a_1, \dots, a_K)$ are distinct and that $d_k\ne 0$ for $k=1,\dots, K$.
Obviously, \eqref{necessary} holds (as  $\mu_\eps(S)$ is preserved by the convergence, i.e, equal to $2\pi \chi(S)$), as well as the upper bound of the total variation of those measures is conserved leading to 
\beq
\label{equ123}
\sum_{l=1}^K |d_{l}|=\frac{|\mu|(S)}{2\pi}\leq \liminf_{\eps\to 0}\frac{|\mu_\eps|(S)}{2\pi}=\liminf_{\eps\to 0}\sum_{l=1}^{K_\eps} |d_{l,\eps}|\leq n\leq T.
\eeq 
By \eqref{Pcomp.c1}, we conclude that $\omega(u_\eps)\to \mu$ in any $W^{-1,p}$ for $p\in [1, 2)$ as $\eps\to 0$.  
Finally, the lower bound of the energy is obtain by \eqref{balls4} for $\sigma=\eps^q$:
\begin{align*}
\liminf_{\eps\to 0} \frac 1 {\pi \logeps} E^{in}_\e(u_\e) &\ge\liminf_{\eps\to 0} \frac 1 {\pi \logeps} \sum_{k=1}^{K_\eps} \int_{B_{l, \eps}} e^{in}_\eps(u_\eps) \vol_g\\
&\geq \liminf_{\eps\to 0} \sum_{1\leq l\leq  K_\eps, \, d_{l, \eps}\neq 0} (1-q)|d_{l, \eps}|\geq (1-q)\sum_{l=1}^K |d_{l}|=(1-q)\frac{|\mu|(S)}{2\pi},
\end{align*}
where we used \eqref{equ123}. As $\mu$ is the limit of $\omega(u_\eps)$ (so independent of $q$), passing to the limit $q\to 0$, the conclusion is straightforward.
\end{proof}

\begin{remark}
At this stage, we cannot conclude that the sequence $\{\Phi(u_\eps)\}_{\eps\downarrow 0}$ is bounded as large oscillations might arise a-priori in the current $j(u_\eps)$. To handle this difficulty, we need to insure that the excess of energy away from vortices is of order $O(1)$ (see Proposition~\ref{intrinsic.gammalim2}).
\end{remark}

\bigskip

\section{Renormalized energy as a $\Gamma$-limit in the intrinsic case. Proof of Theorem \ref{intrinsic.gammalim} }
\label{sec:intrin}

In this section, we focus on the situation where all vortices have degree $\pm 1$ and the excess of energy away from vortices is of order $O(1)$. We will prove that the flux integrals converge 
and that we have a stronger lower bound (than the one stated in Theorem \ref{intrinsic.gammalim}, point 2). This is typically the situation when the vector fields $u_\eps$ are minimizers of $E^{in}_\eps$ (or energetically close to minimizing configurations). The following Proposition together with Corollary \ref{Cor.cgc1} lead to the final conclusion of Theorem \ref{intrinsic.gammalim}. 

\begin{propo}\label{intrinsic.gammalim2}

\nd 1) Let $(u_\e)_{\e\in (0,1)}$ be a family of vector fields in $\calX^{1,2}(S)$ satisfying
\begin{equation}
E_\e^{in}(u_\e)  \le n \pi \logeps + C
\qquad\mbox{ for every $\e$}
\label{Escaling2}\end{equation}
for some integer $n>0$, and assume that
there exist  $n_0(\le n)$ distinct points $a_1,\ldots, a_{n_0}\in S$,
and nonzero integers $d_1,\ldots, d_{n_0}$ satisfying \eqref{necessary} such that 
\begin{equation}
\omega(u_\e) \overset{W^{-1,1}}\longrightarrow 2\pi \sum_{k=1}^{n_0} d_k\delta_{a_k}
\qquad\mbox{ with }\quad \sum_{k=1}^{n_0} |d_k| = n.
\label{convergence}\end{equation}
Then $n_0 = n$ and $|d_k|=1$ for every $k$,
and there exists 
$\Phi\in \calL(a;d)$ such that, after passing to a further subsequence if necessary,
\begin{equation}
\Phi(u_\e)\rightarrow \Phi ,\qquad\Phi(u_\e)\mbox{ defined in \eqref{Phiu.def}}.
\label{Phi.conv}\end{equation}
Moreover, for every $\sigma>0$, 
\begin{multline}\label{lb1}
\liminf_{\e\to 0}
\left[
E_\e^{in}(u_\e)- n (\pi \logeps  + \iota_F) 
\right]  \ \ge \  
W(a,d,\Phi) 
\\
+  \liminf_{\e\to 0}\int_{S \setminus \cup_{k=1}^n B_\sigma(a_k)} \bigg[\frac 12 \big|\frac{j(u_\e)}{|u_\e|_g}-j(u^*)\big|_g^2 + 
e_\e^{in}(|u_\e|_g)\bigg] \vol_g 
\end{multline}
for $u^* = u^*(a,d,\Phi)$, $a=(a_1, \dots, a_n)$ and $d=(d_1, \dots, d_n)$. 

\nd 2) Conversely, for every 
distinct $a_1,\ldots, a_n$ and $d_1,\ldots , d_n\in \{\pm 1\}$ satisfying \eqref{necessary}, and every $\Phi\in \calL(a;d)$
there exist sequences of smooth vector fields $u_\eps$ such that $|u_\eps|_g\leq 1$ in $S$,  \eqref{convergence} and \eqref{Phi.conv} hold and
\begin{equation}\label{intrinsic.ubd}
E_\e^{in}(u_\e)- n (\pi \logeps  + \iota_F) \to  W(a,d, \Phi) \quad \textrm{as $\e\to 0$}.
\end{equation}
\end{propo}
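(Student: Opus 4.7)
The plan is to first pin down the limiting number and degrees of vortices, then bootstrap compactness of the flux integrals from Proposition~\ref{P.comp}, derive the sharp lower bound by a localization dichotomy (vortex cores vs.\ bulk region with the canonical harmonic as reference), and finally produce a recovery sequence by cut-and-paste.

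\textbf{Identification and boundedness of fluxes.} Apply Corollary~\ref{Cor.cgc1} to $u_\eps$ with $T=n$: after extraction it yields distinct $\tilde a_k\in S$ and nonzero $\tilde d_k\in\Z$ with $\sum|\tilde d_k|\le n$ and $\omega(u_\eps)\to 2\pi\sum\tilde d_k\delta_{\tilde a_k}$ in $W^{-1,1}$. Uniqueness of the limit \eqref{convergence} forces $(\tilde a,\tilde d)=(a,d)$ after relabelling, and the hypothesis $\sum_{k=1}^{n_0}|d_k|=n$ together with $|d_k|\ge 1$ gives $n_0=n$ and $|d_k|=1$. To control $\Phi(u_\eps)$, I would apply Proposition~\ref{P.vballs} at scale $\sigma=\eps^q$: combining \eqref{balls4} summed over $l$ (with $|d_{l,\eps}|=1$ eventually) with the upper bound \eqref{Escaling2} yields the excess-energy estimate
\begin{equation*}
\int_{S\setminus\cup_l B_{l,\eps}} e^{in}_\eps(u_\eps)\,\vol_g \le n\pi\logeps - n\pi\log(\sigma/\eps) + C = n\pi|\log\sigma|+C .
\end{equation*}
Freezing any small $\sigma>0$ independent of $\eps$ then controls $\|Du_\eps\|_{L^2(S\setminus\cup B_\sigma(a_k))}$ uniformly; since $|u_\eps|_g\le 1$ after the cut-off reduction in Step~1 of the proof of Proposition~\ref{P.comp} and $|j(u_\eps)|_g\le |Du_\eps|_g|u_\eps|_g$, the harmonic projection $\Phi(u_\eps)$ (defined in \eqref{Phiu.def}) is uniformly bounded (the contribution from the balls is $o(1)$ by Cauchy--Schwarz, the radius bound $\sum r_{l,\eps}\le (n+1)\sigma$ and the logarithmic energy bound). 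A subsequence converges to some $\Phi\in\R^{2\mathfrak g}$, and $\Phi\in\calL(a;d)$ follows from \eqref{fluxes.converge} combined with the stability estimate \eqref{dist_lat}.

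\textbf{Lower bound \eqref{lb1}.} Split $E^{in}_\eps(u_\eps)$ between geodesic balls $B_\sigma(a_k)$ of fixed small radius $\sigma$ and the complement $S_\sigma(a)=S\setminus\cup_k B_\sigma(a_k)$. On $S_\sigma(a)$, use \eqref{formula12} to bound
\begin{equation*}
e^{in}_\eps(u_\eps)\ \ge\ \tfrac12\Bigl|\tfrac{j(u_\eps)}{|u_\eps|_g}\Bigr|_g^2 + \tfrac12\bigl|d|u_\eps|_g\bigr|^2 + \tfrac1{4\eps^2}F(|u_\eps|_g^2),
\end{equation*}
and expand
\begin{equation*}
\Bigl|\tfrac{j(u_\eps)}{|u_\eps|_g}\Bigr|_g^2 = |j(u^*)|_g^2 + 2\bigl(\tfrac{j(u_\eps)}{|u_\eps|_g}-j(u^*),j(u^*)\bigr)_g + \Bigl|\tfrac{j(u_\eps)}{|u_\eps|_g}-j(u^*)\Bigr|_g^2.
\end{equation*}
Integration on $S_\sigma(a)$ then gives: the first term contributes $W(a,d,\Phi)+n\pi|\log\sigma|+o_\sigma(1)$ by the definition \eqref{W.def}; the third term is retained and produces the excess on the RHS of \eqref{lb1}; the cross term vanishes as $\eps\to 0$ by integrating by parts on $S_\sigma(a)$, using $d^*j(u^*)=0$ from \eqref{ustar1}, $dj(u^*)=-\kappa\vol_g$ on $S_\sigma(a)$ from Lemma~\ref{L.omega0}, the $L^2$ control from the excess-energy step, and the boundary convergence $\int_{\partial B_\sigma(a_k)} j(u_\eps)/|u_\eps|_g \to 2\pi d_k$ that follows (up to an $o_\sigma(1)$ error) from the vorticity convergence \eqref{Pcomp.c1}. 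Inside each $B_\sigma(a_k)$, localize via exponential normal coordinates at $a_k$, use that in these coordinates $|g_{ij}-\delta_{ij}|=O(\sigma^2)$, and rescale by $\eps$: the definitions \eqref{ier.def}--\eqref{gammaF.def} then yield $\pi\log(\sigma/\eps)+\iota_F+o_\sigma(1)$ per unit-degree vortex, summing to $n(\pi\log(\sigma/\eps)+\iota_F)$. Adding the two contributions cancels $n\pi\log(\sigma/\eps)+n\pi|\log\sigma|=n\pi\logeps$ and yields \eqref{lb1} after letting $\sigma\to 0$ following $\eps\to 0$.

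\textbf{Upper bound and main obstacle.} For part 2), I would glue: in each $B_{\sigma_0}(a_k)$ (with $\sigma_0$ small and fixed), use exponential normal coordinates and a local moving frame $\{\tau_1,\tau_2\}$ to transplant the radial minimizer $v_\eps(y)=\rho(|y|/\eps)(y/|y|)^{d_k}$ from \eqref{ier.def}, phase-rotated so that on $\partial B_{\sigma_0}(a_k)$ it agrees (up to a unit-modulus factor) with $u^*(a,d,\Phi)/|u^*|_g$; outside $\cup_k B_{\sigma_0}(a_k)$, set $u_\eps=u^*$. Smoothness of $u^*$ away from $\{a_k\}$ (Theorem~\ref{P1} point~2)) ensures the gluing is controlled, and a direct expansion using \eqref{gammaF.def} and \eqref{W.def} gives $E^{in}_\eps(u_\eps)=n(\pi\logeps+\iota_F)+W(a,d,\Phi)+o_{\sigma_0}(1)+o_\eps(1)$, while $\Phi(u_\eps)\to \Phi$ by construction. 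The hardest step will be the cancellation of the cross term in the lower bound: it requires converting the weak vorticity estimate \eqref{Pcomp.c1} into quantitative $L^2$-control of $j(u_\eps)/|u_\eps|_g - j(u^*)$ on annular regions, and pairing it against $j(u^*)$ which blows up like $1/\mathrm{dist}_S(\cdot,a_k)$ near each vortex. Managing this simultaneously with the opposing requirements $\sigma\to 0$ (to recover $W$ via \eqref{defi_W}) and $\sigma/\eps\to\infty$ (to resolve $\iota_F$ via \eqref{gammaF.def}) — so that geodesic balls used for the core asymptotics must shrink with $\eps$ in a controlled way — is the technical heart of the argument.
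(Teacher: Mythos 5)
Your outline has the right overall shape (core/bulk splitting, comparison with a canonical harmonic field, vortex balls, gluing for the recovery sequence), but three steps would fail as written. First, the deduction ``$\sum_{k=1}^{n_0}|d_k|=n$ together with $|d_k|\ge 1$ gives $n_0=n$ and $|d_k|=1$'' is simply false arithmetic ($n_0=n-1$ with one degree equal to $2$ is consistent with both constraints). Ruling out multiplicities is a genuinely second-order statement: in the paper it comes from combining the sharp bulk lower bound with an $\e$-dependent reference field $u^*(a^\e,d^\e,\Phi^\e)$ (Lemma~\ref{L.qgamma}) and the core bound (Lemma~\ref{L.near}) against \eqref{Escaling2}; the term $\pi\sum_k(d_{k,\e}^2-|d_{k,\e}|)\log(1/r_\e)$ and the divergence of $\sum d_{l,\e}d_{k,\e}G(a_{l,\e},a_{k,\e})$ when same-sign vortices coalesce are what force $|d_{k,\e}|=1$ and $n_0=n$; your outline never produces this mechanism. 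Second, your argument for boundedness of $\Phi(u_\e)$ does not close: with balls at scale $\sigma=\e^q$ the excess outside the cores is only $O(q\,\logeps)$, not $O(1)$, while at a fixed scale $\sigma$ the contribution of $\cup_k B_\sigma(a_k)$ to $\int_S(j(u_\e),\eta_k)_g\,\vol_g$ is only bounded by $C\sigma\,\logeps^{1/2}$ via Cauchy--Schwarz, which diverges. This is exactly the obstruction flagged in the remark after Corollary~\ref{Cor.cgc1}; the paper obtains \eqref{Phi.conv} not from an a priori bound on $j(u_\e)$ but from the coercivity $W(a^\e,d^\e,\Phi^\e)\ge -C+\tfrac12|\Phi^\e|^2$ (visible in \eqref{W.formula}) fed back through Lemmas~\ref{L.qgamma} and \ref{L.near} and the upper bound, after which the nearest lattice points $\Phi^\e\in\calL(a^\e,d^\e)$ converge and $\Phi\in\calL(a,d)$ follows from Lemma~\ref{lem:latti}. (A Struwe-type dyadic argument as in Lemma~\ref{lem:Struwe} could substitute for the naive Cauchy--Schwarz, but it still needs the per-annulus excess control coming from Lemma~\ref{L.near}, and you invoke neither.)

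Third, the fixed-$\sigma$ splitting cannot resolve the $O(1)$ constants, even granting the previous points. On a fixed ball $B_\sigma(a_k)$ the normal-coordinate approximation distorts the metric by $O(\sigma^2)$, and since the core energy is of order $\pi\log(\sigma/\e)$ the resulting error is $O(\sigma^2\logeps)$, which blows up as $\e\to0$ before you send $\sigma\to0$; the same defect ruins your recovery sequence glued at fixed radius $\sigma_0$, so the claimed error $o_{\sigma_0}(1)+o_\e(1)$ in \eqref{intrinsic.ubd} is not correct. Likewise your cross-term estimate requires $\int_{B_\sigma(a_k)}|j(u_\e)|_g\,\vol_g$, which is only $O(\sigma\logeps^{1/2})$, and the comparison with $I^{in}_F$ needs quantitative control of the boundary data of $u_\e$ on a circle, which you do not provide. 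The paper's proof is organized precisely around these issues: well-separated approximate vortices at scale $r_\e\le\e^\beta$ (Lemma~\ref{L.sep}); the cross term handled by pairing a Lipschitz truncation $\tilde\psi_\e$ of the potential against the vorticity in $W^{-1,1}$ rather than by integration by parts on a fixed domain; a good circle of radius $t_\e\in(r_\e,\sqrt{r_\e})$ selected by coarea from the $O(1)$ quadratic excess, so that the boundary data is close to the degree-one profile and the modified functional $I^\delta(\e,r)$ with an extension argument yields $\pi\log(t_\e/\e)+\iota_F+o(1)$; and, for the upper bound, gluing at radius $\sqrt\e$ so the metric error is $O(\e\logeps)=o(1)$. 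You acknowledge at the end that the core balls must shrink with $\e$ and that the boundary matching is the hard point, but the argument you actually outline (fixed $\sigma$, iterated limits, naive Cauchy--Schwarz) is inconsistent with that acknowledgment and would not yield \eqref{lb1} or \eqref{intrinsic.ubd} with the exact constants $\iota_F$ and $W(a,d,\Phi)$.
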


\bigskip

\subsection{Useful coordinates}\label{sect.coords}

It will be useful to carry out certain computations in exponential normal coordinates
near  certain points (typically, one of the points $P\in S$ about which $\omega(u_\e)$ concentrates). 
These are defined by the map $y\in \R^2\mapsto \exp_P(y_1\tau_{1,P} + y_2\tau_{2,P}) =: \Psi(y) $, where $\{\tau_{1,P},\tau_{2,P}\}$ is an orthonormal basis for $T_PS$.
This map is a diffeomorphism when restricted to a suitable neighborhood of the origin in $\R^2$.
In this neighborhood,
\[
x = \Psi(y) \  \Rightarrow\  \dist_S(P,x) = |y|, \qquad\mbox{ so }B_r(P) \cong \{ y \in \R^2 : |y|<r\}.
\]
Here $|y|$ denotes the Euclidean norm of $y\in \R^2$.
We will write $g_{lk}(y) := (\partial_l \Psi(y), \partial_k\Psi(y))_g$ to denote the components of the metric 
tensor in this coordinate system (where we identify $\partial_l\Psi(y)$ with an element of $T_{\Psi(y)}S$ in the natural way).
It is then a standard fact that 
\begin{equation}\label{normalcs}
g_{lk}(y) := \delta_{lk} + O(|y|^2), \qquad\ \ \mbox{ and hence } \ \ g(y) := \det( g_{lk}(y)) = 1 + O(|y|^2).
\end{equation}
Furthermore, we can also find a moving frame $\{\tau_1,\tau_2\}$ near $P$ such that
the connection $1$-form $A$ satisfies
\begin{equation}\label{A0}
|A(\Psi(y))|_g= O(|y|).
\end{equation}
Indeed, \eqref{A0} can be achieved by starting with an arbitrary
moving frame $\{\tau_1,\tau_2\}$  near $P$, and replacing it by
$\{ e^{i\phi}\tau_1,e^{i\phi}\tau_2\}$ for a suitable function $\phi$.

For any point $P\in S$ there is a $\sigma>0$ such that both  normal
coordinates and the above moving frame are defined in $B_\sigma(P)\subset S$.
Thus, given a vector field $u$, in this neighborhood we can
define $v = v_1+iv_2:B_\sigma(0)\subset \R^2\to \C$ by requiring that
\begin{equation}\label{uv}
u(\Psi(y)) = v(y) \tilde \tau_1(y) = v_1(y)\tilde \tau_1(y) + v_2(y)\tilde \tau_2(y), \qquad\quad
\tilde \tau_k(y) = \tau_k(\Psi(y)), \, k=1,2.
 \end{equation}
We will write $|v|=(v,v)^{1/2} := (v_1^2+v_2^2)^{1/2}$, so that $|v(y)| = |u(\Psi(y))|_g$.
We will also write the energy density $e_\e(v)$ and the current $j(v)$ to denote the Euclidean quantities
\[
e_\e(v) := \frac 1 2 |\nabla v|^2+ \frac 1 {4\e^2} F(|v|^2),\qquad
j(v) := \sum_{k=1}^2 (iv, \partial_{y_k}v) dy_k,
\]
where here all norms and inner products $(\cdot, \cdot)$ are understood with respect to the
Euclidean structure on $\R^2$, with respect to which the $1$-forms $\{dy_1, dy_2\}$  are orthonormal.
It is then routine to check that
\begin{align}
e_\e^{in}(u)(\Psi(y)) &= [1+ O(|y|^2)] e_\e(v)(y) + O(|v|^2(y))\nonumber \\
\Psi^* j(u) &= j(v) + O(|y|)|v|^2 \label{uvsv}\\
\Psi^* \vol_g &= [1+ O(|y|^2)] dy\nonumber 
\end{align}
where $dy = dy_1\wedge dy_2$ denotes the Euclidean area element.
Thus for example
\begin{align}
\int_{B_\sigma(P)} e_\e^{in}(u) \, \vol_g
&= (1+ O(\sigma^2) )
\int_{\{y\in \R^2: |y|<\sigma\}}  e_\e(v)  
+O(|v|^2) \ dy  \ .
\label{reduce}
\end{align}

\subsection{Upper bound} 

Given $F$ satisfying \eqref{F.growth}, we recall the notations
\eqref{ier.def} for the intrinsic energy of the radial vortex profile $I^{in}_F(R,\eps)$ as well as the limit $\iota_F$ defined in \eqref{gammaF.def}.
The coordinate system described above 
will allow us to reduce energy estimates on small balls to classical facts about the
Ginzburg-Landau energy in the Euclidean setting. We first use this reduction to
prove the upper bound part of Proposition \ref{intrinsic.gammalim2}.

\begin{proof}[Proof of Proposition \ref{intrinsic.gammalim2}, point 2)]
Recall that we constructed a canonical harmonic unit vector field $u^*=u^*(x;a,d,\Phi)$ in Theorem \ref{P1}. We will construct an appropriate vector field $U_\e=U_\e(a,d, \Phi)$ for the upper bound in Proposition \ref{intrinsic.gammalim2}, point 2) as follows:
first, we choose 
\[
U_\e := u^* \mbox{ in }S_{\sqrt \e} := S \setminus \cup_{k=1}^n B(a_k , \sqrt \e).
\]
In order to define $U_\e$ inside the balls $B(a_k , \sqrt \e)$, we need to prove that $u^*$ has the appropriate behavior at the boundary  $\partial B(a_k , \sqrt \e)$ which is done in the next step.

\medskip

\nd {\it Step 1. Estimating $u^*$ on $\partial B(a_k , \sqrt \e)$}.
Writing $j^* := j(u^*)$, by  \eqref{form_jstar}, \eqref{newpsi}, and properties of the Green's function
$G$ (see Section \ref{sec:renorm}), we have in a neighborhood of the vortices $a_k$:
\begin{align}
j^* (x) 
&= d^*[ 2\pi d_k G(x, a_k)\vol_g  + \mbox{smooth terms}] \nonumber \\
&
= d^*[- d_k \log (\dist_S(x, a_k)) \vol_g + \mbox{$C^1$ terms}]  \nonumber\\
&\  \ = \star d[d_k \log (\dist_S(x, a_k))]  + \mbox{$C^0$ terms} . \label{jstar.log}
\end{align}
Let $v^*:B(0, \sqrt \e)\to \SSS^1$ be the representation of $u^*$ in exponential normal 
coordinates near $a_k$ given by \eqref{uv}.
Since within these coordinates 
$
\Psi^*j(u^*)=j(v^*)-\Psi^*A, 
$ near $a_k$, we deduce that
$$j(v^*)=d_k d\theta+\mbox{$C^0$ terms} \quad \textrm{in } B(0, \sqrt \e),$$
where $d\theta$ is the angular $1$-form $d\theta := \frac 1{|y|^2}( y_1dy_2 - y_2dy_1)$. In particular, we have that 
\beq
\label{def_v_bdry}
v^*=e^{i(d_k\theta+\eta)} \textrm{ on } \partial B(0, \sqrt \e),
\eeq
for a $C^1$ function $\eta:\partial B(0, \sqrt \e)\to \R$ that we write $\eta=\eta(\theta)$ with the angular derivative $|\partial_\theta \eta|\leq C\sqrt \e$. Moreover, as $|d_k|=1$, it follows that
\[
\quad  |j(v^*)|^2(y) = \frac 1{|y|^2} + O(\frac 1{|y|}) \quad \textrm{ in } B(0, \sqrt \e).
\]

\medskip

\nd {\it Step 2. Defining $U_\e$ inside the ball $B(a_k , \sqrt \e)$}. We define $V_\e:=v^*=e^{i(d_k\theta+\eta)}$ on $\partial B(0, \sqrt \e)$. Setting $\bar \eta$ to be the mean of $\eta$ over $\partial B(0, \sqrt \e)$, we define $V_\e$ inside the annulus 
$B(0, \sqrt \e)\setminus B(0, \frac{\sqrt \e}2)$ by linear interpolation in the lifting as follows: 
$$V_\e(re^{i\theta})=e^{i[d_k\theta +\bar \eta  + 2(\frac{r}{\sqrt \e}-\frac12)(\eta-\bar \eta)]} \quad \textrm{ for } r\in (\frac{\sqrt \e}2, \sqrt \e).$$
Finally, as $|d_k|=1$, we define $V_\e$ inside the ball $B(0, \frac{\sqrt \e}2)$ as being a minimizer of $I^{in}_F(\frac{\sqrt \e}2, \e)$ if $d_k=1$ (or its complex conjugate if $d_k=-1$) up to a rotation of angle $\bar \eta$. The minimizing property of $V_\e$ implies that $|V_\e|\leq 1$ everywhere (by cutting off at $1$). Through the normal coordinates \eqref{uv}, we define $U_\e$ to be the corresponding vector field to $V_\e$ inside the ball $B(0, \sqrt \e)$. Note that by construction $U_\e\in \calX^{1,2}(S)$ (in fact, it is Lipschitz since every minimizer in $I^{in}_F(\frac{\sqrt \e}2, \e)$ is Lipschitz) and $|U_\e|_g\leq 1$ in $S$.

\medskip

\nd {\it Step 3. Estimating the energy of $U_\e$ and $j(U_\e)$ inside the ball $B(a_k , \sqrt \e)$}.
First, by definition of $V_\e$ inside the ball $B(0, \frac{\sqrt \e}2)$, we obtain via \eqref{gammaF.def}
$$ \int_{B(0,\frac{\sqrt \e}2)}e_\e(V_\e) \, dy=\pi \log \frac {\sqrt \e}{2\e} + \iota_F + o(1).
$$
Second, inside the annulus 
$B(0, \sqrt \e)\setminus B(0, \frac{\sqrt \e}2)$, since $|d_k|=1$, we have
\begin{align}\nonumber
\int_{B(0, \sqrt \e)\setminus B(0, \frac{\sqrt \e}2)} \frac12|\nabla V_\e|^2\, dy&=
\int_{ \frac{\sqrt \e}2}^{\sqrt \e}\int_0^{2\pi}
\frac1{2r}\big|d_k+2(\frac{r}{\sqrt \e}-\frac12)\partial_\theta \eta\big|^2+\frac{2r}\e|\eta-\bar\eta|^2\, d\theta dr
\\\label{123}
&\leq \pi \int_{ \frac{\sqrt \e}2}^{\sqrt \e} \frac1r(1+O(\sqrt\e))\, dr+\int_0^{2\pi} |\partial_\theta \eta|^2\, d\theta=\pi \log 2+o(1)
\end{align}
where we used the Poincar\'e inequality and $|\partial_\theta \eta|\leq C \sqrt \e$. Finally, by \eqref{normalcs} and \eqref{uvsv}, we compute
\begin{align*}
 \int_{B(a_k,\sqrt \e)} e^{in}_\e(U_\e) \, \vol_g
 &= 
\int_{\{ y\in \R^2 : |y|<\sqrt \e\} } \left[(1+O(\e))e_\e(V_\e)+ O(1)\right] \sqrt{g(y)}dy 
\\
 &\leq \pi \log \frac {\sqrt \e}{\e} + \iota_F + o(1) \quad \textrm{as } \e\to 0.
\end{align*}
To estimate the current
$j(U_\e)$, note that since $|V_\e|\le 1$ everywhere,
$$|j(V_\eps)|^2 \le |V_\e|^2 |\nabla V_\eps|^2 \le  |\nabla V_\eps|^2 \le 2 e_\e(V_\e).$$
 Thus for every $p\in [1,2)$,
we have by H\"older's inequality 
$$
\int_{B(0, \sqrt \e)} |j(V_\eps)|^p \, dy\leq \int_{B(0, \sqrt \e)} 2^{p/2} e_\e(V_\e)^{p/2} \, dy 
\leq |B(0, \sqrt \e )|^{1-\frac p 2}\bigg(\int_{B(0,\sqrt \e)} 2e_\e(V_\e) \, dy \bigg)^{p/2}\to 0.
$$ 
Combined with the equality $\Psi^*j(U_\e)=j(V_\e)-|V_\e|^2\Psi^*A
$ near $a_k$, as $|V_\e|\leq 1$ everywhere, we conclude that 
$\int_{B(a_k, \sqrt \e)} |j(U_\eps)|^p\to 0$ for every $p\in [1,2)$ as $\eps\to 0$.

\medskip

\nd {\it Step 4. Conclusion.}
Using the definition of $W(a,d,\Phi)$ in Section \ref{sec:renorm} and Step 3, we compute
\begin{align*}
E^{in}_\e(U_\e) 
&=
\int_{S_{\sqrt \e} } \frac 12 |D u^*|_g^2 \, \vol_g
+ \sum_{k=1}^n \int_{B(a_k,\sqrt \e)} e^{in}_\e(U_\e)\, \vol_g
\\
&=
W(a,d,\Phi) + n \pi \log \frac 1{\sqrt \e} + o(1)+ \pi n\log \frac {\sqrt \e}{\e} + n\iota_F + o(1)\\
&=W(a,d,\Phi) + n (\pi \log \frac 1{\e} +\iota_F) + o(1)
 \quad \textrm{as } \eps\to 0.
\end{align*}
As $U_\e := u^*$ in $S_{\sqrt \e}$, by Steps 1 and 3, we deduce that
\beq
\label{eqj}
j(U_\eps)-j(u^*)\to 0 \quad \textrm{ strongly in } \quad L^p(S)
\eeq
 for $p\in [1,2)$ which entails 
$dj(U_\eps)\to dj(u^*)$ strongly in $W^{-1,p}(S)$ for $p\in [1,2)$, in particular \eqref{convergence} and \eqref{Phi.conv} hold where $\Phi\in \calL(a;d)$ was given in the hypothesis as the flux integrals associated to 
$u^*$ by \eqref{Phiu.def}. 
As $U_\e$ is not smooth, using the smoothness argument in Step 1 of the proof of Proposition \ref{P.comp}, $U_\e$ can be replaced by a smooth vector field $u_\e$ with the desired properties. 

\end{proof}

\subsection{Lower bound}
{\bf Throughout most of this section, we assume that $(u_\e)_{\e\in (0,1)}$ is a 
sequence of {\it smooth} vector fields with $|u_\e|_g\leq 1$ in $S$ satisfying the hypotheses 
\eqref{Escaling2} and \eqref{convergence} of Proposition \ref{intrinsic.gammalim2} point 1) } (the smoothness assumption
follows by the argument in Step 1 of the proof of Proposition \ref{P.comp}). We will drop the
assumed bound on $|u_\e|_g$ only in  Step 6 in the proof of Proposition \ref{intrinsic.gammalim2} point 1), where we explain  how to get the lower bound in the general case.
All constants appearing in our estimates may depend on $S, n$, and the constant 
$C$ in \eqref{Escaling2}. Our first lemma allows us to  approximate the
vorticity $\omega(u_\e)$  by a sum of point masses that are 
well-separated, relative to the scale of the approximation.

\begin{lemma}
There exists $\e_0>0$ such that for $\e\in (0, \e_0)$, we can find $r_\e \in (\e^{\frac 1{2(n+1)}},\e^\beta)$ for some $\beta = \beta(n)>0$ and
$K=K(\eps)\in \Z_+$ distinct points $a^\eps=(a_{1, \eps}, \ldots, a_{K, \e})$ in $S$ and
nonzero integers $d^\eps=(d_{1, \eps},\ldots, d_{K, \eps})$ with \eqref{necessary} such that $\sum_{k=1}^K |d_{k, \e}| \le n$ (so, $K\le n$) and
\begin{equation}\label{farrr}
\|\omega(u_\e) - 2\pi \sum_{k=1}^K  d_{k,\e} \delta_{a_{k, \eps}} \|_{W^{-1,1}} \le r_\e^2,
\qquad
\dist_S(a_{k, \eps},a_{l, \eps})\ge \sqrt {r_\e} \mbox{ for all }1\leq k < l\leq K.
\end{equation}
In addition, there exists $\Phi^\e\in \calL(a^\e,d^\e)$ such that $|\Phi(u_\e) - \Phi^\e|\leq C\sqrt{r_\eps}$.
\label{L.sep}\end{lemma}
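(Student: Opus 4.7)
I begin by invoking Proposition \ref{P.comp} with $T = n$, $p = 1$, and a suitably chosen $q \in (0, 1/(n+1))$, e.g.\ $q := 1/(2(n+1))$. This produces an initial collection of at most $n$ distinct points $\{a_l^0\}_l$ and nonzero integers $\{d_l^0\}_l$ satisfying \eqref{necessary} with $\sum_l |d_l^0| \le n$, together with
\[
\|\omega(u_\e) - 2\pi \textstyle\sum_l d_l^0 \delta_{a_l^0}\|_{W^{-1,1}} \le C\e^q\logeps, \qquad \dist_{\R^{2\mathfrak g}}(\Phi(u_\e), \calL(a^0, d^0)) \le C\e^q.
\]
These initial points may not be well-separated, so I merge close-by points into clusters and replace each cluster by a single representative with summed degrees, discarding points whose total degree vanishes.

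For each threshold $\delta > 0$, declare $a_l^0 \sim_\delta a_{l'}^0$ when they are joined by a chain with consecutive distances $\le \delta$. As $\delta$ grows, the resulting clustering changes only at the (at most $n - 1$) edge lengths of a minimum spanning tree of $\{a_l^0\}$, which I denote $\delta_1 < \cdots < \delta_J$. For any $\delta_* \in (\delta_j, \delta_{j+1})$, each cluster has diameter $\le (n-1)\delta_j$ and distinct clusters are at distance $\ge \delta_{j+1}$; consolidating at threshold $\delta_*$ therefore costs at most $2\pi n(n-1)\delta_j$ in $W^{-1,1}$-norm. The goal is to find a gap $(\delta_j,\delta_{j+1})$ with $\delta_j < r_\e^2/(4\pi n^2)$ and $\delta_{j+1} \ge \sqrt{r_\e}$, so that consolidation at $\delta_* := r_\e^2/(4\pi n^2)$ yields both the required separation $\ge \sqrt{r_\e}$ and a consolidation cost $\le r_\e^2/2$.

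To obtain such a gap via pigeonhole, I introduce $n + 1$ candidates $r_\e^{(k)} := \e^{\beta_k}$ with $\beta_k := 5^{-k}\beta_0$ and $\beta_0 := 1/(8(n+1))$, and set $\beta(n) := \beta_n / 2$. The ``desired intervals''
\[
I_k := \big[(r_\e^{(k)})^2/(4\pi n^2),\; \sqrt{r_\e^{(k)}}\,\big], \qquad k = 0, 1, \ldots, n,
\]
are pairwise disjoint and all contained in the range $(\e^{1/(2(n+1))}, \e^\beta)$ once $\e$ is small enough (depending only on $n$). Since there are at most $n - 1$ merging scales $\delta_j$, by pigeonhole one interval $I_{k^*}$ contains no merging scale, providing $r_\e := r_\e^{(k^*)}$ and the desired gap.

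The remaining estimates are straightforward. The total $W^{-1,1}$ error is bounded by $C\e^q\logeps + r_\e^2/2 \le r_\e^2$ for $\e$ small, using $q > 2\beta_0 \ge 2\beta_{k^*}$ to ensure $\e^q\logeps = o(r_\e^2)$. For the flux, Lemma \ref{lem:latti} applied to the consolidation step yields $\dist(\calL(a^0, d^0), \calL(a^\e, d^\e)) \le C_n \|\mu_\e^0 - \mu_\e\|_{W^{-1,1}} \le C_n r_\e^2$, and combining with the initial flux bound gives $|\Phi(u_\e) - \Phi^\e| \le C \sqrt{r_\e}$ for a suitable $\Phi^\e \in \calL(a^\e, d^\e)$. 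The main obstacle is the scale balancing in the pigeonhole step: the simultaneous requirements of approximation at the very small scale $r_\e^2$ and of separation at the moderately small scale $\sqrt{r_\e}$ (with $r_\e \to 0$) are incompatible at any single clustering threshold and force the use of multiple candidate scales, which in turn fixes the $n$-exponential smallness of $\beta(n)$.
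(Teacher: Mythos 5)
Your proposal is correct, and it reaches the conclusion by a somewhat different combinatorial route than the paper. Both arguments share the same skeleton: start from Proposition \ref{P.comp} with $T=n$, $p=1$ and a suitable $q<\tfrac1{n+1}$, merge nearby points with summed degrees (which preserves \eqref{necessary}, keeps $\sum|d_{k,\e}|\le n$, and allows discarding zero-degree clusters), and finish the flux estimate by combining \eqref{fluxes.converge} with the lattice stability of Lemma \ref{lem:latti} via the triangle inequality. The difference is in how the separation scale is found: the paper merges pairs at scale $\sqrt{\sigma_1}$, enlarges the admissible error to $\sigma_2^2\sim n^2\sqrt{\sigma_1}$, tests separation again, and iterates at most $n-1$ times, so the final exponent emerges implicitly from the iteration; you instead fix $n+1$ candidate scales $\e^{\beta_k}$ in advance, observe that the single-linkage clustering of the initial points changes at no more than $n-1$ thresholds (MST edge lengths), and pigeonhole to find one candidate whose window $[r_\e^2/(4\pi n^2),\sqrt{r_\e}]$ is free of thresholds, consolidating only once. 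Your version buys a one-pass argument with an explicit list of admissible exponents, at the price of the clustering/gap machinery; the paper's version is more hands-on (pairwise merging) but must track the error growth across iterations. Two small points to tighten: choose each cluster representative among the cluster's own points, so that the separation of representatives is bounded below by the minimal inter-cluster distance, which your gap argument shows is $\ge\sqrt{r_\e}$ exactly (otherwise the diameters of the clusters eat into the $\sqrt{r_\e}$ bound); and record that the disjointness of the intervals $I_k$ requires $\e^{\beta_k/10}<1/(4\pi n^2)$, i.e.\ an $\e_0$ depending only on $n$, which is consistent with the statement.
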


\begin{proof}
Let $0<q< \frac 1{n+1}$ and $\sigma_1 = \e^{q/2}$
Apply now Proposition \ref{P.comp} for $T=n$, $p=1$ and $q$,
and consider the $K(\leq n)$ distinct points $a_{k, \e}$ and nonzero integers $d_{k, \e}$ provided by it ($1\leq k\leq K$). We know by \eqref{fluxes.converge} that
$\dist_{\R^{2\mathfrak g}}(\Phi(u_\eps), \calL(a^\eps ,d^\eps))\leq \sigma_1^2$. 
Set the associated measure $\mu_1(a^\e,d^\e)=2\pi \sum_k d_{k, \e} \delta_{a_{k, \e}}$.
If $\dist_S(a_{k, \e}, a_{l, \e})\ge \sqrt{\sigma_1}$ for all  $k\ne \ell$, then this collection
satisfies \eqref{farrr} with $r_\e = \sigma_1$,  for small enough $\e$.
If not,  define a new collection of points
as follows: consider some pair $a_{l, \e}\neq a_{\ell, \e}$ such that $\dist_S(a_{l, \e}, a_{\ell, \e}) < \sqrt{\sigma_1}$. 
Remove this pair from $\{ a_{k, \e}\}$ and replace them by a point $P$ with the associated degree $d=d_{l, \e}+d_{\ell, \e}$ such that
$\dist_S(P, a_{l, \e}) < \frac 12 \sqrt{\sigma_1}$ and $\dist_S(P, a_{\ell, \e}) < \frac 12\sqrt{\sigma_1}$. The total sum of absolute values of the new degrees
could decrease, so it stays $\leq n$.
Note that
\begin{equation}
\| (d_{l, \e} \delta_{a_{l, \e}} + d_{\ell, \e} \delta_{a_{\ell, \e}}) - (d_{l, \e}+d_{\ell, \e}) \delta_P\|_{W^{-1,1}} \le
(|d_{l, \e}| + |d_{\ell, \e}| )\frac{\sqrt{\sigma_1}}2 \le n \frac{\sqrt{\sigma_1}}2. 
\label{induction}\end{equation}
Continue in this fashion until a new collection is reached (still denoted $\{a_{k, \e}\}$ and where the points of zero degree are suppressed)
such that $\dist_S(a_{l, \e}, a_{\ell, \e}) \ge \sqrt{\sigma_1}$ for all distinct $a_{l, \e}\neq a_{\ell, \e}$. This takes at most
$K-1 \leq n-1$ of the above steps because at each step the number of points decreases. It follows from \eqref{induction} that
\[
\| \omega(u_\e) - 2\pi \sum_{k=1}^K d_{k, \e} \delta_{a_{k, \e}}\|_{W^{-1,1}} \le \sigma_1^2 + n(n-1)\frac{\sqrt{\sigma_1}}2
\le n^2\sqrt{\sigma_1} =: \sigma_2^2 .
\]
Denoting $\mu_2$ the measure associated to this new collection of points $a_{k, \e}$  and degrees 
$d_{k, \e}$, we note that $\|\mu_1-\mu_2\|_{W^{-1,1}}\leq 
\|\mu_1-\omega(u_\e)\|_{W^{-1,1}}+\|\omega(u_\e)-\mu_2\|_{W^{-1,1}}  \le (n^2+1) \sqrt{\sigma_1}$.
If, for this collection,  $\dist_S(a_{\ell, \e}, a_{l, \e})\ge \sqrt{\sigma_2}$ for all  $k\ne \ell$, then again we are finished. If not, we continue in the same fashion.
Within (at most) $n-1$ iterations of this procedure, we obtain a collection of
points satisfying \eqref{farrr} for some $r_\e \le C(n)\e^\beta$ for some (large) $C(n)$ and
(small) positive $\beta$. By decreasing $\beta>0$ we may suppose that $C(n) = 1$.
Moreover, if $\tilde a^\e$ is the final collection of points with the nonzero degrees $\tilde d^\e$, denoting $\tilde \mu(\tilde a^\e, \tilde d^\e)$ the associated measure,
we have that  $\|\mu_1-\tilde \mu\|_{W^{-1,1}}\leq C\sqrt{r_\eps}$. Now we use \eqref{dist_lat} and \eqref{fluxes.converge} to conclude that
$$\dist_{\R^{2\mathfrak g}}(\Phi(u_\eps), \calL(\tilde a^\e, \tilde d^\e))\leq \dist_{\R^{2\mathfrak g}}(\Phi(u_\eps), \calL(a^\e, d^\e))+\dist_{\R^{2\mathfrak g}}(\calL(a^\e, d^\e), \calL(\tilde a^\e, \tilde d^\e))\leq C\sqrt{r_\eps}.$$
\end{proof}

Our next lemma provides a good lower energy bound away from the
vortices. This will be used several times in the proof of the compactness
and lower bound assertions of Proposition \ref{intrinsic.gammalim2}.

\begin{lemma}\label{L.qgamma}
Using the notations in Lemma \ref{L.sep}, let $a^\e=(a_{1, \eps}, \ldots, a_{K, \e})$, $d^\e=(d_{1, \eps}, \ldots, d_{K, \e})$ satisfy \eqref{farrr} for some 
$r_\e\in ( \e^{\frac 1{2(n+1)}}, \e^\beta)$ for some $\beta = \beta(n)>0$ and $\Phi^\e=(\Phi_{k,\eps})_{k=1}^{2\mathfrak g} \in \calL(a^\e, d^\e)$ such that $| \Phi(u_\e)  - \Phi^\e| \leq C\sqrt{r_\eps}$.
Let $u^*(a^\e, d^\e, \Phi^\e)$ be a  canonical harmonic vector field given in Theorem \ref{P1} with the associated current $j^*_\e := j(u^*(a^\e, d^\e, \Phi^\e))$. 
Then for all sufficiently small $\e>0$, 
\begin{align}
\int_{S_{r_\e}} e_\e^{in}(u_\e) \vol_g 
&\ge
 \pi(\sum_{k=1}^K {d^2_{k,\e}}) \log \frac 1 {r_\e} + W(a^\e,d^\e,\Phi^\e) \nonumber \\
&\,
 + \int_{S_{r_\e}}
\Big(\frac 12 
\left| \frac { j(u_\e)}{|u_\e|_g } - j^*_\e \right|_g^2 + e_\e^{in}(|u_\e|_g) \Big) \vol_g 
-O(r_\eps^{1/3}) - O(r_\eps^{1/2}|\Phi^\e|^2)
\label{qgamma}
\end{align}
for $S_{r_\e} := S\setminus \cup_{k=1}^K B_{r_\e}(a_{k, \e})$.
\end{lemma}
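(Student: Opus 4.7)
The starting point is the pointwise identity $|Du_\eps|_g^2 = |\tilde j_\eps|_g^2 + \bigl|d|u_\eps|_g\bigr|^2$ from \eqref{formula12}, where $\tilde j_\eps := j(u_\eps)/|u_\eps|_g$. The vortex balls of Proposition \ref{P.vballs} have radius $\sigma \ll r_\eps$ and are absorbed into $\cup_k B_{r_\eps}(a_{k,\eps})$ by the merging in Lemma \ref{L.sep}, so $|u_\eps|_g\ge\tfrac12$ on $S_{r_\eps}$, giving classically
\[
\int_{S_{r_\eps}} e_\eps^{in}(u_\eps)\,\vol_g \;=\; \int_{S_{r_\eps}}\tfrac12|\tilde j_\eps|_g^2\,\vol_g + \int_{S_{r_\eps}} e_\eps^{in}(|u_\eps|_g)\,\vol_g.
\]
Completing the square $\tfrac12|\tilde j_\eps|^2 = \tfrac12|\tilde j_\eps - j^*_\eps|^2 + (\tilde j_\eps - j^*_\eps, j^*_\eps)_g + \tfrac12|j^*_\eps|^2$ and invoking \eqref{W.def} (applicable since \eqref{farrr} gives $\sqrt{r_\eps}\le \dist_S(a_{k,\eps},a_{\ell,\eps})$) to evaluate $\tfrac12\int_{S_{r_\eps}}|j^*_\eps|^2$, the inequality \eqref{qgamma} reduces to the cross-term bound
\[
(\ast)\qquad \int_{S_{r_\eps}}(\tilde j_\eps - j^*_\eps, j^*_\eps)_g\,\vol_g \;\geq\; -O(r_\eps^{1/3}) - O(r_\eps^{1/2}|\Phi^\eps|^2).
\]

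To prove $(\ast)$, I split $j^*_\eps = d^*\psi + \sum_{k=1}^{2\mathfrak g}\Phi_{k,\eps}\eta_k$ via \eqref{form_jstar} and analyze the flux and vortex contributions separately. For the \emph{flux} part, $L^2$-orthogonality $\int_S(d^*\psi,\eta_k)=\int_S(\psi,d\eta_k)=0$ combined with $|d^*\psi|_g\lesssim 1/\dist_S(\cdot,a_{k,\eps})$ near the singularities yields $\int_{S_{r_\eps}}(j^*_\eps,\eta_k) = \Phi_{k,\eps} + O(r_\eps)+O(r_\eps^2|\Phi^\eps|)$. Using $|\tilde j_\eps - j(u_\eps)|_g \le |Du_\eps|_g\bigl|1-|u_\eps|_g\bigr|$, H\"older combined with \eqref{F.growth} gives $\int_S|\tilde j_\eps - j(u_\eps)|_g\le C\eps|\log\eps|$, and Lemma \ref{L.sep} provides $|\Phi_k(u_\eps)-\Phi_{k,\eps}|\le C\sqrt{r_\eps}$; the $\cup B_{r_\eps}$-correction is $Cr_\eps\sqrt{|\log\eps|}$ by Cauchy--Schwarz. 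Altogether $|\int_{S_{r_\eps}}(\tilde j_\eps - j^*_\eps,\eta_k)|\le C\sqrt{r_\eps}$, and multiplying by $|\Phi_{k,\eps}|$, summing over $k$, and using Young's inequality produces the $O(r_\eps^{1/2}|\Phi^\eps|^2)+O(r_\eps^{1/2})$ contribution, the second summand absorbed into $O(r_\eps^{1/3})$.

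For the \emph{vortex} part, set $\tilde u_\eps := u_\eps/|u_\eps|_g$ (unit-length on $S_{r_\eps}$) and decompose $\tilde j_\eps - j^*_\eps = (|u_\eps|_g - 1)\,j(\tilde u_\eps) + (j(\tilde u_\eps) - j^*_\eps)$. The first piece paired with $d^*\psi$ is controlled by three-term H\"older using $\||u_\eps|_g-1\|_{L^2}\le C\eps\sqrt{|\log\eps|}$ (from \eqref{F.growth} and \eqref{Escaling2}), $\|j(\tilde u_\eps)\|_{L^2(S_{r_\eps})}\le C\sqrt{|\log\eps|}$ (since $|D\tilde u_\eps|\le C|Du_\eps|$ where $|u_\eps|_g\ge\tfrac12$), and $\|d^*\psi\|_{L^\infty(S_{r_\eps})}\le C/r_\eps$; the bound $C\eps|\log\eps|/r_\eps$ lies in $O(r_\eps^{1/3})$ once $r_\eps\ge\eps^{1/(2(n+1))}$. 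For the second piece, Lemma \ref{L.omega0} and $dj^*_\eps = -\kappa\vol_g$ on $S_{r_\eps}$ together imply that $j(\tilde u_\eps) - j^*_\eps$ is a closed $1$-form on $S_{r_\eps}$. Applying \eqref{stokes.bdy} with $\eta = j(\tilde u_\eps) - j^*_\eps$ and $\zeta = \psi$, the bulk integral vanishes and only boundary terms on $\partial B_{r_\eps}(a_{k,\eps})$ remain. Writing $\star\psi(x) = -d_{k,\eps}\log\dist_S(x,a_{k,\eps}) + R_{k,\eps}(x)$ near $a_{k,\eps}$ with $R_{k,\eps}\in C^1$ (via \eqref{newpsi}), the principal part equals $-d_{k,\eps}\log r_\eps$ times $\int_{\partial B_{r_\eps}(a_{k,\eps})}(j(\tilde u_\eps) - j^*_\eps) = 2\pi\bigl(\deg(\tilde u_\eps;\partial B_{r_\eps}(a_{k,\eps})) - d_{k,\eps}\bigr)$, and the degree matching $\deg(\tilde u_\eps;\partial B_{r_\eps}(a_{k,\eps})) = d_{k,\eps}$ inherited from the vortex merging of Lemma \ref{L.sep} makes this principal term vanish exactly. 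The $R_{k,\eps}$-remainder is bounded by $Cr_\eps\int_{\partial B_{r_\eps}}|j(\tilde u_\eps)-j^*_\eps|_g$, which after a dyadic Fubini choice of $r_\eps$ (ensuring $\int_{\partial B_{r_\eps}}|j(\tilde u_\eps)-j^*_\eps|_g^2\le C|\log\eps|/r_\eps$) and Cauchy--Schwarz is $Cr_\eps\sqrt{|\log\eps|}\in O(r_\eps^{1/3})$.

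The main obstacle is the cross-term bound $(\ast)$: since $\|j^*_\eps\|_{L^2(S_{r_\eps})}$ diverges like $\sqrt{|\log r_\eps|}$, naive Cauchy--Schwarz is far too weak. The structural gain is that $j^*_\eps$ is co-exact plus harmonic while $j(\tilde u_\eps) - j^*_\eps$ is a closed $1$-form with \emph{exactly vanishing} circulations around each $\partial B_{r_\eps}(a_{k,\eps})$, so the integration by parts collapses to controllable boundary corrections.
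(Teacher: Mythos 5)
Your overall skeleton matches the paper's: the pointwise identity \eqref{formula12} leading to the decomposition \eqref{ilb.1}, the evaluation of $\tfrac12\int_{S_{r_\e}}|j^*_\e|_g^2$ via \eqref{W.def}, the reduction to the cross-term bound (your $(\ast)$, the paper's \eqref{ilb.2}), and the splitting of $j^*_\e$ through \eqref{form_jstar} into a flux part and a $d^*\psi$ part. Your flux-part estimate is essentially the paper's bound on the terms $L_k$. The gap is in the vortex part. You assert that $|u_\e|_g\ge\tfrac12$ on $S_{r_\e}$ because the vortex balls of Proposition \ref{P.vballs} are ``absorbed'' into $\cup_k B_{r_\e}(a_{k,\e})$ by the merging in Lemma \ref{L.sep}; this is not justified and is false in general. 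The merging in Lemma \ref{L.sep} acts on the point masses (centers with nonzero degree), while zero-degree vortex balls --- which may contain zeros of $u_\e$, e.g.\ a $\pm1$ dipole --- can sit anywhere in $S_{r_\e}$. Consequently $\tilde u_\e=u_\e/|u_\e|_g$ need not be a well-defined $\calX^{1,2}$ unit field on all of $S_{r_\e}$, Lemma \ref{L.omega0} does not apply there, and your key structural claim that $j(\tilde u_\e)-j^*_\e$ is a \emph{closed} $1$-form on $S_{r_\e}$ fails; the bulk term in your integration by parts does not collapse. For the same reason the degree $\deg(\tilde u_\e;\partial B_{r_\e}(a_{k,\e}))$ may not even be defined via \eqref{deg.def} (the circle can meet the set $\{|u_\e|<\tfrac14\}$), and even when it is, Lemma \ref{L.sep} only gives the $W^{-1,1}$ estimate \eqref{farrr}, not the exact identity $\deg(\tilde u_\e;\partial B_{r_\e}(a_{k,\e}))=d_{k,\e}$ that your ``principal term vanishes exactly'' step requires (compare Lemma \ref{L.near}, which needs a careful choice of radius in a good set and only yields an inequality on degrees). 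Finally, the ``dyadic Fubini choice of $r_\e$'' is not available: $r_\e$ is fixed by the hypotheses of the lemma, so you cannot select a favorable radius to control $\int_{\partial B_{r_\e}}|j(\tilde u_\e)-j^*_\e|_g^2$; that device is used later in the paper (choice of $t_\e\in(r_\e,\sqrt{r_\e})$ in Proposition \ref{intrinsic.gammalim2}), not inside this lemma.

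The paper's proof avoids all of this by never normalizing $u_\e$ and never invoking degrees on the circles $\partial B_{r_\e}(a_{k,\e})$. For the pairing with $d^*\psi_\e$ it introduces the Lipschitz truncation $\tilde\psi_\e$ (replacing $\psi_\e$ inside the excised balls by its value at radius $r_\e$), so that $d^*\tilde\psi_\e={\bf 1}_{S_{r_\e}}d^*\psi_\e+\xi_\e$ with $\|\star\tilde\psi_\e\|_{W^{1,\infty}}\le C r_\e^{-1/2}$, and then integrates by parts against the full vorticity $\omega(u_\e)$, which is always defined and controlled by $|Du_\e|_g^2$ (Lemma \ref{L.last}); the resulting term is bounded by $\|\star\tilde\psi_\e\|_{W^{1,\infty}}\,\|\omega(u_\e)-2\pi\sum_k d_{k,\e}\delta_{a_{k,\e}}\|_{W^{-1,1}}\le C r_\e^{3/2}$, using exactly \eqref{farrr}. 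If you want to salvage your route, you must either prove that the zero set of $u_\e$ avoids $S_{r_\e}$ (which the hypotheses do not give) or, as the paper does, trade ``exact circulation cancellation'' for the quantitative $W^{-1,1}$ closeness of the vorticity paired against a Lipschitz test form.
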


\begin{proof}
The proof uses some arguments
from \cite{JeSp}, Theorem 2.
First, we use Section \ref{sec:calc} and  elementary algebra to find that
\begin{equation}
 e_\e^{in}(u_\e) = \frac 12|j_\e^*|_g^2 +
\frac 12\left| \frac { j(u_\e)}{|u_\e|_g } - j^*_\e \right|_g^2 +
 (j^*_\e, \frac{j(u_\e)}{|u_\e|_g} - j^*_\e)_g
+ e_\e^{in}(|u_\e|_g) \quad \textrm{in} \, \,  S_{r_\e}. 
\label{ilb.1}\end{equation}
In addition,
by \eqref{W.def}, we have that
\[ 
\frac 12
\int_{S_{{r_\e}}}|j^*_\e|_g^2\vol_g  = \pi(\sum_k d^2_{k, \eps}) \log \frac 1{{r_\e}} + W(a^\e,d^\e,\Phi^\e) + O(\sqrt {r_\e}) + O(r_\eps^{3/2}|\Phi^\e|^2) \quad\mbox{ as }\e\to 0 \ .
\] 
After combining these,  we find that to prove \eqref{qgamma}, 
it suffices to prove that
\begin{equation}
\left| \int_{S_{r_\e}}
 (j^*_\e, \frac{j(u_\e)}{|u_\e|_g} - j^*_\e)_g
\vol_g   \right|  =O({r_\eps}^{1/3}) + O({r_\eps^{1/2}}|\Phi^\e|^2) \quad\mbox{ as }\e\to 0.
\label{ilb.2}\end{equation}
Toward this end, we let $\psi_\eps:=\psi(a^\eps, d^\eps)$ be the solution of \eqref{psi.def} and we start by using  \eqref{form_jstar} to  write 
\begin{align*}
\int_{S_{r_\e}}
 (j^*_\e, \frac{j(u_\e)}{|u_\e|_g} - j^*_\e)_g
 \vol_g   
 &= 
\int_{S_{r_\e}}
 (d^*\psi_\eps , \frac{j(u_\e)}{|u_\e|_g} - j^*_\e)_g \vol_g
 +\sum_{k=1}^{2\mathfrak g} \Phi_{k, \eps} 
\int_{S_{r_\e}}
 (\eta_k , \frac{j(u_\e)}{|u_\e|_g} - j^*_\e)_g \vol_g\\
&=: L_0 +\sum_{k=1}^{2\mathfrak g} \Phi_{k, \eps}  L_k.
\end{align*}

\nd {\it Step 1. Estimate of $L_k$ for $k=1, \dots, 2\mathfrak g$}. We decompose
\[
L_k = \int_S (\eta_k, j(u_\e)- j^*_\e)_g \, \vol_g+ \int_S (\eta_k, \frac{j(u_\e)}{|u_\e|_g})_g(1-|u_\e|_g) \vol_g - \int_{S\setminus {S_{r_\e}}}
(\eta_k , \frac{j(u_\e)}{|u_\e|_g} -  j^*_\e)_g \vol_g .
\]
We estimate the terms on the right-hand side.
First, as $\Phi^\eps$ are the flux integrals associated to $u^*(a^\e, d^\e, \Phi^\e)$, we deduce
\[
|\int_S (\eta_k, j(u_\e)- j^*_\e)_g \vol_g | = |\Phi_k(u_\e) - \Phi_{k, \eps} | =O(\sqrt{r_\eps}).
\]
Next, since by \eqref{formula12}, $\left|\frac{ |j(u_\e)|_g}{|u_\e|_g}(1-|u_\e|_g) \right| \le \big|1-|u_\e|_g\big| |Du_\eps|_g \leq C \e e_\e^{in}(u_\e)$, it is clear that
\[
\left| \int_S (\eta_k, \frac {j(u_\e)}{|u_\e|})_g (1-|u_\e|_g)\, \vol_g \right|
\le C\|\eta_k\|_{L^\infty} \e \logeps.
\]
We split the remaining term into two pieces. By Cauchy-Schwarz,
\begin{align*}
\left|\int_{\cup_{\ell=1}^{K_\eps} B_{r_\e}(a_{\ell, \eps})}  (\eta_k, \frac {j(u_\e)}{|u_\e|_g})_g \vol_g\right|
&\le \left( \int_{ \cup_{\ell=1}^{K_\eps} B_{r_\e}(a_{\ell, \eps})} |\eta_k|^2 \vol_g \int_{\cup_{\ell=1}^{K_\eps} B_{r_\e}(a_{\ell, \eps})} |Du_\e|_g^2  \vol_g\right)^{1/2}\\
 &= O({r_\e}\logeps^{1/2}) = O({r_\e}^{1/2}).
 \end{align*}
Next, from \eqref{form_jstar}, \eqref{newpsi} and properties of the Green's function
in Section \ref{sec:renorm} (in particular that $\|d^*G(\cdot, a_{k, \eps}) \|_{L^1(B_{r_\e}(a_{\ell, \eps}))}=O(r_\eps)$ for every $k$ and $\ell$), one readily checks that 
\[
\left|\int_{ \cup_{\ell=1}^{K_\eps} B_{r_\e}(a_{\ell, \eps})} (\eta_k, j^*_\e)_g\vol_g\right|
\le C\|\eta_k\|_{L^\infty}(r_\eps+|\Phi^\eps|r_\eps^2\|\eta_k\|_{L^\infty}).
\]
By combining the above, we conclude that
\[
|\Phi_{k, \eps}  L_k |  =O\bigg((\sqrt{r_\eps}+|\Phi^\eps|r_\eps^2)|\Phi^\e|\bigg)=O(\sqrt{r_\e})+O(\sqrt{r_\e}|\Phi^\e|^2) \qquad\mbox{ for every $k=1, \dots, 2\mathfrak g$}.
\]

\nd {\it Step 2. Estimate of  $L_0$}.
Next, with $\psi_\eps:=\psi(a^\eps, d^\eps)$ the $2$-form solving \eqref{psi.def}, we define
\[
\tilde \psi_\e(x) :=
\begin{cases}\psi_\e(x) &\mbox{ in }S_{r_\e}\\
\psi_\e(x) + d_k( \log \dist_S(x, a_{\ell, \eps}) - \log {r_\e} ) \vol_g &\mbox{ in } B_{r_\e}(a_{\ell, \eps}), \, \ell=1,\dots,  K_\eps.
\end{cases}
\]
Since $\dist_S(a_{l, \eps}, a_{\ell, \eps})\geq \sqrt{r_\eps}$, it follows from \eqref{newpsi} 
and properties of the Green's function from  Section \ref{sec:renorm}
that $\tilde \psi_\e$ is Lipschitz continuous in $S$ and $C^1$ in
$\cup_{\ell=1}^{K_\eps} B_{r_\e}(a_{\ell, \eps})$,
with Lipschitz constant bounded by $ C/{\sqrt{r_\e}}$
in $S$. 
Thus we can write 
\[
d^* \tilde \psi_\e = {\mathbf 1}_{S_{r_\e}} d^*\psi_\e+  \xi_\e,  
\qquad\mbox{ with } 
\quad \| d^*\tilde \psi_\e\|_{L^\infty(S)} ,  \| \xi_\e \|_{L^\infty(S)}  \le C / \sqrt{r_\e}
\]
where $\xi_\e$ is a $1$-form supported in $\cup_{\ell=1}^{K_\eps} B_{r_\e}(a_{\ell, \eps})$.
With this notation we have  
\[
L_0 = \int_S (d^*\tilde \psi_\e, j(u_\e)- j^*_\e)_g \vol_g + \int_S (d^*\tilde \psi_\e, \frac{j(u_\e)}{|u_\e|_g})_g(1-|u_\e|_g) \vol_g- 
\int_{S\setminus S_{r_\e}}
(\xi_\e, \frac{j(u_\e)}{|u_\e|_g} -  j^*_\e)_g \vol_g.
\]
We consider the terms on the right-hand side.
First, writing $u^*_\e:=u^*(a^\e, d^\e, \Phi^\e)$, by the Stokes theorem and the definition of the Hodge star operator, we have 
\[
\int_S (d^*\tilde \psi_\e, j(u_\e)- j^*_\e)_g \vol_g
 = 
\int_S \star\tilde \psi_\e\,  (\omega(u_\e) - \omega(u^*_\e)) 
=
\int_S \star \tilde \psi_\e\,  \big(\omega(u_\e)-2\pi \sum_{\ell=1}^{K_\eps} d_{\ell, \eps} \delta_{a_{\ell, \eps}}\big)
\]
and from this, together with \eqref{farrr}, we conclude that
\[
\left|\int_S (d^*\tilde \psi_\e, j(u_\e)- j^*_\e)_g \vol_g \right| \le
\| \star \tilde \psi_\e\|_{W^{1,\infty}}\| \omega(u_\e) - 2\pi \sum_{\ell=1}^{K_\eps} d_{\ell, \eps} \delta_{a_{\ell, \eps}}\|_{W^{-1,1}}\le C {r_\e}^{3/2}.
\]
The other terms in the decomposition of $L_0$ are estimated exactly like their 
counterparts in Step 1 above, using the estimates
$\|d^*\tilde \psi_\e\|_\infty$, $\| \xi_\e \|_\infty \le {C} {r_\e^{-1/2}}$.
This leads to
\[
|L_0| \le C(\ {r_\e}^{3/2} + \frac  \e {r_\e}  \logeps  +  r_\e^{1/2} \logeps^{1/2} + {r_\e}^{1/2}+{r_\e}^{3/2}|\Phi^\eps|) 
=O({r_\e}^{1/3}+r_\e^{2}|\Phi^\eps|^2)
\]
as $\e\to 0$.
\end{proof}

Our next lemma provides a rather crude estimate of the energy near the ``vortex cores".

\begin{lemma}\label{L.near}
Using the notations in Lemma \ref{L.sep}, let $a^\e=(a_{1, \eps}, \ldots, a_{K, \e})$, $d^\e=(d_{1, \eps}, \ldots, d_{K, \e})$ satisfy \eqref{farrr} for some 
$r_\e\in (\e^{\frac 1{2(n+1)}}, \e^\beta)$ for some $\beta = \beta(n)>0$. Then for all sufficiently small $\e>0$, 
\begin{equation}\label{lbd.near}
\int_{B_{{r_\e}}(a_{k, \e})} e_\e^{in}(u_\e)
 \vol_g \ge
 |d_{k, \e}| (\pi \log \frac{{r_\e}}{\e}- C ), \quad k=1, \dots, K.
\end{equation}
\end{lemma}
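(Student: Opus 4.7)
The lemma follows by combining the vortex ball construction of Proposition~\ref{P.vballs} applied at a scale $\sigma_\e \ll r_\e$ with a ball-growth/annular argument that propagates the estimate up to scale $r_\e$. To begin, fix $q \in (\tfrac{1}{2(n+1)}, \tfrac{1}{n+1})$ (so that $q < 1 - T/(n+1)$ with $T = n$) and set $\sigma_\e := \e^q$. Since $r_\e \ge \e^{1/(2(n+1))}$ and $q > \tfrac{1}{2(n+1)}$, we have $(n+1)\sigma_\e = o(r_\e)$ as $\e \to 0$. Proposition~\ref{P.vballs} at this scale produces pairwise disjoint balls $\{B_{l,\sigma_\e}\}_{l=1}^{K_\e}$ with integer degrees $\tilde d_{l,\e} := \deg(u_\e;\partial B_{l,\sigma_\e})$ satisfying $\sum_l |\tilde d_{l,\e}| \le n$, with $|u_\e|_g \ge 1/2$ outside them, $\sum_l r_{l,\sigma_\e} \le (n+1)\sigma_\e$, and
\[
\int_{B_{l,\sigma_\e}} e_\e^{in}(u_\e)\,\vol_g \ \ge \ |\tilde d_{l,\e}|\,\big(\pi \log(\sigma_\e/\e) - c\big).
\]

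The crux is to show that the small-scale vortex balls cluster correctly around the points $a_{k,\e}$, in the sense that for each $k$,
\[
\sum_{l \in \calJ_k} \tilde d_{l,\e} = d_{k,\e}, \qquad \calJ_k := \{l : a_{l,\sigma_\e} \in B_{r_\e/2}(a_{k,\e})\}.
\]
Since the $\{a_{k,\e}\}$ are separated by $\sqrt{r_\e}$ and the total radius of the small-scale balls is $o(r_\e)$, the families $\calJ_k$ are disjoint across $k$, and for $\e$ small every $B_{l,\sigma_\e}$ with $l \in \calJ_k$ is contained in $B_{r_\e}(a_{k,\e})$. To establish the degree identity I would test the two $W^{-1,1}$-estimates
\[
\big\|\omega(u_\e) - 2\pi \textstyle\sum_l \tilde d_{l,\e}\delta_{a_{l,\sigma_\e}}\big\|_{W^{-1,1}} \le C\sigma_\e \logeps, \qquad \big\|\omega(u_\e) - 2\pi \textstyle\sum_k d_{k,\e}\delta_{a_{k,\e}}\big\|_{W^{-1,1}} \le r_\e^2,
\]
(the first from the argument in the proof of Corollary~\ref{Cor.cgc1}, the second being \eqref{farrr}) against a Lipschitz cutoff $\varphi_k$ with $\varphi_k \equiv 1$ on $B_{r_\e/2}(a_{k,\e})$, $\spt \varphi_k \subset B_{3r_\e/4}(a_{k,\e})$, and $\|d\varphi_k\|_\infty \le C/r_\e$. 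This yields $|2\pi\sum_{l\in\calJ_k}\tilde d_{l,\e} - 2\pi d_{k,\e}| \le C(r_\e + \sigma_\e\logeps/r_\e) \to 0$, and the identity follows from the integrality of the degrees.

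It then remains to propagate the lower bound from scale $\sigma_\e$ to scale $r_\e$. On the annulus $B_{r_\e}(a_{k,\e}) \setminus \cup_{l\in\calJ_k} B_{l,\sigma_\e}$ we have $|u_\e|_g \ge 1/2$, the degree of $u_\e$ on $\partial B_\rho(a_{k,\e})$ equals $d_{k,\e}$ for appropriate $\rho$, and the geometry is nearly Euclidean (cf.\ Section~\ref{sect.coords}). A Jerrard-type annular lower bound — or equivalently a second application of the ball construction from the small scale up to $r_\e$, as in the Appendix — then gives
\[
\int_{B_{r_\e}(a_{k,\e}) \setminus \cup_{l \in \calJ_k} B_{l,\sigma_\e}} e_\e^{in}(u_\e)\,\vol_g \ \ge \ \pi |d_{k,\e}| \log \frac{r_\e}{(n+1)\sigma_\e} - C.
\]
Adding this to $\sum_{l\in\calJ_k}$ of the small-scale bounds and using $\sum_{l\in\calJ_k} |\tilde d_{l,\e}| \ge |\sum_{l\in\calJ_k}\tilde d_{l,\e}| = |d_{k,\e}|$ telescopes the logarithms into $\pi|d_{k,\e}|\log(r_\e/\e)$ and yields \eqref{lbd.near}. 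The main technical point is the degree-matching step: the separation $\sqrt{r_\e}$ of the $a_{k,\e}$ and the integrality of degrees are both essential to upgrade the approximate $W^{-1,1}$ identity to an exact one.
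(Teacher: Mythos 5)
Your small-scale degree-matching step is in the right spirit, but as written it is not quite correct, and the propagation step from scale $\sigma_\e$ to scale $r_\e$ is a genuine gap. First, the identity $\sum_{l\in\calJ_k}\tilde d_{l,\e}=d_{k,\e}$ does not follow from your cutoff argument as stated: testing against $\varphi_k$ produces $2\pi\sum_l \tilde d_{l,\e}\,\varphi_k(a_{l,\sigma_\e})$, and a nonzero-degree ball whose center lies in the transition annulus $B_{3r_\e/4}\setminus B_{r_\e/2}$ contributes a fractional weight, which defeats the integrality argument (e.g.\ a tight $\pm1$ dipole straddling $\partial B_{r_\e/2}$ makes the sum over $\calJ_k$ equal to $d_{k,\e}\mp 1$ while your tested quantity is still close to $2\pi d_{k,\e}$). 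This is fixable — there are at most $n$ nonzero-degree balls by \eqref{balls2}, so a pigeonhole choice of the transition annulus avoids their centers — but it must be said. The more serious problem is the annular estimate $\int_{B_{r_\e}\setminus\cup_{l\in\calJ_k}B_{l,\sigma_\e}}e_\e^{in}(u_\e)\,\vol_g\ge \pi|d_{k,\e}|\log\frac{r_\e}{(n+1)\sigma_\e}-C$, which you assert but do not prove. Circles centered at $a_{k,\e}$ will not give it: \eqref{farrr} only localizes the vorticity to within $O(r_\e^2)$ of $a_{k,\e}$ in the $W^{-1,1}$ sense, and since $r_\e$ may be as large as $\e^\beta$ with $\beta$ tiny, one can have $r_\e^2\gg\sigma_\e$, so the actual vortex core may sit at distance $\sim r_\e^2\gg(n+1)\sigma_\e$ from $a_{k,\e}$; then $\deg(u_\e;\partial B_\rho(a_{k,\e}))=0$ for all $\rho$ below that displacement and a circle-integration bound around $a_{k,\e}$ loses an unbounded amount $\pi\log(r_\e^2/\sigma_\e)$. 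The fallback ``second ball construction from $\sigma_\e$ up to $r_\e$'' gives lower bounds only in terms of the degrees of the \emph{final} (scale $\sim r_\e$) balls, so you again need $\sum|d|\ge|d_{k,\e}|$ over the final balls contained in $B_{r_\e}(a_{k,\e})$ — i.e.\ exactly \eqref{deg.bigenough} at the large scale — and this does not follow from your single identity at radius $r_\e/2$ without further pairing/localization arguments, because the merging process can group stray tight dipoles (near $\partial B_{r_\e/2}$ or $\partial B_{r_\e}$) with the main cluster and spoil the count. Note also that your cutoff device cannot simply be run at scale $\sim r_\e$ to get \eqref{deg.bigenough} directly, since with $\|d\varphi_k\|_\infty\le C/r_\e$ the error from the small-ball vorticity estimate becomes $C\sigma\logeps/r_\e\sim C\logeps$, which is useless.

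For comparison, the paper avoids the two-scale structure entirely: it applies Proposition \ref{P.vballs} once, at $\sigma=\frac{r_\e}{4n+4}$, so that \eqref{balls4} already carries the full $\pi\log(r_\e/\e)$, and the only thing to prove is the degree bound \eqref{deg.bigenough}. That bound is obtained by testing $\omega(u_\e)-2\pi\sum_l d_{l,\e}\delta_{a_{l,\e}}$ against the \emph{radial ramp} $\varphi=f(\dist_S(\cdot,a_{k,\e}))$ with $f(r)=|\calT_k|-\int_0^r{\mathbf 1}_{\calT_k}$, where $\calT_k$ is the set of radii whose circles avoid all vortex balls ($|\calT_k|\ge r_\e/2$ by \eqref{balls3}). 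This $\varphi$ has $\|\varphi\|_{W^{1,\infty}}\le 2$ (not $C/r_\e$), so the only error needed is the $O(r_\e^2)$ of \eqref{farrr} plus $O(\e\logeps)$, and the coarea formula converts the pairing into $2\pi\int_{\calT_k}\deg(\tilde u_\e;\partial B_r(a_{k,\e}))\,dr$; comparing with $2\pi d_{k,\e}|\calT_k|$ and using integrality of the degree gives a radius $r\in\calT_k$ with $|\deg|\ge|d_{k,\e}|$, hence \eqref{deg.bigenough}. If you want to keep your cutoff-based degree matching, the cleanest repair of your argument is to replace the propagation step by this coarea device (or by a minimal-connection analysis of the $W^{-1,1}$ closeness between the two atomic measures, in the spirit of the proof of Lemma \ref{lem:latti}, to control circle degrees for most radii); as it stands, the key step is missing.
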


\begin{proof}
Consider the collection of balls $\{B_{\ell, \sigma}\}_\ell$ provided by applying Proposition \ref{P.vballs}
to $u_\e$,
with $T=n$ and $\sigma = \frac 1{4n+4} {r_\e}$. In view of \eqref{balls4}, it suffices to show that for every $1\leq k\leq K$, 
\begin{equation}\label{deg.bigenough}
\sum_{ \ell : B_{\ell, \sigma} \subset B_{r_\e}(a_{k,\e}) }| d_{\ell, \sigma} | \ge |d_{k, \e}|.
\end{equation}
To do this, we fix some $k\in \{1, \dots, K\}$, and we define the set
\[
\calT_k := \{ r \in (0,r_\e) : \partial B_r(a_{k,\e}) \cap (\cup_\ell B_{\ell, \sigma}) = \emptyset \}.
\]
It follows from \eqref{balls3} that $|\calT_k|\ge \frac 12 r_\e$.
Now define a Lipschitz function $\varphi:S\to \R$ by
\[
\varphi(x) := f(R_k(x)) , \qquad\mbox{ for }\quad f(r) :=|\calT_k| - \int_0^r {\mathbf 1}_{\calT_k}(s) ds,\qquad R_k(x) := \dist_S(x,a_{k, \e}).
\]
It is clear that $\varphi(a_{k,\e})=|\calT_k|$, $\| \varphi\|_{W^{1,\infty}} \le 2$ and the support of $\varphi$ is inside $B_{r_\e}(a_{k,\e})$, so 
\[
|\int \varphi \, \omega(\ue)  - 2\pi d_{k, \e} \varphi(a_{k,\e}) | \le \| \varphi\|_{W^{1,\infty}} \| \omega(u_\e) - 2\pi \sum_{l=1}^K d_{l, \e} \delta_{a_{l,\e}} \|_{W^{-1,1}} \le 2 r_\e^2,
\]
because by \eqref{farrr}, $a_{l, \e}\notin B_{r_\e}(a_{k,\e})$ if $l\neq k$.
Next, we define $\tilde u_\e$ as in \eqref{tildeu.def}, so that by Step 3 in the proof of Proposition \ref{P.comp}:
\[
\int \varphi \, \omega(u_\e) 
= 
\int \varphi \, \omega(\tilde u_\e)  + O(\e\logeps) .
\]
We  fix a moving frame $\{\tau_1,\tau_2\}$ defined in $B_{r_\e}(a_{k,\e})$ and let $A$ be the connection $1$-form associated to it.
Then by \eqref{rel_connection} we may write $\omega(\tilde u_\e) = d( j(\tilde u_\e) + A)$ in $B_{r_\e}(a_{k,\e})$. It follows
\[
\int \varphi \, \omega(\tilde u_\e) =\underbrace{\int d\bigg(\varphi ( j(\tilde u_\e) + A)\bigg)}_{=0}-\int d\varphi \wedge (j(\tilde u_\e) + A)
=
- \int f'(R_k) dR_k \wedge (j(\tilde u_\e) + A) .
 \]
But the coarea formula and the definition of $f$ imply that
\begin{align*}
- \int_{B_{r_\e}(a_{k,\e})} f'(R_k) dR_k \wedge (j(\tilde u_\e) + A)
&=
\int_{r\in \calT_k} \int_{\partial B_r(a_{k,\e})} (j(\tilde u_\e)+A)\, dr\\
&=
2\pi \int_{r\in \calT_k} \deg(\tilde u_\e, \partial B_r(a_{k,\e}))\, dr,
\end{align*}
where we used \eqref{deg.def} and $|\tilde u_\e|_g=1$ on $\partial B_r(a_{k,\e})$ for every $r\in \calT_k$. Combining these, we find that 
\[
\left| 2\pi d_{k, \e} |\calT_k| - 2\pi \int_{r\in \calT_k} \deg (\tilde u_\e, \partial B_r(a_{k,\e})) \, dr \right|\le Cr_\e^2
\]
for small $\eps>0$. As $|\calT_k|\geq \frac{r_\eps}{2}$, it follows that if $\e$ is small enough, then $|\deg(\tilde u_\e, \partial B_r(a_{k,\e}))| \ge |d_{k, \e}|$
for a large set of $r\in \calT_k$. Choose one of these $r\in \calT_k$. Since 
\[
\deg(\tilde u_\e,\partial B_r(a_{k, \e})) = \sum_{\ell\, :\, B_{\ell, \sigma} \subset B_r(a_{k,\e})} d_{\ell, \sigma},
\]
this implies \eqref{deg.bigenough}.
\end{proof}

We now present the proof of Proposition \ref{intrinsic.gammalim2}. The bulk of the proof is 
devoted to a sharp lower bound near the vortices, which uses
preliminary estimates provided by Lemmas \ref{L.qgamma} and \ref{L.near}
to  refine the conclusion of Lemma \ref{L.near}.

\begin{proof}[Proof of Proposition \ref{intrinsic.gammalim2} point 1)]
By Step 1 in the proof of Proposition \ref{P.comp}, we may assume that $u_\eps$ are smooth vector fields on $S$ with $|u_\e|_g\le 1$ everywhere (as the cutting $u_\eps$ by 
$\hat u_\e$ and then regularizing as in Lemma \ref{L.density}, the new vector field satisfies the hypotheses of the Proposition but
has less energy). We will explain in Step 6 below how to get the result for general vector fields without the constraint on the length of $u_\e$.
Next to the distinct points $a=(a_1,\ldots, a_{n_0})\in S$
and nonzero integers $d=(d_1,\ldots, d_{n_0})$ (given in the hypothesis of Proposition \ref{intrinsic.gammalim2}), using Lemma  \ref{L.sep}, we find $K=K_\eps$ distinct points $a^\e=(a_{1, \eps}, \ldots, a_{K, \e})$ and  nonzero integers $d^\e=(d_{1, \eps}, \ldots, d_{K, \e})$ satisfying \eqref{farrr} for some 
$r_\e\in (\e^{\frac 1{2(n+1)}}, \e^\beta)$ and for some $\beta = \beta(n)>0$, $\sum_k |d_{k, \e}|\leq n$ and $\Phi^\e=(\Phi_{k,\eps})_{k=1}^{2\mathfrak g} \in \calL(a^\e, d^\e)$ such that $| \Phi(u_\e)  - \Phi^\e| \leq C\sqrt{r_\eps}$.
Let $j^*_\e := j(u^*(a^\e,d^\e,\Phi^\e))$ as defined in \eqref{form_jstar}.\\

\nd {\it Step 1. We prove that $\sum_{k=1}^K |d_{k, \e}| = n$, and we control $\dist_S(a_{k,\eps}, a_\ell)$ and the
signs of $d_{k,\e}$ for small $\e>0$, i.e., every $a_{k, \eps}$ is close to some $a_\ell$ with $\sign d_{k,\e}=\sign d_\ell$. Moreover, $W(a^\eps, d^\eps, \cdot)$ is coercive in $\Phi^\eps$ and $W(a^\eps, d^\eps, \Phi^\eps)\to \infty$ if a limit degree satisfies $|d_\ell|>1$.} 
First, Lemma \ref{L.sep} and \eqref{convergence} imply that
\beq
\label{to_be_erased}
\|\sum_{k=1}^{K_\eps} d_{k,\e} \delta_{a_{k,\e}} - \sum_{\ell = 1}^{n_0}d_\ell \delta_{a_\ell}\|_{W^{-1,1}} := s_\e  \to 0\quad\mbox{ as $\e\to 0$.}
\eeq
For small $\eps$ and for $\ell = 1,\ldots, n_0$, we consider the Lipschitz function 
$$
f_{\e, \ell}(x) := 
\sign(d_\ell)[ 2s_\e - \dist_S(x, a_\ell)]^+,\quad x\in S,
$$
where $[\cdots ]^+ = \max\{[\cdots], 0\}$. We also define, for $\ell=1, \dots, n_0$,
\begin{align*}
I_\ell^\e &:= \{ k\in \{1, \dots, K\} : \dist_S(a_{k,\e}, a_\ell)  \leq  2 s_\e\}, \\
I_\ell^{\e,+} &:= \{ k\in I_\ell^\e : \sign(d_{k,\e}) = \sign(d_\ell) \}.
\end{align*}
We henceforth assume that $\e$ is small enough that $s_\e <1$  and 
the closed
balls  $\{ \bar B_{2s_\e}(a_\ell)\}$ are disjoint, and hence
$\{ I^\e_\ell\}_\ell$ are pairwise disjoint.
It follows from our convention for 
defining Sobolev norms
(see Section \ref{sec:soboesp}) that
$\| f_{\e,\ell}\|_{W^{1,\infty}} = 1$.
Thus
\[
\int f_{\e,\ell} \, \big(\sum_{l = 1}^{n_0}d_l \delta_{a_l}-\sum_{k=1}^{K_\e} d_{k,\e} \delta_{a_{k,\e}}\big) \le  \| f_{\e, \ell}\|_{W^{1,\infty}} 
\|\sum_{k=1}^{K_\eps} d_{k,\e} \delta_{a_{k,\e}} - \sum_{l = 1}^{n_0}d_l \delta_{a_l}\|_{W^{-1,1}} = s_\e.
\]
However, the definition and the smallness condition on $s_\e$ imply that
\begin{align*}
\int f_{\e,\ell} \, \big(\sum_{l = 1}^{n_0}d_l \delta_{a_l}-\sum_{k=1}^{K_\e} d_{k,\e} \delta_{a_{k,\e}}\big) 
&= 
2 s_\e |d_\ell| - \sign(d_\ell)\sum_{k\in I^\e_\ell}d_{k,\e}(2s_\e - \dist_S(a_{k,\e}, a_{\ell})) \\
&\ge 
2 s_\e |d_\ell| -  2s_\e \sum_{k\in I^{\e,+}_\ell} |d_{k,\e}|.
\end{align*}
We combine these facts and divide  by $2s_\e$ 
to find that
$
|d_\ell| - \sum_{k\in I_\ell^{\e,+}}|d_{k,\e}| \le  \frac 12$.
Since both terms on the left are integers, it follows that 
\begin{equation}
|d_\ell| \le \sum_{k\in I_\ell^{\e,+}}| d_{k,\e}|
\quad\mbox{ for $\ell = 1,\ldots, n_0$.}
\label{dldk}\end{equation}
Summing over $\ell$ and using the disjointness of $\{ I^\e_\ell\}_\ell$,
we obtain
\[
n = \sum_{\ell = 1}^{n_0} |d_\ell|
 \le \sum_{\ell}\sum_{k\in I_\ell^{\e,+}} |d_{k,\e}|
 \le \sum_{\ell}\sum_{k\in I_\ell^{\e}} |d_{k,\e}|
\le \sum_{k=1}^{K_\e}|d_{k,\e}| \le n.
\]
It follows that in fact for small $\eps$,
\begin{equation}
\sum_{k=1}^K |d_{k, \e}| = n, \qquad \sign d_{k,\e} = \sign d_\ell\ \mbox{ for all }k\in I^\e_\ell, \,
\qquad
\sum_{k\in I^\e_\ell} d_{k, \e} = d_\ell
\label{dkep}\end{equation}
where the last equality holds due to \eqref{dldk} and the first two
equalities in \eqref{dkep}.
In particular, one has that $a_{k, \eps}\to a_\ell$ for every $k\in I^\e_\ell$; as for all $1\leq \ell\leq n_0$ the indices $\{ d_{k, \e}\, :\, k\in I^\e_\ell\}$ have the same sign, the explicit formula \eqref{W.formula} for $W$ implies that $W(a^\e, d^\e, \cdot)$ is coercive in $\Phi_\e$:
\[
\begin{aligned}
W(a^\e, d^\e, \Phi^\e) &\ge -C(a,d) + |\Phi_\e|^2 \qquad &\mbox{ for all }\e \in (0,\e_0),\\
W(a^\e,d^\e, \Phi^\e) &\to +\infty \qquad &\mbox{ as $\e\to 0$,  if $|d_\ell |>1$ for any $\ell$.} 
\end{aligned}
\]
The point is that in the sum $\sum_{1\le l<k\le n} d_{l,\e} d_{k,\e} G(a_{k,\e}, a_{l,\e})$
in formula \eqref{W.formula} for $W$,
Step 1 implies that $\dist_S(a_{k,\e},a_{l,\e})$ is bounded away from $0$, for small
$\e$, for all pairs $k,l$ such that $d_{k,\e} d_{l,\e}<0$.
Thus contributions from this sum are bounded below, and all other terms are manifestly also bounded from below as $\dist_S(a_{k,\e},a_{l,\e})$ is bounded. Moreover, if $|d_\ell|>1$ for any $\ell$, then multiple points $a_{k,\e}$
with the same sign $d_{k,\e}$
must converge to the same $a_\ell$, causing the  sum 
$\sum_{1\le l< k\le n} d_{l,\e} d_{k,\e} G(a_{k,\e}, a_{l,\e})$ 
to diverge.

\medskip

\nd {\it Step 2. We prove that $|d_\ell|=|d_{k, \e}| = 1$ for all $1\leq k\leq K$ and $1\leq \ell\leq n_0$ (so, $K=n=n_0$), $\dist_S(a_{k, \e}, a_{\ell, \e})\geq C>0$ for every $k\neq \ell$ and $\{\Phi_\eps\}$ converge (for a subsequence) as $\eps\to 0$}. Indeed, by combining the energy estimates away from the vortex cores and inside the vortex cores as shown in Lemmas \ref{L.qgamma} and \ref{L.near},
we find for all sufficiently small $\e>0$:
\begin{align}
\int_{S} e_\e^{in}(u_\e) \vol_g 
&\ge
\pi \sum_{k=1}^K |d_{k, \e}| \log \frac 1 \e + \pi  \sum_{k=1}^K (d_{k, \e}^2 - |d_{k, \e}|) \log \frac  1{r_\e} + W(a^\e,d^\e,\Phi^\e)
\nonumber\\
&\qquad+
\int_{S_{r_\e}}
\frac 12\big| \frac { j(u_\e)}{|u_\e|_g } - j^*_\e \big|_g^2 + e_\e^{in}(|u_\e|_g)\vol_g  \ - C \,,
\label{qg2}\end{align}
where $S_{r_\e} = S\setminus \cup_{k=1}^K B_{r_\e}(a_{k, \e})$.
Then the upper bound \eqref{Escaling2} and \eqref{dkep} imply that
\[
 \pi  \sum_{k=1}^K (d_{k, \e}^2 - |d_{k, \e}|) \log \frac  1{r_\e} + W(a^\e,d^\e,\Phi^\e) \le C.
\]
Combined with the coercivity of $W$ proved in Step 1, it follows that 
\[
|d_{k, \e} | = 1\mbox{ for all }k,\, (\textrm{so, } K=n),\qquad |d_\ell| = 1 \,\mbox{ for all } \ell, \qquad
|\Phi^\e|^2 \le C
\]
for all sufficiently small $\e$. Also, Step 1 implies $I_\ell^{\e, +}=I_\ell^\e$ containing only one point $a_{\ell, \e}$ that converges to $a_\ell$ as $\eps\to 0$, thus, yielding $\dist_S(a_{k, \e}, a_{\ell, \e})\geq C>0$ for every $k\neq \ell$ for small $\e>0$.
In particular, \eqref{Phi.conv} holds (that is, $\Phi^\e$ converges) after 
 possibly passing to a subsequence, and \eqref{qg2} implies that
\[
\int_{S_{r_\e}}
\frac 12\left| \frac { j(u_\e)}{|u_\e|_g } - j^*_\e \right|_g^2 + e_\e^{in}(|u_\e|_g)\vol_g  \
\le C.
\]
It follows from the coarea formula that
\[
\int_ {r_\e}^{\sqrt{r_\e}} \sum_{k=1}^n
\int_{\partial B_t(a_{k, \e})} 
\left(\frac 12\left| \frac { j(u_\e)}{|u_\e|_g } - j^*_\e \right|_g^2 + e_\e^{in}(|u_\e|_g)\right) d\calH^1
dt \le C
\]
and hence, since $r_\e \leq \e^\beta$ for some positive $\beta$, that there exists $t_\e\in  (r_\e, \sqrt {r_\e})$ such that 
\begin{equation}
\sum_{k=1}^n
t_\e  \int_{\partial B_{t_\e}(a_{k, \e})} 
\left(\frac 12\left| \frac { j(u_\e)}{|u_\e|_g } - j^*_\e \right|_g^2 + e_\e^{in}(|u_\e|_g)\right) d\calH^1
\le C\logeps^{-1} . 
\label{really}\end{equation}

\nd {\it Step 3. Passage in normal coordinates}. 
We now fix some $k\in \{1,\ldots, n\}$. We assume for concreteness, and to simplify the notation, that $d_{k, \e}= +1$.
We aim to rewrite the integral around $\partial B(t_\e, a_{k,\e})$
in exponential normal coordinates near $a_{k, \e}$, using a moving frame such that
\eqref{A0} holds 
(see \eqref{uv} and the discussion in Section \ref{sect.coords} for notation).
Since we have arranged that $|u_\e|_g\le 1$ everywhere,
it follows from \eqref{normalcs}, \eqref{uvsv} and \eqref{jstar.log} that 
on $\partial B_{t_\e}(a_{k, \e})$, 
\begin{align}
\left| \frac { j(u_\e)}{|u_\e|_g } - j^*_\e \right|_g^2(\Psi(y))
&= [1+ O(t_\e^2) ] \left| \frac { j(v_\e)}{|v_\e| }(y) -   d\theta +O(1)\right|^2
\nonumber\\
&= [1+ O(t_\e^{1/2})]
\left| \frac { j(v_\e)}{|v_\e| }(y) -   d\theta\right|^2 + O(t_\e^{-1/2}).
\label{seriously}
\end{align}
where $d\theta := \frac 1{|y|^2}( y_1dy_2 - y_2dy_1)$ and we used the Young inequality $|z_1+z_2|^2\leq (1+t_\e^{1/2})|z_1|^2+(1+t_\e^{-1/2})|z_2|^2$.
Combining this with \eqref{really} and again using \eqref{uvsv}, 
we obtain
\begin{equation}\label{teest}
t_\e  \int_{\{y\in \R^2 : |y|=t_\e\} } 
\frac 12\left| \frac { j(v_\e)}{|v_\e| }(y) - d\theta \right|^2 + e_\e(|v_\e|)(y) \ d\calH^1(y)
\le C \logeps^{-1}.
\end{equation}
\\

\nd {\it Step 4. We will show that 
$$
\int_{\{y\in \R^2 : |y|< t_{\e} \}} e_\e(v_\e) \ dy \ge \pi \log \frac {t_\e}\e + \iota_F  + o(1) \quad \textrm{as } \eps\to 0.
$$
}
To do this, it is convenient to define
\begin{equation}
\beta_\e(v;r) := r\int_{\partial B_r}\left( \left | \frac {j(v)}{|v|} - d\theta\right|^2 + e_\e(|v|) \right)d\calH^1(y),
\label{betaep.def}\end{equation}
for $v\in H^1(O;\C)$, where $O$ is a neighborhood of the origin in $\R^2$ containing the disk $B_r=\{y\in \R^2 : |y|< r \}$.
We further define for small $\delta>0$:
\[
I^\delta(\e,r) := \inf \left\{ \int_{B_r} e_\e(v) \ :  \ v\in H^1(B_r;\C),  \
\beta_\e(v, r)\le \delta \right\} \ .
\]
By a change of variables one finds that for $v^r(x) := v(rx)$,
\[
\beta_\e(v,r) =  \beta_{\e/r}(v^r,1), \qquad\mbox{ and thus } \ \ \ \ 
I^\delta(\e, r) = I^{\delta}(\e/r, 1) .
\]
Note that $\beta_\e(v,1)=0$ implies that $|v|=1$ and $j(v)=d\theta$ on $\partial B_1$ (i.e., $v=e^{i(\theta+\bar \eta)}$ for some constant $\bar \eta$) so that 
by \eqref{gammaF.def}:
$$\lim_{\e\to 0} \big( I^0(\e,1) - \pi \log \frac 1\e\big)=\iota_F$$
(see \cite[Lemma III.1]{BBH}).
We also claim that for small $\delta>0$:
\begin{equation}\label{Idelta}
I^\delta(\e,1) \ge \pi \log \frac 1\e + \iota_F - C \delta+o(1) \quad \textrm{as } \eps\to 0.
\end{equation}
This follows from the fact that if $v\in H^1(B_{1};\C)$ and $\beta_\e(v;1)\le \delta$,
then $v$ admits an extension to a function $\tilde v\in H^1(B_{2};\C)$ such that
$\beta_\e(\tilde v,2)=0$
and
\[
\int_{B_{2}\setminus B_{1}} e_\e(\tilde v)
\le  \pi \log 2 + C \delta . 
\]
Indeed, this may be done by writing $v(y) = \rho(y) e^{i(\theta+\eta(y))}$
for $|y|=1$ with $\rho, \eta\in H^1(\partial B_1)$ with $|1-\rho|=O(\e)$ pointwise on  $\partial B_1$ and 
$\|\partial_\tau \eta\|^2_{L^2(\partial B_1)}=O(\delta)$ (due to the assumption $\beta_\e(v,1)\leq \delta)$.  
Then for $1\le r \le 2$,
we set $\tilde v(ry) :=  [1+ (2-r)(\rho(y)-1)]\exp i[\theta +\bar \eta  + (2-r)(\eta(y)-\bar \eta)]$,
where $\bar \eta$ is the mean of $\eta$ over $\partial B_1$.
Thus for small $\e>0$:
\[
I^0(\e,2) \le
 \int_{B_2}e_\e(\tilde v) \,dy
 \le 
 \int_{B_1}e_\e(v) \,dy +\pi \log 2 + C \delta.
 \]
It follows that $I^\delta(\e,1) \ge I^0(\e,2) - \pi \log 2 - C\delta$,
which implies \eqref{Idelta}. Then by combining \eqref{Idelta} and \eqref{teest},
we conclude Step 4. \\

\nd {\it Step 5. Lower bound \eqref{lb1}}. By Step 4 combined with \eqref{reduce}, we deduce that
\[
\int_{B_{t_\e}(a_{k, \e})}e_\e^{in}(u_\e) \vol_g \ge \pi \log \frac{t_\e}{\e} +\iota_F   + o(1) 
\qquad\mbox{ as $\e\to 0$, \ for every $1\leq k\leq n$.}
\]
As $t_\eps\in (r_\e, \sqrt{r_\e})$ and $\dist_S(a_{k,\e}, a_{\ell, \e})=O(1)\geq t_\e$ for $k\neq \ell$, we see that \eqref{farrr} holds true for $t_\e$ so that we can apply 
Lemma \ref{L.qgamma} for $t_\e$ yielding:
\begin{align*}
\int_{S_{t_\e}} e_\e^{in}(u_\e) \vol_g 
&\ge n\pi \log \frac 1{t_\e} + W(a^\e, d^\e, \Phi^\e)\\
&\qquad\qquad + \int_{S_{t_\e}}\frac 12
\left| \frac { j(u_\e)}{|u_\e|_g } - j^*_\e \right|_g^2 + e_\e^{in}(|u_\e|_g)\vol_g  \ -  o(1)
\end{align*}
as $\e \to 0$. By adding these inequalities and noting that for every fixed $\sigma>0$, $j^*_\e\to j^*=j(u^*(a,d,\Phi))$
uniformly on $S\setminus \cup_{k=1}^n B_\sigma(a_k)$ and $W(a^\e, d^\e, \Phi^\e)\to W(a,d,\Phi)$
as $\e\to 0$, we complete the proof of \eqref{lb1}.\\

\nd {\it Step 6. Conclusion}. 
We finally consider the general case, without the assumption $|u_\e|_g\le 1$.
Due to the cutting $u_\e$ by $\hat u_\e$ (see Step 1 of the proof of Proposition~\ref{P.comp}), as $e_\e^{in}(|\hat u_\e|_g)=0$ a.e. in $\{|u_\e|_g\geq 1\}$, it remains to check for a fixed $\sigma>0$: 
\begin{align*}
&E^{in}_\e(u_\e)-E^{in}_\e(\hat u_\e)\\
&\geq \int_{\{x\in S_\sigma\, :\, |u_\e|_g(x)>1\}} \frac 12
\left| \frac { j(u_\e)}{|u_\e|_g } - j^* \right|_g^2 + e_\e^{in}(|u_\e|_g)- \frac 12
\left| j(\hat u_\e) - j^* \right|_g^2 \, \vol_g+o(1), \, \textrm{ as } \e\to 0,
\end{align*}
where we denoted by $S_\sigma=S\setminus \cup_{k=1}^n B_\sigma(a_k)$ and $j^*=j(u^*(a,d,\Phi))$. Using \eqref{ilb.1} and $j(u_\e)=|u_\e|_g^2j(\hat u_\e)$ in $\{|u_\e|_g>1\}$, the above inequality will follow from
$$\int_{\{x\in S_\sigma\, :\, |u_\e|_g(x)>1\}} (|u_\e|_g-1)\, (j(\hat u_\e), j^*)_g\, \vol_g=o(1)  \, \textrm{ as } \e\to 0.$$
To prove this, we use $|j(\hat u_\e)|_g=|D \hat u_\e|_g\leq |D u_\e|_g$ in $\{|u_\e|_g>1\}$ so that we obtain by \eqref{F.growth} and the Cauchy-Schwarz inequality:
\begin{align*}
&\big|\int_{\{x\in S_\sigma\, :\, |u_\e|_g(x)>1\}} (|u_\e|_g-1)\, (j(\hat u_\e), j^*)_g\, \vol_g\big|\\
&\leq \|j^*\|_{L^\infty(S_\sigma)} (\int_S F(|u_\e|^2_g)\vol_g)^{1/2} (\int_S |Du_\e|^2_g \vol_g)^{1/2}
=O(\frac{\e \logeps}{\sigma})=o(1).
\end{align*}
\end{proof}

\begin{proof}[Proof of Theorem \ref{intrinsic.gammalim}] 
It is a direct consequence of Corollary \ref{Cor.cgc1} and Proposition \ref{intrinsic.gammalim2}.
\end{proof}

\bigskip

\section{$\Gamma$-limit in the extrinsic case}
\label{sec:ext_gamma}

In this section we prove the counterpart of 
Proposition \ref{intrinsic.gammalim2} for the extrinsic energy $E^{ex}_\e$ in Problem 2. Here, the surface $S$ is isometrically embedded in $\R^3$.

\begin{theorem}\label{ext.gamma}

The following $\Gamma$-convergence result holds.
\begin{itemize}
\item[1)] (Compactness) Let $(m_\eps)_{\eps\downarrow 0}$ be a family of sections of $\calX^{1,2}(S)$ satisfying
$
E^{ex}_\eps(m_\eps)  \le T \pi |\log \eps| + C
$
for some integer $T> 0$ and a constant $C>0$. Then
there exists a sequence $\eps \downarrow 0$ such that for every $p\in [1,2)$,
\begin{equation}
\omega(m_\eps)  \longrightarrow 2\pi \sum_{k=1}^{n} d_k\delta_{a_k} \quad \textrm{in } \, W^{-1,p}, \, \,  \textrm{ as } \, \eps \to 0,
\label{convergence_ext}\end{equation}
where $\{a_k\}_{k=1}^n$ are distinct points in $S$ and $\{d_k\}_{k=1}^n$ are nonzero integers satisfying \eqref{necessary} and $\sum_{k=1}^n |d_k|\leq T$. Moreover, if $\sum_{k=1}^n |d_k|= T$, then
$n = T$ and 
$|d_k|=1$ for every $k=1, \dots, n$; in this case, for a further subsequence, there exists $\Phi\in \calL(a;d)$ such that $\Phi(m_\eps)$ defined in \eqref{Phiu.def} converges to 
$\Phi$ as $\eps\to 0$.\\

\item[2)] ($\Gamma$-liminf inequality) Assume that the sections $m_\eps\in \calX^{1,2}(S)$ satisfy \eqref{convergence_ext} for $n$ distinct points $\{a_k\}_{k=1}^n\in S^n$ and $|d_k|=1$, $k=1, \dots n$ that satisfy \eqref{necessary} and $\Phi(m_\e)\to \Phi\in \calL(a;d)$. Then
\begin{equation}\label{lb1_ext}
\liminf_{\e\to 0}
\left[
E_\e^{ex}(m_\e)- n (\pi \logeps  + \iota_F) 
\right]  \ \ge \  
W(a,d,\Phi) + \tilde W(a,d,\Phi) 
\end{equation}
for $u^* = u^*(a,d,\Phi)$, $a=(a_1, \dots, a_n)$, $d=(d_1, \dots, d_n)$ and $\tilde W(a,d,\Phi)$ defined in  \eqref{renorm_ext}. 
\\ 

\item[3)] ($\Gamma$-limsup inequality) For every $n$ distinct points $a_1,\ldots, a_n\in S$ and $d_1,\ldots , d_n\in \{\pm 1\}$ satisfying
\eqref{necessary} and every $\Phi\in \calL(a;d)$
there exists a sequence of smooth sections $m_\eps:S\to TS$ such that $|m_\eps|_g\leq 1$ in $S$, \eqref{convergence_ext} holds,  $\Phi(m_\e)\to \Phi$ and
$$
E^{ex}_\eps(m_\eps)- n \pi |\log \eps| \longrightarrow  W(a,d, \Phi)+\tilde W(a,d,\Phi)+n \iota_F \quad \textrm{ as } \, \eps \to 0.
$$
\end{itemize}

\end{theorem}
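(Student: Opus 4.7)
The plan is to piggyback on Proposition~\ref{intrinsic.gammalim2} using the identity
\[
E_\e^{ex}(m) \;=\; E_\e^{in}(m) \;+\; \tfrac{1}{2}\int_S |\calS(m)|_g^2 \vol_g,
\]
which follows from the splitting $|\dbar m|_g^2 = |\dd m|_g^2 + |\calS(m)|_g^2$ (Lemma~\ref{lem:ext_Dir}). Since $|\calS(m)|_g \le C|m|_g$ is bounded and continuous, the shape-operator term does not affect vortex concentration but contributes an $O(1)$ correction. In particular, $E_\e^{ex} \ge E_\e^{in}$ immediately gives the compactness assertion of point~1): Corollary~\ref{Cor.cgc1} and Proposition~\ref{intrinsic.gammalim2} point~1) yield the $W^{-1,p}$ convergence of $\omega(m_\e)$, the saturation $n=T$, $|d_k|=1$, and $\Phi(m_\e)\to\Phi\in\calL(a;d)$.

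For the $\Gamma$-liminf, I truncate so that $|m_\e|_g\le 1$ and split the energy. Proposition~\ref{intrinsic.gammalim2} point~1) provides, for each $\sigma>0$,
\[
\liminf_\e [E_\e^{in}(m_\e)-n(\pi\logeps+\iota_F)] \;\ge\; W(a,d,\Phi) + \liminf_\e \int_{S_\sigma} \tfrac{1}{2}\Bigl|\tfrac{j(m_\e)}{|m_\e|_g}-j^*\Bigr|_g^2 \vol_g
\]
with $j^*=j(u^*(a,d,\Phi))$ and $S_\sigma=S\setminus\cup_k B_\sigma(a_k)$. Combined with the finite excess $E_\e^{ex}(m_\e)-n\pi\logeps \le C$, this residual bound forces $\int_{S_\sigma}|Dm_\e|_g^2 = O(1)$ for each $\sigma>0$, and the potential forces $|m_\e|_g\to 1$ in $L^2(S)$. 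Extracting a diagonal subsequence, $m_\e\to m^*$ weakly in $\calX^{1,2}(S_\sigma)$ and strongly in every $L^q(S_\sigma)$, with $|m^*|_g=1$ a.e., $\omega(m^*)=2\pi\sum_k d_k\delta_{a_k}$, and $\Phi(m^*)=\Phi$. Since $m^*$ and $u^*$ are unit tangent fields on $S\setminus\{a_k\}$ with the same vorticity and fluxes, the Hodge decomposition makes $j(m^*)-j(u^*)$ $d$-exact and Lemma~\ref{L.ns} forces its periods into $2\pi\Z$; hence $m^*=e^{i\Theta^*}u^*$ for a scalar $\Theta^*$ with $d\Theta^*\in L^2(S)$, and $j(m^*)-j^*=d\Theta^*$. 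Weak lower semicontinuity yields $\liminf\int_{S_\sigma}|j(m_\e)/|m_\e|_g-j^*|_g^2 \ge \int_{S_\sigma}|d\Theta^*|_g^2$, while strong $L^2_\loc$ convergence and $|\calS(m_\e)|_g\le C$ give $\lim\int_{S_\sigma}|\calS(m_\e)|_g^2 = \int_{S_\sigma}|\calS(m^*)|_g^2$ and $\int_{S\setminus S_\sigma}|\calS(m_\e)|_g^2=O(\sigma^2)$. Sending $\sigma\to 0$ and invoking \eqref{renorm_ext} completes \eqref{lb1_ext}.

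For the $\Gamma$-limsup, I first secure a minimizer $\Theta^*$ of the functional defining $\tilde W$ by the direct method (coercivity comes from $|d\Theta|^2$ modulo constants, with the constant mode living in $\R/2\pi\Z$, and both integrands are weakly lower semicontinuous). Setting $m^*:=e^{i\Theta^*}u^*$, I recycle the construction of Proposition~\ref{intrinsic.gammalim2} point~2): take $m_\e=m^*$ on $S_{\sqrt\e}$, paste in the radial profile of $I^{in}_F(\sqrt\e/2,\e)$ inside each inner ball $B_{\sqrt\e/2}(a_k)$, and interpolate the phase linearly in the radius on the annulus $B_{\sqrt\e}(a_k)\setminus B_{\sqrt\e/2}(a_k)$. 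The intrinsic contribution $\tfrac{1}{2}\int_{S_{\sqrt\e}}|j^*+d\Theta^*|_g^2$ splits as $\tfrac{1}{2}\int_{S_{\sqrt\e}}|j^*|_g^2+\tfrac{1}{2}\int_S|d\Theta^*|_g^2+o(1)$, the cross term being $o(1)$ by integration by parts using $d^*j^*=0$ on $S\setminus\{a_k\}$; the core and annular pieces reproduce $n\pi\log(\sqrt\e/\e)+n\iota_F+o(1)$ as in the intrinsic proof, and the extrinsic shape-operator piece inside the cores contributes only $O(\e)$. Adding $\tfrac{1}{2}\int_S|\calS(m^*)|_g^2$ produces the expansion $E_\e^{ex}(m_\e)=n\pi\logeps+n\iota_F+W(a,d,\Phi)+\tilde W(a,d,\Phi)+o(1)$. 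The main obstacle will be the global identification $m^*=e^{i\Theta^*}u^*$ from local weak limits: the equality $\Phi(m^*)=\lim\Phi(m_\e)$ must be extracted from local weak convergence on each $S_\sigma$ by approximating $\eta_k$ with forms supported in $S_\sigma$ and using Cauchy--Schwarz to bound $\int_{B_\sigma}|j(m_\e)|_g$, after which Lemma~\ref{L.ns} supplies the period quantization needed to build $\Theta^*$ globally.
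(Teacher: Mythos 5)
Your treatment of points 1) and 3) is essentially the paper's: compactness via $E^{ex}_\e\ge E^{in}_\e$ and Corollary \ref{Cor.cgc1}/Theorem \ref{intrinsic.gammalim}, and a recovery sequence $e^{i\Theta^*}U_\e$ with the cross term handled through $d^*j(u^*)=0$ and $d\Theta^*\in L^\infty$ (note you do need the elliptic regularity $\Theta^*\in C^1$ from \eqref{PDET}, which you use implicitly). For the liminf you take a genuinely different route: instead of the paper's factorization $m_\e=w_\e u^*_\e$, extension of $w_\e$ into balls of radius $\tilde t_\e\le\e^{\beta/2}$, weak $H^1$ compactness of $\tilde w_\e$ and the lifting Lemma \ref{lem:lifting} (all inside Lemma \ref{L.qgamma.ex}), you pass to the limit in $m_\e$ itself on each $S_\sigma$ and try to identify the limit $m^*$ as $e^{i\Theta^*}u^*$. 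The whole weight of that identification rests on transferring the flux constraint to the limit, $\Phi(m^*)=\lim_\e\Phi(m_\e)=\Phi$: without it the harmonic part of $j(m^*)-j(u^*)$ need not vanish, no single-valued lifting exists, and your residual only bounds an infimum over all $w\in H^1(S;\SSS^1)$ carrying an extra harmonic component, which can be strictly smaller than $\tilde W(a,d,\Phi)$. This is exactly the phenomenon the flux hypothesis is there to exclude, so the step is not cosmetic.

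And here is the gap: the tool you propose for this transfer fails quantitatively. On a ball of fixed radius $\sigma$ the Dirichlet energy of $m_\e$ is of order $\pi\logeps$, so Cauchy--Schwarz only gives $\int_{B_\sigma(a_k)}|j(m_\e)|_g\,\vol_g\le C\sigma\,\logeps^{1/2}\to\infty$; hence $\int_{S\setminus S_\sigma}(j(m_\e),\eta_k)_g\vol_g$ is not negligible, and $\Phi(m^*)=\Phi$ does not follow from weak convergence on $S_\sigma$ plus a fixed-scale cutoff. The paper avoids precisely this by doing the flux comparison on $\e$-dependent balls of radius $\tilde t_\e\le\sqrt{r_\e}\le\e^{\beta/2}$, where H\"older gives $\tilde t_\e^{1/2}E^{in}_\e(m_\e)^{1/2}\to0$ (end of Step 4'' of Lemma \ref{L.qgamma.ex}). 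To rescue your fixed-$\sigma$ route you would need a bound on $j(m_\e)$ in $L^p(S)$, $p<2$, uniform in $\e$; such a bound is available, but only by combining the upper bound with the matching lower bound inside vortex balls (Lemma \ref{L.near}) and the dyadic-annulus argument of Lemma \ref{lem:Struwe} -- none of which you invoke. A related smaller omission: to get $d\bigl(j(m^*)-j(u^*)\bigr)=0$ across the punctures $a_k$ (needed before Hodge theory yields exactness) you must control $j(m^*)$ near $a_k$ in $L^1$ and identify its degree there, which again requires these quantitative inputs rather than the soft limits on each $S_\sigma$. With those ingredients added, your scheme (weak limit of $m_\e$, lifting via the flux identity as in Lemma \ref{lem:lifting}, lower semicontinuity of the residual and of the shape-operator term) does close, and is an acceptable alternative to the paper's $w_\e$-extension argument; as written, however, the flux-transfer step is a genuine gap.
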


\subsection{Compactness}
Let us start by computing the extrinsic Dirichlet energy of a section $m$:

\begin{lemma}
\label{lem:ext_Dir}
If $m:S\to TS$ is a section of $\calX^{1,2}$ then
$$|\dbar m|_g^2=|D m|_g^2+|\calS(m)|_g^2 \quad \textrm{a.e. in } S,$$
where $\calS:TS\to TS$ is the shape operator defined in \eqref{shape}.
\end{lemma}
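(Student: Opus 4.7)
The plan is to compute $\bar D_v m$ for each tangent direction $v$, decompose it into tangential and normal components using the Gauss formula, and then sum over an orthonormal frame.

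First I would assume $m$ is smooth on an open set and reduce the general $\calX^{1,2}$ case at the end by the standard density argument from Lemma \ref{L.density} together with the fact that both sides depend continuously on $m$ in the $\calX^{1,2}$-topology (the shape operator $\calS$ is a smooth bundle morphism on the embedded surface, so $m \mapsto |\calS(m)|_g$ is continuous from $L^2$ to $L^2$). Fix $x \in S$ and a tangent vector $v \in T_xS$. Since $m$ is tangential, i.e. $m \cdot N = 0$ on $S$, differentiating in direction $v$ gives $(\bar D_v m) \cdot N + m \cdot \bar D_v N = 0$, which together with the definition \eqref{shape} yields
\[
(\bar D_v m) \cdot N = -m \cdot \bar D_v N = (m, \calS(v))_g.
\]
The tangential part of $\bar D_v m$ is precisely the Levi-Civita covariant derivative $D_v m$ (this is the standard fact that the induced connection on an embedded submanifold, obtained by orthogonal projection of the ambient connection onto the tangent bundle, coincides with the Levi-Civita connection of the induced metric). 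Therefore the Gauss-type decomposition reads
\[
\bar D_v m = D_v m + (m, \calS(v))_g\, N,
\]
and, since $N$ is a unit vector orthogonal to $T_xS$,
\[
|\bar D_v m|_g^2 = |D_v m|_g^2 + (m, \calS(v))_g^2.
\]

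Next I would apply this to an orthonormal basis $\{\tau_1, \tau_2\}$ of $T_xS$, using the definition $|\bar D m|_g^2 = |\bar D_{\tau_1} m|_g^2 + |\bar D_{\tau_2} m|_g^2$ and similarly for $|Dm|_g^2$. This gives
\[
|\bar D m|_g^2 = |D m|_g^2 + \sum_{k=1}^2 (m, \calS(\tau_k))_g^2.
\]
The concluding ingredient is that the shape operator is self-adjoint on $T_xS$, a standard consequence of the symmetry of the second fundamental form: since $N \cdot \tau_j = 0$ on $S$, differentiating yields $(\bar D_{\tau_k} N) \cdot \tau_j = - N \cdot \bar D_{\tau_k}\tau_j$, and the right-hand side is symmetric in $k, j$ modulo tangential terms that vanish when paired with $N$. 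Hence $(\calS(\tau_k), \tau_j)_g = (\tau_k, \calS(\tau_j))_g$, which gives $(m, \calS(\tau_k))_g = (\calS(m), \tau_k)_g$, and therefore
\[
\sum_{k=1}^2 (m, \calS(\tau_k))_g^2 = \sum_{k=1}^2 (\calS(m), \tau_k)_g^2 = |\calS(m)|_g^2
\]
by Parseval in $T_xS$. Combining the displays yields the claimed identity pointwise on the smooth locus, and the density/continuity argument extends it to a.e. $x \in S$ for arbitrary $m \in \calX^{1,2}(S)$.

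There is no real obstacle here; the content is essentially the Gauss equation together with self-adjointness of $\calS$. The only mildly delicate point is justifying that the tangential part of $\bar D_v m$ equals $D_v m$ without unfolding the definition of $\bar D$ on an extension $\bar m$; this is handled by recalling that $D$ is characterized as the projection of $\bar D$ onto $TS$ and is independent of the chosen extension, exactly as stated in the setup of Problem 2.
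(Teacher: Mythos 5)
Your proof is correct and follows essentially the same route as the paper: decompose $\dbar_v m$ into its tangential part $D_v m$ and normal part, identify the normal component as $(\dbar_v m)\cdot N=-m\cdot\dbar_v N=(m,\calS(v))_g$, and use the symmetry of the shape operator to rewrite $\sum_k (m,\calS(\tau_k))_g^2$ as $|\calS(m)|_g^2$. The only cosmetic differences are your Parseval phrasing versus the paper's explicit component computation in a moving frame, and your added density remark for the $\calX^{1,2}$ case, which is harmless since the identity is pointwise algebraic.
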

\begin{proof} 
Let $\{\tau_1, \tau_2=i\tau_1\}$ be a local moving frame on $S$, i.e., 
$$\tau_\ell\cdot \tau_k:=(\tau_\ell, \tau_k)_g=\delta_{\ell k}, \quad \ell, k=1,2.$$
We write
$$m=\sum_{k=1}^2 m^k \tau_k, \quad D_\ell m:=D_{\tau_\ell} m, \quad \dbar_\ell m:=\tau_\ell\cdot\dbar m, \, \ell=1,2.$$
Denoting $N$ the Gauss map at $S$, we decompose the extrinsic differential as follows: 
$$\dbar m=Dm+(\dbar m\cdot N)\otimes N, \quad \textrm{i.e.,}  \quad \dbar_\ell m=D_\ell m+(\dbar_\ell m\cdot N) N, \quad \ell=1,2.$$
Therefore,
\beq
\label{e_vs_e1}
|\dbar m|_g^2=\sum_{\ell=1}^2 |\dbar_\ell m|_g^2=\sum_{\ell=1}^2 \bigg(|D_\ell m|_g^2+(\dbar_\ell m\cdot N)^2\bigg).
\eeq
Recall the definition of the shape operator \eqref{shape} (in particular, $\calS(\tau_\ell)=-\dbar_\ell N$ for $\ell=1,2$). It is a standard fact that $\calS$ is a symmetric operator corresponding to the
the second fundamental form $H$ of $S$; in other words, we have in the frame $\{\tau_1, \tau_2\}$ that
\footnote{The symmetry of $H$ follows from
$H_{\ell k}=-\tau_k \cdot \dbar_\ell N=N\cdot (\dbar_\ell \tau_\beta-D_\ell \tau_\beta)$ as $\tau_k\cdot N=0$ and $\dbar_\ell \tau_\beta-D_\ell \tau_\beta=\dbar_\beta \tau_\ell-D_\beta \tau_\ell+\underbrace{[\bar \tau_\ell, \bar \tau_\beta]-[\tau_\ell, \tau_\beta]}_{=0}$ where $[\cdot, \cdot]$ represents the commutator in $\R^3$ for the metric $g$.}
$$H_{\ell k}=\tau_k \cdot \calS(\tau_\ell)=\tau_\ell \cdot \calS(\tau_k)=H_{k \ell}.$$
Therefore, as $m\cdot N=0$ on $S$, we compute for every $\ell=1,2$:
$$
\dbar_\ell m\cdot N=-m\cdot \dbar_\ell N=\sum_{k=1}^2 m^k \tau_k\cdot \calS(\tau_\ell)=\sum_{k=1}^2 m^k \tau_\ell \cdot \calS(\tau_k)=\tau_\ell \cdot \calS(m) 
$$
so that
$$
\sum_{\ell=1}^2 (\dbar_\ell m\cdot N)^2=|\calS(m)|^2_g=\sum_{1\leq l, k\leq 2} m^l m^k H^2_{l k}, \quad \textrm{where } H^2_{lk}=\sum_{\ell=1}^2 H_{\ell l} H_{\ell k}.$$
\end{proof}

\begin{proof}[Proof of Theorem \ref{ext.gamma} point 1)] By Lemma \ref{lem:ext_Dir}, we see that $E^{in}_\e(m_\e)\leq E^{ex}_\e(m_\e)$, so that we can apply 
Corollary \ref{Cor.cgc1} and Theorem \ref{intrinsic.gammalim} point 1) to reach the conclusion. Note that the lower bound of $E^{in}_\e(m_\e)$ in Corollary \ref{Cor.cgc1} holds also true for $E^{in}_\e(m_\e)$.
\end{proof}

\subsection{Upper bound}

In the following, we adapt the construction from the proof of Proposition \ref{intrinsic.gammalim2}, point 2) to the case of Problem 2.

\begin{proof}[Proof of Theorem \ref{ext.gamma}, point 3)]
Let $u^*=u^*(a,d, \Phi)$ be a canonical harmonic map and $\Theta$ be a minimizer in \eqref{renorm_ext} (such a minimizer exists by the direct method in calculus of variations). Then
$\Theta$ satisfies the associated Euler-Lagrange equation to \eqref{renorm_ext}: 
\beq
\label{PDET}
-\Delta \Theta+\frac12\left(\cos(2\Theta) (\calS(u^*),\calS(iu^*))_g+\sin(2\Theta) (|\calS(iu^*)|_g^2-|\calS(u^*)|_g^2) \right)=0\quad \textrm{in} \quad S.
\eeq
Therefore, $\Delta \Theta\in L^\infty$ so 
$\Theta\in C^1(S)$. 
Let $U_\e:=U_\e(a,d, \Phi)$ 
be the vector field constructed for the upper bound in Proposition \ref{intrinsic.gammalim2}. We set 
\beq
\label{numarul}
m_\eps:=e^{i\Theta}U_\e \quad \textrm{in} \quad S.
\eeq
 By Lemma \ref{lem:ext_Dir}, 
we have that $|\dbar m_\e|_g^2=|D m_\e|_g^2+|\calS(m_\e)|_g^2$. We compute the intrinsic part as follows:
$$D_\ell m_\e=e^{i\Theta} D_\ell U_\e+ie^{i\Theta} \partial_\ell \Theta U_\e$$
yielding
\begin{align*}
|D m_\e|_g^2&=|D U_\e|^2_g+ |U_\e|_g^2 |d\Theta|^2_g+2\sum_\ell \partial_\ell \Theta 
\big(D_\ell U_\e, i U_\e\big)_g \\
\textrm{so that} \quad \quad |\dbar m_\e|_g^2&=|D U_\e|^2_g+ |U_\e|_g^2 |d\Theta|^2_g+|\calS(m_\e)|_g^2+2 \big(j(U_\e), d\Theta\big)_g.
\end{align*}
Recall that $|U_\e|_g\leq 1$ in $S$ and $U_\e=u^*$ in $S_{\sqrt{\e}}=S\setminus \cup_k B_{\sqrt{\e}}(a_k)$. Since $|m_\e|_g=|U_\e|_g$, we deduce by \eqref{renorm_ext}:
\begin{align*}
E^{ex}_\e(m_\e)  &\leq  \int_S e_\e^{in}(U_\e) \vol_g+\tilde W(a,d, \Phi)+
\int_S  \big(j(U_\e), d\Theta\big)_g \vol_g+\frac12 \int_{S\setminus S_{\sqrt{\e}}} |\calS(m_\e)|_g^2 \vol_g.
\end{align*}
The desired upper bound follows by the upper bound of $E^{in}_\e(U_\e)$ in Proposition \ref{intrinsic.gammalim2} point 2), by noting that as $\eps\to 0$: 
$$\int_{S\setminus S_{\sqrt{\e}}} |\calS(m_\e)|_g^2 \vol_g=o(1)$$
(as $|m_\e|_g\leq 1$) and
\begin{align*}
\bigg|\int_S \big(j(U_\e), d\Theta\big)_g \vol_g\bigg|&\leq
\bigg|\int_S \big(j(u^*), d\Theta\big)_g \vol_g\bigg|+\bigg|\int_{S\setminus S_{\sqrt{\e}}}
\big(j(U_\e)-j(u^*), d\Theta\big)_g \vol_g\bigg|\\
&\leq \bigg|\int_S \big(\underbrace{d^*j(u^*)}_{=0}, \Theta\big)_g \vol_g\bigg|+\|d\Theta\|_{L^\infty}\int_{S\setminus S_{\sqrt{\e}}}
|j(U_\e)-j(u^*)|_g \vol_g\\
&=o(1)
\end{align*}
because $j(U_\e)-j(u^*)\to 0$ strongly in $L^p(S)$ for every $p\in [1,2)$ (see \eqref{eqj}). It remains to prove the convergence of the vorticity $\omega(m_\eps)$ and of the flux integrals $\Phi(m_\e)$ as $\eps\to 0$. For that, we use $j(m_\e)=d\Theta |U_\e|^2_g+j(U_\e)$; since $d\Theta (1-|U_\e|_g^2)\to 0$ in  $L^p(S)$ for every $p\in [1,2)$ (as $d\Theta\in L^\infty$ and $|U_\e|_g=1$ in $S_{\sqrt{\e}}$), we deduce by \eqref{eqj}:
$$d j(m_\e)=d\bigg(j(U_\e)+d\Theta-d\Theta (1-|U_\e|_g^2) \bigg)\to dj(u^*)$$
in $W^{-1,p}(S)$ for $\eps\to 0$ yielding \eqref{convergence_ext}.
Also, for every harmonic $1$-form $\eta$, integration by parts and \eqref{eqj} yield
\bea
\int_S (j(m_\e), \eta)_g \vol_g&=\int_S (j(U_\e), \eta)_g +(d\Theta, \eta)_g \vol_g-\int_{S\setminus S_{\sqrt{\e}}} (1-|U_\e|_g^2)(d\Theta, \eta)_g  \vol_g\\
&=\int_S (j(u^*), \eta)_g \vol_g+o(1)
\end{align*}
because $d\Theta\in L^\infty$; in particular, $\Phi(m_\e)\to \Phi(u^*)$  for $\eps\to 0$. The smoothing argument follows as in Step 1 in the proof of Proposition \ref{P.comp}.
\end{proof}

\bigskip

\subsection{Lower bound}
\label{sec:lb_ex}
Following the proof in the intrinsic case in  Proposition \ref{intrinsic.gammalim2}, point 1), we start with a 
sharp lower bound away from the vortices, parallel
to Lemma \ref{L.qgamma} above. Let $u_\eps$ satisfy the assumption in Theorem \ref{ext.gamma} point 2). 
As $E_\e^{ex}(m_\eps)\geq E_\e^{in}(m_\eps)$, by the proof of Proposition \ref{intrinsic.gammalim2}, point 1), there exist 
$\beta = \beta(n)>0$ and
\begin{itemize}
\item 
 $n$ distinct points $a^\e=(a_{1, \eps}, \ldots, a_{n, \e})$ and  integers $d^\e=(d_{1, \eps}, \ldots, d_{n, \e})$ such that \eqref{farrr} is satisfied for the vorticity $\omega(m_\eps)$ for some 
$r_\e\in (\e^{\frac 1{2(n+1)}}, \e^\beta)$, and with $|d_{k, \e}|=1$ for all $k$;
\item
$\Phi^\e=(\Phi_{k,\eps})_{k=1}^{2\mathfrak g} \in \calL(a^\e, d^\e)$ such that $| \Phi(m_\e)  - \Phi^\e| \leq C\sqrt{r_\eps}$.
\end{itemize}
Moreover, $\dist_S(a_{k, \e}, a_{\ell, \e})\geq C>0$ for every $k\neq \ell$, $a_{k, \e}\to a_k$ as $\e\to 0$ and $|\Phi^\e|\le C$
(because $ \Phi(m_\e)\to \Phi$ by hypothesis).
In addition, $d_{k,\e}\to d_k$ as $\e\to 0$, and thus in fact $d_{k,\e} = d_k$
for all $k$, when $\e$ is small enough.
Finally, from Lemma \ref{L.near} we have 
\beq	
\int_{B_{{r_\e}}(a_{k, \e})} e_\e^{ex}(m_\e)
 \vol_g \ge
\int_{B_{{r_\e}}(a_{k, \e})} e_\e^{in}(m_\e)
 \vol_g \ge
(\pi \log \frac{{r_\e}}{\e}- C ), \quad k=1, \dots, n.
\label{eq:repair1}\eeq
Also, we may assume that 
\beq
\label{nu}
E_\e^{ex}(m_\e)\le n(\pi \logeps+C)\eeq 
for a constant $C=C(a,d,\Phi)>0$,
since otherwise \eqref{lb1_ext} is obvious.

\begin{lemma}\label{L.qgamma.ex}
Under the above hypotheses,
\begin{align}
\int_{S_{r_\e}} e_\e^{ex}(m_\e) \vol_g 
&\ge
 \pi n \log \frac 1 {r_\e} + (W+\tilde W)(a^\e,d^\e,\Phi^\e)
-o(1) \quad \textrm{as } \, \eps\to 0,
\label{qgamma.ex}
\end{align}
for $S_{r_\e} := S\setminus \cup_k B_{r_\e}(a_{k, \e})$.
\end{lemma}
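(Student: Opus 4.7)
My plan is to combine Lemma \ref{L.qgamma} (which handles the intrinsic part) with the variational characterization of $\tilde W$ in \eqref{renorm_ext}. The decomposition $e_\eps^{ex}(m_\eps) = e_\eps^{in}(m_\eps) + \tfrac12|\calS(m_\eps)|_g^2$ from Lemma \ref{lem:ext_Dir}, together with Lemma \ref{L.qgamma} applied to the intrinsic part (which is valid in view of \eqref{farrr}, $|\Phi^\eps|\le C$, and $|d_{k,\eps}|=1$), yields
\[
\int_{S_{r_\eps}} e_\eps^{ex}(m_\eps)\,\vol_g \ge \pi n\log\tfrac{1}{r_\eps} + W(a^\eps,d^\eps,\Phi^\eps) + \int_{S_{r_\eps}}\Big[\tfrac12|\xi_\eps|_g^2 + e_\eps^{in}(|m_\eps|_g) + \tfrac12|\calS(m_\eps)|_g^2\Big]\vol_g - o(1),
\]
where $\xi_\eps := j(m_\eps)/|m_\eps|_g - j_\eps^*$. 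Using the continuity $\tilde W(a^\eps,d^\eps,\Phi^\eps)\to \tilde W(a,d,\Phi)$ (a consequence of $u_\eps^*\to u^*$ in $\calX^{1,p}\cap C^1_\loc(S\setminus\{a_k\})$ from Theorem \ref{P1} and the uniform boundedness of $\calS$ on $TS$), the task reduces to showing that the remainder integral is at least $\tilde W(a^\eps,d^\eps,\Phi^\eps) - o(1)$.

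On the ``good set'' $G_\eps := S_{r_\eps}\cap\{|m_\eps|_g\ge \tfrac12\}$, whose complement has area $O(\eps^2)$ (by \eqref{F.growth} and the bound $\int F(|m_\eps|_g^2)\vol_g = O(\eps^2)$ inherited from $\int e_\eps^{in}(|m_\eps|_g) = O(1)$), the unit tangent fields $v_\eps := m_\eps/|m_\eps|_g$ and $u_\eps^*$ admit a local lift $v_\eps = e^{i\Theta_\eps} u_\eps^*$ via the isometry \eqref{isom}. The identities of Section \ref{sec:calc} give
\[
\xi_\eps = (|m_\eps|_g-1)j_\eps^* + |m_\eps|_g\,d\Theta_\eps, \qquad \calS(m_\eps) = |m_\eps|_g\,\calS(e^{i\Theta_\eps}u_\eps^*).
\]
Expanding the two squared quantities and applying Young's inequality, one aims at
\[
\int_{G_\eps}\Big[\tfrac12|\xi_\eps|_g^2 + \tfrac12|\calS(m_\eps)|_g^2\Big]\vol_g \ge \int_{G_\eps}\Big[\tfrac12|d\Theta_\eps|_g^2 + \tfrac12|\calS(e^{i\Theta_\eps}u_\eps^*)|_g^2\Big]\vol_g - o(1).
\]
The inputs needed are $\|j_\eps^*\|_{L^\infty(S_{r_\eps})} = O(r_\eps^{-1})$ from \eqref{jstar.log}, $\||m_\eps|_g-1\|_{L^2(S)}=O(\eps)$, $|\calS|\le C$ uniformly on $S$, and an $\eta$-ellipticity-type statement that $|m_\eps|_g \to 1$ uniformly on $\{r_\eps \le \dist_S(x,a_{k,\eps}) \le \sigma\}$ for any fixed $\sigma>0$; the delicate cross term $\int_{G_\eps}(1-|m_\eps|_g^2)|d\Theta_\eps|_g^2$ is handled by splitting $G_\eps$ into $\{|m_\eps|_g\ge 1-\delta_\eps\}$ and its complement for a suitable $\delta_\eps\downarrow 0$ and using $e_\eps^{in}(|m_\eps|_g)\ge C\eps^{-2}(|m_\eps|_g-1)^2$ on the complement.

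The hard part will be producing from $\Theta_\eps$ a globally-defined real-valued admissible test function for $\tilde W(a^\eps,d^\eps,\Phi^\eps)$. Since both $v_\eps$ and $u_\eps^*$ have the same local degree $d_k\in\{\pm 1\}$ at each vortex $a_{k,\eps}$, the $S^1$-valued ratio $e^{i\Theta_\eps}$ extends continuously across each ball $B_{r_\eps}(a_{k,\eps})$, and a Lipschitz cutoff/harmonic-extension argument across these balls and across $S\setminus G_\eps$ costs only $o(1)$ in Dirichlet energy (using $r_\eps\to 0$ and $|S\setminus G_\eps| = O(\eps^2)$). The truly subtle point is that the resulting $S^1$-valued map on $S$ must lift to $\R$: its integer winding along each homology generator $\gamma_\ell$ equals $\frac{1}{2\pi}\int_{\gamma_\ell}(j(v_\eps)-j(u_\eps^*))$ (after perturbing $\gamma_\ell$ to avoid the vortex balls and bad set, as in Step 4 of the proof of Theorem \ref{P1}), and the hypotheses $\Phi(m_\eps)\to \Phi$ and $\Phi^\eps \to \Phi$, combined with Lemma \ref{L.ns} and the lattice structure \eqref{lattice}, force this integer to vanish for all sufficiently small $\eps$. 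Once $\Theta_\eps:S\to\R$ is admissible, the definition \eqref{renorm_ext} immediately delivers the desired lower bound by $\tilde W(a^\eps,d^\eps,\Phi^\eps)$, completing the proof.
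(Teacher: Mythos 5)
Your plan has the right skeleton (split $e^{ex}_\e=e^{in}_\e+\tfrac12|\calS|^2_g$ via Lemma \ref{lem:ext_Dir}, feed the intrinsic part into Lemma \ref{L.qgamma}, and convert the remainder into a competitor for $\tilde W$), but two of its key steps do not go through as stated. First, the passage from $\tfrac12|\xi_\e|_g^2+\tfrac12|\calS(m_\e)|_g^2$ to $\tfrac12|d\Theta_\e|_g^2+\tfrac12|\calS(e^{i\Theta_\e}u_\e^*)|_g^2-o(1)$: your splitting by a level $1-\delta_\e$ only controls, via \eqref{F.growth} and the potential term, the \emph{measure} of $\{|m_\e|_g<1-\delta_\e\}$ (of order $\e^2/\delta_\e^2$); it gives no control of $\int|d\Theta_\e|_g^2$ (equivalently of $\int|Dm_\e|_g^2$) over that set, and nothing prevents an amount of Dirichlet energy of order one, or even of order $\logeps$, from concentrating there. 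The ``$\eta$-ellipticity'' statement ($|m_\e|_g\to1$ uniformly on annuli) that you invoke to repair this is not available from the hypotheses of the lemma (only integral energy bounds) and is nowhere established in the paper, so it is an unproved additional ingredient. Second, the assertion that a cutoff/harmonic extension of the $\SSS^1$-valued ratio across the balls $B_{r_\e}(a_{k,\e})$ and across $S\setminus G_\e$ costs only $o(1)$ is unjustified: the boundary energy of $m_\e$ on the particular circle $\partial B_{r_\e}(a_{k,\e})$ need not be small (one must first select a good radius $\tilde t_\e\in(r_\e,\sqrt{r_\e})$ by a coarea/mean-value argument as in \eqref{tep.est}), and the set $\{|m_\e|_g<\tfrac12\}$ has small measure but completely uncontrolled geometry, so extending an $\SSS^1$-valued map across it at negligible Dirichlet cost does not follow from what you cite.

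For comparison, the paper's proof avoids both obstacles by never normalizing: it writes $m_\e=w_\e u^*_\e$ with $w_\e$ \emph{complex-valued}, defined on all of $S\setminus\{a_{k,\e}\}$ and allowed to vanish, so the remainder is exactly $\tfrac12|dw_\e|_g^2+\tfrac12|\calS(w_\e u^*_\e)|_g^2+\tfrac1{4\e^2}F(|w_\e|^2)$ up to the $o(1)$ correction \eqref{wvsj}; it modifies $w_\e$ only inside balls $B_{\tilde t_\e}(a_{k,\e})$ chosen by the mean-value argument, extracts a weak $H^1$ limit $w_0$ of the extended maps, gets $|w_0|=1$ from Fatou and the potential term, and applies the lifting Lemma \ref{lem:lifting} to the \emph{limit} after checking $\Phi(w_0u^*)=\Phi(u^*)$; together with $\limsup_\e\tilde W(a^\e,d^\e,\Phi^\e)\le\tilde W(a,d,\Phi)$ (Step 4$'$) this yields \eqref{qgamma.ex}. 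Your idea of killing the winding of the ratio along the homology generators using $\Phi(m_\e)\to\Phi$, $\Phi^\e\to\Phi$ and Lemma \ref{L.ns} is the correct mechanism and is precisely the $\e$-level analogue of the flux condition in Lemma \ref{lem:lifting}; if carried out it would even bypass the continuity of $\tilde W$, but it requires the quantitative comparison of $\tilde\zeta_\ell$ with $\zeta_\ell(a^\e;d^\e)$ and the good-radius construction that you gloss over. Note also that the convergence $u^*_\e\to u^*$ (after rotations) that you attribute to Theorem \ref{P1} is not part of that theorem; it is proved separately via the holonomy formula \eqref{holonomy0}. Switching to the unnormalized ratio and the weak-limit/lifting argument removes the bad-set and extension difficulties and closes the proof.
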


\begin{proof}
For the proof it is useful to define\footnote{\label{foot12} Recall  from Theorem \ref{P1} that $u^*(a,d,\Phi)$ is unique only up to a rotation. For purposes of this definition, we assume that a representative $u^*$ has been (arbitrarily) fixed. In the end we are only interested in $\inf \mathcal I_\e$, and this is independent of the chosen rotation. We will therefore
feel free to adjust the rotations as needed.} a functional  $\mathcal I_\e[\, \cdot \,; a,d,\Phi]$  on $H^1(S;\C)$:
\begin{equation}\label{I.def}
\mathcal I_\e[w ; a,d, \Phi ] = \int_S \frac 12 |dw|_g^2 +\frac 12 |\mathcal S(w u^*(a,d,\Phi))|_g^2 + \frac 1{{4\e^2}} F(|w|_g^2) \ \vol_g . 
\end{equation}
We will also write $u^*_\e := u^*(a^\e,d^\e, \Phi^\e)$ and $j_\e^*=j(u_\e^*)$.

\medskip

\nd {\it Step 1}. It follows from \eqref{ilb.1} and \eqref{ilb.2} that 
\begin{equation}
\int_{S_{r_\e}} e^{in}_\e(m_\e) \vol_g
= \int_{S_{r_\e}} \frac 12|j^*_\e|_g^2  + \frac 12 \left| \frac {j(m_\e)}{|m_\e|_g} - j^*_\e\right|_g^2
+ e^{in}_\e(|m_\e|_g) \vol_g  + o(1) . 
\label{elb.1}\end{equation}
For every $x\in S\setminus\{a^\e\}$, 
since $\{ u_\e^*(x), iu_\e^*(x)\}$ 
is a basis for $T_xS$,
there is a $w_\e(x)\in \C$ such that
\[
m_\e = w_\e u^*_\e.
\]
If $m_\e\in \calX^{1,2}(S)$, 
then it is clear that  the function $w_\e:S\to \C$
defined in this way belongs to $H^1_{loc}$ away from $\{a^\e\}$,
and Section \ref{sec:calc} shows that
\[ 
d|w_\e| = d|m_\e|_g, \qquad 
\frac{(iw_\e, dw_\e)}{|w_\e|}  = \frac {j(m_\e)}{|m_\e|_g}  - |m_\e|_g j_\e^*,
\qquad 
|dw_\e|_g^2 = |d|w_\e||_{g}^2 + |\frac{(iw_\e, dw_\e)}{|w_\e|}|_{g}^2,
\] 
where here $( \, \cdot \, , \, \cdot \, )$ denotes the real inner product on $\C$, defined by  $(v,w) := \frac 12(v\bar w + w \bar v)$.
From Lemma \ref{lem:ext_Dir} we also know that
$e_\e^{ex} (m_\e) = e_\e^{in}(m_\e) + \frac 12 |\calS(m_\e)|_g^2$.
It follows that 
\begin{align*}
\int_{S_{r_\e}} e_\e^{ex}(m_\e) \vol_g
&=
\int_{S_{r_\e}} \frac 12 |j^*_\e|_g^2  \ \vol_g +
\int_{S_{r_\e}} \frac 12|dw_\e|_g^2
 +\frac 12 |\mathcal S(w_\e u^*_\e)|_g^2 + \frac 1{{4\e^2}} F(|w_\e|^2) \ \vol_g 
\\
&\qquad +
\frac 1 2 \int_{S_{r_\e}}\left| \frac {j(m_\e)}{|m_\e|_g} - j^*_\e\right|_g^2 -  \left| \frac {j(m_\e)}{|m_\e|_g} -|m_\e|_g j^*_\e\right|_g^2  \vol_g + o(1).
\end{align*}
Clearly
\[
\left| \frac {j(m_\e)}{|m_\e|_g} - j^*_\e\right|_g^2 -  \left| \frac {j(m_\e)}{|m_\e|_g} - |m_\e|_gj^*_\e\right|_g^2 \\
= 2 \frac {j(m_\e)}{|m_\e|_g} \cdot j^*_\e (|m_\e|_g-1)+ |j^*_\e|^2(1 - |m_\e|_g^2)  .
\]
Since $j^*_\e$ is smooth away from $\{a^\e\}$
and blows up like $\dist_S(\cdot, a_{k, \e})^{-1}$ near each
$a_{k, \e}$, see \eqref{jstar.log}, it is clear that $|j^*_\e|_g \le C r_\e^{-1} \le C \e^{-\frac 1{2(n+1)}}$
on $S_{r_\e}$.
Recalling that $ \frac {|j(m_\e)|_g}{|m_\e|_g} \le |Dm_\e|_g$,  
straightforward estimates then show that
\begin{equation}\label{wvsj}
\left|
 \int_{S_{r_\e}}\left| \frac {j(m_\e)}{|m_\e|_g} - j^*_\e\right|_g^2 -  \left| \frac {j(m_\e)}{|m_\e|_g} -|m_\e|_g j^*_\e\right|_g^2  \vol_g \right| \le C \e r_\e^{-1} E_\e^{in}(m_\e) + C r_\e^{-2} \e \sqrt{E_\e^{in}(m_\e)} = o(1)
\end{equation}
as $\e\to 0$.
We infer that
\begin{equation}
\int_{S_{r_\e}} e_\e^{ex}(m_\e) \vol_g
=
\int_{S_{r_\e}} \frac 12 |j^*_\e|_g^2  \ \vol_g +
\int_{S_{r_\e}} \frac 12|dw_\e|_g^2
 +\frac 12 |\mathcal S(w_\e u^*_\e)|_g^2 + \frac 1{{4\e^2}} F(|w_\e|^2) \ \vol_g 
 +  o(1) .
\label{elb.3}\end{equation}

\medskip

\nd {\it Step 2}. 
It follows from \eqref{eq:repair1} and \eqref{elb.1} that 
\begin{align*}
&\int_{S_{r_\e}} \frac 12|dw_\e|_g^2
 +\frac 12 |\mathcal S(w_\e u^*_\e)|_g^2 + \frac 1{{4\e^2}} F(|w_\e|^2) \ \vol_g 
 \\
&\qquad\qquad\qquad\qquad\le E_\e^{ex}(m_\e) - 
\int_{S_{r_\e}} \frac 12 |j^*_\e|_g^2  \ \vol_g 
- n\pi \log\frac{r_\e}{\e} + C.
\end{align*}
We recall also that, in view of \eqref{formula12}, the fact that $u^*_\e$ is a {\it unit} vector field implies that $|j^*_\e|_g = |Du^*_\e|$, and hence (from the definition of the intrinsic renormalized energy) that
\beq
\int_{S_{r_\e}} \frac 12 |j^*_\e|_g^2 \, \vol_g
= W(a^\e, d^\e,\Phi^\e) + n\pi \log \frac 1 {r_\e} + o(1).
\label{repair2}\eeq
Combining the above estimates with \eqref{nu}, we deduce that
\begin{equation}
\int_{S_{r_\e}} \frac 12|dw_\e|_g^2
 +\frac 12 |\mathcal S(w_\e u^*_\e)|_g^2 + \frac 1{{4\e^2}} F(|w_\e|^2) \ \vol_g \le C
\label{calI.bd}\end{equation}
for some constant independent of $\e$.

\nd {\it Step 3}. We next claim that exists $\tilde w_\e\in H^1(S;\C)$ such that $\tilde w_\e = w_\e$
on $S_{\sqrt{r_\e}}$ and 
\begin{equation}\label{extend.w}
\mathcal I_{\e}[ \tilde w_\e ; a^\e,d^\e, \Phi^\e] \le 
\int_{S_{r_\e}} \frac 12|dw_\e|_g^2
 +\frac 12 |\mathcal S(w_\e u^*_\e)|_g^2 + \frac 1{{4\e^2}} F(|w_\e|^2) \ \vol_g 
+ o(1)
\end{equation}
as $\e\to 0$, where $\mathcal I_\e$ was defined at \eqref{I.def}.

First, fix some $k\in \{1, \dots, n\}$ and consider exponential normal coordinates $\Psi$
at  $a_{k, \e}$, mapping a Euclidean ball $\{ y\in \R^2 :|y|< \sigma \}$
onto the geodesic ball $B_\sigma(a_{k, \e})$ in $S$,
see Section \ref{sect.coords}. For $y\in \R^2$ such that
$r_\e \le |y|\le \sqrt{r_\e}$, let
$v_\e(y) := w_\e(\Psi(y))$.
We may then rewrite the energy of $w_\e$ in this annulus
in terms of $v_\e$. Using \eqref{normalcs} to approximate the metric
$g$ by the Euclidean metric,  we find that 
\[
\int_{B_{\sqrt{r_\e}}(a_{k, \e})\setminus B_{r_\e}(a_{k, \e})}
\frac 12 |dw_\e|_g^2 + \frac 1{{4\e^2}}F(|w_\e|^2) \vol_g = (1+O(r_\e))
\int_{\{y\in \R^2 : r_\e < |y| < \sqrt{r_\e}\}} e_\e(v_\e)(y) dy
\]
where $e_\e(v_\e)$ denotes the Euclidean Ginzburg-Landau energy
and $dy$ is the Euclidean area element.
By arguing as in Step 3 of the proof of 
Proposition \ref{intrinsic.gammalim2}, see  \eqref{really},
we may find some ${\tilde t}_\e \in (r_\e, \sqrt{r_\e})$
such that 
\begin{equation}\label{tep.est}
{\tilde t}_\e  \int_{\{y\in \R^2 : |y|={\tilde t}_\e\}} 
e_\e(v_\e)(y) d\calH^1(y)
\le C\logeps^{-1} . 
\end{equation}
In particular, writing $\partial_\tau$ for the tangential derivative, it follows 
from Cauchy-Schwarz that
\[
\int_{\{y\in \R^2 : |y|={\tilde t}_\e\}} |\partial_\tau v_\e| d\calH^1 \ \le \ C \logeps^{-1/2}.
\]
Hence the total variation of $v_\e$ on $\{y\in \R^2 : |y| = {\tilde t}_\e\}$
is bounded by $C\logeps^{-1/2}$. Since ${\tilde t}_\e\geq \eps^{1/2(n+1)}$ and ${\tilde t}_\e \int_{ \{ |y|= {\tilde t}_\e\}} F(|v_\e|^2) \, d\h^1\le
C \e^2\logeps^{-1}$, it follows that there is some constant $v_0$ of unit modulus such
that $|v_\e(y) - v_0|\leq C\logeps^{-1/2}$ whenever $|y|={\tilde t}_\e$.
We may thus write
\[
v_\e(y) = \rho(y) e^{i\eta(y)} \qquad\mbox{ for }|y|= {\tilde t}_\e,
\]
where $\eta$ is  $H^1$ and real-valued. In particular, $\rho\leq 2$ on $\{|y|= {\tilde t}_\e\}$.
Let $\bar \eta$ denote the mean
of $\eta$ on $\{ y\in \R^2 : |y|={\tilde t}_\e\}$, and define a complex-valued function
$\tilde v_\e$  on $\{|y| < \tilde t_\e\}$ by 
\begin{align*}
\tilde v_\e(s y) &:= [1+ s(\rho(y)-1)] \exp[i (\bar \eta+s(\eta(y) - \bar \eta))] \qquad&\mbox{ for }|y|={\tilde t}_\e, 0\le s \le 1,\qquad
\\
\end{align*}
Then one can check from \eqref{tep.est} that 
\beq
\label{int_123}
\int_{\{y\in \R^2 : |y|  < {\tilde t}_\e\}} e_\e(\tilde v_\e)(y) dy \le C \logeps^{-1}.
\eeq
We next define $\tilde w_\e(x) = \tilde v_\e(\Psi^{-1}(x))$ in $B_{\tilde t_\e}(a_{k, \e})\subset S$. 
We remark that since the area of $B_{{\tilde t}_\e}(a_{k, \e})$
is bounded by $C {\tilde t}_\e^2$ and $|\tilde w_\e|_g\leq 2$ in $B_{{\tilde t}_\e}(a_{k, \e})$, it is clear that
\[
\int_{B_{{\tilde t}_\e}(a_{k, \e})} |\mathcal S(\tilde w_\e u^*_\e)|_g^2\vol_g 
\le 
C \int_{B_{{\tilde t}_\e}(a_{k, \e})} |\tilde w_\e|^2 \vol_g   = o(1) \qquad \mbox{ as }\e\to 0.
\]
Now we proceed in the same fashion for every $k\in \{1,\ldots, n\}$,
and we set  $\tilde w_\e = w_\e$ on $S_{\tilde t_\e}$ (in particular, $\tilde w_\e = w_\e$
on $S_{\sqrt{r_\e}}$).
This yields a function $\tilde w_\e\in H^1(S;\C)$.
Again using \eqref{normalcs}, which implies that the difference
between the metric $g$ (appearing in $e_\e^{ex}$) and
the Euclidean metric (appearing in $e_\e$) is negligible in small balls (in particular, inside $B_{{\tilde t}_\e}(a_{k, \e})$) for our choice of coordinates, we readily verify that  $\tilde w_\e$ 
satisfies \eqref{extend.w}, proving the claim.

\medskip

\nd 
{\it Step 4}.
We introduce\footnote{The footnote \ref{foot12} applies here as well.}  the functional $\mathcal I_0[\, \cdot \,; a,d,\Phi]$ for $v\in H^1(S;\SSS^1)$:
$$
\mathcal I_0[v ; a,d, \Phi ] = \int_S \frac 12 |dv|_g^2 +\frac 12 |\mathcal S(v u^*(a,d,\Phi))|_g^2 \ \vol_g .
$$
The definition \eqref{renorm_ext} then implies that $\tilde W(a,d,\Phi) = \inf_{\Theta\in H^1(S;\R)}\mathcal I_0[e^{i\Theta}; a, d, \Phi]$.
 In view of \eqref{elb.3}, \eqref{repair2}, and \eqref{extend.w}, to conclude to our desired estimate \eqref{qgamma.ex}, it suffices to prove
that
\beq
\liminf_{\e\to 0} 
\mathcal I_{\e}[ \tilde w_\e ; a^\e,d^\e, \Phi^\e]  
\ge
\tilde{W}(a,d,\Phi)
\label{remain1}\eeq
and 
\beq
\limsup_{\e\to 0} \tilde{W}(a^\e,d^\e,\Phi^\e)
\le
\tilde{W}(a,d,\Phi).
\label{remain2}\eeq

\medskip

\nd 
{\it Step 4'}. We prove the second assertion \eqref{remain2}. Toward this goal, we claim that after possible $\e$-dependent rotations of  $u^*_\e$,
we have
\beq
u^*_\e\to u^*
\quad \textrm{ a.e. in } S.  
\label{remain3}\eeq
First note that it is clear from the definition \eqref{psi.def} of $\psi(a;d)$ and \eqref{newpsi} that
\[
j^*_\e :=
 d^*\psi(a_\e, d_\e)+ \sum_{k=1}^{2\mathfrak g} \Phi_{k,\e}\eta_k
\to
 d^*\psi(a, d)+ \sum_{k=1}^{2\mathfrak g} \Phi_{k}\eta_k
=j^*
\]
in $C^1_{loc}$ away from $a_1,\ldots, a_n$ and globally in $L^q(S)$ for every $q\in [1,2)$.
Next, fix $x\in S\setminus \{a_k\}_k$ and a unit  vector $v\in T_xS$.
We may assume that $u_\e^*(x) = v$ for every $\e$.
Now consider $y\in S\setminus \{a_k\}_k$ and a
smooth curve $\gamma:[0,1]\to S\setminus\{a_k\}_k$ 
such that $\gamma(0)=x, \gamma(1)=y$.
For  $\e$ sufficiently small, the image of $\gamma$
is bounded away from $\{ a_{k,\e}\}_k$.
When this holds, we define
$U_\e(s) := u^*_\e(\gamma(s))$,
and similarly $U(s) = u^*(\gamma(s))$.
In \eqref{holonomy0}, we have derived an explicit formula
that gives $U_\e(s)$ in terms of $v\in T_xS$
and $j^*_\e$ (or $U(s)$ in 
terms of $v$ and $j^*$),
and with the convergence of $j^*_\e$ to $j^*$,
this formula immediately implies that
\[
u^*_\e(y) = U_\e(1) \to U(1) = u^*(y) \mbox{ as }\e\to 0.
\]
Since $y$ was an arbitrary point in $S\setminus\{a_k\}_k$, this proves the claim \eqref{remain3}.
\footnote{\label{footin}This argument proves the following continuity result (in addition to Theorem \ref{P1}): 
if $a_\e\to a$ and $\Phi_\e\in \calL(a_\e, d)\to
\Phi \in \calL(a,d)$, then up to rotations, 
$u_\e^*(a_\e, d, \Phi_\e)
\to u^*(a, d, \Phi)$ almost everywhere and in $L^p$ for all $p<\infty$. 
}

Now the direct method leads to the existence of 
$\Theta_0\in H^1(S;\R)$ minimizing $\mathcal I_0[ e^{i ( \cdot)} ; a,d, \Phi]$.
The continuity of the shape operator and the convergence $u^*_\e\to u^*$ {\it a.e.}
imply that $|\calS(e^{i\Theta_0}u^*_\e)|_g^2\to |\calS(e^{i\Theta_0}u^*)|_g^2$
almost everywhere and hence in $L^p$ for every $p<\infty$. It follows that
\[
\limsup_\e \tilde W(a^\e, d^\e, \Phi^\e) \le \lim_\e
\mathcal I_0[e^{i\Theta_0}; a^\e, d^\e, \Phi^\e] =
\mathcal I_0[e^{i\Theta_0}; a, d, \Phi] = \tilde W(a,d,\Phi),
\]
proving \eqref{remain2}.

\medskip

\nd
{\it Step 4''.}
We prove \eqref{remain1}. First note from \eqref{extend.w}, \eqref{calI.bd}
that $\|\tilde w_\e \|_{H^1}^2  \le 2 \mathcal I_\e[\tilde w_\e; a^\e,d^\e, \Phi^\e]\le C$.
We may thus assume, after
passing to a subsequence, that $\tilde w_\e \rightharpoonup w_0$ weakly in $H^1(S;\C)$
and thus a.e. in $S$ and strongly in $L^p$ for every $p<\infty$.
By Fatou's lemma,
$$\int_S F(|w_0|^2) \, \vol_g\leq \liminf_{\e\to 0} \int_S F(|\tilde w_\e|^2) \, \vol_g\leq \liminf_{\e\to 0} 4\eps^2 {\mathcal I_\e}[\tilde w_\e; a^\e, d^\e, \Phi^\e]=0,$$
so we deduce that $|w_0|=1$ a.e.
Standard weak lower semicontinuity arguments together with
\eqref{remain3} and
the continuity of the shape operator imply that
\[
\mathcal I_0[ w_0; a,d,\Phi] \le \liminf_{\e \to 0}{\mathcal I_\e}[\tilde w_\e; a^\e,d^\e, \Phi^\e]. 
\]
To complete the proof of \eqref{remain1}, it thus suffices to show that
$w_0$ admits a lifting, that is, that there exists some $\Theta_0\in H^1(S;\R)$ such that
$w_0 = e^{i\Theta_0}$. Note that this is a delicate issue as $S$ is not simply connected while standard results (see e.g., \cite{Bet_Zhe}) requires this topological contraint on $S$. We will show in Lemma \ref{lem:lifting} that $w_0$ has indeed an $H^1$ lifting provided that $w_0$ satisfies the constraint $\Phi(w_0 u^*) = \Phi(u^*)=\Phi$.
Toward this end, as in Step 1, we note that $j(w_0u^*)= (iw_0,dw_0) + j^*$.
Thus for $k\in \{1,\ldots, 2\mathfrak g \}$, we have
\begin{align*}
\Phi_k(w_0u^*)
=
\int_S ( (iw_0, dw_0) + j^*, \eta_k)_g \, \vol_g  
=
\int_S  ( (iw_0, dw_0) , \eta_k)_g \, \vol_g   + \Phi_k.
\end{align*}
Similarly, 
\begin{align*}
\Phi_k(\tilde w_\e u_\e^*)
=
\int_S  ( (i\tilde w_\e, d\tilde w_\e) , \eta_k)_g \, \vol_g   +\int_S  (|\tilde w_\e|^2-1) j^*_\e ,  \eta_k)_g \, \vol_g   + \Phi_{k, \e}.
\end{align*}
From these and the convergence $\tilde w_\e \rightharpoonup w_0$ weakly in $H^1(S;\C)$, $\tilde w_\e\to w_0$ strongly in $L^2(S)$ and $L^6(S)$ (in particular, $|\tilde w_\e|^2\to 1$ in $L^3(S)$), $j^*_\e\to j^*$ in $L^q(S)$ for $q=\frac32<2$,
and recalling that $\Phi^\e\to \Phi$, one can verify that 
$\Phi(w_0u^*) = \lim_{\e\to 0} \Phi(\tilde w_\e u_\e^*)$.

Next, recall that by construction in Step 3, $\tilde w_\e u_\e^* =  w_\e u_\e^* = m_\e$ in $S_{\tilde t_\e}$ and $|\tilde w_\e|\leq 2$ in $\calO:=\cup_{k=1}^N B_{\tilde t_\e} (a_{ k,\e})$. Therefore, $|j(\tilde w_\e u_\e^*)|_g\leq 4 |j^*_\e|_g+2|d\tilde w_\e |_g$ in $\calO$.
Thus
\begin{align*}
&|\Phi_k(\tilde w_\e u_\e^*) - \Phi_k(m_\e) |
\le C  \int_{\calO} | j(\tilde w_\e u_\e^*) |_g + |j(m_\e)|_g\, \vol_g \\
&\le C  \bigg(\int_{\calO} |j^*_\e|_g +|d\tilde w_\e |_g \vol_g+\int_{\calO\cap \{|m_\e|_g\leq 2\}}|Dm_\e |_g\vol_g+\int_{\calO\cap \{|m_\e|_g\geq 2\}}(|m_\e|_g-1)|Dm_\e |_g \vol_g\bigg)\\
&\leq C \bigg(\tilde t_\e^{1-1/q} \|j^*_\e\|_{L^q(S)}+\tilde t_\e^{1/2} \|d\tilde w_\e\|_{L^2(\mathcal O)}+ \tilde t_\e^{1/2} E^{in}_\e(m_\e)^{1/2}+\e E^{in}_\e(m_\e)\bigg) \to 0, 
\end{align*}
as $\e\to 0$, where we used H\"older's inequality, that $(j_\e^*)$ is uniformly bounded in $L^q(S)$ for $q=3/2$, \eqref{int_123}, \eqref{nu} and \eqref{F.growth}. 
Since $\Phi(m_\e)\to \Phi$ by assumption, 
we deduce that $\Phi(w_0u^*)=\Phi(u^*)$ as claimed. 

Now Lemma \ref{lem:lifting} below implies that $w$ admits a lifting,
completing the proof of \eqref{remain1} and hence of 
Lemma \ref{L.qgamma.ex}.
\end{proof}

\begin{lemma}\label{lem:lifting}
Assume that $w\in H^1(S; \SSS^1)$ and that $\Phi(w u^*) = \Phi(u^*)$
for some canonical harmonic unit vector field $u^*(a,d,\Phi)$.
Then there exists $\Theta \in H^1(S;\R)$ such that $w = e^{i\Theta}$.
\end{lemma}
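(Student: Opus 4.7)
Set $\alpha := (iw, dw)$, a well-defined $L^2$ $1$-form on $S$ since $w\in H^1(S;\SSS^1)\subset L^\infty$. The plan is to show that $\alpha$ is exact, and then take $\Theta$ to be an $H^1$-primitive. First I would verify that, extending the formula $j(e^{i\Theta}u^*)=j(u^*)+d\Theta$ from Section \ref{sec:calc} to $H^1$-data by a direct computation (using $|w|=1$, $|u^*|_g=1$, and the isometry properties of $i$), one has
\[
j(wu^*) = \alpha + j(u^*) \quad\text{a.e. on }S.
\]
Testing this against the fixed orthonormal harmonic basis $\{\eta_k\}_{k=1}^{2\mathfrak g}$ of $Harm^1(S)$ and invoking the hypothesis $\Phi(wu^*) = \Phi(u^*)$ then gives $\int_S (\alpha,\eta_k)_g\,\vol_g=0$ for every $k$, so the harmonic part of $\alpha$ in the Hodge decomposition \eqref{Hodge} vanishes.

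Next I would show that $d\alpha=0$ in the sense of distributions. Writing $w=w_1+iw_2$, so that $\alpha = w_1\,dw_2 - w_2\,dw_1$, the constraint $|w|^2=1$ differentiates to $w_1\,dw_1+w_2\,dw_2=0$ a.e. Hence at almost every point at least one of $w_1,w_2$ is nonzero, in which case $dw_1$ and $dw_2$ are pointwise linearly dependent, so that $dw_1\wedge dw_2=0$ a.e. Integration by parts (justified by smooth $H^1$-approximation of the real and imaginary parts of $w$) against any smooth $0$-form $\phi$ then yields
\[
\int_S d\phi\wedge\alpha \;=\; -2\int_S \phi\,dw_1\wedge dw_2 \;=\;0,
\]
which is the distributional identity $d\alpha=0$.

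With these two facts in hand, I apply the Hodge decomposition \eqref{Hodge} to write $\alpha = d\xi+d^*\beta+\eta$ with $\eta\in Harm^1(S)$ and the three pieces $L^2$-orthogonal. The first paragraph forces $\eta=0$, while $d\alpha=0$ together with $d^*d^*\beta=0$ shows that $d^*\beta\in Harm^1(S)$; since $d^*\beta$ is also $L^2$-orthogonal to $Harm^1(S)$, it must vanish. Hence $\alpha=d\xi$ with $\xi\in H^1(S;\R)$. Setting $\Theta:=\xi$ and $f:=w\,e^{-i\Theta}$, and using that $|w|=1$ implies $\bar w\,dw = i(iw,dw) = i\,d\Theta$ and hence $dw = iw\,d\Theta$ a.e., one computes
\[
df = e^{-i\Theta}(dw - iw\,d\Theta)=0 \quad\text{a.e. on } S.
\]
Thus $f$ is constant on $S$, say $f\equiv e^{i\alpha_0}$ for some $\alpha_0\in\R$, and $w=e^{i(\Theta+\alpha_0)}$; replacing $\Theta$ by $\Theta+\alpha_0\in H^1(S;\R)$ furnishes the lifting.

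The main obstacle is the distributional closedness of $\alpha$ in the $H^1$ setting on the non-simply-connected surface $S$: the identity $d\alpha=2\,dw_1\wedge dw_2$ only becomes useful once the pointwise collapse $dw_1\wedge dw_2=0$ a.e. (forced by the unit-length constraint) is combined with a careful approximation argument to pass to distributions. Once this is in place, Hodge theory together with the flux-matching hypothesis kill both the harmonic and the co-exact parts of $\alpha$, and the remainder of the argument is algebraic.
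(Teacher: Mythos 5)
Your proposal is correct, and its overall architecture coincides with the paper's proof: identify $j(w)=(iw,dw)$, show it is closed, use the Hodge decomposition \eqref{Hodge}, kill the harmonic part via the flux hypothesis together with the identity $j(wu^*)=(iw,dw)+j(u^*)$ (valid since $|w|=1$, $|u^*|_g=1$), and conclude that $we^{-i\Theta}$ has vanishing differential, hence is a constant of unit modulus. The only genuine divergence is in the closedness step. The paper invokes the local lifting theorem of Bethuel--Zheng \cite{Bet_Zhe}: locally $w=e^{i\theta}$ with $\theta\in H^1$, so $j(w)=d\theta$ locally and hence $dj(w)=0$ as a distribution; it then kills the co-exact part by pairing $j(w)$ with $d^*\beta$ and integrating by parts. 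You instead prove $dj(w)=0$ directly: the constraint $|w|^2=1$ gives $w_1dw_1+w_2dw_2=0$ a.e., hence $dw_1\wedge dw_2=0$ a.e., and the identity between the distributional and pointwise Jacobian for $H^1\cap L^\infty$ data (justified by smooth approximation, which is legitimate on $S$ via charts or truncation plus mollification) converts this into the distributional statement; you then observe $d^*\beta$ is both harmonic and orthogonal to $Harm^1(S)$, hence zero. Your variant is slightly more self-contained (no appeal to \cite{Bet_Zhe}), at the cost of carrying out the approximation argument for the Jacobian identity, while the paper's citation makes that step immediate; both are sound, and the remainder of the two arguments is identical.
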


\begin{proof}
It follows from  \cite{Bet_Zhe} that  any  $w\in H^1(S;\SSS^1)$ can {\em locally}
be written in the form $w = e^{i\theta}$. It follows that, again locally, $j(w)$ 
has the form $j(w) = d\theta$. Thus $d j(w)=0$. 
As a result, $\int_S ( j(w), d^*\beta)_g \vol_g = 0$ for all $2$-forms $\beta$ in $H^1(S)$.
This implies that the Hodge decomposition \eqref{Hodge} for
$j(w) $ takes the form
\[
j(w) = (iw, dw) = d\Theta +\eta \qquad\mbox{ where }\Theta\in H^1(S;\R) \mbox{ and $\eta$ is a harmonic $1$-form.}
\]
Next, by hypothesis, we have 
\bea
\Phi_k(u^*)=\Phi_k(w u^*) &= \int_S ( j(w u^*), \eta_k)_g \vol_g\\ 
&
= 
\int_S (j(w)+ j^* , \eta_k)_g \vol_g = \int_S (j(w) , \eta_k)_g \vol_g + \Phi_k(u^*) .
\end{align*}
yielding $\int_S (j(w), \eta_k)_g \vol_g= 0$ for every $k=1,\ldots, 2\mathfrak g$. Thus the harmonic part of $j(w)$ in the decomposition  \eqref{Hodge} vanishes, and  $j(w) = d\Theta$ for some $\Theta\in H^1(S;\R)$, or equivalently, $j(w e^{-i\Theta}) = 0$. 
Writing $v = w e^{-i\Theta}\in H^1(S;\SSS^1)$, we deduce that
\[
dv = (dv, \frac {iv}{|v|})\frac {iv}{|v|} + (dv, \frac {v}{|v|})\frac {v}{|v|} =
j(v) \frac {iv}{|v|} + d|v| \, \frac {v}{|v|}  = 0 . 
\]
It follows that $v$ is constant, from which we conclude that $w = e^{i(\Theta+\alpha)}$ for some $\alpha\in \R$.
\end{proof}

\begin{remark}
\label{lem:add}
Note that $$\tilde W(a^\e, d^\e, \Phi^\e)\to \tilde W(a, d, \Phi) \quad \textrm{if $a^\e\to a$, $d^\e\to d$ and $\Phi^\e\in \calL(a^\e, d^\e)\to \Phi\in \calL(a,d)$}.$$ In fact it is a consequence of \eqref{remain2} and $\liminf_{\e\to 0} \tilde W(a^\e, d^\e, \Phi^\e)\geq \tilde W(a, d, \Phi)$ which is follows the argument in Step 4'' above. Indeed, if we denote by $\Theta_\e$ a minimizer of $\tilde W(a^\e, d^\e, \Phi^\e)$ (such a minimizer exists as a consequence of the direct method in calculus of variations), we have that  
$\|d\Theta_\e\|^2_{L^2(S)}\leq 2 \tilde W(a^\e, d^\e, \Phi^\e)\leq \int_S |\calS(e^{i\pi}u^*(a^\e, d^\e, \Phi^\e))|^2_g \vol_g\leq C$ (because $\calS$ is bounded over the set of unit vector fields). Therefore, up to an additive constant, the Poincar\'e-Wirtinger inequality implies that $(\Theta_\e)$ is uniformly bounded in $H^1(S)$. Therefore, for a subsequence, there exists a limit $\Theta\in H^1(S)$ such that $\Theta_\e\rightharpoonup \Theta$ weakly in $H^1(S)$ and a.e. in $S$. As $u^*(a^\e, d^\e, \Phi^\e)\to u^*(a, d, \Phi)$ a.e. in $S$ (by footnote~\ref{footin}, recall that $d^\e=d$ for small $\e$), standard weak lower semicontinuity arguments, the continuity of the shape operator and Fatou's lemma imply $$\liminf_{\e\to 0} \tilde W(a^\e, d^\e, \Phi^\e)\geq \frac12 \int_S |d\Theta|^2_g+ 
|\calS(e^{i\Theta}u^*(a, d, \Phi))|^2_g \vol_g\geq \tilde W(a, d, \Phi).$$
\end{remark}

\begin{proof}[Proof of Theorem \ref{ext.gamma}, point 2)] We may assume the hypothesis on $(a^\e, d^\e, \Phi^\e)$ made at the beginning of Section \ref{sec:lb_ex}. Then we argue as in the proof of Proposition \ref{intrinsic.gammalim2} point 1). As $e^{ex}_\e(m_\eps)\geq e^{in}_\e(m_\eps)$ in $S$ (by Lemma \ref{lem:ext_Dir}, by Step 3 in the proof of Proposition \ref{intrinsic.gammalim2} point 1) ), there exists $t_\e\in (r_\e, \sqrt r_\e)$ such that
\[
\int_{B_{t_\e}(a_{k, \e})}e_\e^{ex}(m_\e) \vol_g \ge\int_{B_{t_\e}(a_{k, \e})}e_\e^{in}(m_\e) \vol_g \ge \pi \log \frac{t_\e}{\e} +\iota_F   + o(1)\] 
as $\e\to 0$, for every $1\leq k\leq n$. As $t_\eps\in (r_\e, \sqrt{r_\e})$ and $\dist_S(a_{k,\e}, a_{\ell, \e})=O(1)\geq t_\e$ for $k\neq \ell$, then  \eqref{farrr} holds true for $t_\e$ so that we can apply 
Lemma \ref{L.qgamma.ex} for $t_\e$ yielding:
$$
\int_{S_{t_\e}} e_\e^{ex}(m_\e) \vol_g 
\ge n\pi \log \frac 1{t_\e} + (W+\tilde W)(a^\e, d^\e, \Phi^\e)-  o(1)
$$
as $\e \to 0$. As $W(a^\e, d^\e, \Phi^\e)\to W(a, d, \Phi)$ (by Proposition \ref{prop.W}, as $\Phi_\e \to \Phi$ and  $d_\e=d$ for all small $\e$) and  $\tilde W(a^\e, d^\e, \Phi^\e)\to \tilde W(a, d, \Phi)$ (by Remark \ref{lem:add})  in the limit $\e\to 0$, we reach the desired lower bound for 
$E^{ex}_\e(m_\eps)$.
\end{proof}

\bigskip

\section{$\Gamma$-limit for the micromagnetic energy}\label{sec:11}

Before stating the main result for Problem 3, let us show that the quantity $\tilde \iota_F$ in \eqref{iota_til} is well defined, i.e., the limit in \eqref{iota_til} exists. For that, it is enough to prove the nondecreasing behaviour of $t\mapsto I^{mm}_F(t)+\pi \log t$ that follows as for $\iota_F$ (see \cite[Lemma III.1]{BBH}): for $0<t_1\leq t_2\leq 1$, we want $I^{mm}_F(t_1)\leq \pi \log \frac{t_2}{t_1}+I^{mm}_F(t_2)$. Indeed, if $v_2$ is the minimizer of $I^{mm}_F(\frac1{t_2},1)$, then setting $v_1=v_2$ in $B_{1/t_2}(0)$ and $v_1=\frac{x}{|x|}$ in 
$B_{1/t_1}(0)\setminus B_{1/t_2}(0)$ we have 
\beq
\label{aici}
I^{mm}_F(t_1)=I^{mm}_F(\frac1{t_1},1)\leq  \int_{B_{1/t_1}(0)} \tilde{e}_1(v_1)\, dy=I^{mm}_F(\frac1{t_2},1)+\pi \log\frac{t_2}{t_1}=I^{mm}_F(t_2)+\pi \log\frac{t_2}{t_1}.
\eeq

In this section we prove the counterpart of 
Theorem \ref{ext.gamma} for the micromagnetic energy $E^{mm}_\e$ in Problem 3 where the surface $S$ is isometrically embedded in $\R^3$ endowed with the Euclidian metric $g$. 
For $M:S\to \mathbb{S}^2$, we always use the decomposition  
\beq
\label{decompo}
M=m+\Mn N,\quad m=\Pi(M)\eeq
where $N$ is the Gauss map on $S$, $\Mn=M\cdot N$ is the normal component of $M$ and $m$ is the projection $\Pi$ of $M$ on the tangent plane $TS$. 

\begin{theorem}\label{mm.gamma}

The following $\Gamma$-convergence result holds.
\begin{itemize}
\item[1)] (Compactness) Let $(M_\eps)_{\eps\downarrow 0}$ be a family in $H^{1}(S; \SSS^2)$ satisfying
$$
E^{mm}_\eps(M_\eps)  \le T \pi |\log \eps| + C
$$
for some integer $T> 0$ and a constant $C>0$. Then
there exists a sequence $\eps \downarrow 0$ such that for every $p\in [1,2)$, the vorticity 
$\omega(m_\eps)$ of the projection $m_\eps=\Pi(M_\eps)$ satisfies  \eqref{convergence_ext}
for $n$ distinct points $\{a_k\}_{k=1}^n$ and nonzero integers $\{d_k\}_{k=1}^n$ satisfying \eqref{necessary} and $\sum_{k=1}^n |d_k|\leq T$. Moreover, if $\sum_{k=1}^n |d_k|= T$, then
$n = T$ and 
$|d_k|=1$ for every $k=1, \dots, n$; in this case, for a further subsequence, there exists $\Phi\in \calL(a;d)$ such that $\Phi(m_\eps)$ defined in \eqref{Phiu.def} converges to 
$\Phi$ as $\eps\to 0$.\\

\item[2)] ($\Gamma$-liminf inequality) Assume that the projections $m_\eps=\Pi(M_\e)\in \calX^{1,2}(S)$ of some family $M_\e:S\to \mathbb{S}^2$ satisfy \eqref{convergence_ext} for $n$ distinct points $\{a_k\}_{k=1}^n\in S^n$ and $|d_k|=1$, $k=1, \dots n$ that satisfy \eqref{necessary} and $\Phi(m_\e)\to \Phi\in \calL(a;d)$. Then
 for every $\sigma>0$, 
\bea
\liminf_{\e\to 0}
\left[
E_\e^{mm}(M_\e)- n \pi \logeps  \right]  & \ge \  
W(a,d,\Phi) + \tilde W(a,d,\Phi) +n \tilde \iota_F\\
& +  \liminf_{\e\to 0}\int_{S \setminus \cup_{k=1}^n B_\sigma(a_k)} |dM_{\perp, \e}|_g^2 \vol_g 
\end{align*}
for $u^* = u^*(a,d,\Phi)$, $a=(a_1, \dots, a_n)$, $d=(d_1, \dots, d_n)$, $\tilde W(a,d,\Phi)$ defined in  \eqref{renorm_ext}, $\tilde \iota_F$ is defined in \eqref{iota_til} and $M_{\perp, \e}$ is the normal component of $M_\e$.

\item[3)] ($\Gamma$-limsup inequality) For every $n$ distinct points $a_1,\ldots, a_n\in S$ and $d_1,\ldots , d_n\in \{\pm 1\}$ satisfying
\eqref{necessary} and every $\Phi\in \calL(a;d)$
there exists a sequence of smooth maps $M_\e:S\to \mathbb{S}^2$ such that  \eqref{convergence_ext} holds for the projections $m_\e=\Pi(M_\e)$ (see \eqref{decompo}),  $M_{\perp,\e}\to 0$ in $H^1_{loc}(S\setminus \{a_k\}_k)$, $\Phi(m_\e)\to \Phi$ and
$$
E^{mm}_\eps(M_\eps)- n \pi |\log \eps| \longrightarrow  W(a,d, \Phi)+\tilde W(a,d,\Phi)+n \tilde \iota_F \quad \textrm{ as } \, \eps \to 0.
$$
\end{itemize}

\end{theorem}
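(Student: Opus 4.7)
The plan is to run the proof of Theorem \ref{ext.gamma} on the tangent projection $m_\e = \Pi(M_\e)$, using the identity $|m_\e|_g^2 = 1 - M_{\perp,\e}^2$ which makes the micromagnetic potential exactly $F(|m_\e|_g^2)/(4\e^2)$. Decomposing $\bar D_{\tau_k} M_\e = (D_{\tau_k} m_\e - M_{\perp,\e}\calS(\tau_k)) + (dM_{\perp,\e}(\tau_k) + (\calS(m_\e),\tau_k)_g) N$ and summing over an orthonormal frame $\{\tau_1,\tau_2\}$ yields the pointwise identity
\[
|\bar D M_\e|^2 = |\bar D m_\e|^2 + |dM_{\perp,\e}|_g^2 + R_\e,
\]
with $|\bar D m_\e|^2 = |Dm_\e|_g^2 + |\calS(m_\e)|_g^2$ by Lemma \ref{lem:ext_Dir}, and $R_\e$ a sum of cross terms involving $M_{\perp,\e}$ and the shape operator $\calS$. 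The potential estimate $\|M_{\perp,\e}\|_{L^2}^2 \le C\e^2\logeps$ from \eqref{F.growth}, combined with Cauchy--Schwarz (completing the square in $|dM_{\perp,\e}|_g$ to absorb the only cross term of order $\logeps^{1/2}$, namely $\int(\calS(m_\e), \nabla M_{\perp,\e})_g\vol_g$), produces $E_\e^{ex}(m_\e) \le E_\e^{mm}(M_\e) + C$. Part (1) then follows immediately from Theorem \ref{ext.gamma} point (1) applied to $m_\e$.

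For part (2), let $r_\e, a^\e, d^\e, \Phi^\e$ be as in Lemma \ref{L.sep} applied to $m_\e$ (with $|d_{k,\e}| = 1$ and, for small $\e$, $d^\e = d$), and pick an intermediate radius $t_\e \in (r_\e, \sqrt{r_\e})$ by the coarea argument of Step 3 of Proposition \ref{intrinsic.gammalim2} point (1). Away from the cores, Lemma \ref{L.qgamma.ex} together with the pointwise identity above (absorbing the cross terms in $R_\e$ as in the preceding paragraph) gives
\[
\int_{S_{t_\e}} e_\e^{mm}(M_\e) \vol_g \ge n\pi \log\tfrac{1}{t_\e} + (W+\tilde W)(a^\e,d^\e,\Phi^\e) + \tfrac12 \int_{S_{t_\e}} |dM_{\perp,\e}|_g^2 \vol_g - o(1).
\]
For the core estimate in $B_{t_\e}(a_{k,\e})$, I pass to exponential normal coordinates $\Psi$ with a moving frame satisfying \eqref{A0}, and represent $M_\e$ as an $\SSS^2$-valued map $v_\e = (v_\e^1, v_\e^2, v_\e^3)$ on the flat disk $\{|y|<t_\e\}$ with $v_\e^3 = M_{\perp,\e}\circ \Psi$. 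The asymptotics \eqref{normalcs}--\eqref{uvsv} yield
\[
\int_{B_{t_\e}(a_{k,\e})} e_\e^{mm}(M_\e)\vol_g = (1 + O(t_\e^2)) \int_{|y|<t_\e} \tilde e_\e(v_\e)\, dy + O(1),
\]
with boundary control $v_\e \approx (e^{i d_k\theta}, 0)$ on $\{|y| = t_\e\}$ inherited from the counterpart of \eqref{teest} for $m_\e$, together with the potential trace bound $v_\e^3 = O(\sqrt\e)$ (in a suitable average sense) on the same circle. A direct adaptation of Step 4 of Proposition \ref{intrinsic.gammalim2} point (1), extending $v_\e$ to a slightly larger disk by an $\SSS^2$-valued (rather than $\C$-valued) competitor and invoking \eqref{iota_til}, then produces $\int_{|y|<t_\e}\tilde e_\e(v_\e)\, dy \ge \pi \log(t_\e/\e) + \tilde\iota_F + o(1)$. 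Summing over $k = 1,\ldots, n$ and using $W(a^\e,d^\e,\Phi^\e) \to W(a,d,\Phi)$ (Proposition \ref{prop.W}) and $\tilde W(a^\e,d^\e,\Phi^\e) \to \tilde W(a,d,\Phi)$ (Remark \ref{lem:add}) gives the claimed lower bound.

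For part (3), I would set $M_\e = e^{i\Theta} u^*(a,d,\Phi)$ on $S_{\sqrt\e}$ (so $M_{\perp,\e} \equiv 0$ there), with $\Theta$ a minimizer in \eqref{renorm_ext}, and inside each $B_{\sqrt\e}(a_k)$ replace this by the $\SSS^2$-valued radial minimizer of $I^{mm}_F(\sqrt\e/2, \e)$, glued across $B_{\sqrt\e}(a_k) \setminus B_{\sqrt\e/2}(a_k)$ by linear interpolation of the tangential angular lifting (normal component kept at zero in the annulus), exactly as in Step 2 of the upper bound in Proposition \ref{intrinsic.gammalim2} point (2). The additive decomposition of $E_\e^{mm}(M_\e)$ over $S_{\sqrt\e}$ and the small balls, combined with \eqref{iota_til} and the definitions of $W$ and $\tilde W$, yields the asymptotic $W+\tilde W + n\tilde\iota_F$, while the convergences of $\omega(m_\e)$, $\Phi(m_\e)$ and $M_{\perp,\e}$ are immediate from the construction. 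The hard part will be the sharp core lower bound producing $\tilde\iota_F$ in place of $\iota_F$: one must convert the micromagnetic energy on a shrinking geodesic disk $B_{t_\e}(a_{k,\e})$ into the Euclidean $\tilde e_\e$-energy with only $o(1)$ error after subtraction of the $\pi\log(t_\e/\e)$ divergence, handling simultaneously the metric perturbation, the shape-operator contributions, the boundary matching to an $\SSS^2$-valued Dirichlet datum, and the possible vanishing $|m_\e|\to 0$ inside the core where the map escapes into the normal direction; the $\SSS^2$-constrained Euclidean extension lemma replacing Step 4 of Proposition \ref{intrinsic.gammalim2} is the genuinely new ingredient.
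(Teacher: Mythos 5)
Your architecture is the paper's: split $E^{mm}_\e(M_\e)$ into the extrinsic energy of the tangential part $m_\e$ plus the normal-derivative contribution, reuse Theorem \ref{ext.gamma} and Lemma \ref{L.qgamma.ex} away from the cores, and replace the core profile by the $\SSS^2$-valued one from \eqref{uuu} to produce $\tilde \iota_F$; your part (3) construction is the one in the paper. The genuine gap is your treatment of the cross term $\int_S(\dbar m_\e\cdot N,\, dM_{\perp,\e})_g\,\vol_g$. Absorbing it by pointwise Cauchy--Schwarz (``completing the square in $|dM_{\perp,\e}|_g$'') does give the compactness inequality $E^{ex}_\e(m_\e)\le E^{mm}_\e(M_\e)+C$, so part (1) is fine; but it cannot give the $o(1)$-accurate splitting you invoke in part (2). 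Indeed $\int_S|\calS(m_\e)|_g^2\,\vol_g$ is of order $1$ (not $o(1)$) and a priori $\|dM_{\perp,\e}\|_{L^2}=O(\logeps^{1/2})$, so any pointwise absorption either destroys the $|\calS(m_\e)|_g^2$ term (needed to recover $\tilde W$), or destroys the $|dM_{\perp,\e}|_g^2$ term (which must survive in the statement of part (2)), and in every case leaves an error of order $O(1)$ --- fatal for a second-order $\Gamma$-liminf. Consequently your displayed inequality on $S_{t_\e}$, retaining $\tfrac12\int_{S_{t_\e}}|dM_{\perp,\e}|_g^2\,\vol_g$ with only an $o(1)$ loss ``as in the preceding paragraph'', does not follow from the argument you give.

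The missing ingredient is an integration by parts, which is precisely Step 1 of the paper's proof (leading to \eqref{e_vs_e2} and \eqref{444}). Since $\dbar_\ell m_\e\cdot N=-m_\e\cdot\dbar_\ell N$ contains no derivative of $m_\e$, one writes $\int_S(\dbar m_\e\cdot N, dM_{\perp,\e})_g\,\vol_g=-\int_S\big(d^*(m_\e\cdot\dbar N),M_{\perp,\e}\big)_g\,\vol_g$, and this is $o(1)$ because $\|d^*(m_\e\cdot\dbar N)\|_{L^2}\le C(1+\|Dm_\e\|_{L^2})=O(\logeps^{1/2})$ while the potential term and \eqref{F.growth} make $\|M_{\perp,\e}\|_{L^2}$ small (of order $\e\logeps^{1/2}$ for the physical potential). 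The remaining cross terms coming from $Dm_\e+M_{\perp,\e}\dbar N$ are handled by Young's inequality with $\e$-dependent weights and are also $O(\e\logeps)$, yielding the exact splitting of $E^{mm}_\e(M_\e)$ into $E^{ex}_\e(m_\e)$ plus the $\int_S|dM_{\perp,\e}|_g^2$ term up to $o(1)$. With that identity in hand, the rest of your plan (Lemma \ref{L.qgamma.ex} outside the cores, the $\SSS^2$-valued adaptation of Step 4 of Proposition \ref{intrinsic.gammalim2} inside $B_{t_\e}(a_{k,\e})$ giving $\tilde\iota_F$, and your upper-bound construction) is exactly the paper's proof; the $\SSS^2$-constrained extension step you flag as ``the genuinely new ingredient'' is indeed the only piece the paper treats by direct adaptation of the intrinsic argument rather than by a new lemma.
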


\begin{remark}
The term $\liminf_{\e\to 0}\int_{S \setminus \cup_{k=1}^n B_\sigma(a_k)} |dM_{\perp, \e}|_g^2 \vol_g $  in the above $\Gamma$-liminf inequality
will be used to show that if $M_\e$ minimizes $E_\e^{mm}$, then 
$M_{\perp,\e}\to 0$ in $H^1_{loc}(S\setminus \{a_k\}_k)$, where $\{a_k\}_k$
are limiting vortex locations.
\end{remark}

\begin{proof}We divide the proof in several steps. 

\medskip

\nd {\it Step 1. A basic computation}. Let $M:S\to \mathbb{S}^2$ such that $E^{mm}_\eps(M)  \le C |\log \eps|$. By \eqref{decompo},
we start by computing the extrinsic differential of $M$:
\bea
\dbar M&=\dbar m+ \dbar (\Mn N)\\
&=Dm+(\dbar m\cdot N)\otimes N+\Mn \dbar N+d\Mn\otimes N\\
&=[Dm+\Mn \dbar N]+[(\dbar m\cdot N)+d\Mn]\otimes N.
\end{align*}
In other words, in terms of partial derivatives, we have for $\ell=1,2$:
$$\dbar_\ell M=[D_\ell m+\Mn \dbar_\ell N]+[(\dbar_\ell m\cdot N)+\partial_\ell \Mn] N.$$
This entails the following extrinsic Dirichlet energy density:
$$
|\dbar M|^2_g=\underbrace{|Dm+\Mn \dbar N|^2_g}_{=:I}+\underbrace{|(\dbar m\cdot N)+d\Mn|^2_g}_{=:II}.
$$
Writing $$E^{mm}_\eps(M)=\int_S \frac12(I+II)+\frac1{4\e^2}F(|m|_g^2)\, \vol_g\leq C|\log \eps|,$$ we deduce by Young's inequality that
\bea
2C|\log \eps|\geq \int_S I\, \vol_g&=\int_S  |Dm|^2_g +\Mn^2 |\dbar N|^2_g+
2\Mn (Dm, \dbar N)_g \vol_g\\
&\geq \int_S  \frac 12|Dm|^2_g-3\|\dbar N \|^2_{L^\infty} \Mn^2\ \vol_g\\
& \geq \int_S \frac12 |Dm|^2_g \, \vol_g-O(\e^2 |\log \e|).
\end{align*}
Therefore,
$$\int_S I\, \vol_g=\int_S |Dm|^2_g +\underbrace{\Mn^2 |\dbar N|^2_g}_{=O(\e^2|\log \e|)}+
2\underbrace{\Mn (Dm, \dbar N)_g}_{=O(\e |\log \e|)} \, \vol_g= \int_S |Dm|^2_g \, \vol_g+O(\e |\log \e|).
$$
The second term $II$ is treated as follows:
\begin{align*}
\int_S II\, \vol_g&=\int_S |\dbar m\cdot N|^2_g+|d\Mn|^2_g +
2(\dbar m\cdot N, d\Mn)_g \, \vol_g\\
& =\int_S |\calS(m)|^2_g+|d\Mn|^2_g  \, \vol_g+O(\e |\log \e|)
\end{align*}
because $\dbar m\cdot N=-m\cdot \dbar N$ so $|\dbar m\cdot N|^2_g=|\calS(m)|^2_g$ and integration by parts yields
$$\int_S (\dbar m\cdot N, d\Mn)_g \, \vol_g=-\int_S \big(d^*(m\cdot \dbar  N), \Mn\big)_g \, \vol_g=
O(\e |\log \e|).$$
Therefore, we obtain:
\beq
\label{e_vs_e2}
\int_{S} |\dbar M|^2_g\, \vol_g=\int_{S} \big(|\dbar m|^2_g+|d\Mn|^2_g\big)\, \vol_g+O(\e |\log \e|)\geq \int_S |Dm|^2_g \, \vol_g+O(\e |\log \e|).
\eeq

\medskip

\nd {\it Step 2. Compactness}. Let $M_\e$ satisfy the assumptions at Theorem \ref{mm.gamma} point 1). 
By Step 1, we deduce that $E^{in}_\e(m_\eps)\leq T \pi |\log \eps| + C$ where $m_\e=\Pi(M_\e)$
is the projection of $M_\e$ on $TS$ (recall that the potential $F$ in $E^{mm}_\e$ satisfies \eqref{F.growth}), i.e., \eqref{F.growth} holds true.
Therefore, Theorem \ref{intrinsic.gammalim} point 1) leads to the desired conclusion.\\

\bigskip

\nd {\it Step 3. Upper bound}. The difference with respect Problem 2 is the following: within the notation in Step 1, as $|M|=1$, one has that $|\Mn|=\sqrt{1-|m|_g^2}$. By \eqref{e_vs_e2}, the only term that changes in the renormalized energy for Problem 2  comes from $|d\Mn|^2_g=|d\sqrt{1-|m|^2}|^2_g$ that influences the energy of the radial profile of a vortex by a constant (therefore, $\iota_F$ in Problem 2 will be replaced by $\tilde \iota_F$).  
Let $u^*=u^*(a,d, \Phi)$ be a canonical harmonic map and $\Theta$ be a minimizer in \eqref{renorm_ext}. 
As $\Theta$ satisfies the associated Euler-Lagrange equation \eqref{PDET}, it yields $\Delta \Theta\in L^\infty$ so 
$\Theta\in C^1(S)$. 
Let $U_\e:=U_\e(a,d, \Phi)$ 
be the vector field constructed in the proof of Proposition \ref{intrinsic.gammalim2} point 2). We have to modify $U_\e$ in the balls $B_{\sqrt\e/2}(a_k)$ according to the micromagnetic radial profile of a vortex given in 
$I^{mm}_F$. For that, we recall that $U_\e$ is denoted in exponential normal coordinates by $V_\e$ around every vortex $a_k$ of degree $d_k\in \{\pm 1\}$ and we have that  $V_\e=e^{id_k\theta}$ on $\partial B_{\sqrt\e/2}(0)$ (up to a rotation). We define $\tv_\e:B(0, \frac{\sqrt \e}2)\to \mathbb{S}^2$ as being a minimizer of $I^{mm}_F(\frac{\sqrt \e}2, \e)$ if $d_k=1$ (or its complex conjugate if $d_k=-1$) up to a rotation. We set $\tv_\e=V_\e$ outside these balls of radius $\frac{\sqrt \e}2$. Denoting by $\tu_\e$ the tangential component of the corresponding map to $\tv_\e$ on $S$, we set $m_\e=e^{i\Theta} \tu_\e$ and $M_\e=m_\e+\Mne N$ where $\Mne=0$ outside the balls $B_{\sqrt\e/2}(a_k)$ and $\Mne$ is the vertical component of $\tv_\e$ inside $B_{\sqrt\e/2}(a_k)$. By the proof of Proposition \ref{intrinsic.gammalim2} point 2) and the above choice of $M_\e$ inside the balls $B_{\sqrt\e/2}(a_k)$, 
we deduce that 
$$\int_S |Dm_\e|^2_g \, \vol_g\leq C|\log \eps|$$ so that \eqref{e_vs_e2} implies
\bea
\int_{S} |\dbar M_\e|^2_g\, \vol_g&=\int_{S} \big(|\dbar m_\e|^2_g+|d\Mne|^2_g\big)\, \vol_g+O(\e |\log \e|)\\
&=\int_{S} (|D \tu_\e|^2_g+ \underbrace{|\tu_\e|_g^2 |d\Theta|^2_g+|\calS(m_\e)|_g^2+2 \big(j(\tu_\e), d\Theta\big)_g}_{III}+|d\Mne|^2_g\big)\, \vol_g+O(\e |\log \e|).
\end{align*}
{\it Estimating $III$}. Recall that $|\tu_\e|_g^2\leq 1$ in $S$ and $\tu_\e=u^*$ in $S_{\sqrt{\e}}$ so that
\begin{align*}
\int_S III \ \vol_g&=\int_S |\tu_\e|_g^2 |d\Theta|^2_g+|\calS(m_\e)|_g^2+2 \big(j(\tu_\e), d\Theta\big)_g \, \vol_g\\
&\leq \int_S |d\Theta|^2_g+|\calS(e^{i\Theta} u^*)|_g^2+2 \big(j(\tu_\e), d\Theta\big)_g \, \vol_g+\int_{S\setminus S_{\sqrt{\e}}} |\calS(m_\e)|_g^2 \vol_g\\
&\leq 2\tilde W(a,d, \Phi)+o(1)
\end{align*}
because 
$$\int_{S\setminus S_{\sqrt{\e}}} |\calS(m_\e)|_g^2 \vol_g=o(1)$$
(as $|m_\e|_g\leq 1$) and
\begin{align*}
\bigg|\int_S \big(j(\tu_\e), d\Theta\big)_g \vol_g\bigg|&\leq
\bigg|\int_S \big(j(u^*), d\Theta\big)_g \vol_g\bigg|+\bigg|\int_{S\setminus S_{\sqrt{\e}}}
\big(j(\tu_\e)-j(u^*), d\Theta\big)_g \vol_g\bigg|\\
&\leq \bigg|\int_S \big(\underbrace{d^*j(u^*)}_{=0}, \Theta\big)_g \vol_g\bigg|+\|d\Theta\|_{L^\infty}\int_{S\setminus S_{\sqrt{\e}}}
|j(\tu_\e)-j(u^*)|_g \vol_g\\
&=o(1)
\end{align*}
because  $\|j(\tu_\e)-j(u^*)\|_{L^1(S\setminus S_{\sqrt{\e}})} \le  \|j(\tu_\e))\|_{L^1(S\setminus S_{\sqrt{\e}})}+\|j(u^*)\|_{L^1(S\setminus S_{\sqrt{\e}})}\to 0$
(from H\"older's inequality in the small balls of radius $\sqrt\e$,  using
control over $\| D\tu_\e\|_{L^2}$ coming from the energy, and estimating $\| Du^*\|_{L^p}$ for $p<2$, as in   Steps 1 and 3 of the proof of  Proposition~\ref{intrinsic.gammalim2}).

\medskip
\nd {\it Estimating the integral of $\frac12(|D \tu_\e|^2_g+|d\Mne|^2_g)+\frac1{4\e^2} F(|m_\e|_g^2)$ on $S$}. First, by definition of $\tv_\e$ inside the ball $B(0, \frac{\sqrt \e}2)$ and $\tilde \iota_F$, we obtain by \eqref{uuu}
$$ \int_{B(0,\frac{\sqrt \e}2)} \tilde e_\e(\tv_\e) \, dy=\pi \log \frac {\sqrt \e}{2\e} + \tilde \iota_F + o(1).
$$
Recall that inside the annulus $B(0, \sqrt \e)\setminus B(0, \frac{\sqrt \e}2)$, we have by \eqref{123}
\begin{align*}
\int_{B(0, \sqrt \e)\setminus B(0, \frac{\sqrt \e}2)} \frac12|\nabla \tv_\e|^2\, dy&=\pi \log 2+o(1)
\end{align*}Thus, by \eqref{normalcs} and \eqref{uvsv} 
as $\Mne=0$ outside $B(a_k,\sqrt \e/2)$:

\begin{align*}
 \int_{B(a_k,\sqrt \e)} \frac12  (|D \tu_\e|^2_g+|d\Mne|^2_g)&+\frac1{4\e^2} F(|m_\e|_g^2)\, \vol_g\\
& = 
\int_{\{ y\in \R^2 : |y|<\sqrt \e\} } \left[(1+O(\e))\tilde e_\e(\tv_\e)+ O(1)\right] \sqrt{g(y)}dy 
\\
 &= \pi \log \frac {\sqrt \e}{\e} + \tilde \iota_F + o(1) \quad \textrm{as } \e\to 0.
\end{align*}
Finally, by definition of $W(a,d,\Phi)$, we have
$$\int_{S_{\sqrt \e} } \frac 12 |D \tu_\e|_g^2 \, \vol_g=\int_{S_{\sqrt \e} } \frac 12 |D u^*|_g^2 \, \vol_g=
W(a,d,\Phi) + n \pi \log \frac 1{\sqrt \e}+o(1).$$
Summing up, the desired upper bound follows.

The convergence $j(m_\eps)\to j(u^*)$ in $L^p(S)$ for every $p\in [1,2)$ follows as in the proof of Theorem \ref{ext.gamma} point 3) because the change made above for $\tilde U_\e$ (instead of $U_\e$) in the small balls $B(a_k,\sqrt \e)$ does not affect the convergence of the current due to
$\|j(m_\eps)\|_{L^p(B(a_k,\sqrt \e))}\to 0$ for every $p\in [1,2)$ (coming from the blow up of $\|j(m_\eps)\|_{L^2(B(a_k,\sqrt \e))}$  as $|\log \e|$ and the H\"older inequality in the ball $B(a_k,\sqrt \e)$). This entails also the convergence of the vorticities $\omega(m_\e)$ in \eqref{convergence_ext}, as well as $\Phi(m_\e)\to \Phi=\Phi(u^*)$.  

\medskip

\nd {\it Step 4. Lower bound}. Let $M_\e$ satisfy the assumptions at Theorem \ref{mm.gamma} point 2). Furthermore, we may assume that $E^{mm}_\e(M_\e)\leq n\pi|\log \e|+c$ for some $c>0$ (otherwise the lower bound is trivial).
By \eqref{e_vs_e2}, we deduce that 
\beq
\label{444}
E^{mm}_\e(M_\e)=E^{ex}_\e(m_\e)+\int_S |dM_{\perp, \e}|^2\, \vol_g+O(\e|\log \e|)\eeq
 where $m_\e=\Pi(M_\e)$
is the projection of $M_\e$ on $TS$ and $F$ satisfies \eqref{F.growth}.
The rest of the proof follows the argument in the proof of Theorem \ref{ext.gamma}, point 2). With the same notation, the only change here concerns the estimate inside the small balls $B_{t_\e}(a_{k, \e})$. As in Step 4 in the proof of Proposition \ref{intrinsic.gammalim2} point 1), one uses the entire micromagnetic energy density and \eqref{uuu} to conclude using \eqref{normalcs} and \eqref{uvsv} 
\bea
\int_{B_{t_\e}(a_{k, \e})}e_\e^{ex}(M_\e)+ |dM_{\perp, \e}|^2 \vol_g  &\ge 
\int_{\{ y\in \R^2 : |y|<t_\e\} } \left[(1+O(\e))\tilde e_\e(v_\e)+ O(1)\right] \sqrt{g(y)}dy \\
&\ge \pi \log \frac{t_\e}{\e} +\tilde \iota_F  + o(1)
\end{align*}
as $\e\to 0$, for every $1\leq k\leq n$ where $v_\e:B(0, t_\e)\to \SSS^2$ is the representation in normal coordinates of $M_\e$, with its in-plane component
corresponding to $m_\e$ and its vertical component to $M_{\perp, \e}$ (see \eqref{decompo}).
Also note the extra term in $|dM_{\perp, \e}|^2$ in \eqref{444} that is used only inside the small balls, therefore, it leads to the extra term in the desired lower bound outside the fixed balls $B_{\sigma}(a_{k})$ around the limit vortices.
\end{proof}

\bigskip

\section{Minimizers of the considered functionals}
\label{sec:min}

In this section, we study the asymptotic behaviour of minimizers of our three functionals as $\e\to 0$.

\medskip

\nd {\it The intrinsic case}.

\begin{theorem}
\label{thm:min_in}
For $\eps>0$, let $u_\eps$ be a minimizer of $E^{in}_\eps$ over the set $\calX^{1,2}(S)$. Then
there exists a sequence $\eps \downarrow 0$ such that for every $p\in [1,2)$,
$$
\begin{cases}
& \omega(u_\eps)  \longrightarrow 2\pi \sum_{k=1}^{n} d^*_k\delta_{a^*_k} \quad \textrm{in } \, W^{-1,p}(S), \\ 
&u_\eps\rightharpoonup u^* \quad \textrm{weakly in } \, \calX^{1,p}(S),\\
&\Phi(u_\eps)\to \Phi^*
\end{cases}
\textrm{ as } \, \eps \to 0,
$$
where $n=|\chi(S)|$, $\{a^*_k\}_{k=1}^n$ are distinct points in $S$, $d^*_k=\sign(\chi(S))$, $\Phi^* \in {\mathcal L}(a^*,d^*)$ such that $(a^*,d^*, \phi^*)$ is a minimizer of the renormalized energy (for the above $d^*$)
$$\{W(a, d^*, \Phi)\, :\, a=(a_1, \dots, a_n)\in S^n \textrm{ distinct points}, \, \Phi\in {\mathcal L}(a,d^*) \}$$
and $u^*$ is a canonical harmonic vector field associated to $(a^*,d^*, \phi^*)$. Moreover, we have the following second order energy expansion:
$$E^{in}_\eps(u_\eps)=n\pi \log \frac 1 \eps+W(a^*, d^*, \Phi^*) +n\iota_F+o(1), \textrm{ as } \, \eps\to 0.$$ 
\end{theorem}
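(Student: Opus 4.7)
The plan combines Theorem~\ref{intrinsic.gammalim} (compactness, $\Gamma$-liminf, $\Gamma$-limsup) with Theorem~\ref{P1} and Proposition~\ref{prop.W}, through an upper/lower bound matching argument.

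\textbf{Step 1 (Upper bound and existence of a minimizing configuration).} Fix $n := |\chi(S)|$ and $d^* := (\sign\chi(S),\ldots,\sign\chi(S))\in \Z^n$, so that \eqref{necessary} holds. For every $(a,\Phi)$ with $a\in S^n$ of distinct entries and $\Phi\in\calL(a;d^*)$, Theorem~\ref{intrinsic.gammalim}(3) produces a recovery sequence, hence
$$
\min_{\calX^{1,2}(S)} E^{in}_\eps \;\le\; n\pi\logeps + W(a,d^*,\Phi) + n\iota_F + o(1).
$$
I would then argue that $W(\cdot,d^*,\cdot)$ attains its infimum $W_*$ on this class. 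Since all degrees have identical sign, the Green-function term $4\pi^2\sum_{l<k}d^*_ld^*_k G(a_l,a_k)$ in \eqref{W.formula} blows up as any two points collide, the quadratic term $\tfrac12|\Phi|^2$ is coercive in $\Phi$, and the shifted lattice $\calL(a;d^*)$ varies continuously in $a$ by Lemma~\ref{lem:latti}, so that $\min_{\Phi\in\calL(a;d^*)}W$ is upper semicontinuous in $a$. A direct-method argument then produces a minimizer $(a^*,\Phi^*)$ attaining $W_*$.

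\textbf{Step 2 (Compactness, degree count, $\Gamma$-liminf matching).} The upper bound gives $E^{in}_\eps(u_\eps)\le n\pi\logeps+C$. Theorem~\ref{intrinsic.gammalim}(1) with $T=n$ yields a subsequence along which $\omega(u_\eps)\to 2\pi\sum_{k=1}^{K}d_k\delta_{a_k}$ in $W^{-1,p}$, with $K$ distinct points, nonzero integer degrees, \eqref{necessary}, and $\sum|d_k|\le n$. Corollary~\ref{Cor.cgc1} gives $\sum|d_k|\le \liminf E^{in}_\eps/(\pi\logeps)\le n$, and combined with $\sum|d_k|\le n$ forces $\sum|d_k|=n$. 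The moreover clause of (1) then yields $K=n$ and $|d_k|=1$ for every $k$; combined with $\sum d_k=\chi(S)=\pm n$, all $d_k$ equal $\sign\chi(S)$, so the limit degree vector is $d^*$. The same clause yields, along a further subsequence, $\Phi(u_\eps)\to\Phi^*\in\calL(a^*;d^*)$, where $a^*:=(a_1,\ldots,a_n)$. Applying the $\Gamma$-liminf in (2),
$$
\liminf_{\eps\to 0}\bigl[E^{in}_\eps(u_\eps)-n\pi\logeps\bigr]\;\ge\; W(a^*,d^*,\Phi^*)+n\iota_F,
$$
and comparison with the upper bound $\le W_*+n\iota_F+o(1)$ forces $W(a^*,d^*,\Phi^*)=W_*$; in particular $(a^*,\Phi^*)$ minimizes $W(\cdot,d^*,\cdot)$, and the announced second-order expansion follows.

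\textbf{Step 3 (Weak convergence to a canonical harmonic vector field).} After the cutoff of Step~1 in the proof of Proposition~\ref{P.comp} one may assume $|u_\eps|_g\le 1$. Since the upper bound in Step~2 saturates, the nonnegative remainder in the sharp lower bound \eqref{lb1} must vanish in the limit, giving
$$
\int_{S\setminus\cup_kB_\sigma(a_k^*)}\Bigl[\tfrac12\bigl|\tfrac{j(u_\eps)}{|u_\eps|_g}-j^*\bigr|_g^2+e^{in}_\eps(|u_\eps|_g)\Bigr]\vol_g\;\to\;0\qquad\forall\sigma>0,
$$
with $j^*:=j(u^*(a^*,d^*,\Phi^*))$. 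Hence $|u_\eps|_g\to 1$ and $j(u_\eps)\to j^*$ strongly in $L^2_{\loc}(S\setminus\{a_k^*\})$. Near each vortex, the radial-profile estimates of the proof of Proposition~\ref{intrinsic.gammalim2} give $\int_{B_\sigma(a_k^*)}|Du_\eps|^p=O(\sigma^{2-p})$ uniformly in $\eps$ for $p<2$, so $u_\eps$ is bounded in $\calX^{1,p}(S)$. Any weak limit $u$ satisfies $|u|_g=1$ a.e.\ (by \eqref{F.growth} and Fatou), $\omega(u)=2\pi\sum_kd_k^*\delta_{a_k^*}$, and $d^*j(u)=0$ (by passing to the limit in the strong $L^2_{\loc}$-convergence of the currents away from the vortices). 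By the uniqueness clause of Theorem~\ref{P1}(3), $u=e^{i\beta}u^*(a^*,d^*,\Phi^*)$ for some $\beta\in\R$; absorbing this rotation into the definition of $u^*$ concludes the convergence claim.

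\textbf{Main obstacle.} The delicate point is the identification of the weak $\calX^{1,p}$-limit as a canonical harmonic vector field. Vorticity and flux pass to the limit by compactness in negative Sobolev norms, but the coclosed condition $d^*j(u)=0$ (which distinguishes canonical harmonic vector fields among all unit vector fields with prescribed vortex data) must be extracted from the strong $L^2_{\loc}$-convergence $j(u_\eps)/|u_\eps|_g\to j^*$ away from the singularities; and this convergence is only available because $u_\eps$ minimizes, forcing the nonnegative slack in the sharp $\Gamma$-liminf of Proposition~\ref{intrinsic.gammalim2} to vanish in the limit.
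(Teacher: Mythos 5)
Your proposal is correct and follows the paper's strategy almost step for step: direct-method minimization of $W(\cdot,d^*,\cdot)$ using Proposition \ref{prop.W}, the recovery sequence of Theorem \ref{intrinsic.gammalim}(3) for the upper bound, compactness and degree saturation via Theorem \ref{intrinsic.gammalim}(1) and Corollary \ref{Cor.cgc1}, and matching against the sharp liminf \eqref{lb1} of Proposition \ref{intrinsic.gammalim2} to obtain the energy expansion and the vanishing excess \eqref{r12.2.1}. The only genuinely different (and slightly under-justified) ingredient is how you get the uniform $\calX^{1,p}$ bound and identify the limit. The paper obtains the bound by combining the saturated upper bound with the lower bound $\int_{B_r(a_{k,\e})}e^{in}_\e\,\vol_g\ge \pi\log\frac r\e -C$ at \emph{every} scale $r\in(\e^\beta,r_0)$ (Proposition \ref{P.vballs} plus Lemma \ref{L.near}) and then a Struwe-type dyadic summation (Lemma \ref{lem:Struwe}); you instead assert $\int_{B_\sigma(a_k^*)}|Du_\e|^p_g=O(\sigma^{2-p})$ "by the radial-profile estimates" of Proposition \ref{intrinsic.gammalim2}, which that proof does not literally contain. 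The claim is nevertheless recoverable from the paper's estimates: the $O(1)$ excess bound $\int_{S_{r_\e}}\frac12\bigl|\frac{j(u_\e)}{|u_\e|_g}-j^*_\e\bigr|_g^2+e^{in}_\e(|u_\e|_g)\,\vol_g\le C$ from Step 2 of that proof, together with $\|j^*_\e\|_{L^p(B_\sigma)}^p=O(\sigma^{2-p})$, the identity \eqref{formula12}, and a crude H\"older bound on the core balls $B_{r_\e}$ (whose measure is $O(\e^{2\beta})$), yields the stated bound, so this is a citation gap rather than a mathematical one. For the identification, the paper proves $j(u_\e)\to j(u^*)$ strongly in $L^p(S)$ and concludes $j(u_*)=j(u^*)$; your route (verify $|u_*|_g=1$, the vorticity equation and $d^*j(u_*)=0$, then invoke the uniqueness clause of Theorem \ref{P1}(3)) works, but you must also record explicitly that the flux integrals match, $\Phi(u_*)=\lim_\e\Phi(u_\e)=\Phi^*$ — which does follow from $j(u_\e)\rightharpoonup j(u_*)$ weakly in $L^p$ tested against the bounded harmonic forms $\eta_k$ — since Theorem \ref{P1}(3) only identifies $u_*$ up to a global rotation once the constants $\Phi_k$ in \eqref{form_jstar} agree.
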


\begin{remark}
\label{rem:min_in}

\smallskip

1) 
We will also prove that $j(u_\e)\to j(u^*)$ in 
$L^p(S)$ for every $p<2$, and that
\begin{equation}\
\int_{S_\sigma}  e_\e^{in}(u_\e) \vol_g\leq C_\sigma, \quad 
\int_{S_\sigma} \bigg[\frac 12 \big|\frac{j(u_\e)}{|u_\e|_g}-j(u^*)\big|_g^2 + 
e_\e^{in}(|u_\e|_g)\bigg] \vol_g \to 0,
\label{r12.2.1}\end{equation}
for every $\sigma>0$, where
$S_\sigma: = S \setminus \cup_{k=1}^n B_\sigma(a^*_k)$.
It follows that
 $|u_\e|_g\to 1$ in $H^1(S_\sigma)$ and $u_\e\rightharpoonup u^*$ weakly in $\calX^{1,2}(S_\sigma)$.

\medskip

2) In the case $\chi(S)=0$, as smooth unit length vector fields do exist over $S$, the minimal energy $E^{in}_\e$ is uniformly bounded as $\e\to 0$. Therefore, any sequence of minimizers $u_\eps$ of $E^{in}_\eps$ has a subsequence strongly convergent in $\calX^{1,2}(S)$ to a smooth canonical harmonic vector field $u^*\in \calX(S)$.
 As $|u_\e|_g\leq 1$ in $S$ (by the standard cutting off argument at $1$ for a minimizer $u_\e$), it entails $j(u_\e)\to j(u^*)$ in $L^2$ and therefore,  
$\omega(u_\eps)\to 0$ in $H^{-1}\cap L^1(S)$ (by \eqref{egal_dj}).
This case was treated in \cite{SSV}. 

\medskip

3) When $\mathfrak g = 0$ (that is, when $S$ is a topological sphere) then $\Phi$
is not present, and the renormalized energy and vector field $u^*$ both simplify significantly.

\end{remark}

\bigskip

\begin{proof}[Proof of Theorem \ref{thm:min_in}] Let $n=|\chi(S)|$. We assume $n>0$ (the case $n=0$ is treated in  Remark \ref{rem:min_in} point 1)). Fix $d_k=\sign(\chi(S))$ for $1\leq k\leq n$. Denoting $d=(d_1, \dots, d_n)$, by Proposition \ref{prop.W}, the direct method in the calculus of variation implies the existence of a minimizer of
$$\{W(a, d, \Phi)\, :\, a=(a_1, \dots, a_n)\in S^n \textrm{ distinct points}, \Phi\in {\mathcal L}(a,d) \}.$$
Fix $(a, d, \Phi)$ such a minimizer.
First, by the upper bound in Theorem \ref{intrinsic.gammalim} point 3) applied to the triple $(a, d, \Phi)$, we deduce that every minimizer $u_\e$ of $E^{in}_\e$ has to satisfy 
\beq
\label{need_up}
E^{in}_\e(u_\e)\leq \pi n|\log \e|+W(a,d, \Phi)+n\iota_F+o(1).  
\eeq
By the compactness result  in Theorem \ref{intrinsic.gammalim} point 1), we have for a subsequence that  $\omega(u_\eps)  \longrightarrow 2\pi \sum_{k=1}^{K} d^*_k\delta_{a^*_k}$ in $W^{-1,p}(S)$ for some distinct points $a^*_k$ and nonzero integers $d^*_k$ satisfying 
$\sum_k d^*_k=\chi(S)$ and $\sum_k |d^*_k|\leq n=|\chi(S)|$. It entails that $K=n$ and  $d^*_k=d_k=\sign(\chi(S))$. In this case, Theorem \ref{intrinsic.gammalim} point 1) gives us the existence of $\Phi^*\in {\mathcal L}(a^*, d^*)$ such that for a subsequence $\Phi(u_\e)\to \Phi^*$. Applying Theorem \ref{intrinsic.gammalim} point 2), we deduce that $E^{in}_\e(u_\e)\geq \pi n|\log \e|+W(a^*,d^*, \Phi^*)+n\iota_F+o(1)$. Then \eqref{need_up} leads as $\eps\to 0$ to $W(a^*,d^*, \Phi^*)\leq W(a,d, \Phi)$. In other words, $(a^*,d^*, \Phi^*)$ is a minimizer of the intrinsic renormalized energy $W(\cdot, d^*, \cdot)$. Moreover, using the stronger lower bound in \eqref{lb1}, we obtain the second estimate in \eqref{r12.2.1}.

It remains to prove the convergence of $u_\e$ to a canonical harmonic vector field\footnote{which 
we recall is unique only up to a global rotation; in fact,  $j(u^*) = j^*(a^*,d^*, \Phi^*)$ is genuinely unique as defined in \eqref{psi.def}, \eqref{form_jstar}. } associated to $(a^*,d^*, \Phi^*)$.  Let $u^*$ denote one such vector field.
By Lemma \ref{L.sep} and Step 2 in the proof of Proposition \ref{intrinsic.gammalim2}, there exist $n$ distinct points  $a^\eps=(a_{k, \e})_{1\leq k\leq n}$ such that $\dist_S(a_{k, \e}, a_{\ell, \e})\geq C_0>0$ for every $k\neq \ell$, $a_{k, \e}\to a^*_k$ and  
$$\|\omega(u_\eps)-2\pi \sign(\chi(S))\sum_{k=1}^{n} \delta_{a_{k, \e}}\|_{W^{-1,1}}=o(1) \, \textrm{  as }\,  \e\to 0.$$ Then, by Proposition \ref{P.vballs} and Lemma \ref{L.near}, we deduce that for every small $\eps>0$ and for every $r\in(\eps^\beta, r_0(C_0))$ (with $\beta=\beta(n)>0$), 
\beq
\label{inter_equ}
\int_{B_r(a_{k, \e})} e_\e^{in}(u_\e)
 \vol_g \ge
 \pi \log \frac{r}{\e}- C, \quad k=1, \dots, n.
\eeq
By the upper bound \eqref{need_up}, it yields 
$$\int_{S\setminus \cup_k B_r(a_{k, \e})} e_\e^{in}(u_\e)
 \vol_g \leq n\pi |\log r|+C$$
 for every small $\eps>0$ and for every $r\in(\eps^\beta, r_0(C_0))$. It follows by Lemma \ref{lem:Struwe} below that $(u_\e)_{\e \to 0}$ is bounded in $\calX^{1,p}(S)$, therefore, for a subsequence, $u_\e\rightharpoonup u_*$ in $\calX^{1,p}(S)$ for every $p\in [1,2)$ for a unit length vector field $u_*$. As $a_{k, \e}\to a^*_k$, by \eqref{inter_equ}, we also deduce the first estimate in
 \eqref{r12.2.1}.
  
Now we aim to prove that $u_*$ is a canonical harmonic unit vector field,
{\it i.e.} that $|u_*|=1$ (which is obvious) and
  $j(u_*) = j(u^*)$.
By the Sobolev embedding, we have that $u_\e\to u_*$ strongly in $L^q(S)$ for every $q<\infty$, and we know that $Du_\e\rightharpoonup Du_*$ in $L^p$ for
every $p\in [1,2)$. 
Together these imply that  $j(u_\e) \rightharpoonup j(u_*)$ in $L^p(S)$ for every $p\in [1,2)$. On the other hand, we claim that $j(u_\e)\to j(u^*)$ in $L^p(S)$
for every $p\in [1,2)$. 
To prove this, fix $p\in [1,2)$ and
note that
 \begin{align*}
\|\frac{j(u_\e)}{|u_\e|_g} - j(u_\e) \|_{L^p(S)} 
&\le \| |Du_\e|_g (1 - |u_\e|_g)\|_{L^p(S)} \\
& \le 
\| Du_\e\|_{L^2(S)} 
\| 1-|u_\e|_g\|_{L^2(S)}^{\frac{2-p}{p}}
\| D( 1-|u_\e|_g)\|_{L^2(S)}^{2-\frac2p}
\end{align*}
by H\"older and a Gagliardo-Nirenberg inequality. 
Since $(1 - |u_\e|_g)^2\leq \frac1C F(|u_\e|_g^2)$ (by \eqref{F.growth}) and $|D(1-|u_\e|_g)|_g \le |Du_\e|_g$,
all terms on the right-hand side can be controlled by $E^{in}_\e(u_\e)$,
leading to
\[
\|\frac{j(u_\e)}{|u_\e|_g} - j(u_\e) \|_{L^p(S)} 
\le C|\log\e|\, \e^{\frac{2-p} p}.
\]
Also, \eqref{r12.2.1} and H\"older's inequality readily imply that
$\|\frac{j(u_\e)}{|u_\e|_g} - j(u^*) \|_{L^p(S_\sigma)}  \to 0$ for every $\sigma>0$,
and hence that $\| j(u_\e) - j(u^*) \|_{L^p(S_\sigma)}  \to 0$. Finally, for 
$q\in (p,2)$, since $j(u^*)\in L^q(S)$ and $\{j(u_\e)\}$ is uniformly bounded in
$L^q(S)$, H\"older's inequality implies that 
\[
\| j(u_\e) - j(u^*)\|_{L^p(S\setminus S_\sigma)} \le
 \|1 \|_{L^{\frac{qp}{q-p}}(S\setminus S_\sigma)} 
 \| j(u_\e) - j(u^*)\|_{L^q(S\setminus S_\sigma)} 
 \le
C \sigma^{\frac {2(q-p)}{pq}}.
\]
As $\sigma$ can be chosen arbitrarily small, summing up, we obtain that $j(u_\e)\to j(u^*)$ in $L^p(S)$ as claimed.
(This completes the proof of Remark \ref{rem:min_in}).
Since $j(u_*)$ and $j(u^*)$ are both limits of $j(u_\e)$, we conclude that
$j(u_*) = j(u^*)$ and hence that $u_*$ is a canonical harmonic unit vector field.
\end{proof}

We use the following estimate reminiscent from the work of Struwe \cite{Struwe:1994a} in a ball $B_r\subset S$ of radius $r\in (0,r_0)$ (thus,
$r_0$ is at most the injectivity radius of $S$) and we have
\begin{equation}
\left |\mbox{Vol}_g(B_r) - \pi r^2 \right| \ \le \ c_1 r^4, 
\qquad\quad
\left| \calH^1(\partial B_r)  - 2\pi r \right| \le c_1 r^3 
\label{fix.c0}\end{equation}
which is a consequence of Bertrand-Diguet-Puisseux 
Theorem (see Section \ref{subsec:connect})  and the compactness of $S$.

\begin{lemma}\label{lem:Struwe}
Let $r_0$ be the injectivity radius of $S$, $\beta, c>0$ and $\rho_0, R\in (0, r_0)$ with $\rho_0<R$.
For every $\e\in (0, \frac12)$, let $f_\e :B_R\to \R$ be a function on a ball $B_R\subset S$
such that 
$$\|f_\e\|^2_{L^2(B_R)}\leq c(1+|\log \e|)$$ and 
for every $\e^\beta \leq \rho \leq \rho_0$,
\[
\|f_\e\|^2_{L^2(B_R\setminus B_\rho)} \leq c (1+|\log \rho|).
\]
Then for $1\leq p<2$ we have
\[
\|f_\e\|_{L^p(B_R)} \leq C,
\]
where $C>0$ is independent of $\e$.
\end{lemma}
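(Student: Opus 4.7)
The plan is to carry out a dyadic decomposition of the ball $B_R$ centered at the same point as $B_\rho$, and apply H\"older's inequality annulus by annulus, exploiting the fact that for $p<2$, the area of small annuli decays as a positive power, which will beat the at-worst logarithmic growth of the $L^2$ norms permitted by the hypothesis. Concretely, fix $\rho_k := 2^{-k}\rho_0$ and let $A_k := B_{\rho_k}\setminus B_{\rho_{k+1}}$, and let $K=K(\e)\in \N$ be the largest integer with $\rho_{K+1}\ge \e^\beta$, so that $K\sim \beta |\log\e|/\log 2$. Then
\[
B_R = (B_R\setminus B_{\rho_0}) \cup \bigcup_{k=0}^{K} A_k \cup B_{\rho_{K+1}},
\]
and by H\"older's inequality, using \eqref{fix.c0} to estimate $|A_k|\le c \rho_k^2$,
\[
\|f_\e\|_{L^p(A_k)} \le \|f_\e\|_{L^2(A_k)} \, |A_k|^{\frac 1p-\frac 12} \le \|f_\e\|_{L^2(B_R\setminus B_{\rho_{k+1}})} \cdot c\,\rho_k^{\frac 2p-1}.
\]

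Next I would insert the two hypotheses. On $A_k$ (for $0\le k\le K$) we have $\rho_{k+1}\ge \e^\beta$, so the second hypothesis gives $\|f_\e\|_{L^2(B_R\setminus B_{\rho_{k+1}})}^2\le c(1+|\log \rho_{k+1}|)\le c(1+|\log\rho_0|+(k+1)\log 2)$, hence $\|f_\e\|_{L^2(A_k)}\le C\sqrt{k+1}$. Combining with $\rho_k^{\frac 2p-1} = \rho_0^{\frac 2p-1} 2^{-k(\frac 2p-1)}$ and raising to the $p$-th power,
\[
\|f_\e\|_{L^p(A_k)}^p \le C (k+1)^{p/2} 2^{-k(2-p)}.
\]
Since $2-p>0$, the series $\sum_{k\ge 0} (k+1)^{p/2} 2^{-k(2-p)}$ converges, giving a uniform bound on $\sum_{k=0}^K \|f_\e\|_{L^p(A_k)}^p$. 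The ``outer'' contribution $\|f_\e\|_{L^p(B_R\setminus B_{\rho_0})}$ is handled identically (or absorbed into the $k=0$ term), using the first hypothesis on $B_R$ as a crude upper bound.

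Finally, for the innermost ball $B_{\rho_{K+1}}\subset B_{2\e^\beta}$ I would apply H\"older once more and use only the global $L^2$ bound:
\[
\|f_\e\|_{L^p(B_{\rho_{K+1}})} \le \|f_\e\|_{L^2(B_R)} \, |B_{\rho_{K+1}}|^{\frac 1p-\frac 12} \le c\sqrt{1+|\log\e|}\cdot \e^{\beta(\frac 2p-1)},
\]
which tends to $0$ as $\e \to 0$ and is in particular uniformly bounded. Summing the three contributions yields $\|f_\e\|_{L^p(B_R)}\le C$ independent of $\e$. No step here is a genuine obstacle; the only thing to check carefully is the interplay between the lower cutoff $\e^\beta$ in the second hypothesis and the choice of $K$, so that every annulus $A_k$ over which H\"older is applied lies in a region where the second hypothesis is in force, while the remaining innermost ball is small enough in measure that even the logarithmically growing global $L^2$ bound contributes negligibly.
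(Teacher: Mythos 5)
Your proof is correct and is essentially the paper's own argument: the same dyadic annuli $\rho_k=2^{-k}\rho_0$ down to scale $\e^\beta$, H\"older on each annulus combining the logarithmic $L^2$ bound from the second hypothesis with the quadratic volume decay, a convergent series in $k$ since $2-p>0$, and a final H\"older step with the global $L^2$ bound on the residual ball of radius $\lesssim \e^\beta$. One small caveat: for the fixed outer region $B_R\setminus B_{\rho_0}$ you should use the second hypothesis with $\rho=\rho_0$ (your ``handled identically'' option), since the first hypothesis alone only yields a bound of order $\sqrt{1+|\log\e|}$ there, not an $\e$-independent constant.
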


\begin{proof} Let $1\le p<2$ and $\rho_j=2^{-j}\rho_0$ for $0\leq j\leq j_\beta$ with $j_\beta=\lfloor \log_2 \frac{\rho_0}{\e^\beta}\rfloor$ where $\lfloor s \rfloor$ is the integer part of a real $s$.
Using H\"older's inequality, we have
\begin{align*}
\int_{B_{\rho_0}\setminus B_{\rho_{j_\beta}}} |f_\e|^p \, \vol_g
& = \sum_{j=0}^{j_\beta-1} \int_{B_{\rho_j}\setminus B_{\rho_{j+1}}} (|f_\e|^2)^{\frac{p}{2}}  \, \vol_g
\\
&\leq\sum_{j=0}^{j_\beta-1} \left(\int_{B_{\rho_j}\setminus B_{\rho_{j+1}}} |f_\e|^2\vol_g\right)^{\frac p2}
\mbox{Vol}_g(B_{\rho_j}\setminus B_{\rho_{j+1}})^{1-\frac p2}\\
&\le C(p,S)\sum_{j=0}^{j_\beta-1}
\left(\int_{B_R\setminus B_{\rho_{j+1}}} |f_\e|^2 \vol_g\right)^{\frac p2} (2^{-j}\rho_0)^{2- p}\\
&\le C(p,S, \rho_0)\sum_{j=0}^\infty (1+j\log 2-\log \rho_0)^{\frac p2} 2^{-(2-p)j}.
\end{align*}
Since 
\[
\lim_{j\to \infty} \frac{(1+(j+1)\log 2-{\log \rho_0})^{\frac p2} 2^{-(2-p)(j+1)}}{(1+j\log 2-{ \log \rho_0})^{\frac p2} 2^{-(2-p)j}}=2^{-(2-p)}<1,
\]
the above sum converges so $ \|f_\e\|_{L^p(B_{\rho_0}\setminus B_{\rho_{j_\beta}})}\le C$. Also, the hypothesis combined with H\"older's inequality yield $\|f_\e\|_{L^p(B_R\setminus B_{\rho_0})} \le C$ as well as 
\[
 \|f_\e\|_{L^p(B_{\rho_{j_\beta}})}\le \|f_\e\|_{L^2(B_R)} \mbox{Vol}_g (B_{\rho_{j_\beta}})^{\frac1p-\frac12}=O(|\log \eps| \e^{\beta(\frac2p-1 )})=o(1).
 \]
\end{proof}

\bigskip

\nd {\it The extrinsic case}.

If $S$ is a surface isometrically embedded in $\R^3$, then Theorem \ref{thm:min_in} holds true also in the extrinsic case for minimizing sections $m_\e$ of $E^{ex}_\e$ with the natural change of having the limit triplet
$(a^*,d^*, \phi^*)$ to be a minimizer of the extrinsic (instead of the intrinsic) renormalized energy 
$$
\{(W+\tilde W)(a, d^*, \Phi)\, :\, a=(a_1, \dots, a_n)\in S^n \textrm{ distinct points},  \Phi\in {\mathcal L}(a,d^*) \}.
$$
Moreover, we have the following formula for the minimal extrinsic energy:
\beq
\label{aici2}
E^{ex}_\eps(m_\eps)=n\pi \log \frac 1 \eps+(W+\tilde W)(a^*, d^*, \Phi^*) +n\iota_F+o(1),
\eeq
as $\e\to 0$. Finally, after passing to a subsequence if necessary, 
$$m_\e\rightharpoonup  e^{i\Theta^*}u^* \,  \textrm{ in } \, \calX^{1,p}(S),$$ where
$u^*$ is some fixed canonical harmonic map $u^*(a^*,d^*,\phi^*)$ and $\Theta^*$
is a minimizer of 
$\Theta \mapsto \frac 12\int_S|d\Theta|_g^2 + |\mathcal S(e^{i\Theta}u^*)|_g^2\vol_g$.

\medskip

We  sketch the proof of the above claim.
The energy expansion \eqref{aici2} and the convergence 
\[
\omega(m_\e)\to 2\pi \sum_{k=1}^n d_k^* \delta_{a_k^*}, \qquad
\Phi(m_\e) \to \Phi^*
\]
are proved exactly as in the intrinsic case, using Theorem \ref{ext.gamma}
in place of Theorem \ref{intrinsic.gammalim}.
Similarly, exactly the same arguments as for the intrinsic case prove that there
exists some $m_*\in \calX^{1,p}(S)$ for $m_\e\rightharpoonup m_*$
in $\calX^{1,p}(S)$, for all $p\in [1,2)$.

It remains to prove that  $m_* = e^{i\Theta^*}u^*(a^*,d^*, \Phi^*)$.
We will deduce this conclusion from estimates carried out during the proof
of Theorem \ref{ext.gamma}. We start by recalling from the
proof of  point 2)  Theorem \ref{ext.gamma} that for 
every
sufficiently small $\e>0$, there exists 
$t_\e\in (\e^{\frac 1{2(n+1)}}, \e^{\beta/2})$, $n$ distinct 
points $a^\e=(a_{k, \e})_{1\leq k \leq n} \in S^n$, $d^\e\in \{ \pm 1\}^n$, and $\{ \Phi_{k,\e} \}_{k=1}^{2\mathfrak g} \in \calL(a^\e, d^\e)$
such that 
\[
a^\e\to a^*, 
\qquad d^\e\to d^* \ \  \mbox{(so $d^\e = d^*$ for all small $\e$)}, \qquad 
\Phi^\e\to \Phi^*
\]
and
\begin{align*}
\int_{B_{t_\e}(a_{k, \e})}e_\e^{ex}(m_\e) \vol_g
&\ge \pi \log \frac{t_\e}{\e} +\iota_F   + o(1)\quad\mbox{ for every $1\leq k\leq n$,}
\\
\int_{S_{t_\e}} e_\e^{ex}(m_\e) \vol_g 
&\ge n\pi \log \frac 1{t_\e} + (W+\tilde W)(a^\e, d^\e, \Phi^\e)-  o(1)
\end{align*}
as $\e \to 0$. Let $u_\e^*$ be a canonical harmonic map $u^*(a^\e, d^\e, \Phi^\e)$ so that $u_\e^*$
(up to a rotation as in  \eqref{remain3}) satisfies
 $u_\e^* \to u^*$ in $L^p(S)$, $p<\infty$, as $\e\to 0$.
We may then define $w_\e:S_{t_\e}\to \C$ by requiring that
\[
m_\e = w_\e u^*_\e \qquad\mbox{ in }S_{t_\e}.
\]
Then \eqref{elb.3}, 
established 
during the proof of
Lemma \ref{L.qgamma.ex}, can be applied in $S_{t_\e}$ (instead of $S_{r_\e}$) implies that
\[
\int_{S_{t_\e}} e_\e^{ex}(m_\e) \vol_g 
= W(a^\e, d^\e, \Phi^\e) +n\pi\log \frac 1{t_\e} 
+
\int_{S_{t_\e}} \frac 12|dw_\e|_g^2
 +\frac 12 |\mathcal S(w_\e u^*_\e)|_g^2 + \frac 1{{4\e^2}} F(|w_\e|^2) \ \vol_g 
 +  o(1) .
\]
Combining these and \eqref{aici2} and recalling that $W(a^\e, d^\e, \Phi^\e)\to W(a^*, d^*, \Phi^*)$ (by Proposition \ref{prop.W}) and  that  $\tilde W(a^\e, d^\e, \Phi^\e)\to \tilde W(a^*, d^*, \Phi^*)$ (by Remark \ref{lem:add}),
we obtain
\[
\int_{S_{t_\e}} \frac 12|dw_\e|_g^2
 +\frac 12 |\mathcal S(w_\e u^*_\e)|_g^2 + \frac 1{{4\e^2}} F(|w_\e|^2) \ \vol_g 
\to \tilde W(a^*, d^*, \Phi^*)
\]
as $\e\to 0$. 
Next, following the arguments in Step 3 of the proof of Lemma
\ref{L.qgamma.ex}, we may construct a function $\tilde w_\e:S\to \C$ such 
that $\tilde w_\e = w_\e$ in $S_{\sqrt{r_\e}}$ and
\[
\limsup_{\e\to 0}\int_{S} \frac 12|d \tilde w_\e|_g^2
 +\frac 12 |\mathcal S(\tilde w_\e u^*_\e)|_g^2 + \frac 1{{4\e^2}} F(|\tilde w_\e|^2) \ \vol_g 
\leq \tilde W(a^*, d^*, \Phi^*).
\]
In particular $\{ \tilde w_\e\}$ is uniformly bounded in $H^1(S)$. We pass to a 
subsequence that converges weakly in $H^1$, and hence strongly in $L^p$
for every $p<\infty$, to a limit $w_*$. As in Step 4'' of the proof of Lemma \ref{L.qgamma.ex}, 
one checks that $w_*\in H^1(S;\SSS^1)$ satisfying the compatibility condition $\Phi(w_* u^*)=\Phi(u^*)$ so that Lemma \ref{lem:lifting} yields $w_* = e^{i\Theta_*}$. Standard 
lower semicontinuity arguments imply
\begin{align*}
\tilde W(a^*, d^*, \Phi^*)&\leq \int_{S} \frac 12|d \Theta_*|_g^2
 +\frac 12 |\mathcal S(e^{i\Theta_*} u^*)|_g^2 \ \vol_g\\& = \int_{S} \frac 12|d w_*|_g^2
 +\frac 12 |\mathcal S(w_* u^*)|_g^2 \ \vol_g \\
&\leq \liminf_{\e\to 0} \int_{S} \frac 12|d \tilde w_\e|_g^2
 +\frac 12 |\mathcal S(\tilde w_\e u^*_\e)|_g^2 + \frac 1{{4\e^2}} F(|\tilde w_\e|^2) \ \vol_g \\
& \leq \tilde W(a^*, d^*, \Phi^*).
\end{align*}
That is, $\Theta_*$ attains the minimum in the definition
of $\tilde W$.
Finally, the construction implies that for {\it a.e.} $x\in S$,
$
m_* = \lim_\e m_\e = \lim_\e \tilde w_\e u_\e^* = e^{i\Theta_*}u^*
$,
completing the proof.

\bigskip

\nd {\it The micromagnetic case}.

Let $S$ is a surface isometrically embedded in $\R^3$ endowed with the Euclidian metric. If $M_\e:S\to \SSS^2$
is a minimizer of $E^{mm}_\e$, then the projections $m_\eps=\Pi(M_\e)$ on $TS$ satisfy (for a subsequence) the convergences in Theorem \ref{thm:min_in} as $\eps\to 0$ where the limit triplet $(a^*,d^*, \phi^*)$ is a minimizer of the extrinsic renormalized energy $W+\tilde W$. The second order expansion of the minimal micromagnetic energy has the form \eqref{aici2} with the natural change of $\iota_F$ by $\tilde \iota_F$. Moreover, the normal components $M_{\perp, \eps}$ of $M_\e$ satisfy $M_{\perp, \eps}\to 0$ in $H^1(S \setminus \cup_{k=1}^n B_\sigma(a_k))$ as $\e\to 0$.

\bigskip

\appendix
\section{Ball construction. Proof of Proposition \ref{P.vballs}}\label{App.A}

In this section we present the vortex ball construction leading to 
Proposition \ref{P.vballs}.
We start with several lemmas in
which we verify, largely by adapting classical proofs to our setting, that
basic ingredients needed for the vortex ball argument on the Euclidean plane remain available in the present setting.
Once these ingredients are available, 
we follow classical arguments.
Since the arguments are rather standard,
our exposition is terse in places.

We first fix  positive constants $c_1(S), r_0(S)$ such that 
$\partial B_r(x)$ is a homeomorphic to a circle
for every $x\in S$,
and $0<r<r_0$ (thus,
$r_0$ is at most the injectivity radius of $S$) and
we recall \eqref{fix.c0}.
In several places later in our argument,
we will impose additional smallness conditions on
$r_0$.

In view of Lemma \ref{L.density}, in proving lower energy bounds we may 
restrict our attention to smooth vector fields.

\begin{lemma}\label{L.ingredients}Given $u\in \calX(S)$, let $\rho:= |u|_g$.
Assume that $\e<r<r_0(S)$. Then 
there exist positive constants $c_2,c_3, c_4$ such that the following hold.
First, 
\begin{equation}
\frac 12 \int_{\partial B_r(x)} | d \rho|_g^2 + \frac 1 {2\e^2}F(\rho^2) \, d\calH^1
\ge
\frac
{c_2}{\e} \| 1-\rho\|_{L^\infty}^2
\label{lb.first}\end{equation}
where $|d\rho|_g^2(x) := (d\rho(\tau_1))^2 + (d\rho(\tau_2))^2$ for any orthonormal
basis $\{ \tau_1,\tau_2 \}$ of $T_xS$.
Second, 
\begin{equation}
\int_{\partial B_r(x)}  e_\e^{in}(u) d\calH^1
\ge \lambda_\e(\frac r{|d|} )\quad\mbox{ for } d = \begin{cases}
\deg(u; \partial B_r(x))&\mbox{ if $\rho\ge \frac 12 $ on $\partial B_r(x)$ }\\
\mbox{any positive integer} &\mbox{ if not }
\end{cases}
\label{lb.third}\end{equation}
where
\begin{equation}\label{lambdaep.def}
\lambda_\e(r) :=
\min_{0<s\le 1} \left[ \frac {c_2}{4 \e}(1-s)^2 + s^2  \frac {\pi }{r}(1- c_3 r^2)\,\right] \ge 
\frac{\pi(1-c_3 r^2)} {r+c_4 \e} 
\end{equation}
is a nonincreasing function, and we use the convention that  for $r>0$,  $\lambda_\e(r/0) =  \lambda_\e(+\infty) = 0 $.
\end{lemma}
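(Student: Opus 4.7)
\emph{Proof plan.} The plan is to first establish \eqref{lb.first} by a Modica--Mortola argument on the closed curve $\partial B_r(x)$, and then derive \eqref{lb.third} by combining \eqref{lb.first} with a Cauchy--Schwarz bound on the tangential ``phase'' energy that exploits the degree constraint via \eqref{deg.def} and Stokes' theorem. For \eqref{lb.first}, the growth assumption \eqref{F.growth} and AM--GM yield the pointwise inequality
\[
\tfrac12|d\rho|_g^2 + \tfrac1{2\e^2}F(\rho^2) \ \ge\ \tfrac{\sqrt C}{\e}|d\rho|_g\,|1-\rho|\ =\ \tfrac{\sqrt C}{2\e}|d((1-\rho)^2)|_g.
\]
Integration around the closed curve $\partial B_r(x)$ then bounds the right-hand side below by $\tfrac{\sqrt C}{2\e}$ times the total variation of $(1-\rho)^2$, which is at least twice its oscillation. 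If the oscillation is comparable to $\|1-\rho\|_{L^\infty}^2$ this immediately yields \eqref{lb.first}; otherwise $(1-\rho)^2 \ge \tfrac12\|1-\rho\|_{L^\infty}^2$ pointwise on $\partial B_r(x)$, and then \eqref{F.growth} together with $\e<r$ makes the potential term alone dominate $\tfrac{c_2}{\e}\|1-\rho\|_{L^\infty}^2$.

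For \eqref{lb.third} in the case $\rho \ge \tfrac12$ on $\partial B_r(x)$, I will pass to exponential normal coordinates at $x$ in a moving frame $\{\tau_1,\tau_2\}$ for which the connection $1$-form $A$ satisfies \eqref{A0}, i.e., $A = O(r)$ near $x$. Writing $u = v\tau_1$ for a complex-valued $v$ on $\partial B_r$, one has $|Du|_g^2 \ge |\partial_\tau v - iA(\tau)v|^2$. Combining \eqref{deg.def} with the identity $j(u) = j(v) - |v|^2 A$ in this frame and the Stokes identity $\int_{B_r}\kappa\,\vol_g = \int_{\partial B_r}A$ (which follows from $dA = \kappa\,\vol_g$) shows that the winding number of $v/|v|$ on $\partial B_r$ equals $d = \deg(u;\partial B_r)$. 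Writing $v = \rho e^{i\theta_v}$, Cauchy--Schwarz applied to $\int_{\partial B_r}\partial_\tau\theta_v\,d\calH^1 = 2\pi d$ together with the arclength estimate \eqref{fix.c0} gives
\[
\int_{\partial B_r(x)}\tfrac12\rho^2|\partial_\tau \theta_v|^2\,d\calH^1 \ \ge\ \rho_{\min}^2\cdot\tfrac{\pi d^2}{r}(1-c_3 r^2).
\]
Adding one half of \eqref{lb.first} applied to the remaining share of $\tfrac1{4\e^2}F(\rho^2)$ present in $e_\e^{in}$, and absorbing the $O(r)$ connection cross term $-2A(\tau)(iv,\partial_\tau v)$ and the quadratic term $A(\tau)^2|v|^2$ into the factor $(1-c_3r^2)$ via a Peter--Paul inequality (provided $r_0$ is small), gives
\[
\int_{\partial B_r(x)}e_\e^{in}(u)\,d\calH^1 \ \ge\ \tfrac{c_2}{4\e}(1-\rho_{\min})^2 + \rho_{\min}^2\cdot\tfrac{\pi d^2}{r}(1-c_3 r^2).
\]
Rewriting $\pi d^2/r = \pi|d|/(r/|d|)$ and noting that, for $r_0(S)$ small enough, $\pi d^2/r(1-c_3r^2)\cdot s^2$ pointwise dominates $\pi|d|/(r/|d|)\cdot(1-c_3(r/|d|)^2)\cdot s^2$ uniformly in $|d|\ge 1$, the right-hand side exceeds the minimum defining $\lambda_\e(r/|d|)$. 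The lower bound $\lambda_\e(r) \ge \pi(1-c_3r^2)/(r+c_4\e)$ then follows from the explicit minimization $\min_s[A(1-s)^2+Bs^2] = AB/(A+B)$ with $A = c_2/(4\e)$, $B = \pi(1-c_3r^2)/r$, upon choosing $c_4 \ge 4\pi/c_2$.

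In the fallback case, where $\rho<\tfrac12$ at some point of $\partial B_r(x)$, \eqref{lb.first} alone gives $\int_{\partial B_r(x)} e_\e^{in}(u)\,d\calH^1 \gtrsim 1/\e$, which after adjusting $c_2$ dominates $\lambda_\e(r/d) \le c_2/(4\e)$ (attained as $s\to 0$) for every positive integer $d$. The main obstacle I expect is the careful bookkeeping required to absorb the connection corrections $-2A(\tau)(iv,\partial_\tau v) + A(\tau)^2|v|^2$ and the metric corrections in $d\calH^1 = (1+O(r^2))r\,d\theta$ into the single factor $(1-c_3 r^2)$ uniformly in $|d|\ge 1$, together with the coordination of the absolute constants $c_2,c_3,c_4$ across \eqref{lb.first}, \eqref{lb.third}, and the explicit bound in \eqref{lambdaep.def}; this will likely force us to shrink $r_0(S)$ and fix the constants in a specific order.
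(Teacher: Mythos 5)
Your proposal is correct in substance and follows essentially the same strategy as the paper's proof: a Modica--Mortola--type bound along $\partial B_r(x)$ for the modulus (the paper writes $|\zeta'|+\frac1\e|\zeta|\le C\e\,(\cdot)$ for $\zeta=(1-\rho)^2$ and invokes a scaled $W^{1,1}\hookrightarrow L^\infty$ embedding on the circle, which is the same ``sup $\le$ mean $+$ total variation'' mechanism as your oscillation dichotomy), and, for \eqref{lb.third}, a Cauchy--Schwarz lower bound on the circle exploiting the quantized circulation, the energy split with $s=\min_{\partial B_r(x)}(\rho\wedge 1)$, the reduction from $r$ to $r/|d|$, the fallback case via $\|1-\rho\|_{L^\infty}>\frac12$, and the explicit computation $\min_{s}[A(1-s)^2+Bs^2]=AB/(A+B)$ giving the bound with $c_4$. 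The one genuine difference is that the paper stays intrinsic: it sets $v=u/\rho$, uses $|Dv|_g=|j(v)|_g$ and \eqref{deg.def} directly, so the curvature correction enters only through $\int_{B_r(x)}\kappa\,\vol_g=O(r^2)$, and no normal coordinates, frame with $A=O(r)$, or Peter--Paul absorption of connection cross terms is needed. The same simplification is available inside your route: writing $v=\rho e^{i\theta_v}$ gives the exact identity $|\partial_\tau v-iA(\tau)v|^2=|\partial_\tau\rho|^2+\rho^2\,(\partial_\tau\theta_v-A(\tau))^2$, and Cauchy--Schwarz applied to $\int_{\partial B_r(x)}(\partial_\tau\theta_v-A(\tau))\,d\calH^1=2\pi d-\int_{B_r(x)}\kappa\,\vol_g$ reproduces the paper's estimate with no cross terms to absorb; this also sidesteps the fact that $\rho$ is not a priori bounded, which makes the Peter--Paul remainder $\delta^{-1}A(\tau)^2\rho^2$ slightly awkward as you wrote it.

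One slip should be fixed. In passing from $\frac{\pi d^2}{r}(1-c_3r^2)\,s^2$ to $\lambda_\e(r/|d|)$ you claim domination of ``$\pi|d|/(r/|d|)\cdot(1-c_3(r/|d|)^2)\,s^2$''; since $\pi|d|/(r/|d|)=\pi d^2/r$, that comparison is false for every $|d|\ge 2$ (as $1-c_3r^2<1-c_3(r/|d|)^2$). What you actually need is to dominate the coefficient occurring in \eqref{lambdaep.def} evaluated at $r/|d|$, namely $\frac{\pi}{r/|d|}\bigl(1-c_3(r/|d|)^2\bigr)=\frac{\pi|d|}{r}\bigl(1-c_3 r^2/d^2\bigr)$, i.e.\ the inequality $|d|(1-c_3r^2)\ge 1-c_3r^2/d^2$, which holds for all $|d|\ge1$ once $c_3r_0^2\le\frac12$; this is exactly the paper's reduction, and with it your argument goes through.
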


\begin{proof}
At $y\in \partial B_r(x)$, 
let $\tau\in T_yS$ denote the unit tangent to $\partial B_r(x)$,
oriented in the standard way, and let ${}'$ denote differentiation 
with respect to $\tau$.
Further define $\zeta := (1 - \rho)^2$. Then by \eqref{F.growth}
\[
|\zeta'| + \frac 1 \e|\zeta|      = 2|\rho-1| |\rho'|  + \frac 1 \e \zeta \le
\e |\rho'|^2 + \frac 2\e (1-\rho)^2 \le 
C \e \left(|d \rho|_g^2 + \frac 1 {2\e^2}F(\rho^2)\right).
\]
Then \eqref{lb.first} follows from a (suitably scaled) Sobolev embedding $W^{1,1}\hookrightarrow L^\infty$
on $\partial B_r(x)$,
taking into account the fact that $\calH^1(\partial B_r(x)) \ge \e$.

Next, if $\rho\ge \frac 12$ on $\partial B_r(x)$, then
we can define $v = u/\rho$ and $d:= \deg(u; \partial B_r(x))$ . Since $|v|_g=1$, we have $|Dv|_g = |j(v)|_g$, and thus
\begin{align*}
\int_{\partial B_r(x)}|Dv|_g^2 \, d\h^1
=
\int_{\partial B_r(x)}|j(v)|_g^2 \, d\h^1
&
\ \ge\  \frac 1{\calH^1(\partial B_r(x))} \big|\int_{\partial B_r(x)}j(v) \big|^2_g
\\
&\overset{\eqref{deg.def}}= \frac 1{\calH^1(\partial B_r(x))}
\left( 2\pi d - \int_{B_r(x)}\kappa \ \vol_g \right)^2.
\end{align*}
Since $S$ is compact and smooth,  
it follows from  this and \eqref{fix.c0} that
\begin{equation}
\int_{\partial B_r(x)} \frac 12 |Dv|_g^2 d\calH^1 \ge \frac {\pi d^2}{r}(1- c_3 r^2)\, .
\label{lb.second}\end{equation}

Finally, if we write $s :=\min_{\partial B_r(x)} (\rho \wedge 1) >0$, then 
$|Du|_g^2 = |d\rho|{_g}^2 + \rho^2|Dv|{_g}^2 \ge |d\rho|_g^2 + s^2 |Dv|_g^2$.
Then 
one may deduce  \eqref{lb.third}
from \eqref{lb.first} and \eqref{lb.second}, after first taking $r_0$  small enough so that $c_3r_0^2 \le 1/2$, which yields $|d|(1-c_3r^2)\geq 1-c_3\frac{r^2}{d^2}$ for every $|d|\geq 1$.
Then \eqref{lb.third} follows directly in the case $\rho\ge \frac 12$ on $\partial B_r(x)$, if $d\ne 0$, whereas if $d=0$ it is immediate.
If $\min_{\partial B_r(x)}\rho < \frac 12$, then $ \| 1-\rho\|_{L^\infty} > \frac 12$, and thus
\begin{align*}
\int_{\partial B_r(x)}  e_\e^{in}(u) d\calH^1
&\ge
\frac 12 \int_{\partial B_r(x)} | d \rho|_g^2 + \frac 1 {2\e^2}F(\rho^2) \, d\calH^1
\overset{\eqref{lb.first}}\ge
\frac
{c_2}{4\e} \\
&\ge 
\min_{0<s\le 1} \left[ \frac {c_2}{4 \e}(1-s)^2 + s^2  \frac {\pi }{r}(1- c_3 r^2)\,\right] 
= \lambda_\e(r) \ge \lambda_\e(\frac r {|d|})
\end{align*}
for any $d$. (If $\rho=0$ somewhere on $\partial B_r(x)$, then the definition $v = u/\rho$ may not make sense, but the proof of \eqref{lb.third} relies only on \eqref{lb.first} and makes no mention of $v$.)
\end{proof}

We also need:

\begin{lemma}\label{L.degree.energy}
Assume that $u$ is a smooth vector field on $S$ and that
for some $0<r<r_0(S)$ and $x\in S$, 
\[
\rho := |u|_g \ge \frac 12 \mbox{ on }\partial B_r(x), \qquad
\deg(u;\partial B_r(x)) = d\ne 0.
\]
Then if $r_0$ is sufficiently small, 
\[
\int_{B_r(x)} |Du|_g^2 \vol_g \ge  \frac \pi 4 |d| \, . 
\]
\end{lemma}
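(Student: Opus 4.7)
The strategy combines a topological consequence of the non-vanishing of $d$ with the circle-by-circle lower bound \eqref{lb.second} from Lemma~\ref{L.ingredients}, integrated via the coarea formula.

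First I would show that $u$ must vanish at some point of $B_r(x)$. Otherwise, $v := u/|u|_g$ is a smooth unit vector field on $B_r(x)$; Lemma~\ref{L.omega0} then yields $\omega(v)\equiv 0$ on $B_r(x)$, and integrating the identity $\omega(v) = dj(v) + \kappa\,\vol_g$ over $B_r(x)$ together with the definition \eqref{deg.def} of the degree gives $\deg(v;\partial B_r(x))=0$. Since $\deg(v;\partial B_r(x)) = \deg(u;\partial B_r(x)) = d$, this contradicts $d\ne 0$.

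Next, consider the open set $G := \{ s \in (0,r] : |u|_g \ge \tfrac{1}{2} \text{ on } \partial B_s(x)\}$. It contains $r$ by hypothesis, and by the same argument applied to $v$ it cannot coincide with $(0,r]$. Let $(s_*, r]$ be the connected component of $G$ containing~$r$; then $s_*\in(0,r)$, and on the annulus $A := B_r(x) \setminus \bar B_{s_*}(x)$ the rescaled field $v=u/|u|_g$ is smooth and of unit length. Since $v$ does not vanish on $\overline A$, the degree $\deg(u;\partial B_s(x))$ is constant and equal to $d$ for every $s\in(s_*,r]$.

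The core estimate now follows by a coarea integration of \eqref{lb.second}. Using the pointwise identity from \eqref{formula12}, $|Du|_g^2 \ge |u|_g^2 |Dv|_g^2 \ge \tfrac{1}{4}|Dv|_g^2$ on $A$, and hence
\[
\int_A \tfrac{1}{2}|Du|_g^2\, \vol_g \ \ge\ \tfrac{1}{4}\int_{s_*}^r \frac{\pi d^2}{s}(1-c_3 s^2)\, ds \ \ge\ \tfrac{\pi d^2}{4}\log(r/s_*)\bigl(1 - O(r_0^2)\bigr).
\]
Once $r_0$ is chosen small enough to absorb the $O(r_0^2)$ correction, this already yields $\int_{B_r(x)}|Du|_g^2\,\vol_g \ge \pi|d|/4$ in the case $\log(r/s_*)\ge 1/|d|$, because $d^2\ge |d|$ for every nonzero integer $d$.

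The main obstacle will be the complementary \emph{thin-annulus} case $\log(r/s_*) < 1/|d|$, in which a single annulus does not provide enough energy. I would handle it by iterating: the smaller ball $B_{s_*}(x)$ itself satisfies the hypotheses of the lemma, because $|u|_g\ge 1/2$ on $\partial B_{s_*}(x)$ by continuity from $A$ (with equality attained somewhere on $\partial B_{s_*}$), and the degree on $\partial B_{s_*}(x)$ still equals $d$. This produces a further separation radius $s_*^{(2)}<s_*$ and an additional annular contribution proportional to $\log(s_*/s_*^{(2)})$, and so on. Because Step~1 ensures that $u$ has a zero in every such ball, the sequence $s_*^{(k)}$ cannot stabilize at a strictly positive radius without contradiction; summing the annular bounds telescopes to $\tfrac{\pi d^2}{4}\log(r/s_*^{(k)})\bigl(1-O(r_0^2)\bigr)$, which exceeds $\pi|d|/4$ as soon as $\log(r/s_*^{(k)})\ge 1/|d|$, and this must occur after finitely many iterations. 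The delicate point in writing out this endgame is to keep the multiplicative error $1-O(r_0^2)$ uniform across all iterations, which is why the hypothesis asks for $r_0$ sufficiently small depending only on $S$.
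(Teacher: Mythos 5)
The first half of your argument is sound: $u$ must vanish in $B_r(x)$, the degree is constant on the maximal annulus $B_r(x)\setminus \bar B_{s_*}(x)$ where $\rho\ge\tfrac12$, and integrating \eqref{lb.second} by the coarea formula there does settle the case $\log(r/s_*)\ge 1/|d|$. The gap is in the thin-annulus endgame. By the very definition of $s_*$ (the left endpoint of the maximal interval of good radii containing $r$ — note in passing that your set $G$ is closed, not open, since the condition $\rho\ge\tfrac12$ is closed), there are radii arbitrarily close below $s_*$ whose circles meet $\{\rho<\tfrac12\}$. Consequently, when you apply the same construction to $B_{s_*}(x)$, the maximal good interval of radii containing $s_*$ is the degenerate interval $\{s_*\}$: you get $s_*^{(2)}=s_*$, the new annulus is empty, and the iteration never advances. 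There is also no contradiction with $u$ vanishing inside $B_{s_*}(x)$, since its zeros simply lie below $s_*$; so the claimed telescoping never gets started.

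Moreover, the obstruction is not just in how the induction is organized: no argument that only uses the angular bound \eqref{lb.second} on circles where $\rho\ge\tfrac12$ can prove the lemma. Take, in a normal chart (flat metric for simplicity), $u(y)=f(|y|)e^{id\theta}$ with $f\equiv 1$ for $|y|\ge r(1-\e_0)$, $f$ decreasing to $0$ across the annulus $r(1-2\e_0)\le|y|\le r(1-\e_0)$, and $f\equiv 0$ inside. The union of all good radii is contained in $\{s\ge r(1-2\e_0)\}$, and the angular energy carried there is $O(d^2\e_0)$, which is far below $\tfrac\pi4|d|$ for small $\e_0$; the energy the lemma demands sits in the modulus gradient $|d\rho|_g^2$ inside the region $\{\rho<\tfrac12\}$, a term your estimates never see. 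You also cannot fall back on \eqref{lb.third} on bad circles, because the conclusion of the lemma contains no $\e$ or potential term. This is exactly why the paper's proof works on the sublevel set $O=\{\rho<t\}$ for a regular value $t\in(\tfrac18,\tfrac14)$: there $d$ is computed as $\tfrac1{2\pi}\big(\int_{\partial O}j(u)/t^2+\int_O\kappa\,\vol_g\big)$ via Lemma \ref{L.omega0} and Stokes, and the pointwise bound $|dj(u)|_g\le|Du|_g^2$ of Lemma \ref{L.last} (see \eqref{egal_dj}) gives $2\pi t^2(|d|-\tfrac12)\le\int_O|Du|_g^2$, capturing the energy in the low-modulus region whether it is angular or radial. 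To close your thin-annulus case you would need an estimate of this vorticity type (or some other mechanism exploiting $|d\rho|$ across the transition region), not a further iteration of the annular bound.
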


\begin{proof}
First, let $O := \{y\in B_r(x) : \rho(y)<t \}$, where $t$
is  a regular value  of $\rho(\cdot)$ such that $\frac 18<t<\frac 14$. 
Then $O$ is an open set with smooth
boundary, compactly contained in $B_r(x)$,  and 
\begin{align*}
d
= \deg(u;\partial B_r(x))  
&= 
\frac 1{2\pi} 
\left( \int_{\partial B_r(x)} j(\frac u{|u|_g}) +
\int_{B_r(x)} \kappa\vol_g \right) \\
&= 
\frac 1{2\pi} \left( \int_{\partial O} \frac{ j(u)}{t^2}  + \int_O \kappa \vol_g
\right)
\end{align*}
where the final equality follows from Lemma \ref{L.omega0} and Stokes' Theorem,
as well as the fact that $|u|_g=t$ on $\partial O$. Thus if $r_0$ is sufficiently small
(depending on $\|\kappa\|_\infty$) then
\[
2\pi t^2 \left( |d| - \frac 12  \right)\le \big|\int_{\partial O} j(u)
\big|_g = 
 \big|\int_{O} dj(u) \big|_g \le  \int_O|d j(u)|_g \, \le \int_O |Du|_g^2,
\]
where the last inequality follows from
Lemma \ref{L.last}, see \eqref{egal_dj}.
\end{proof}

Finally we recall 

\begin{lemma}  \label{Zballs}
Assume that $u_\e$ is a smooth vector field satisfying \eqref{Escaling}. Then 
there exists $\e_0>0$ such that whenever $0<\e <\e_0$,
there exists a collection $\tilde {\mathcal B}^0  = \{ \tilde B^0_j \}$ of closed 
pairwise disjoint
balls that cover the set where $|u_\e|_g\le \frac 12$, and such that 
\[
\sum_j \tilde r^0_j \le C \e \int_S e_\e^{in}(|u_\e|_g)\, \vol_g, \qquad \mbox{ where $\tilde r^0_j$ denotes 
the radius of $\tilde B^0_j$}.
\]
\end{lemma}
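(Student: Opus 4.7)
The plan is to build the cover in three stages: locate a level $t^*\in(1/2,3/4)$ of $\rho:=|u_\e|_g$ whose level set has small length; enclose each connected component of the sublevel set $\{\rho<t^*\}$ in a geodesic ball whose radius is controlled by the perimeter of the component; and finally merge overlapping balls into a pairwise disjoint collection.

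For the first stage I would start from the coarea formula
\[
\int_{1/2}^{3/4} \calH^1(\{\rho=t\})\,dt \ = \ \int_{\{1/2\le\rho\le 3/4\}} |d\rho|_g\,\vol_g.
\]
On the strip $\{\rho\le 3/4\}$ the growth condition \eqref{F.growth} gives $\sqrt{F(\rho^2)}\ge c>0$ for a universal $c$, so Young's inequality yields the pointwise bound
\[
|d\rho|_g \ \le \ c^{-1}\,|d\rho|_g\sqrt{F(\rho^2)} \ \le \ c^{-1}\e \Big(|d\rho|_g^2 + \tfrac{1}{\e^2}F(\rho^2)\Big) \ \le \ C\e\,e_\e^{in}(\rho).
\]
Integrating and combining the mean value theorem with Sard's theorem applied to the smooth function $\rho$ (which is smooth wherever $\rho>0$, in particular in $\{\rho\ge 1/2\}$) produces a regular value $t^*\in(1/2,3/4)$ with $\calH^1(\{\rho=t^*\})\le L:=C\e\int_S e_\e^{in}(\rho)\,\vol_g$. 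The open set $E:=\{\rho<t^*\}$ contains $\{\rho\le 1/2\}$ and has smooth boundary of total length at most $L$.

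For the second stage, testing $(1-\rho)^2\le C\,F(\rho^2)$ against $\mathbf{1}_E$ and using the elementary bound $(1-\rho)^2\ge 1/16$ on $E$ gives $\mbox{Vol}_g(E)\le C\e^2\int_S e_\e^{in}(\rho)\,\vol_g=O(\e^2|\log\e|)$, which is $o(1)$. In particular, once $\e$ is small each connected component $\omega_j$ of $E$ fits inside a single exponential normal coordinate chart (Section \ref{sect.coords}) in which the metric is Euclidean up to an $O(|y|^2)$ perturbation. The planar diameter-perimeter inequality then applies: for any two points $x,y$ in a connected planar domain with piecewise smooth boundary of length $\ell$, projecting $\partial\omega_j$ onto the line through $x$ and $y$ and using the coarea formula for the projection shows $\ell\ge 2|x-y|$, hence $|x-y|\le \ell/2$. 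Transferring back to $S$ gives $\mathrm{diam}_g(\omega_j)\le (1+o(1))L_j/2$, where $L_j:=\calH^1(\partial\omega_j)$, so each $\omega_j$ is enclosed in a closed geodesic ball $B_j$ of radius $L_j$.

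For the third stage, the collection $\{B_j\}$ is made disjoint by a standard merging. Form the graph whose vertices are the balls and whose edges link intersecting pairs; for any connected component $\mathcal C$ of this graph with radii $\{r_k\}_{k\in\mathcal C}$, a chain of pairwise intersecting balls joining two points of $\bigcup_{k\in\mathcal C}B_k$ gives $\mathrm{diam}_g\big(\bigcup_{k\in\mathcal C}B_k\big)\le 2\sum_{k\in\mathcal C}r_k$, so the union lies in a single geodesic ball of radius $2\sum_{k\in\mathcal C}r_k$. The resulting family $\{\tilde B_j^0\}$ is pairwise disjoint, covers $\{\rho\le 1/2\}$, and satisfies
\[
\sum_j\tilde r_j^0 \ \le \ 2\sum_j L_j \ \le \ 2L \ \le \ C\e\int_S e_\e^{in}(\rho)\,\vol_g,
\]
which is the claim up to relabeling of the constant. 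The principal technical subtlety is ensuring that the classical planar diameter-perimeter bound transfers to the Riemannian setting without degradation of constants; this is precisely where the a priori volume bound $\mbox{Vol}_g(E)=o(1)$ is essential, since it forces each component to lie in a region where the metric is uniformly approximately Euclidean, preventing pathologies such as components wrapping around handles of $S$.
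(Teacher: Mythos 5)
Your first stage is exactly the paper's: the coarea formula plus \eqref{F.growth} produce a regular value $t^*\in(\tfrac12,\tfrac34)$ with $\calH^1(\{\rho=t^*\})\le C\e\int_S e^{in}_\e(\rho)\,\vol_g$, and the merging in your third stage is the standard step (note only that merging must be iterated, and that two intersecting balls of radii $r_1,r_2$ fit inside a single ball of radius $r_1+r_2$, so the sum of radii is in fact preserved, not just doubled). The genuine divergence from the paper, and the place where your argument has a gap, is the second stage. The paper covers the \emph{level set} $\{\rho=t^*\}$ by balls with total radius $\le 2\calH^1(\{\rho=t^*\})$ and then shows the complement of the union is connected for small $\e$, so that $\rho>t^*$ there by \eqref{lb.first} and \eqref{Escaling}; you instead try to enclose each connected component $\omega_j$ of the sublevel set $\{\rho<t^*\}$ in a ball of radius comparable to $\calH^1(\partial\omega_j)$ via the planar diameter--perimeter inequality. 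That inequality is only available once you know $\omega_j$ sits inside one normal coordinate chart, and your justification of this -- the bound $\mbox{Vol}_g(E)=O(\e^2\logeps)$ -- does not give it: small area does not force small extent. A component could a priori be a long, thin strip of width $O(\e^2\logeps)$ stretching across a fixed-size portion of $S$ (or winding along a non-contractible loop); its area is tiny, yet it fits in no single chart, and the planar projection argument is then unavailable. So as written the key step ``each $\omega_j$ fits inside a chart'' is circular: you need a diameter bound to place $\omega_j$ in a chart, and you need the chart to prove the diameter bound.

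The gap can be closed, but it requires a genuinely geometric input about $S$ rather than the volume bound. For instance: since $t^*$ is a regular value, $\partial\omega_j$ is a finite union of smooth circles of total length $\le L=O(\e\logeps)$; once $L$ is below a threshold depending on the injectivity radius of $S$, each such circle lies in a geodesic ball of radius $\le L$ and bounds a disk of diameter $\le CL$ inside it, and combining this with $\mbox{Vol}_g(\omega_j)=o(1)$ (which rules out $\omega_j$ being the ``large'' complementary region) confines $\omega_j$ to the union of these small disks, giving $\mathrm{diam}_g(\omega_j)\le C\,\calH^1(\partial\omega_j)$ directly -- at which point the chart and the Euclidean inequality are no longer needed. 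Alternatively, follow the paper's route: cover $\{\rho=t^*\}$ itself, merge, and use connectedness of the complement of the resulting balls together with \eqref{lb.first} to conclude that $\{\rho\le\frac12\}$ is covered. Either repair is fine; what is missing in your write-up is precisely this step, and the sentence attributing it to the smallness of $\mbox{Vol}_g(E)$ should be removed or replaced.
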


\begin{proof}
Let $\rho := |u_\e|_g$.
Then $\frac 12 |d\rho|_g^2 + \frac 1 {4\e^2}F(\rho^2) \ge \frac 1{\e\sqrt 2} |d\rho|_g \sqrt {F(\rho^2)} \ge \frac c\e |1-\rho| \, |d\rho|_g$, by \eqref{F.growth}. Thus the coarea formula, which remains valid on a smooth manifold, implies that
\[
\e \int_S e_\e^{in}(\rho) \vol_g \ge 
c \int_0^\infty |1-s| \calH^1(\rho^{-1}(s))\, ds
\ge
c \int_{1/2}^{3/4}\calH^1(\rho^{-1}(s))\, ds.
\]
In particular we may find some $\alpha\in [\frac 12, \frac 34]$, a regular value of $\rho$, such that
$\calH^1(\rho^{-1}(\alpha)) \le C \e \int_S e_\e^{in}(\rho) \vol_g$.
Following standard arguments, we may start with an efficient
finite cover of $\rho^{-1}(\alpha)$ and then merge
balls to find a collection
of closed {\em pairwise disjoint} balls that cover $\rho^{-1}(\alpha)$
and whose radii sum to at most
$2\calH^1(\rho^{-1}(\alpha))$. This is $\tilde {\mathcal B}^0$.
The complement of the union
of these balls is connected as long as $\e$ is small enough, so on the complement,
either $\rho > \alpha$ or $\rho<\alpha$ everywhere.
The latter case is impossible by \eqref{Escaling} and \eqref{lb.first}, 
if  $\e$ is small enough, and this proves the lemma.
\end{proof}

A few more definitions are needed before we prove Proposition \ref{P.vballs}.
W.l.o.g., we may assume that $\frac12$ is a regular value of $\rho=|u|_g$. First, we set
\begin{align*}
Z &:= \{ x\in S : |u(x)|_g \le \frac 12\}, \\ 
Z_E& :=  \cup \{ \mbox{connected components $Z_l$ of $Z$ }  \ : \ \deg(u; \partial Z_l) \ne0 \}.
\end{align*}

Next, for any set $V\subset S$ such that $\partial V \cap Z_E = \emptyset$ we define
the generalized degree
\[
\mbox{dg}(u;\partial V) := \sum \{ \deg(u; \partial Z_l) \ : \ \mbox{ components $Z_l$ of $Z_E$
such that }Z_l\subset\subset V\}.
\]
Note that $\mbox{dg}(u;\partial V) = \deg(u,\partial V)$ if $\partial V$ is $C^1$, say,
and $|u|_g > \frac 12$ on $\partial V$.
Finally we define
\[
\Lambda_\e(\sigma)  := \int_0^\sigma \lambda_\e(r)\, dr.
\]
It is straightforward to check that 
\begin{equation}\label{Lambda.log}
\Lambda_\e(\sigma) \ge \pi \log(1+ \frac \sigma{c_4\e}) -  C \sigma^2  \ge \pi (\log\frac \sigma \e - C)\qquad\mbox{ for }0\le \sigma \le r_0(S).
\end{equation}
We record several other properties. First, since $\Lambda_\e(\cdot)$ is the
integral over $[0,\sigma]$ of a positive nonincreasing function, it is easy to see that
\[
\Lambda_\e(\sigma_1+\sigma_2) \le \Lambda_\e(\sigma_1) + \Lambda_\e(\sigma_2),
\qquad \sigma\mapsto \frac 1 \sigma\Lambda_\e(\sigma)\mbox{ is nonincreasing}. 
\]
Finally, consider two radii $r_1<r_2$ such that $\e\le r_j \le r_0$ for $j=1,2$,
and assume that $x\in S$ is a point such that 
$Z_E$ does not intersect the annulus $B_{r_2}\setminus B_{r_1}(x)$.
Then $\mbox{dg}(u;\partial B_r(x)) = \mbox{dg}(u; \partial B_{r_1}(x))$ for
all $r\in (r_1,r_2)$, so one may 
use the coarea formula and integrate \eqref{lb.third} from $r_1$ to $r_2$ 
to find that 
\begin{equation}
\int_{B_{r_2}\setminus B_{r_1}(x)}e_\e^{in}(u) \, \vol_g \ge {|d|}\left[ \Lambda_\e(\frac{r_2}{{|d|}}) - \Lambda_\e(\frac {r_1}{{|d|}})\right],\qquad d:= \mbox{dg}(u; \partial B_{r_1}(x)).
\label{best2}\end{equation}

\begin{proof}[Proof of Proposition \ref{P.vballs}]
We divide the proof in several steps:

\medskip

\nd {\it Step 1. An initial covering of $Z_E$.}
We claim first that there exists a collection $\mathcal B^0 = \{ B_{l,0}\}_{k=1}^K$ of 
closed, pairwise disjoint balls with  centers $a_{l,0}$ and radii $r_{l,0}\ge \e$ for all $l$, 
such that 
$Z_E\subset \cup B^0_k$, 
and (after possibly decreasing the constant $c_2$ in the definition \eqref{lambdaep.def} of $\lambda_\e$, in a way that depends only on the geometry of $S$)
\begin{equation}
\int_{B_{l,0}} e^{in}_\e(u) \vol_g \ \ge \frac {c_2}{4\e} r_{l,0} \ge \ \Lambda_\e(r_{l,0}) \qquad\mbox{ for every }l.
\label{best1}\end{equation}
We first cover  $Z_E$ with balls that satisfy
\eqref{best1}. 
Indeed, for every $x\in Z_E$, this estimate holds for  $B_r(x)$, for the smallest $r\ge \e$ such that $\min_{\partial B_r(x)}\rho \ge 1/2$.  This is  a result of  Lemma \ref{L.degree.energy},
if its $r\le 2\e$, and otherwise it follows from  \eqref{lb.first} and the coarea formula. One can then choose a finite subcover.
The balls obtained in this fashion may overlap. If so, they may be combined
into pairwise disjoint balls that still satisfy \eqref{best1},
by exactly the arguments in   \cite{Je},  proof of Proposition 3.3,
where the same result is proved in the Euclidean setting. This argument 
involves a slightly more
careful choice of balls (so that no center is contained in any other ball) and use of the Besicovitch covering lemma. For our present purposes, we may appeal to Federer \cite{Federer}, sections 2.8.9 - 2.8.14,
for a difficult  but doubtless correct version of the
covering lemma that is valid on a smooth compact Riemannian manifold,
and indeed in much greater generality.
Adjustments to the constant $c_2$ depend on constants appearing in this
covering lemma, which are explicitly described in the above reference.

\medskip

\nd {\it  Step 2. Growing and merging balls.}
Now let $d_{l,0} := \mbox{dg}(u;\partial B_{l,0})$. We will assume for this
discussion that $d_{l,0}\ne 0$ for some  $l$, 
as the other case is 
both easier and less relevant for our main results.
Using Lemma \ref{L.ingredients} and associated properties of
$\Lambda_\e$, such as  those in \eqref{best1},
we may now follow the algorithm 
from \cite{Je}, proof of Proposition 4.1, to which one
may refer for the details omitted here.
We describe it briefly.
First, define
\[
\sigma_0 := \min_{\mathcal B^0} r_{l,0}/ |d_{l,0}| \overset{\eqref{Escaling}, \eqref{best1}}\le C \ \e \logeps \ .
\]
 We claim that for 
$\sigma\in (\sigma_0, r_0(S))$,
there exists a finite collection of pairwise disjoint closed balls $\mathcal B^\sigma = \{ B_{l,\sigma}\}_{l=1}^{K_\sigma}$
with centers $a_{l,\sigma}$ and radii $r_{l,\sigma}$, such that 
\begin{equation}
Z_E \subset \cup B_{l,\sigma}\, , \qquad
\int_{B_{l,\sigma}} e_\e^{in}(u)\vol_g \ge \frac {r_{l,\sigma}}\sigma \Lambda_\e(\sigma) \,,\ 
\mbox{ and } \ r_{l,\sigma}\ge  \sigma |d_{l,\sigma}|\quad\mbox{ for all }l, 
\label{best3}\end{equation}
where $d_{l,\sigma} = \mbox{dg}(u;B_{l,\sigma})$. 
We take $\mathcal B^{\sigma_0}$ to be the collection found in Step 1
above.
Given any $\sigma_1 \ge \sigma_0$ for which such a collection exists, 
we say that the {\em minimizing balls} are those for which $r_{l,\sigma_1} = \sigma_1 |d_{l,\sigma_1}|$. Since the balls are closed and pairwise disjoint, there is some
$\delta>0$ such that for $\sigma_1 \le \sigma \le \sigma+\delta$, 
we can expand the minimizing balls, while leaving the centers fixed,
by enclosing them in pairwise disjoint annuli chosen so  that, 
for every $\sigma$, the equality
$r_{l,\sigma} = {\sigma} |d_{l,\sigma}|$ holds for all minimizing balls. 
We add balls to the collection of minimizing balls as $\sigma$ increases, when necessary.
This preserves \eqref{best3} due to properties of $\Lambda_\e$ summarized above, such as \eqref{best2}. At certain values of $\sigma$, for example $\sigma = \sigma_1+\delta$, the expansion process will lead to two or more balls colliding. When this occurs, one can regroup them into larger, pairwise disjoint balls
in a way that preserves the properties \eqref{best3}. (Details of all
these assertions can be found in \cite{Je}.) This process can
be continued as long as  every minimizing ball has radius at most
$r_0(S)$, which happens as long as $\sigma < r_0(S)$.

\medskip

\nd {\it Step 3. Stopping the process, and covering all of $Z$.}
Recalling that $n>T-1$ by hypothesis, we fix $q \in (0,  1-\frac T{n+1})$,
which implies that $\frac T{1-q}< n+1$.
It then follows from \eqref{best3}, \eqref{Lambda.log}, and \eqref{Escaling} that
if $\e^q\le \sigma < r_0(S)$, then
\begin{equation}\label{radii.bd}
\sigma \sum |d_{l,\sigma}| \le \sum r_{l,\sigma} \le \sigma \frac{T\pi  \logeps + C}{\pi |\log (\sigma/\e)|-C} \le \sigma\Big(\frac T{1-q} + \frac C \logeps
\Big).
\end{equation}
Thus there exists $\e_0>0$
(depending on $S$, $q$, and the constant in \eqref{Escaling}) 
 such that if $0 <\e <\e_0$, then
$\sum |d_{l,\sigma}| < n+1$, and thus $\sum |d_{l,\sigma}|\le n$.

These balls have all the desired properties (the bound \eqref{balls3} on the sum of the
radii follows from \eqref{radii.bd}) except that they cover $Z_E$ rather than all of $Z$.
To rectify this, recall from Lemma \ref{Zballs} that $Z\setminus Z_E$ can be covered by a finite collection
of balls whose radii sum to at most $C \e \logeps$. We can add these balls
to $\mathcal B^\sigma$, merging as necessary to obtain a pairwise disjoint collection
(still denoted $\mathcal B^\sigma$) that covers all of $Z$, and
with the sum of the radii increased by at most $C\e\logeps$. Since 
$\frac T{1-q}<n+1$, it follows from \eqref{radii.bd} that these still satisfy
$\sum r_{l,\sigma} < \sigma(n+1)$ for $0<\e<\e_0$. The bound on the total
degrees \eqref{balls2} and the energy lower bound
\eqref{balls4} for this modified collection of balls are directly inherited from the previous
collection.

\end{proof}

\paragraph{\bf Acknowledgment.}  R.I. acknowledges partial support by the ANR project ANR-14-CE25-0009-01. The work of R.J. was partially supported by the Natural Sciences and Engineering Research Council of Canada under operating Grant 261955.

\begin{center}

\bigskip

\bigskip

{\scshape Radu Ignat}\\
{\footnotesize
Institut de Math\'ematiques de Toulouse \& Institut Universitaire de France \\
UMR 5219, Universit\'e de Toulouse, CNRS, UPS, IMT \\
F-31062 Toulouse Cedex 9, France\\
\email{radu.ignat@math.univ-toulouse.fr}
}
\bigskip

\bigskip

{\scshape Robert L. Jerrard}\\
{\footnotesize
 Department of Mathematics\\
University of Toronto \\
Toronto, Ontario, M5S 2E4\\
\email{rjerrard@math.toronto.edu}
}

\end{center}

\end{document}